\newlength{\defbaselineskip}
\newcommand{\setlinespacing}[1]%
           {\setlength{\baselineskip}{#1 \defbaselineskip}}
\newcommand{\IR}{\mathbb{R}}
\newcommand{\IC}{\mathbb{C}}
\newcommand{\IZ}{\mathbb{Z}}
\newcommand{\IN}{\mathbb{N}}
\def \<{\langle}
\def\>{\rangle}
\newtheorem{Theorem}{Theorem}[section]
\newtheorem{Lemma}{Lemma}[section]
\newtheorem{Definition}{Definition}[section]
\newtheorem{prop}{Proposition}[section]
\newtheorem{Corollary}{Corollary}[section]
\numberwithin{equation}{section}
\begin{document}

\title{{ Wavelets in weighted norm spaces}
\thanks{\it Math Subject Classifications.
primary:  41A65; secondary: 41A25, 41A46, 46B20.}}
\author{K. S. Kazarian, \thanks{ Dept. of Mathematics, Mod. 17, Universidad Aut\'onoma de Madrid, 28049, Madrid, Spain e-mail: kazaros.kazarian@uam.es }
S. S. Kazaryan   \thanks{Institute of Mathematics
Armenian Academy of Sciences
375019 Yerevan
Armenia e-mail: kazarian@instmath.sci.am }
and A. San Antol\'{\i}n \thanks{ Dept. of Mathematical Analysis, Universidad de Alicante, 03690 Alicante, Spain)  e-mail: angel.sanantolin@uam.es}}
\maketitle

\begin{abstract} {We give a complete characterization of the classes of weight functions for which the Haar wavelet system for $m$-dilations, $m= 2,3,\ldots$ is an unconditional basis in $L^p(\mathbb{R},w)$. Particulary it follows that higher rank Haar wavelets are unconditional bases in the weighted norm spaces $L^p(\mathbb{R},w)$, where $w(x) = |x|^{r}, r>p-1$. These weights can have very strong zeros at the origin. Which shows that the class of weight functions for which higher rank Haar wavelets are unconditional bases is much richer than it was supposed. One of main purposes of our study is to show that weights with strong zeros should be considered if somebody is studying basis properties of a given  wavelet system in a weighted norm space.
 } \end{abstract}

\section{Introduction}

\

The wavelet analysis, since its creation, has been  used in many areas of
applied mathematics. The main idea is as simple as to find  a function ({\it wavelet or wavelet function}) defined in $\IR$ or in $\IR^{d}$ so that the system of its  dilations and translations constitute a complete orthonormal system (ONS) in $L^2(\mathbb{R})$ or in $L^2(\mathbb{R}^d)$. In fact this idea was used by A. Haar for constructing  a complete ONS in $L^2([0,1])$ such that the expansion with respect  the system of any continuous function on $[0,1]$  converges   uniformly. In the univariate case the simplest di\-lation  is the dyadic dilation. In this case a function $g\in L^2(\mathbb{R})$ is  a  wavelet  if  $\{g_{k,j}: k,j\in \IZ\},$ where $g_{k,j}(x):= 2^{k/2}g(2^{k}x-j)$ is a complete ONS in the space $L^2(\mathbb{R})$.

If we want to study the class of weight functions $w\geq 0$ for which a given wavelet  system $\{g_{k,j}: k,j\in \IZ\}$ is a basis in a certain sense in the weighted norm space $L^p(\mathbb{R},w), 1\leq p <\infty$ then we have to consider a new phenomenon which  does not arise in the case of a finite interval. In  recent years several papers were published where the above question was studied. Unfortunately in the majority of those papers the phenomenon which we are going to describe is not considered. To give a preliminary  idea about the main subject of our study   suppose that for a given weight function $w$ there exist some functions $h$ such that
\begin{enumerate}
\item[(a)] \qquad $\int_{\IR} g_{k,j}(x) h(x) dx = 0 \quad \mbox{for all}\quad k,j\in \IZ$;
\item[(b)] \qquad  $\frac{h}{w} \in L^{p'}(\mathbb{R},w),\qquad \qquad 1/p +1/p' =1$.
\end{enumerate}
Then any non trivial function $\frac{h}{w}$ as linear continuous functional will be a non trivial element in the dual space $L^{p'}(\mathbb{R},w)$. Moreover, this functional  vanishes on all $g_{k,j}: k,j\in \IZ$ .
Hence our wavelet system is not complete in the space $L^p(\mathbb{R},w)$. A necessary condition for completeness of the system $\{g_{k,j}: k,j\in \IZ\}$   in the space $L^p(\mathbb{R},w)$ is the following condition: $\frac{h}{w} \notin L^{p'}(\mathbb{R},w)$. Thus if we are going to describe all weight functions $w$ for which  the system $\{g_{k,j}: k,j\in \IZ\}$ is a basis in a certain sense in  $L^p(\mathbb{R},w)$, we have to consider all those weight functions $w$ for which the condition $(b)$ is not true. For our study
 we will  use the technique by which   similar questions were studied for incomplete systems  in the weighted norm spaces (see \cite{K:0},\cite{K:1}, \cite{K:2}). We also will show that the conditions $(a)$ and $(b)$ are not hypothetical cases. We tried to produce a readable text of our study. The authors are conscious that some results of the present paper can be proved by other methods. In those cases we have opted for more classical tools with the hope to present a text which will be accessible to a wider range of readers.

 Usually in concrete examples one has  that the wavelet $g\in L^1(\mathbb{R})\bigcap L^2(\mathbb{R}).$ Moreover, if we are going to study the wavelet system in $L^p(\mathbb{R},w)$ it will be natural to suppose that
\[ g\in L^1(\mathbb{R})\bigcap L^{\max(p,p')}(\mathbb{R}). \]
On the other hand the purpose of the present paper is not to obtain the most general results. Hence, instead of the last restriction we will suppose  that $g\in L^1(\mathbb{R})\bigcap L^{\infty}(\mathbb{R}).$ It is well known that if a wavelet $g\in L^1(\mathbb{R})\bigcap L^2(\mathbb{R})$ then its Fourier transform $\widehat{g}$ should vanish at the origin and thus the constant functions satisfy to condition $(a)$. Which means that if someone has studied the formulated question without describing the class of functions for which  the condition (a) holds and without considering weighted norm spaces $L^p(\mathbb{R},w)$ with weights $w$  which does not satisfy the condition (b), then his proof  is not complete. In \cite{GK:1}(see also \cite{GK:0}) was given a complete characterization  of weight functions $w$ for which the Haar wavelet system is an unconditional basis in $L^p(\mathbb{R},w), 1\leq p <\infty$. It was given without detailed proof because the technical details were similar with the proof given in \cite{K:2}. Almost at the same time was published the paper \cite{L:1}. In the last paper the described phenomenon was not considered. It is understandable that we will not cite all papers where similar questions have been studied.

In order to show that the question under consideration is not a technical problem which has only theoretical interest,
we  characterize the classes of weight functions for which the Haar wavelet system for $m$-dilations, $m= 2,3,\ldots$ is an unconditional basis in $L^p(\mathbb{R},w)$. From the corollary of the main theorem of the last section it follows that higher rank Haar wavelets are unconditional bases in the weighted norm spaces $L^p(\mathbb{R},w)$, where $w(x) = |x|^{r}, r>p-1$. Which shows that the class of weight functions for which higher rank Haar wavelets are unconditional bases is much richer than it was supposed  (see for example \cite{Ko:01}, where
 this question was studied). In Section \ref{sec:2} we prove an inequality for the orthogonal wavelet systems which particularly shows that in the case of general orthogonal wavelet systems the set of nontrivial functions $h$ for which the condition (a) holds is not empty. In Section \ref{sec:3} we give all preliminary results which will be used for our study. Next two sections are dedicated to the $m$th rank Haar system on $[0,1]$. The results of these sections have certain interest. It should be mentioned that they are used   for proofs given in the last section, where the main results for the higher rank Haar wavelets are obtained.

\

\section{An inequality for wavelet type systems}\label{sec:2}

If $w\geq 0$ be a  weight function on $\IR$, i.e. a non negative locally integrable function  then we write $\phi \in L^{p}(\IR,w), 1 \leq p <\infty$ if $\phi:\IR \rightarrow \IC$ is measurable on $\IR$ and the norm is defined by
\[
 \| \phi \|_{L^{p}(\IR,w)}:= \left( \int_{\IR} |\phi(t)|^{p} w(t) dt\right)^{\frac{1}{p}} < +\infty.
 \]

For a $g\in L^2(\mathbb{R})$ and $m= 2,3,\ldots$ we will denote
\begin{equation}\label{eq:1.0}
g_{k,j,m}(x):= m^{k/2}g(m^{k}x-j), \qquad k,j \in \IZ.
\end{equation}
In this paper we will use a slightly modified version of the classical definition of the Fourier transform.
For a function $f\in L^1(\mathbb{R}) \cap
 L^2(\mathbb{R})$ we put
 $$ \widehat{f}({y}) = \int_{\mathbb{R}} f ({x}) e^{-2\pi i{x} {y}} dx.$$
The characteristic function of a set $E$ is denoted by $\chi_E$ and $\IN_{0} = \IN\bigcup\{0\}$.

 The following lemma is a well known result
(cf.~\cite{D:92}, p. 132; ~\cite{HW:96}, p. 71).

\begin{Lemma}\label{lem:1}
The system $\{ h(\cdot -j): { j}\in
\IZ\},$ where $h\in L^2(\IR),$  is an orthonormal system if
and only if
\[
\sum_{{ j}\in \IZ}|\widehat{h}({ t}+  { j})|^2
 = 1 \qquad \mbox{for a.e.}\quad  { t}\in \IR  .
\]
\end{Lemma}
As an obvious corollary of the above lemma we have that if $g\in L^2(\IR)$ is a wavelet then $|\widehat{g}(t)| \leq 1$ a.e. on $\IR$.

\begin{Theorem}\label{thm:1} Let $g\in L^2(\mathbb{R})$ and $m= 2,3,\dots$. Suppose that the system $\{g_{k,j,m}: k\in \IN_{0}, j\in \IZ \}$  is
 orthonormal. Then
 \begin{equation}\label{eq:1.1}
 \sum_{k=0}^{\infty} |\widehat{g}(m^{-k}x)|^{2} \leq 1.
 \end{equation}
 \end{Theorem}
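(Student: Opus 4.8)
The plan is to test Bessel's inequality for the orthonormal system against a family of functions whose Fourier transforms are indicators of shrinking intervals, and then pass to a pointwise statement via the Lebesgue differentiation theorem. First I would record the Fourier transform of the generators: with the convention fixed above, a direct change of variables gives
\[
\widehat{g_{k,j,m}}(y) = m^{-k/2}\, e^{-2\pi i j m^{-k} y}\,\widehat{g}(m^{-k}y), \qquad k\in\IN_0,\ j\in\IZ .
\]
Thus for any $f\in L^2(\IR)$, Plancherel's identity yields $\langle f, g_{k,j,m}\rangle = \int_{\IR} G_k(y)\, e^{2\pi i j m^{-k} y}\, dy$, where $G_k(y) := m^{-k/2}\,\widehat{f}(y)\,\overline{\widehat{g}(m^{-k}y)}$.

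Next, fix a point $x_0\in\IR$ and a parameter $\delta\in(0,1)$, and choose $f$ so that $\widehat{f} = \chi_{[x_0, x_0+\delta]}$; then $\|f\|_{2}^2 = \delta$. For each fixed $k$ I would compute the single-scale sum $\sum_{j\in\IZ} |\langle f, g_{k,j,m}\rangle|^2$ by recognizing the integrals above as Fourier coefficients, with respect to the orthonormal basis $\{m^{-k/2} e^{2\pi i j m^{-k} s}\}_{j}$ of $L^2([0,m^k))$, of the $m^k$-periodization $P_k(s) := \sum_{n\in\IZ} G_k(s + n m^k)$. Parseval on the circle then gives $\sum_{j} |\langle f, g_{k,j,m}\rangle|^2 = m^k \int_0^{m^k} |P_k(s)|^2\, ds$. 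Here the choice $\delta < 1 \le m^k$ is crucial: since $G_k$ is supported on an interval of length $\delta < m^k$, for each $s$ at most one translate $G_k(s + n m^k)$ is nonzero, so the periodization carries no cross terms and $\int_0^{m^k}|P_k(s)|^2\, ds = \int_{\IR} |G_k(y)|^2\, dy$. Unfolding the scaling, this simplifies to
\[
\sum_{j\in\IZ} |\langle f, g_{k,j,m}\rangle|^2 = \int_{x_0}^{x_0+\delta} |\widehat{g}(m^{-k}y)|^2\, dy .
\]

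Finally I would sum over $k$ and apply Bessel's inequality for the orthonormal system $\{g_{k,j,m}\}_{k\in\IN_0,\, j\in\IZ}$, obtaining $\sum_{k=0}^\infty \int_{x_0}^{x_0+\delta} |\widehat g(m^{-k}y)|^2\, dy \le \|f\|_{2}^2 = \delta$. Since all terms are nonnegative, monotone convergence (Tonelli) lets me interchange the sum and the integral, so that $\frac{1}{\delta}\int_{x_0}^{x_0+\delta} \bigl(\sum_{k=0}^\infty |\widehat g(m^{-k}y)|^2\bigr)\, dy \le 1$; in particular the integrand is locally integrable. Letting $\delta \to 0$ and invoking the (one-sided) Lebesgue differentiation theorem gives $\sum_{k=0}^\infty |\widehat g(m^{-k}x_0)|^2 \le 1$ for a.e.\ $x_0$, which is the desired inequality \eqref{eq:1.1}. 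I expect the main obstacle to be the single-scale computation: one must set up the periodization correctly and verify that the small support of $\widehat{f}$ eliminates the cross terms, which is valid precisely because $\delta < 1 \le m^k$ for every $k\ge 0$. By contrast, the orthonormality hypothesis enters cleanly and only through Bessel's inequality in the sum over $k$, so once the per-scale identity is in hand the conclusion follows by routine limiting arguments.
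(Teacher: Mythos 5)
Your proof is correct, and it rests on the same basic strategy as the paper's — compute $\widehat{g_{k,j,m}}$, then test Bessel's inequality for the system against functions whose Fourier transforms are supported on an interval short enough that the exponentials $\{m^{-k/2}e^{2\pi i jm^{-k}y}\}_{j\in\IZ}$ serve as an orthonormal basis at every scale $k\in\IN_{0}$ — but the implementation differs at both key steps, and yours is in some respects cleaner. At the per-scale level the paper forms the projections $S_k(f,\cdot)=\sum_{j}\langle f,g_{k,j,m}\rangle g_{k,j,m}$, computes $\widehat{S_k(f,\cdot)}(y)=\widehat f(y)\,|\widehat g(m^{-k}y)|^2$ on $I$, and uses the mutual orthogonality $S_k\perp S_{k'}$; you instead evaluate the single-scale Bessel sum exactly, $\sum_j|\langle f,g_{k,j,m}\rangle|^2=\int_{x_0}^{x_0+\delta}|\widehat g(m^{-k}y)|^2\,dy$, via periodization and Parseval, so that no cross-scale orthogonality is needed beyond what Bessel's inequality for the whole system already encodes. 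At the concluding step the paper lets $f$ range over all functions with $\supp\widehat f\subseteq I$, $|I|=1$, and reads the resulting bound as saying that multiplication by $\mu_l(y)=\sum_{k=0}^{l}|\widehat g(m^{-k}y)|^2$ is a contraction on $L^2(I)$, hence $\mu_l\le 1$ a.e.; you use only the indicators $\widehat f=\chi_{[x_0,x_0+\delta]}$ and finish with Tonelli and one-sided Lebesgue differentiation as $\delta\to 0^{+}$. What your route buys: an exact identity at each scale (in contrast, the paper's displayed chain of equalities conflates $\sum_k|\widehat{S_k}|^2$ with $|\sum_k\widehat{S_k}|^2$ and is only correct when reread as a statement about the single function $\sum_k S_k$), and a pointwise conclusion reached by a routine limiting argument rather than an operator-norm characterization of multipliers. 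What the paper's route buys: its inequality is uniform over all test functions with unit-length frequency support, which packages the result as a contraction statement and avoids the periodization bookkeeping. Both arguments prove \eqref{eq:1.1} for almost every $x$, which is the only meaningful reading since $\widehat g$ is defined as an element of $L^2(\IR)$.
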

\begin{proof}
It is easy to check  that
 \begin{equation}\label{eq:1.2}
\widehat{g_{k,j,m}}(y) = m^{-k/2} \widehat{g}(m^{-k}y) e^{-2\pi ijm^{-k}y }.
 \end{equation}
It is well known that for any interval $I\subset \IR, |I|=1$ the trigonometric system $ \{e^{-2\pi ij y}\}_{j\in \IZ} $ is a complete orthonormal system in $L^{2}(I)$. Hence, for any $k\in \IZ$ and $\Delta \subset \IR, |\Delta|=m^{k}$ the system $ \{m^{-k/2} e^{-2\pi ij m^{-k} y}\}_{j\in \IZ}$ will be
a complete orthonormal system in $L^{2}(\Delta)$.  Thus for any $f\in L^2(\mathbb{R})$ such that $\mbox{supp}\widehat{f} \subseteq I, |I|=1$ we will have that
\[
S_{0}(f,x) = \sum_{j\in \IZ} \int_{\IR} f(t) \overline{g_{0,j,m}(t)} dt g_{0,j,m}(x) = \sum_{j\in \IZ} \int_{I} \widehat{f}(t)\overline{\widehat{g}(t)} e^{2\pi ijt} dt\, g_{0,j,m}(x).
\]
Which yields
\[
\widehat{S_{0}(f,\cdot)}(y)  = \sum_{j\in \IZ} \int_{I} \widehat{f}(t)\overline{\widehat{g}(t)} e^{2\pi ijt} dt\, \widehat{g}(y) e^{-2\pi ijy } = \widehat{f}(y) |\widehat{g}(y)|^{2}
\]
if $y\in I$. It should be observed that the last equality holds because $\widehat{f}(\cdot)\overline{\widehat{g}(\cdot)} \in L^{2}(I)$ which is true because of Lemma \ref{lem:1}.
If  for any $k\in \IN$  we put
\[
S_{k}(f,x) = \sum_{j\in \IZ} \int_{\IR} f(t) \overline{g_{k,j,m}(t)} dt g_{k,j,m}(x)
\]
in a same way we obtain that
\begin{equation}\label{eq:1.3}
\widehat{S_{k}(f,\cdot)}(y)  = \widehat{f}(y) |\widehat{g}(m^{-k}y)|^{2} \qquad \mbox{if} \quad y\in I.
\end{equation}

By the orthogonality of the system $\{g_{k,j}: k\in \IN_{0}, j\in \IZ \}$ we have that
\[
S_{k}(f,\cdot)\bot\, S_{k'}(f,\cdot)\qquad \mbox{in}\quad L^2(\mathbb{R})\qquad \mbox{if}\quad  k\neq k'.
\]
Hence,
\[
\int_{\IR} |\sum_{k=0}^{l} S_{k}(f,x)|^{2} dx = \sum_{k=0}^{l} \int_{\IR} |S_{k}(f,x)|^{2} dx = \sum_{k=0}^{l} \int_{\IR} |\widehat{S_{k}(f,\cdot)}(y)|^{2}dy
\]
\[
= \int_{\IR}  \sum_{k=0}^{l}  |\widehat{S_{k}(f,\cdot)}(y)|^{2}dy = \int_{\IR}|\widehat{f}(y)|^{2}\bigg| \sum_{k=0}^{l} |\widehat{g}(m^{-k}y)|^{2}  \bigg|^{2} dy.
\]
By (\ref{eq:1.3}) we have that
\[
\int_{I} |\widehat{f}(y)|^{2} \bigg|\sum_{k=0}^{l}  |\widehat{g}(m^{-k}y)|^{2}  \bigg|^{2} dy = \int_{I}  \sum_{k=0}^{l}  |\widehat{S_{k}(f,\cdot)}(y)|^{2}dy
\]
\[
\leq \int_{\IR}  \sum_{k=0}^{l}  |\widehat{S_{k}(f,\cdot)}(y)|^{2}dy.
\]
By the above relations and Bessel's inequality we obtain that
\[
\int_{I} |\widehat{f}(y)|^{2} \bigg|\sum_{k=0}^{l}  |\widehat{g}(m^{-k}y)|^{2}  \bigg|^{2} dy \leq \int_{\IR} |\sum_{k=0}^{l} S_{k}(f,x)|^{2} dx
\]
\[
\leq \| f \|^{2} = \int_{I} |\widehat{f}(y)|^{2} dy.
\]
The last inequality can be interpreted as follows. Let
\[
\mu_{l}(y) = \sum_{k=0}^{l}  |\widehat{g}(m^{-k}y)|^{2} \qquad \mbox{if}\quad y \in I.
\]
Then for any $l\in \IN$ the multiplicative operator $T_{l}(\phi)(y) = \mu_{l}(y) \phi(y)$ is a bounded operator $L^{2}(I) \rightarrow L^{2}(I)$ with the norm less than or equal to $1$. Which is true if and only if $\mu_{l}(y) \leq 1$.
\end{proof}

By a simple modification of the last part, related with the application of the Bessel inequality and the proof of Theorem \ref{thm:1} we  obtain the following

\begin{Theorem}\label{thm:2} Let $h^{(\nu)}\in L^2(\mathbb{R}), 1\leq \nu \leq \mu$ and $m= 2,3,\dots$. Suppose that the system $\{h^{(\nu)}_{k,j,m}: k\in \IN_{0}, j\in \IZ,  1\leq \nu \leq \mu\}$  is
 orthonormal. Then
 \[
 \sum_{k=0}^{\infty} \sum_{\nu=1}^{\mu} |\widehat{h^{(\nu)}}(m^{-k}x)|^{2} \leq 1.
 \]
 \end{Theorem}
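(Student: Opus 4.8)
The plan is to transcribe the proof of Theorem~\ref{thm:1}, replacing the single factor $|\widehat{g}(m^{-k}y)|^2$ by the sum $\sum_{\nu=1}^{\mu}|\widehat{h^{(\nu)}}(m^{-k}y)|^2$ and applying Bessel's inequality to the enlarged orthonormal system indexed by the three parameters $k,j,\nu$.

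First I would fix an interval $I\subset\IR$ with $|I|=1$ and a function $f\in L^2(\IR)$ with $\supp\widehat{f}\subseteq I$. For each $k\in\IN_0$ and each $\nu$ I set
\[
S_k^{(\nu)}(f,x)=\sum_{j\in\IZ}\int_{\IR}f(t)\overline{h^{(\nu)}_{k,j,m}(t)}\,dt\;h^{(\nu)}_{k,j,m}(x),\qquad S_k(f,x)=\sum_{\nu=1}^{\mu}S_k^{(\nu)}(f,x).
\]
The identity \eqref{eq:1.2} holds verbatim for each $h^{(\nu)}$ in place of $g$, and since each family $\{h^{(\nu)}(\cdot-j)\}_{j\in\IZ}$ is orthonormal, Lemma~\ref{lem:1} yields $|\widehat{h^{(\nu)}}|\le 1$ a.e.; hence every product $\widehat{f}\cdot\overline{\widehat{h^{(\nu)}}(m^{-k}\cdot)}$ belongs to $L^2(I)$, which is what legitimizes the computation leading to \eqref{eq:1.3}. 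Carrying out that computation for each $\nu$ and summing gives, for $y\in I$,
\[
\widehat{S_k(f,\cdot)}(y)=\widehat{f}(y)\sum_{\nu=1}^{\mu}|\widehat{h^{(\nu)}}(m^{-k}y)|^2.
\]

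Next I would use the orthonormality of the whole system. Because $\{h^{(\nu)}_{k,j,m}:0\le k\le l,\ j\in\IZ,\ 1\le\nu\le\mu\}$ is orthonormal, $\sum_{k=0}^{l}S_k(f,\cdot)$ is the orthogonal projection of $f$ onto the span of this finite family, so $\|\sum_{k=0}^{l}S_k(f,\cdot)\|^2\le\|f\|^2$ by Bessel's inequality. Setting $\mu_l(y)=\sum_{k=0}^{l}\sum_{\nu=1}^{\mu}|\widehat{h^{(\nu)}}(m^{-k}y)|^2$ and combining the displayed identity with Plancherel's theorem, I obtain
\[
\int_I|\widehat{f}(y)|^2|\mu_l(y)|^2\,dy\le\int_{\IR}\Big|\sum_{k=0}^{l}S_k(f,x)\Big|^2dx\le\|f\|^2=\int_I|\widehat{f}(y)|^2\,dy.
\]
As $f$ ranges over all functions with $\supp\widehat{f}\subseteq I$, this says the multiplication operator $\phi\mapsto\mu_l\phi$ on $L^2(I)$ has norm at most $1$, so $\mu_l\le 1$ a.e. on $I$. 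Letting $I$ run over all unit intervals gives $\mu_l\le 1$ a.e. on $\IR$, and monotone convergence as $l\to\infty$ yields the asserted inequality.

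The only step requiring genuine care, as the sentence introducing the statement already hints, is the Bessel estimate: one must invoke orthonormality of the full system simultaneously in all three indices $k,j,\nu$, so that distinct level-$k$ blocks $S_k$ are mutually orthogonal and each $S_k$ is itself an orthogonal projection. Once this is in hand the argument is a faithful copy of the proof of Theorem~\ref{thm:1}, and the extra summation over $\nu$ introduces only bookkeeping rather than any new analytic obstacle.
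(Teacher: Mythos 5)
Your proposal is correct and is essentially the paper's own argument: the paper proves Theorem \ref{thm:2} precisely by this modification of the proof of Theorem \ref{thm:1}, replacing $|\widehat{g}(m^{-k}y)|^{2}$ by $\sum_{\nu=1}^{\mu}|\widehat{h^{(\nu)}}(m^{-k}y)|^{2}$ and invoking Bessel's inequality for the orthonormal system in all three indices $k,j,\nu$. Your packaging of the Bessel step as a single orthogonal-projection bound on $\sum_{k=0}^{l}S_{k}(f,\cdot)$ is just a slightly cleaner rephrasing of the paper's level-by-level orthogonality computation, not a different method.
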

We formulate the following corollary for $g\in L^1(\mathbb{R})\bigcap L^2(\mathbb{R})$. In the general case a similar result can be proved using the concept of points of approximate continuity (cf. \cite{CKS:01}, \cite{CKS:02}).

\begin{Corollary}\label{cor:1}
Let $g\in L^1(\mathbb{R})\bigcap L^2(\mathbb{R})$ and $m= 2,3,\dots$. Suppose that the system $\{g_{k,j,m}: k\in \IN_{0}, j\in \IZ \}$  is
 orthonormal. Then the continuous function $\widehat{g}$ vanishes at the origin, $\widehat{g}(0) = 0.$
 \end{Corollary}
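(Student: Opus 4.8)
The plan is to combine the summability bound of Theorem \ref{thm:1} with the continuity of $\widehat{g}$, which is guaranteed by the extra hypothesis $g \in L^1(\mathbb{R})$. Since $g \in L^1(\mathbb{R})$, its Fourier transform $\widehat{g}$ is (uniformly) continuous on $\IR$; in particular $\widehat{g}(0)$ is a well-defined number and $\widehat{g}$ is continuous at the origin. This is the only place where the assumption $g \in L^1(\mathbb{R})$ (beyond $g \in L^2$) enters, and it is precisely what lets us pass from an almost-everywhere statement to a genuine pointwise conclusion at the single point $0$.

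First I would invoke Theorem \ref{thm:1}: under the stated orthonormality hypothesis,
\[
\sum_{k=0}^{\infty} |\widehat{g}(m^{-k}x)|^{2} \leq 1
\]
for almost every $x \in \IR$. Fix any $x_0 \neq 0$ for which this inequality holds; since almost every $x_0$ qualifies, such a point certainly exists. Then the series $\sum_{k\ge 0} |\widehat{g}(m^{-k}x_0)|^{2}$ converges, so its general term tends to $0$; that is, $\widehat{g}(m^{-k}x_0) \to 0$ as $k \to \infty$.

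Finally I would let $k \to \infty$ and use continuity. Since $m \geq 2$, we have $m^{-k}x_0 \to 0$, and by continuity of $\widehat{g}$ at the origin $\widehat{g}(m^{-k}x_0) \to \widehat{g}(0)$. Comparing the two limits yields $\widehat{g}(0) = 0$, as claimed.

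The step I expect to require the most care is the justification for selecting a point $x_0 \neq 0$ at which the inequality of Theorem \ref{thm:1} holds: since that inequality is, strictly speaking, an almost-everywhere statement, one cannot simply substitute $x = 0$ directly (which would otherwise give $\sum_{k\ge 0}|\widehat{g}(0)|^{2} \leq 1$ and hence $\widehat{g}(0)=0$ immediately). Working instead with a generic nonzero $x_0$ and exploiting the continuity of $\widehat{g}$ sidesteps this measure-theoretic subtlety. Alternatively, one could observe that each partial sum $\sum_{k=0}^{l}|\widehat{g}(m^{-k}x)|^{2}$ is continuous, so the a.e. bound propagates to every point and in particular to $x=0$; either route is routine once the continuity of $\widehat{g}$ is in hand.
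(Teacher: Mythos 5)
Your proof is correct and rests on exactly the same two ingredients as the paper's: the bound $\sum_{k=0}^{\infty}|\widehat{g}(m^{-k}x)|^{2}\leq 1$ of Theorem \ref{thm:1} together with the continuity of $\widehat{g}$ coming from $g\in L^1(\mathbb{R})$. The only cosmetic difference is that you argue directly (terms of a convergent series tend to zero, then pass to the limit along $m^{-k}x_0\to 0$), whereas the paper argues by contradiction (if $\widehat{g}(0)\neq 0$, then $|\widehat{g}|$ is bounded below near the origin, making the series diverge); your extra care about the almost-everywhere nature of the inequality is a sound, if minor, refinement of the same argument.
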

\begin{proof}
Let $\widehat{g}(0) \neq 0.$ Without loss in generality we can suppose that $\widehat{g}(0) > 0.$ Which yields that $\widehat{g}(y)$ is greater than $\widehat{g}(0)/2$  in a neighborhood of the origin. The last condition
 contradicts to (\ref{eq:1.1}).
\end{proof}
By Corollary \ref{cor:1} we have that when for $g\in L^1(\mathbb{R})\bigcap L^2(\mathbb{R})$ and $m= 2,3,\dots$  the system $\{g_{k,j,m}: k\in \IN_{0}, j\in \IZ \}$  is orthonormal then the set of nontrivial functions $h$ defined on $\IR$ such that
\[
{(a)_{m}} \qquad \qquad \int_{\IR} g_{k,j,m}(x) h(x) dx = 0 \quad \mbox{for all}\quad k,j\in \IZ.
\]
is not empty. Hence, having in mind that the constant function belongs to $L^{\infty}(\IR)$ we obtain
\begin{Corollary}\label{cor:2}
Let $\{ g^{(\nu)}\}_{\nu =1}^{\mu} \subset L^1(\mathbb{R})\bigcap L^2(\mathbb{R})$ and $m= 2,3,\dots$. If the system $\{g^{(\nu)}_{k,j,m}: k\in \IZ, j\in \IZ, 1\leq \nu \leq \mu \}$  is
 orthonormal then it cannot be complete in $L^{1}(\IR)$.
 \end{Corollary}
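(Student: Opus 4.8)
The plan is to prove incompleteness by exhibiting an explicit nonzero annihilating functional, exploiting the duality $(L^1(\IR))^{*} = L^{\infty}(\IR)$. Recall that a family is complete in a Banach space if and only if no nonzero bounded linear functional vanishes on every member of the family (Hahn--Banach). Hence it suffices to produce a single nontrivial $h\in L^{\infty}(\IR)$ with
\[
\int_{\IR} g^{(\nu)}_{k,j,m}(x)\,h(x)\,dx = 0 \quad\text{for all } k,j\in\IZ,\ 1\leq\nu\leq\mu.
\]
The natural candidate, already signalled by the remark $(a)_{m}$ preceding the statement, is the constant function $h\equiv 1$, which is nontrivial and belongs to $L^{\infty}(\IR)$.

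The second step is to verify that $h\equiv 1$ annihilates the whole system. First I would note that each $g^{(\nu)}_{k,j,m}\in L^1(\IR)$, since dilation and translation preserve integrability, so the pairing against $1\in L^{\infty}(\IR)$ is well defined. Then, by the change of variables $u = m^{k}x-j$,
\[
\int_{\IR} g^{(\nu)}_{k,j,m}(x)\,dx = m^{k/2}\int_{\IR} g^{(\nu)}(m^{k}x-j)\,dx = m^{-k/2}\int_{\IR} g^{(\nu)}(u)\,du = m^{-k/2}\,\widehat{g^{(\nu)}}(0),
\]
the last equality being the definition of the Fourier transform at the origin. It therefore remains only to establish that $\widehat{g^{(\nu)}}(0)=0$ for every $\nu$.

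This last fact is where the work of the section pays off. The full orthonormal system over $k\in\IZ$ contains, as a subsystem, the orthonormal family indexed by $k\in\IN_{0}$, so Theorem~\ref{thm:2} applies and yields $\sum_{k=0}^{\infty}\sum_{\nu=1}^{\mu}|\widehat{g^{(\nu)}}(m^{-k}x)|^{2}\leq 1$. If some $\widehat{g^{(\nu_{0})}}(0)\neq 0$, then, $g^{(\nu_{0})}$ being in $L^1(\IR)$, its transform is continuous and bounded away from $0$ in a neighborhood of the origin; since $m^{-k}x\to 0$ as $k\to\infty$, infinitely many terms of the double sum would exceed a fixed positive constant, forcing divergence and contradicting the bound $\leq 1$. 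This is precisely the argument of Corollary~\ref{cor:1}, now read for each generator. Hence $\widehat{g^{(\nu)}}(0)=0$ for all $\nu$, every integral above vanishes, and $h\equiv 1$ is the required nontrivial annihilating functional.

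I expect no genuine obstacle here: the only delicate point is the vanishing of $\widehat{g^{(\nu)}}$ at the origin, and that is already supplied by Theorem~\ref{thm:2} together with the continuity granted by the $L^1$ hypothesis. The remainder is the routine duality observation that a nonzero annihilating functional in $(L^1)^{*}$ obstructs completeness.
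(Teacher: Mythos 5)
Your proof is correct and follows essentially the same route as the paper: the paper likewise deduces $\widehat{g^{(\nu)}}(0)=0$ from the inequality of Theorem~\ref{thm:2} (via the argument of Corollary~\ref{cor:1}), observes that condition $(a)_m$ then holds for the constant function, and concludes incompleteness because $1\in L^{\infty}(\IR)=(L^{1}(\IR))^{*}$ is a nontrivial annihilating functional. If anything, you are slightly more explicit than the paper in restricting to the subsystem $k\in\IN_{0}$ so that Theorem~\ref{thm:2} applies and in writing out the change of variables, but the substance is identical.
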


\

\section{Preliminary results}\label{sec:3}

\subsection{On ${\mathcal M}$-sets}\label{ss:pr}

Further in this section we will consider that $m\geq 2$ is a fixed natural number.
Let ${\mathcal M} = {\mathcal M}(m): = \{ [\frac{j-1}{m^{k}}, \frac{j}{m^{k}}]: k\in \IZ, j\in \IZ \}$. Further, the parameter $m$ will be  omitted to make the notation understandable.
 We will assume that any $m-$adic rational point $\xi = \frac{j}{m^{k}}, k\in \IZ, j\in \IZ $ is ``split" into two distinct points $\xi_{l}$ and $\xi_{r}$ characterized by the following conditions: for any $-\infty<a< \xi <b <+\infty$ we have
\[
\xi_{l} \in (a, \xi],\quad \xi_{l} \notin [\xi, b),\quad \mbox{and}\quad \xi_{r} \in [\xi, b),\quad \xi_{r} \notin (a, \xi].
\]
Hence, there can be easily established one to one correspondence between any $y\in \IR$ and the sequences $\{\Delta_{j}(y)\}_{j=-\infty}^{\infty}\subset {\mathcal M}$ such that
\[
\Delta_{j+1}(y) \subset \Delta_{j}(y) \quad \text{for all}\quad j\in \IZ\quad \text{and} \quad |\Delta_{j}(y)| = m^{-j}.
\]
When talking about the neighborhoods of the points $\xi_{l}$ and $\xi_{r}$, we will understand some intervals $(a,\xi)$ and $(\xi,b)$, respectively.
The measure of the set of all $m-$adic rational points is equal to zero, hence, this assumption will not affect the results that we are going to consider.
In the last section we need concept of ${\mathcal M}$-neighborhoods of $+\infty$ and $-\infty$. For $j\in \IN_{0}$ we put
\[
\Delta_{j}(+\infty)= \IR^{+}\setminus [0,m^{j}], \quad \text{and}\quad \Delta_{j}(-\infty)= \IR^{-}\setminus [-m^{j},0],
\]
where $ \IR^{+}= [0,+\infty)$ and $\IR^{-} =  (\infty,0]$.

\subsubsection{Maximal function}

Let
\begin{equation}\label{max:f}
M_{\mathcal M}f(x) = \sup_{x\in \Delta, \Delta\in \mathcal M} \frac{1}{|\Delta|}\int_{\Delta} |f(t)| dt, \qquad f\in L^{1}_{\text{loc}}(\IR).
\end{equation}

Let us consider also a maximal function with respect to a weight function $\omega$  defined by the following equation
\begin{equation}\label{max:om}
M_{\mathcal M,\omega}f(x) = \sup_{x\in \Delta, \Delta\in \mathcal M} \frac{1}{\omega(\Delta)}\int_{\Delta} |f(t)|\omega(t) dt, \qquad f\in L^{1}_{\text{loc}}(\IR,\omega),
\end{equation}
where $\omega(\Delta) = \int_{\Delta} \omega (t) dt$.
\begin{prop}\label{pr:om}
Let $\omega(x) \geq 0$ for $x\in \IR$, $\omega \in L^{1}_{\text{loc}}(\IR)$ and let $f\in L^{1}_{\text{loc}}(\IR,\omega)$. Then for any  $\lambda >0$
\begin{equation}\label{max:1.1}
\omega(\{t\in \IR: M_{\mathcal M,\omega}f(t) > \lambda\}) \leq \frac{1}{\lambda}\int_{\IR} |f(t)|\omega(t) dt.
\end{equation}
\end{prop}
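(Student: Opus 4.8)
The plan is to run the classical Calder\'on--Zygmund stopping-time argument, adapted to the measure $d\mu = \omega\,dt$, exploiting the nested structure of $\mathcal{M}$. We may assume $\int_{\IR}|f(t)|\omega(t)\,dt < \infty$, since otherwise (\ref{max:1.1}) is trivial. Fix $\lambda > 0$ and set $E_\lambda = \{t\in\IR: M_{\mathcal{M},\omega}f(t) > \lambda\}$. By the very definition (\ref{max:om}), a point $t$ lies in $E_\lambda$ if and only if it belongs to some $\Delta\in\mathcal{M}$ with $\omega(\Delta) > 0$ and
\[
\int_\Delta|f(s)|\omega(s)\,ds > \lambda\,\omega(\Delta);
\]
call such an interval \emph{good}. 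Thus $E_\lambda$ is exactly the union of the good intervals, and the whole task reduces to estimating the $\omega$-measure of this union.

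The engine of the argument is the key combinatorial property of $\mathcal{M}$: any two intervals $\Delta,\Delta'\in\mathcal{M}$ are either essentially disjoint (sharing at most an endpoint) or one contains the other. First I would select, among the good intervals, those that are maximal with respect to inclusion. By the nesting property two distinct maximal good intervals cannot overlap, so they form a pairwise disjoint family $\{\Delta_i\}$. For each of them the defining inequality gives $\omega(\Delta_i) \le \frac{1}{\lambda}\int_{\Delta_i}|f|\omega$, and summing over the disjoint family yields
\[
\omega\Big(\bigcup_i\Delta_i\Big) = \sum_i\omega(\Delta_i) \le \frac{1}{\lambda}\sum_i\int_{\Delta_i}|f(t)|\omega(t)\,dt \le \frac{1}{\lambda}\int_{\IR}|f(t)|\omega(t)\,dt,
\]
which is the desired bound, provided $\bigcup_i\Delta_i = E_\lambda$.

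The one point that needs care --- and which I expect to be the main obstacle --- is the existence of maximal good intervals, i.e. ruling out (or absorbing) infinite ascending chains of good intervals. Here the finiteness reduction pays off: along an ascending chain $\Delta^{(1)}\subset\Delta^{(2)}\subset\cdots$ of $m$-adic intervals the lengths tend to $\infty$, and by monotone convergence $\int_{\Delta^{(n)}}|f|\omega \to \int_U|f|\omega \le \int_{\IR}|f|\omega < \infty$, where $U = \bigcup_n\Delta^{(n)}$. Because of the specific shape of $\mathcal{M}$, such a union $U$ can only be one of the two half-lines $\IR^{+}$ or $\IR^{-}$: the ascending $m$-adic intervals containing a fixed positive (resp.\ negative) point stabilize to $[0,m^{n}]\uparrow\IR^{+}$ (resp.\ $[-m^{n},0]\uparrow\IR^{-}$). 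Passing to the limit in the goodness inequality gives $\int_U|f|\omega \ge \lambda\,\omega(U)$, hence $\omega(U) \le \frac{1}{\lambda}\int_U|f|\omega$. Since $\IR^{+}$ and $\IR^{-}$ are themselves disjoint, I would simply adjoin $U$ to the family $\{\Delta_i\}$ whenever such a chain occurs; the family stays pairwise disjoint and still covers $E_\lambda$, and the summation above goes through unchanged. Apart from this at-infinity bookkeeping, the argument is the classical weak-type $(1,1)$ estimate, the only special feature of the measure $\omega\,dt$ being that it is used uniformly both in forming the averages and in the final summation.
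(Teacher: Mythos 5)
Your argument is correct, and its skeleton coincides with the paper's proof: cover the level set $\{M_{\mathcal M,\omega}f>\lambda\}$ by the $m$-adic intervals on which the $\omega$-average of $|f|$ exceeds $\lambda$, pass to maximal such intervals, use the nesting property of $\mathcal M$ to make them pairwise disjoint, and sum the defining inequalities. Where you genuinely depart from the paper is at the step you yourself single out as the main obstacle, and there your version is the more careful one. The paper settles the existence question in one line, asserting that among the good intervals containing a given point there is one of maximal $\omega$-measure. That assertion is unjustified and in general false, precisely because goodness bounds $\omega(\Delta)$ rather than $|\Delta|$: if $\omega$ is integrable on $\IR^{+}$ (which the hypotheses permit, since only local integrability is assumed), $f=\chi_{[0,1]}$, and $\lambda$ is small enough, then every $[0,m^{k}]$ is good while the values $\omega([0,m^{k}])$ increase strictly to $\omega(\IR^{+})$, so no good interval of maximal $\omega$-measure exists. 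Your device of absorbing infinite ascending chains into the half-lines $\IR^{+}$, $\IR^{-}$ --- identifying the union of such a chain as a half-line and passing to the limit in the goodness inequality by monotone convergence --- is exactly what closes this gap, and the final summation is unaffected since the two half-lines meet only in an $\omega$-null point. One detail worth making explicit: when a chain exhausts $\IR^{+}$, every good interval contained in $\IR^{+}$ is properly contained in some member of the chain, hence no maximal good interval exists on that side; this is the reason adjoining $U=\IR^{+}$ keeps the family pairwise disjoint, as you claim. In short, both proofs run the same weak-type $(1,1)$ stopping-time argument, but yours is complete at the one point where the paper's is not.
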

\begin{proof}
If $x\in \Omega_{\lambda}(f):=\{t\in \IR: M_{\mathcal M,\omega}f(t) > \lambda\}$ then for some $\Delta \in {\mathcal M}$ such that $x\in \Delta $
\[
\frac{1}{\omega(\Delta)}\int_{\Delta} |f(t)|\omega(t) dt >\lambda.
\]
Observe that among all intervals which have the above properties there exists $\Delta_{x} \in {\mathcal M}$  with  maximal $\omega-$measure. Thus, having in mind that $\mathcal M$ is numerable we can find a sequence of mutually disjoint intervals $\{\Delta_{\nu}\} \subset \mathcal M$ so that $\Omega_{\lambda} = \bigcup_{\nu=1}^{\infty} \Delta_{\nu}$.
\end{proof}

 We also have
\begin{prop}\label{pr:maxp}
Let $\omega (x) \geq 0$ for $x\in \IR$, $\omega \in L^{1}_{\text{loc}}(\IR)$.
Then for any $f\in L^{p}_{\text{loc}}(\IR,\omega), p>1$
\begin{equation}\label{max:1.2}
\int_{\IR} M_{\mathcal M,\omega}f(t)^{p} \omega(t) dt \leq  \frac{2^{p}p}{p-1}\int_{\IR} |f(t)|^{p}\omega(t) dt.
\end{equation}
\end{prop}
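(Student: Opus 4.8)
The plan is to derive the strong-type $(p,p)$ inequality from the weak-type $(1,1)$ bound already established in Proposition \ref{pr:om}, combined with the trivial uniform bound for $M_{\mathcal M,\omega}$, by the classical Marcinkiewicz-style truncation argument. The precise constant $2^{p}p/(p-1)$ in (\ref{max:1.2}) is exactly what this method yields, which is a good sign that this is the intended route. First I would note that we may assume the right-hand side of (\ref{max:1.2}) is finite, since otherwise there is nothing to prove; this also guarantees that the truncations introduced below lie in $L^{1}(\IR,\omega)$, so that Proposition \ref{pr:om} applies to them.

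Two elementary properties of the operator are needed. It is sublinear: for any $\Delta\ni x$ one has $\frac{1}{\omega(\Delta)}\int_{\Delta}|f+g|\omega \le M_{\mathcal M,\omega}f(x)+M_{\mathcal M,\omega}g(x)$, and taking the supremum over $\Delta\ni x$ gives $M_{\mathcal M,\omega}(f+g)\le M_{\mathcal M,\omega}f+M_{\mathcal M,\omega}g$. It also obeys a uniform bound: if $|h|\le c$ a.e., then $\frac{1}{\omega(\Delta)}\int_{\Delta}|h|\omega\le c$ for every $\Delta$, so $M_{\mathcal M,\omega}h\le c$. Now fix $\lambda>0$ and split $f=f_{\lambda}+(f-f_{\lambda})$ with $f_{\lambda}:=f\chi_{\{|f|>\lambda/2\}}$. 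Since $|f-f_{\lambda}|\le\lambda/2$, the uniform bound gives $M_{\mathcal M,\omega}(f-f_{\lambda})\le\lambda/2$, and sublinearity yields $M_{\mathcal M,\omega}f\le M_{\mathcal M,\omega}f_{\lambda}+\lambda/2$. Hence $\{M_{\mathcal M,\omega}f>\lambda\}\subseteq\{M_{\mathcal M,\omega}f_{\lambda}>\lambda/2\}$, and applying Proposition \ref{pr:om} to $f_{\lambda}$ gives
\[
\omega(\{t: M_{\mathcal M,\omega}f(t)>\lambda\}) \le \frac{2}{\lambda}\int_{\{|f|>\lambda/2\}}|f(t)|\omega(t)\,dt.
\]

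Finally I would integrate in $\lambda$ via the distribution-function identity
\[
\int_{\IR}\bigl(M_{\mathcal M,\omega}f\bigr)^{p}\omega\,dt = p\int_{0}^{\infty}\lambda^{p-1}\,\omega(\{t: M_{\mathcal M,\omega}f(t)>\lambda\})\,d\lambda,
\]
insert the bound above, and interchange the order of integration (Tonelli, all integrands being nonnegative). For fixed $t$ the condition $|f(t)|>\lambda/2$ amounts to $\lambda<2|f(t)|$, so the inner integral becomes $\int_{0}^{2|f(t)|}\lambda^{p-2}\,d\lambda=(2|f(t)|)^{p-1}/(p-1)$, which is finite precisely because $p>1$. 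Collecting the constants $2p\cdot 2^{p-1}/(p-1)=2^{p}p/(p-1)$ produces exactly (\ref{max:1.2}).

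I do not anticipate a serious obstacle: once Proposition \ref{pr:om} is available, the argument is routine. The only points meriting care are the sublinearity and uniform-bound properties of the weighted maximal operator, which are what make the truncation at level $\lambda/2$ reduce $\{M_{\mathcal M,\omega}f>\lambda\}$ to $\{M_{\mathcal M,\omega}f_{\lambda}>\lambda/2\}$, and the justification of the Fubini--Tonelli interchange, legitimate here because the double integral is a repeated integral of a nonnegative measurable function and the right-hand side of (\ref{max:1.2}) is assumed finite.
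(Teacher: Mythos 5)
Your proposal is correct and coincides with the paper's own proof: the paper likewise truncates $f$ at level $\lambda/2$, uses $M_{\mathcal M,\omega}f \le M_{\mathcal M,\omega}f_{1} + \lambda/2$ to reduce to the weak-type bound of Proposition \ref{pr:om}, and then integrates the distribution function with Tonelli to obtain the constant $2^{p}p/(p-1)$. Your explicit remarks on sublinearity, the uniform bound, and the integrability of the truncation are details the paper leaves implicit, but the argument is the same.
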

\begin{proof}
Following the proof of the corresponding result for the Lebesgue measure (see \cite{St:70}, p.7) we split $f$ into two parts, $f= f_{1}+ f_{2}$, where  $f_{1} (x) = f(x)$ if $|f(x)|\geq \frac{\lambda}{2}$ and $f_{1} (x) = 0$ otherwise. Then we have $|f(x)| \leq |f_{1}(x)| + \frac{\lambda}{2}$. Hence, $M_{\mathcal M,\omega}f(x) \leq M_{\mathcal M,\omega}f_{1}(x) + \frac{\lambda}{2}$ and $\Omega_{\lambda}(f) \subseteq \Omega_{\frac{\lambda}{2}}(f_{1})$. Thus by Proposition \ref{pr:om} we have that
\begin{equation}\label{eq:3.5}
\omega(\Omega_{\lambda}(f)) \leq \frac{2}{\lambda}\int_{\IR} |f_{1}(t)|\omega(t) dt = \frac{2}{\lambda} \int_{\Omega_{\frac{\lambda}{2}}(f)} |f(t)|\omega(t) dt.
\end{equation}
Afterwards we have to use the following equality for any measurable function $g:\IR\rightarrow \IR$
\[
\int_{\IR} |g(t)|^{p} \omega(t)dt = p\int_{\IR} \int_{[0,|g(t)|\,]} \lambda^{p-1} d\lambda\, \omega(t)dt
\]
\[
= p\int_{0}^{+\infty} \lambda^{p-1}\omega(\{t: |g(t)| >\lambda\}) d\lambda.
\]
Hence, by (\ref{eq:3.5}) we obtain
\[
\int_{\IR} M_{\mathcal M,\omega}f(t)^{p} \omega(t) dt \leq 2p \int_{0}^{+\infty} \lambda^{p-2}\int_{\Omega_{\frac{\lambda}{2}}(f)} |f(t)|\omega(t) dt d\lambda=
\]
\[
2p\int_{\IR} |f(t)|\omega(t) \int_{0}^{2|f(t)|}\lambda^{p-2} d\lambda\,   dt = \frac{2^{p}p}{p-1} \int_{\IR} |f(t)|^{p}\omega(t) dt
\]
\end{proof}

\subsubsection{Calderon-Zygmund decomposition for $m-$adic intervals}

We need a modified version for the Calderon-Zygmund decomposition (see \cite{Tor:86}) for the $m-$adic intervals. Let $f\in L^{1}[0,1]$, $f\geq 0$ and let $\lambda >0$ is such that
\[
\int_{[0,1]} f(t) dt < \lambda.
\]
At the first step we take $m$ intervals $\{I_{k}\}_{k=1}^{m}\subset {\mathcal M}\bigcap [0,1]$ such that $|I_{k}| = \frac{1}{m}, 1\leq k\leq m$ and $\bigcup_{k=1}^{m} I_{k} = [0,1]$. Let $\{I_{k_{l}}\}_{l=1}^{m_{1}}\subset \{I_{k}\}_{k=1}^{m}$ be all those intervals for which $\eta_{I_{k_{l}}} > \lambda, \, 1\leq l\leq m_{1}<m$, where
\[
\frac{1}{|I| }\int_{I} f(t) dt:= \eta_{I}.
\]
Those intervals are renamed $G_{1}, \ldots, G_{m_{1}}$. Clearly,
\begin{equation}\label{eq:cz1}
 \lambda < \frac{1}{|G_{l}| }\int_{G_{l}} f(t) dt \leq m\lambda.
\end{equation}
If $\eta_{I_{k}} \leq \lambda $ for all $1\leq k \leq m$ then we put $m_{1}=0$.
On the next step we repeat the same procedure on any of those intervals that were not renamed.  The collection of all $m-$adic intervals which are separated on the second step are renamed $G_{m_{1}+1}, \ldots, G_{m_{2}}$. For those intervals the condition (\ref{eq:cz1}) holds again. On the $\nu$th step all $m-$adic intervals which are separated are renamed $G_{m_{\nu -1}+1}, \ldots, G_{m_{\nu }}$. If no any interval is separated then we put $m_{\nu +1} = m_{\nu}$. This procedure produces a collection of disjoint $m-$adic intervals $\{G_{l}\}$ for which  the condition (\ref{eq:cz1}) holds and
\begin{equation}\label{eq:cz2}
|\Omega|= \sum_{l} |G_{l}| < \frac{1}{\lambda }\sum_{l} \int_{G_{l}} f(t) dt \leq \frac{1}{\lambda } \int_{[0,1]} f(t) dt,
\end{equation}
where $\Omega = \bigcup_{l} G_{l}$ and for any $I \in {\mathcal M}\bigcap [0,1]$, $I\subset \Omega^{c}:= [0,1]\setminus \Omega$ we have that $\eta_{I} \leq \lambda.$
The collection $\{G_{l}\}$ will be called Calderon-Zygmund $m-$adic decomposition at level $\lambda$. Let
\begin{equation}\label{eq:cz3}
 g(x) = f(x)\chi_{\Omega^{c}} (x) +  \sum_{l} \eta_{G_{l}} \chi_{G_{l}} (x) \quad \text{and}\quad b(x) = f(x) - g(x).
\end{equation}
We skip the details  of the proof of the following
\begin{prop}\label{pr:cz}
Let $f\in L^{1}[0,1]$, and let  $\lambda >0$ be such that
$
\int_{[0,1]} |f(t)| dt < \lambda.
$
Then there exists a family of disjoint sets $\{G_{l}\}_{l\in \Upsilon}\subset {\mathcal M}$ such that
\begin{equation}\label{eq:cz4}
  |f(x)| \leq \lambda \quad \text{a.e. on}\quad  \Omega^{c},   \quad \text{where}\quad \Omega = \bigcup_{l\in \Upsilon} G_{l},
\end{equation}
(\ref{eq:cz2}) is true and for any $l\in \Upsilon$ holds (\ref{eq:cz1}). Moreover, $f(x) = g(x) + b(x),$ where  $g$ is defined by (\ref{eq:cz3}) and the following conditions hold:
\begin{equation}\label{eq:cz5}
  |g(x)| \leq m\lambda \quad \text{a.e. on}\quad  [0,1];
\end{equation}
\begin{equation}\label{eq:cz6}
  \|g(x)\|^{p}_{p} \leq (m\lambda)^{p-1} \| f\|_{1} \quad \text{for all}\quad  1\leq p <\infty;
\end{equation}
\begin{equation}\label{eq:cz7}
  \int_{G_{l}} b(t) dt = 0  \quad \text{for all }\quad  l\in \Upsilon.
\end{equation}
\end{prop}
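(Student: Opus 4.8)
The plan is to run the standard Calder\'on--Zygmund stopping-time selection, but carried out on $|f|$ rather than $f$, since the hypothesis is $\int_{[0,1]}|f(t)|\,dt<\lambda$ and the conclusion (\ref{eq:cz4}) controls $|f|$. For an $m$-adic interval $I\in{\mathcal M}\cap[0,1]$ write $\eta_I=\frac{1}{|I|}\int_I|f(t)|\,dt$. Because $\eta_{[0,1]}=\int_{[0,1]}|f|<\lambda$, the top interval is never selected, so the descent described before the statement is well defined: at each step one subdivides an as-yet-unselected interval into its $m$ children and selects those children whose $\eta$ exceeds $\lambda$. I would first record that the selected intervals $\{G_l\}_{l\in\Upsilon}$ are exactly the maximal $m$-adic subintervals of $[0,1]$ with $\eta_{G_l}>\lambda$; they are therefore pairwise disjoint and at most countable, and (\ref{eq:cz2}) follows at once by summing the inequalities $|G_l|<\frac{1}{\lambda}\int_{G_l}|f|$ and using disjointness. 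For (\ref{eq:cz1}) I would combine the selection inequality $\eta_{G_l}>\lambda$ with maximality: the parent $\widetilde{G_l}$ of $G_l$ satisfies $\eta_{\widetilde{G_l}}\le\lambda$, and since $|\widetilde{G_l}|=m|G_l|$ this gives $\frac{1}{|G_l|}\int_{G_l}|f|\le m\,\eta_{\widetilde{G_l}}\le m\lambda$ (with $|f|$ in place of $f$, which is what is meant once one passes to signed or complex $f$).

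Next I would establish (\ref{eq:cz4}). The key analytic input is the Lebesgue differentiation theorem along the nested $m$-adic intervals $\{\Delta_j(x)\}$ of Subsection \ref{ss:pr}: for a.e.\ $x$ one has $\eta_{\Delta_j(x)}\to|f(x)|$ as $j\to\infty$. If $x\in\Omega^c$ then no $\Delta_j(x)\subseteq[0,1]$ can have $\eta$ exceeding $\lambda$, for otherwise the largest interval in the (totally ordered) chain $\Delta_0(x)\supset\Delta_1(x)\supset\cdots$ with $\eta>\lambda$ would be a maximal interval with $\eta>\lambda$ (its parent having $\eta\le\lambda$), hence selected, forcing $x\in\Omega$. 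Thus $\eta_{\Delta_j(x)}\le\lambda$ for all large $j$, and letting $j\to\infty$ yields $|f(x)|\le\lambda$ a.e.\ on $\Omega^c$. The $m$-adic rational points form a null set by the convention in Subsection \ref{ss:pr}, so the splitting of points is harmless here.

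Finally I would define $g$ and $b$ by (\ref{eq:cz3}), taking $\eta_{G_l}=\frac{1}{|G_l|}\int_{G_l}f$ to be the average of $f$ itself, and verify the three remaining properties by direct computation. Property (\ref{eq:cz5}) splits into two regions: on $\Omega^c$ we have $|g|=|f|\le\lambda\le m\lambda$ by (\ref{eq:cz4}), while on each $G_l$ we have $|g|=|\eta_{G_l}|\le\frac{1}{|G_l|}\int_{G_l}|f|\le m\lambda$ by (\ref{eq:cz1}). For (\ref{eq:cz6}) I would write $|g|^p\le(m\lambda)^{p-1}|g|$ from (\ref{eq:cz5}) and integrate, after noting $\int_{[0,1]}|g|\le\|f\|_1$ (equality of $|g|$ and $|f|$ on $\Omega^c$, and on each $G_l$ the estimate $\int_{G_l}|g|=|\int_{G_l}f|\le\int_{G_l}|f|$). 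Property (\ref{eq:cz7}) is immediate from the choice of $\eta_{G_l}$, since $\int_{G_l}b=\int_{G_l}f-\eta_{G_l}|G_l|=0$.

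The routine bookkeeping---disjointness, countability, and the two-region integral estimates---is unproblematic; the one genuine ingredient is the $m$-adic differentiation (martingale convergence) theorem used for (\ref{eq:cz4}). The only point that requires care is to keep the \emph{selection} based on $|f|$ while building $g$ from the signed (or complex) averages of $f$, so that the cancellation (\ref{eq:cz7}) and the size bound (\ref{eq:cz5}) hold simultaneously.
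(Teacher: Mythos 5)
Your proof is correct and follows essentially the same route as the paper: the paper sets up exactly this stopping-time selection on $m$-adic intervals before the statement and explicitly skips the details, which you have filled in correctly (maximality of the selected intervals, $m$-adic Lebesgue differentiation for (\ref{eq:cz4}), and the direct two-region computations for (\ref{eq:cz5})--(\ref{eq:cz7})). Your care in running the selection on $|f|$ while defining $g$ via the signed averages of $f$, so that (\ref{eq:cz1}), (\ref{eq:cz5}) and the cancellation (\ref{eq:cz7}) hold simultaneously, is precisely the right reading of the paper's construction, which is written out only for $f\ge 0$.
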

 For dyadic intervals a similar result was obtained by C. Watari \cite{Wa:64}.

\subsubsection{Classes of ${\mathcal M}_{p}, p\geq 1$ weights}

\begin{Definition}
We say that a non negative locally integrable function $\omega$ satisfies the condition ${\mathcal M}_{p}$, $p\geq 1$ if
\begin{equation}\label{M:p}
\omega(\Delta) \bigg[\int_{\Delta} \omega^{-\frac{1}{p-1}}(t) dt\bigg]^{p-1} \leq C_{p} |\Delta|^{p} \qquad \forall \Delta \in {\mathcal M},
\end{equation}
where   $C_{p}>0$ is independent of $\Delta\in {\mathcal M}$. For $p=1$ it is understood that
$\bigg[\int_{\Delta} \omega^{-\frac{1}{p-1}}(t) dt\bigg]^{p-1}:= \|\omega^{-1} \|_{L^{\infty}(\Delta)}$.
\end{Definition}
We say that $\omega$ satisfies the condition ${\mathcal M}_{p}(G)$, where $G\subset \IR$ if (\ref{M:p}) holds for all $\Delta \in {\mathcal M}\bigcap G$. The reader should observe that the conditions ${\mathcal M}_{p}([0,1])$ and ${\mathcal M}_{p}((0,1])$ are distinct. In the second case the intervals $[0,2^{-j}], j\in \IN$ should be excluded when one checks the inequality (\ref{M:p}).

The following lemma is obvious.
\begin{Lemma}\label{ap:mpprima}
Let $\omega$ satisfies the condition $\mathcal M_{p}, p>1$ then $\psi = \omega^{-\frac{1}{p-1}}$  satisfies the condition $\mathcal M_{p'}$, where $\frac{1}{p} + \frac{1}{p'} = 1$.
\end{Lemma}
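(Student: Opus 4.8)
The plan is to verify the defining inequality directly by a change of variables, exploiting the symmetry between the roles of $p$ and $p'$. Set $\psi = \omega^{-1/(p-1)}$. The claim is that $\psi$ satisfies $\mathcal{M}_{p'}$, i.e. that there is a constant $C_{p'}$ with
\[
\psi(\Delta)\bigg[\int_{\Delta}\psi^{-1/(p'-1)}(t)\,dt\bigg]^{p'-1}\leq C_{p'}|\Delta|^{p'}\qquad\forall\,\Delta\in\mathcal{M}.
\]
First I would compute the two exponents that appear. Since $\frac{1}{p}+\frac{1}{p'}=1$, one has $p'-1 = \frac{1}{p-1}$, equivalently $\frac{1}{p'-1}=p-1$. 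Substituting these into the expression for $\psi$, the factor $\psi^{-1/(p'-1)} = \big(\omega^{-1/(p-1)}\big)^{-(p-1)} = \omega$, so the bracketed integral is just $\omega(\Delta)=\int_{\Delta}\omega(t)\,dt$. Likewise $\psi(\Delta)=\int_{\Delta}\omega^{-1/(p-1)}(t)\,dt$. Hence the quantity to be bounded equals
\[
\bigg[\int_{\Delta}\omega^{-1/(p-1)}(t)\,dt\bigg]\,\omega(\Delta)^{\,p'-1}.
\]

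The key step is then to recognize that this is, up to an exponent, exactly the $\mathcal{M}_p$ quantity for $\omega$ raised to the power $p'-1 = \frac{1}{p-1}$. Raising the hypothesis (\ref{M:p}) for $\omega$ to the power $\frac{1}{p-1}$ gives
\[
\omega(\Delta)^{\frac{1}{p-1}}\bigg[\int_{\Delta}\omega^{-1/(p-1)}(t)\,dt\bigg] \leq C_p^{\frac{1}{p-1}}\,|\Delta|^{\frac{p}{p-1}}.
\]
Since $\frac{1}{p-1}=p'-1$ and $\frac{p}{p-1}=p'$, the left-hand side is precisely the expression computed above, and the right-hand side is $C_p^{p'-1}|\Delta|^{p'}$. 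Therefore $\psi$ satisfies $\mathcal{M}_{p'}$ with $C_{p'}=C_p^{p'-1}$, and the inequality holds for every $\Delta\in\mathcal{M}$ because the original one does.

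I do not anticipate a genuine obstacle here, which is why the paper calls the lemma obvious; the only thing requiring care is the bookkeeping of the conjugate exponents, specifically the identities $p'-1=\frac{1}{p-1}$ and $\frac{p}{p-1}=p'$, together with keeping track that the weight $\psi$ is locally integrable and nonnegative (which is immediate since $\omega^{-1/(p-1)}\geq 0$, and its local integrability is guaranteed by the $\mathcal{M}_p$ condition itself, the bracketed integral being finite on each $\Delta$). One could present the argument equally well by appealing to the symmetry of the $\mathcal{M}_p$ definition under the involution $\omega\mapsto\omega^{-1/(p-1)}$, $p\mapsto p'$, but the direct exponent computation above is the cleanest route.
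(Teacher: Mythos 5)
Your proof is correct and is exactly the intended argument: the paper offers no proof (it declares the lemma obvious), and your direct verification via the conjugate-exponent identities $p'-1=\frac{1}{p-1}$, $\frac{p}{p-1}=p'$, raising the $\mathcal{M}_p$ inequality to the power $\frac{1}{p-1}$, is the computation the paper leaves to the reader, with the right constant $C_{p'}=C_p^{p'-1}$.
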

We follow the ideas given in \cite{CF:01} to prove the following result.
\begin{prop}\label{pr:3}
Let $\omega (x) \geq 0$ for $x\in \IR$, $\omega \in L^{1}_{\text{loc}}(\IR)$.
Then for any $f\in L^{p}_{\text{loc}}(\IR,\omega), p>1$
\begin{equation}\label{max:weightp}
\int_{\IR} M_{\mathcal M}f(t)^{p} \omega(t) dt \leq  B_{p}\int_{\IR} |f(t)|^{p}\omega(t) dt,
\end{equation}
for some $B_{p}>0$ independent of $f$ if and only if $\omega$ satisfies the condition ${\mathcal M}_{p}$.
\end{prop}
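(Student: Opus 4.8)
The plan is to prove both implications of this weighted maximal inequality, which is the $m$-adic analogue of the classical Muckenhoupt $A_p$ theorem for $M_{\mathcal{M}}$. The two directions are of very different character, so I would treat them separately.

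\medskip

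\textbf{Necessity of ${\mathcal M}_p$.} First I would assume (\ref{max:weightp}) holds and derive (\ref{M:p}). The standard trick is to test the inequality on a well-chosen family of functions localized to a single interval $\Delta \in \mathcal{M}$. Fix $\Delta$ and set $f = \omega^{-\frac{1}{p-1}} \chi_\Delta$. For any point $x \in \Delta$, using $\Delta$ itself as the competitor in the supremum defining $M_{\mathcal{M}}f$ gives the pointwise lower bound
\[
M_{\mathcal{M}} f(x) \ge \frac{1}{|\Delta|} \int_\Delta \omega^{-\frac{1}{p-1}}(t)\, dt \qquad \text{for } x \in \Delta.
\]
Raising to the $p$-th power, integrating against $\omega$ over $\Delta$ only, and applying the hypothesis (\ref{max:weightp}) to this $f$, the right-hand side becomes $B_p \int_\Delta \omega^{-\frac{p}{p-1}} \omega\, dt = B_p \int_\Delta \omega^{-\frac{1}{p-1}}\, dt$. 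Rearranging the resulting inequality
\[
\Big( \tfrac{1}{|\Delta|} \int_\Delta \omega^{-\frac{1}{p-1}} \Big)^{p} \omega(\Delta) \le B_p \int_\Delta \omega^{-\frac{1}{p-1}}
\]
and cancelling one factor of $\int_\Delta \omega^{-\frac{1}{p-1}}$ yields exactly (\ref{M:p}) with $C_p = B_p$. A minor technical point to address is the case where $\int_\Delta \omega^{-1/(p-1)}$ is zero or infinite, which I would handle by a truncation $\omega^{-1/(p-1)} \wedge N$ and a limiting argument.

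\medskip

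\textbf{Sufficiency of ${\mathcal M}_p$.} This is the harder and more substantial direction, and I expect the main obstacle to lie here. The natural route, following \cite{CF:01}, is to fix $\Delta \in \mathcal{M}$ and estimate the local average of $|f|$ by a weighted average controlled through $\mathcal{M}_p$. Writing $\psi = \omega^{-\frac{1}{p-1}}$ and applying H\"older's inequality with exponents $p, p'$ to the measure $\psi\, dt$,
\[
\frac{1}{|\Delta|} \int_\Delta |f|\, dt
= \frac{1}{|\Delta|} \int_\Delta |f| \omega^{1/p} \psi^{1/p'} \cdot \omega^{-1/p}\psi^{-1/p'}\cdot\psi \,dt,
\]
one arranges the factors so that the $\mathcal{M}_p$ condition converts $|\Delta|$ into the weighted quantities. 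This shows that $M_{\mathcal{M}}f(x)$ is dominated pointwise by a constant times $\big(M_{\mathcal{M},\psi}(|f|^p \omega \psi^{-1})(x)\big)^{1/p}$ or a closely related weighted maximal function built from the measure $\psi\,dt$. I would then invoke Lemma \ref{ap:mpprima}, which guarantees $\psi$ satisfies $\mathcal{M}_{p'}$, together with the already-established strong-type bound for the weighted maximal operator in Proposition \ref{pr:maxp} applied to the measure $\psi\,dt$. Combining these reduces (\ref{max:weightp}) to the weighted $L^p$-boundedness of $M_{\mathcal{M},\psi}$, which Proposition \ref{pr:maxp} supplies with an explicit constant. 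The delicate part is bookkeeping the exponents in the H\"older step so that the powers of $\omega$ and $\psi$ recombine correctly and the $\mathcal{M}_p$ constant $C_p$ enters as claimed; a sign error in the exponent arithmetic is the easiest place to go wrong, so I would carry the exponents symbolically and verify at the end that they sum to the required totals.

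\medskip

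The overall structure thus pivots on the $m$-adic Calder\'on--Zygmund and weighted-maximal machinery already assembled in this section: necessity is a one-function testing argument, while sufficiency reduces, via a single carefully balanced H\"older estimate and the duality $\omega \leftrightarrow \psi$ of Lemma \ref{ap:mpprima}, to the unweighted-in-$\psi$ maximal bound of Proposition \ref{pr:maxp}.
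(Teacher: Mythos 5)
Your necessity argument is exactly the paper's: test (\ref{max:weightp}) on $f=\omega^{-\frac{1}{p-1}}\chi_{\Delta}$ and use $\Delta$ itself as competitor in the supremum defining $M_{\mathcal M}f$; that part is fine. The gap is in the sufficiency direction, and it is not a bookkeeping issue that careful exponent-tracking can repair --- the scheme you describe cannot close. Write $\psi=\omega^{-\frac{1}{p-1}}$. A single application of H\"older at exponent $p$, combined with ${\mathcal M}_{p}$, gives
\begin{equation*}
\frac{1}{|\Delta|}\int_{\Delta}|f|\,dt\;\leq\;C_{p}^{1/p}\Bigl(\frac{1}{\omega(\Delta)}\int_{\Delta}|f|^{p}\omega\,dt\Bigr)^{1/p},
\end{equation*}
i.e.\ $M_{\mathcal M}f\leq C_{p}^{1/p}\bigl(M_{{\mathcal M},\omega}(|f|^{p})\bigr)^{1/p}$: the dominating maximal function is built from the measure $\omega\,dt$, not $\psi\,dt$. (Your variant with $M_{{\mathcal M},\psi}(|f|^{p}\omega\psi^{-1})$ would additionally require $\psi(\Delta)\leq C|\Delta|$ for all $\Delta\in{\mathcal M}$, which does not follow from ${\mathcal M}_{p}$; already $\omega(x)=|x|^{r}$ with $0<r<p-1$ violates it on intervals shrinking to the origin.) Raising the correct pointwise bound to the $p$-th power and integrating against $\omega$ leaves you needing $\int_{\IR}M_{{\mathcal M},\omega}(|f|^{p})\,\omega\,dt\leq C\int_{\IR}|f|^{p}\omega\,dt$, i.e.\ a \emph{strong} $(1,1)$ bound for $M_{{\mathcal M},\omega}$. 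That is false: Proposition \ref{pr:om} gives only the weak $(1,1)$ estimate, Proposition \ref{pr:maxp} only applies to exponents strictly greater than $1$, and even for $\omega\equiv 1$ the $m$-adic maximal operator is unbounded on $L^{1}$. Invoking Lemma \ref{ap:mpprima} does not rescue this: Proposition \ref{pr:maxp} holds for \emph{every} weight, so knowing $\psi\in{\mathcal M}_{p'}$ adds nothing to your chain of estimates.

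The ingredient you are missing is precisely the one the paper flags: Lemma \ref{ap:eps}, the open-endedness property ${\mathcal M}_{p}\Rightarrow{\mathcal M}_{p-\varepsilon}$. With it, one runs your H\"older step at exponent $p-\varepsilon$ instead of $p$, obtaining $M_{\mathcal M}f\leq C\bigl(M_{{\mathcal M},\omega}(|f|^{p-\varepsilon})\bigr)^{1/(p-\varepsilon)}$; then
\begin{equation*}
\int_{\IR}\bigl(M_{\mathcal M}f\bigr)^{p}\omega\,dt\;\leq\;C\int_{\IR}\bigl(M_{{\mathcal M},\omega}(|f|^{p-\varepsilon})\bigr)^{\frac{p}{p-\varepsilon}}\omega\,dt,
\end{equation*}
and since $p/(p-\varepsilon)>1$, Proposition \ref{pr:maxp} (applied with the weight $\omega$ and exponent $p/(p-\varepsilon)$) closes the estimate. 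This is the Coifman--Fefferman argument the paper refers to, with the weak $(1,1)$ bound of Proposition \ref{pr:om} underlying Proposition \ref{pr:maxp}; the strict room $p/(p-\varepsilon)>1$ created by Lemma \ref{ap:eps} is exactly what your proposal lacks.
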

\begin{proof} Suppose that (\ref{max:weightp}) is true. For any $\Delta \in {\mathcal M}$ we have by (\ref{max:f}) that
\[
      \frac{1}{|\Delta|}\int_{\Delta} |f(t)| dt \chi_{\Delta}(x) \leq    M_{\mathcal M}f(x).
\]
Hence, by (\ref{max:weightp}) we have that
\[
 \bigg( \frac{1}{|\Delta|}\int_{\Delta} |f(t)| dt\bigg)^{p} \omega(\Delta) \leq B_{p} \int_{\IR} |f(t)|^{p}\omega(t) dt.
\]
Letting $f(t) = \omega^{-\frac{1}{p-1}}(t) \chi_{\Delta}(t)$ we obtain (\ref{M:p}) with $C_{p} = B_{p}.$
To prove the opposite assertion one has to use Proposition \ref{pr:om} and the following
\begin{Lemma}\label{ap:eps}
Let $\omega$ satisfies the condition $\mathcal M_{p}, p>1$ then there exists $\varepsilon >0$ such that
$\omega$ satisfies the condition $\mathcal M_{p-\varepsilon}$.
\end{Lemma}
We skip the rest of the proof because the proof in \cite{CF:01} works with small changes.
\end{proof}

\begin{Definition}\label{def:ainf}
We say that a non negative locally integrable function $\omega$ satisfies the condition ${\mathcal M}_{\infty}$ if
there exists $C>0$ and $\delta >0$ such that for any $\Delta \in {\mathcal M} $ and any measurable subset $E\subseteq \Delta$
\begin{equation}\label{M:infty}
\frac{\omega(E)}{\omega(\Delta)}  \leq C \bigg(\frac{|E|}{|\Delta|}\bigg)^{\delta}.
\end{equation}
\end{Definition}
We skip the detailed proofs of the following two lemmas because the corresponding proofs in \cite{CF:01} for $A_{p}$ weights work with obvious changes.
\begin{Lemma}\label{lem:RH}
Let $\omega$ satisfies the condition $\mathcal M_{p}, p>1$ then there exist $r >0$ and $C>0$ such that
\begin{equation}\label{eq:RH}
\bigg(\frac{1}{|\Delta|}\int_{\Delta} \omega(x)^{1+r}\bigg)^{\frac{1}{1+r}}  \leq C \,\frac{\omega(\Delta)}{|\Delta|} \qquad \forall \Delta \in {\mathcal M}.
\end{equation}
\end{Lemma}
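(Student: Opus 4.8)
The plan is to deduce (\ref{eq:RH}) from the power-decay form of the $\mathcal M_\infty$ condition, which I first extract from $\mathcal M_p$, and then to finish by a layer-cake computation. Since (\ref{M:p}) is invariant both under the $m$-adic affine maps carrying $[0,1]$ onto an arbitrary $\Delta\in\mathcal M$ and under the scaling $\omega\mapsto c\omega$, I may fix $\Delta$ and normalize so that $|\Delta|=1$ and $\omega(\Delta)=1$; the claim then reduces to the single bound $\int_\Delta\omega^{1+r}\le C$ with $C,r$ depending only on $m,p$ and the constant $C_p$ of (\ref{M:p}).

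First I would show that $\mathcal M_p$ implies the power condition (\ref{M:infty}) of Definition \ref{def:ainf}. For $E\subseteq\Delta$, H\"older's inequality with exponents $p,p'$ gives $|E|=\int_E\omega^{1/p}\omega^{-1/p}\le\omega(E)^{1/p}\big(\int_\Delta\omega^{-1/(p-1)}\big)^{1/p'}$, and inserting (\ref{M:p}) yields $|E|\le C_p^{1/p}\,\omega(E)^{1/p}\,|\Delta|\,\omega(\Delta)^{-1/p}$, that is $\omega(E)/\omega(\Delta)\ge C_p^{-1}(|E|/|\Delta|)^p$. Applying this with $E$ replaced by $\Delta\setminus F$ converts it into an $(\alpha,\beta)$ statement: there is $\beta=1-C_p^{-1}2^{-p}\in(0,1)$ with $|F|\le\tfrac12|\Delta|\Rightarrow\omega(F)\le\beta\,\omega(\Delta)$. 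The main obstacle is to upgrade this to the power bound (\ref{M:infty}); I would do this by a stopping-time iteration built on the $m$-adic Calder\'on--Zygmund decomposition of Proposition \ref{pr:cz}: applying that decomposition to $\chi_F$ inside $\Delta$ produces disjoint $m$-adic intervals covering $F$ on each of which the density of $F$ is controlled, and iterating the $(\alpha,\beta)$ estimate across $k$ generations gives $\omega(F)\le\beta^{\,k}\omega(\Delta)$ whenever $|F|\le 2^{-k}|\Delta|$, which is (\ref{M:infty}) with $\delta=\log(1/\beta)/\log 2$.

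With (\ref{M:infty}) in hand and the normalization $|\Delta|=\omega(\Delta)=1$, it reads $\omega(E)\le C|E|^\delta$ for every $E\subseteq\Delta$, and I may assume $0<\delta<1$. Writing $E_\lambda=\{x\in\Delta:\omega(x)>\lambda\}$, the trivial bound $\lambda|E_\lambda|\le\omega(E_\lambda)\le C|E_\lambda|^\delta$ gives $|E_\lambda|\le(C/\lambda)^{1/(1-\delta)}$, and feeding this back into the power condition gives $\omega(E_\lambda)\le C'\lambda^{-\delta/(1-\delta)}$. The layer-cake identity $\int_\Delta\omega^{1+r}=\int_{\{\omega\le1\}}\omega^{1+r}+\omega(E_1)+r\int_1^\infty\lambda^{r-1}\omega(E_\lambda)\,d\lambda$ now bounds the first two terms by $1+C$, while the last is at most $rC'\int_1^\infty\lambda^{\,r-1-\delta/(1-\delta)}\,d\lambda$, which converges precisely when $0<r<\delta/(1-\delta)$ and is then a constant depending only on $r,\delta,C'$. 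Undoing the normalization turns $\int_\Delta\omega^{1+r}\le C''$ into exactly (\ref{eq:RH}).

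I expect the genuinely delicate point to be the iteration of the second paragraph---choosing the stopping level and checking that the $(\alpha,\beta)$ inequality propagates through the $m$-adic generations---rather than the final integral estimate, which is routine. As a cross-check, there is a shorter route that bypasses the power condition: by Lemma \ref{ap:mpprima} the dual weight $\omega^{-1/(p-1)}$ lies in $\mathcal M_{p'}$, and the self-improvement Lemma \ref{ap:eps} applied to it is, after undoing the duality, equivalent to a reverse H\"older inequality for $\omega$. I would nonetheless favor the layer-cake argument above, since it is self-contained given Proposition \ref{pr:cz} and produces the explicit range $0<r<\delta/(1-\delta)$ for the exponent.
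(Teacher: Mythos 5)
Your argument is correct in substance, but it is organized in the opposite order from the paper's intended proof (the paper gives no details, deferring to Coifman--Fefferman \cite{CF:01}). There, the reverse H\"older inequality (\ref{eq:RH}) is proved \emph{first}: one runs a Calder\'on--Zygmund decomposition of $\omega$ itself on $\Delta$ at geometrically increasing heights, uses the $(\alpha,\beta)$ consequence of (\ref{M:p}) to get geometric decay $\omega(\Omega_{k+1})\le\beta\,\omega(\Omega_k)$ of the nested level sets, and sums the layers $\int_{\Omega_k\setminus\Omega_{k+1}}\omega^{1+r}$; the power condition (\ref{M:infty}), i.e.\ the paper's Lemma \ref{lem:apinfty}, is then deduced as a corollary of (\ref{eq:RH}). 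You reverse this: you prove the power condition first, by a stopping-time iteration on set densities, and then obtain (\ref{eq:RH}) from it by the distribution-function computation. Both routes rest on the same two ingredients --- H\"older plus (\ref{M:p}) giving the $(\alpha,\beta)$ statement, and an $m$-adic Calder\'on--Zygmund iteration --- so they are close cousins; yours proves Lemma \ref{lem:apinfty} along the way and keeps the weight-side estimate elementary, while the Coifman--Fefferman route is somewhat shorter because the single iteration on $\omega$ produces the reverse H\"older integral directly.

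Two points need attention. First, the quantitative claim in your iteration, ``$|F|\le 2^{-k}|\Delta|\Rightarrow\omega(F)\le\beta^{k}\omega(\Delta)$'' with $\delta=\log(1/\beta)/\log 2$, is not what the argument yields: a stopped interval produced by Proposition \ref{pr:cz} at level $\lambda$ carries density of $F$ only up to $m\lambda$ (see (\ref{eq:cz1})), so to keep the density below $\tfrac12$ the thresholds must drop by a factor $2m$ per generation; moreover, at generation $j$ the $(\alpha,\beta)$ estimate must be applied, inside each stopped interval of generation $j$, to the union $\Omega^{(j-1)}$ of the previous generation's stopped intervals rather than to $F$ itself (the intermediate sets in a naive chain are not $m$-adic intervals, which is exactly why the stopping families are needed). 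The correct outcome has the form $|F|\le(2m)^{-k}|\Delta|\Rightarrow\omega(F)\le\beta^{k-1}\omega(\Delta)$, i.e.\ $\delta=\log(1/\beta)/\log(2m)$; since the lemma asks only for some $r>0$, this affects nothing beyond constants, but the write-up should reflect it. Second, your proposed ``shorter route'' through Lemmas \ref{ap:mpprima} and \ref{ap:eps} should be discarded rather than merely deprioritized: in \cite{CF:01} the self-improvement $\mathcal M_{p}\Rightarrow\mathcal M_{p-\varepsilon}$ is itself obtained from the reverse H\"older inequality applied to the dual weight, so in the present development that route is circular. Your main argument, which uses only Proposition \ref{pr:cz} and H\"older's inequality, is free of this defect.
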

\begin{Lemma}\label{lem:apinfty}
Let $\omega$ satisfies the condition $\mathcal M_{p}$ for some  $p>1$ then $\omega$ satisfies the condition $\mathcal M_{\infty}$.
\end{Lemma}
We need also the following
\begin{Lemma}\label{lem:fff}
Let $\omega$ satisfies the condition $\mathcal M_{p}$ for some  $p>1$ then
\[
\omega \notin L(\Delta_{j}(+\infty)), \qquad \omega \notin L(\Delta_{j}(-\infty)), \qquad \forall j\in \IN_{0}.
\]
\end{Lemma}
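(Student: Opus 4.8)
The plan is to deduce everything from the $\mathcal{M}_{\infty}$ property established in Lemma \ref{lem:apinfty}, by iterating it along the nested family of large $m$-adic intervals $[0,m^{n}]$, $n\in\IN_{0}$. Note first that each $[0,m^{n}]$ indeed belongs to $\mathcal{M}$: taking $k=-n$ and $j=1$ in the definition of $\mathcal{M}$ gives $[\tfrac{0}{m^{-n}},\tfrac{1}{m^{-n}}]=[0,m^{n}]$, and likewise $[-m^{n},0]\in\mathcal{M}$. Since $\Delta_{j}(+\infty)=(m^{j},+\infty)$ and $\omega\in L^{1}_{\text{loc}}$, proving $\omega\notin L(\Delta_{j}(+\infty))$ for every $j$ amounts to showing $\int_{0}^{\infty}\omega=+\infty$, i.e. $\omega([0,m^{n}])\to\infty$; the case of $-\infty$ is symmetric.

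First I would record the base estimate, namely $\omega([0,1])>0$ and $\omega([-1,0])>0$. This is where one must rule out the degenerate possibility that $\omega$ vanishes a.e. on a half-line (no $m$-adic interval has $0$ in its interior, so the two half-lines are a priori independent as far as $\mathcal{M}_{p}$ is concerned). By Lemma \ref{ap:mpprima} the conjugate $\psi=\omega^{-1/(p-1)}$ satisfies $\mathcal{M}_{p'}$ and is in particular locally integrable; hence $\omega>0$ a.e., so every $m$-adic interval has strictly positive $\omega$-measure. This supplies the positive starting value for the iteration.

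Next I would apply the $\mathcal{M}_{\infty}$ inequality \eqref{M:infty} with $\Delta=[0,m^{n+K}]\in\mathcal{M}$ and $E=[0,m^{n}]\subseteq\Delta$, where $K\in\IN$ is to be chosen. Since $|E|/|\Delta|=m^{-K}$, this gives
\[
\frac{\omega([0,m^{n}])}{\omega([0,m^{n+K}])}\le C\,m^{-K\delta},\qquad\text{hence}\qquad \omega([0,m^{n+K}])\ge \frac{m^{K\delta}}{C}\,\omega([0,m^{n}]).
\]
Because $m\ge 2$ and $\delta>0$, I may fix $K$ so large that $m^{K\delta}/C\ge 2$. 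Iterating the resulting inequality $\omega([0,m^{(l+1)K}])\ge 2\,\omega([0,m^{lK}])$ yields $\omega([0,m^{lK}])\ge 2^{l}\,\omega([0,1])\to\infty$. Thus $\int_{0}^{\infty}\omega=+\infty$, and subtracting the finite quantity $\int_{0}^{m^{j}}\omega$ gives $\int_{m^{j}}^{\infty}\omega=+\infty$, i.e. $\omega\notin L(\Delta_{j}(+\infty))$. Running the identical argument with the intervals $[-m^{n},0]\in\mathcal{M}$ settles $\omega\notin L(\Delta_{j}(-\infty))$.

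The only genuinely delicate point is the second paragraph: the $\mathcal{M}_{\infty}$ inequality merely propagates positivity across scales and cannot by itself exclude a weight concentrated on one half-line, so one really does need the local integrability of $\omega^{-1/(p-1)}$ (equivalently, that $\omega$ is a genuine $\mathcal{M}_{p}$ weight rather than a merely nonnegative function) to anchor the iteration. Once the base value is positive, the geometric growth of $\omega([0,m^{lK}])$ forced by $\mathcal{M}_{\infty}$ is routine.
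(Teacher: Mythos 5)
Your proof is correct, but it runs along a genuinely different route from the paper's. The paper never invokes the $\mathcal{M}_{\infty}$ property here: instead it tests the weighted maximal-function inequality of Proposition \ref{pr:3} against the characteristic function of the annulus $\Delta_{-j-1}\setminus\Delta_{-j}=[m^{j},m^{j+1}]$, which yields inequality (\ref{eq:11}), $\omega([0,m^{j+1}])\leq C_{p}\,\omega([m^{j},m^{j+1}])$ --- each annulus carries a fixed proportion of the mass accumulated so far --- and then argues by contradiction: if $\omega$ were integrable near $+\infty$, the annular masses would tend to $0$, which (\ref{eq:11}) forbids. You instead iterate the $\mathcal{M}_{\infty}$ estimate (\ref{M:infty}) supplied by Lemma \ref{lem:apinfty} along the nested intervals $[0,m^{lK}]$ to force the geometric growth $\omega([0,m^{lK}])\geq 2^{l}\,\omega([0,1])$, obtaining divergence of $\int_{0}^{\infty}\omega$ directly rather than by contradiction. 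The two routes are of comparable depth --- Proposition \ref{pr:3} and Lemma \ref{lem:apinfty} are both results whose detailed proofs the paper defers to the Coifman--Fefferman arguments --- and they encode the same phenomenon, since (\ref{eq:11}) rearranges to $\omega([0,m^{j+1}])\geq \frac{C_{p}}{C_{p}-1}\,\omega([0,m^{j}])$, i.e.\ exactly the kind of geometric-growth inequality you derive. One merit of your writeup is that you make explicit the positivity anchor $\omega([0,1])>0$, ruling out a weight vanishing a.e.\ on one half-line (which the $m$-adic structure of $\mathcal{M}$ alone does not exclude); the paper's concluding ``contradiction with (\ref{eq:11})'' tacitly requires the same lower bound $\omega([0,m^{j+1}])\geq\omega([0,m])>0$, so your second paragraph spells out a step the paper leaves implicit, albeit by appealing to Lemma \ref{ap:mpprima}, which itself presupposes the same nondegenerate reading of the $\mathcal{M}_{p}$ condition.
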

\begin{proof}
We prove the assertion for the neighborhoods of $+\infty$. For this purpose we observe that there exists $C_{p}>0$ such that for any  $j\in \IN_{0}$
\begin{equation}\label{eq:11}
\omega(\Delta_{-j-1}) \leq C_{p} \omega(\Delta_{-j-1}\setminus \Delta_{-j} ).
\end{equation}
For any  $j\in \IN_{0}$ and any locally integrable function $f\geq 0$ we have that $ M_{\mathcal M}f(x) \geq m^{-j} \int_{[0,m^{j}]} f(t) dt$ if $x\in [0,m^{j}]$. By Proposition \ref{pr:3} we obtain that
\[
\bigg(m^{-j} \int_{[0,m^{j}]} f(t) dt\bigg)^{p} \omega([0,m^{j}]) \leq B_{p} \int_{[0,m^{j}]} f(t)^{p} \omega(t) dt.
\]
Putting $j+1$ instead of $j$ in the above inequality and letting $f$ be the characteristic function of the set $\Delta_{-j-1}\setminus \Delta_{-j}$
we obtain the inequality (\ref{eq:11}). If $\omega \in L(\Delta_{j_{0}}(+\infty))$ then for any $\varepsilon >0$ there exists $N\in \IN$ such that
\[
\int_{[m^{N},+\infty)} \omega(t) dt < \varepsilon.
\]
Which leads to a contradiction with the condition (\ref{eq:11}).
Evidently the proof for the neighborhoods of $-\infty$ is similar.
\end{proof}

\begin{Definition}\label{def:mpc0}
Let $\omega \geq 0$ be a weight function defined on $\IR^{+}$.  We will say that $\omega$ satisfies the condition ${\mathcal M}^{y}_{p}(\IR^{+})$, $p\geq 1$ for some $y\in \IR^{+}$ if
\begin{equation}\label{MyR:p}
\omega(\Delta_{j}(y)) \bigg[\int_{\IR^{+}\setminus \Delta_{j}(y)} \omega^{-\frac{1}{p-1}}(t) dt\bigg]^{p-1} \leq C_{p} |\Delta_{j}(y)|^{p} \qquad \forall j \in \IZ,
\end{equation}
where  $C_{p}>0$ is independent of $j\in \IZ$.
\end{Definition}
We will not formulate the definition of the condition ${\mathcal M}^{y}_{p}(\IR^{-})$ because it is clear from the context.
\begin{Definition}\label{def:mpc}
Let $\omega\geq 0$ be a weight function defined on $\Delta$, where $\Delta \in {\mathcal M},$ $|\Delta| = m^{l} $. We will say that $\omega$ satisfies the condition ${\mathcal M}^{y}_{p}(\Delta)$, $p\geq 1$ for some $y\in \Delta$  if
\begin{equation}\label{My:p}
\omega(\Delta_{j}(y)) \bigg[\int_{\Delta\setminus \Delta_{j}(y)} \omega^{-\frac{1}{p-1}}(t) dt\bigg]^{p-1} \leq C_{p} |\Delta_{j}(y)|^{p} \qquad \forall j >l,
\end{equation}
where  $C_{p}>0$ is independent of $j$.
\end{Definition}

\begin{Lemma}\label{lem:wsing1}
Let $w\geq 0$ be a weight function defined on $[0,1]$ such that $w$ satisfies the condition ${\mathcal M}^{y}_{p}([0,1])$ for some  $y\in [0,1]$ and $ 1 < p < \infty$.
Then there exists $q_{p} >1$ such that
\[
\int_{[0,1]\setminus \Delta_{j+1}(y)} w^{-\frac{1}{p-1}}(t) dt \bigg(\int_{[0,1]\setminus \Delta_{j}(y)} w^{-\frac{1}{p-1}}(t) dt\bigg)^{-1} \geq q_{p}
\]
for all $j\in \IN.$
\end{Lemma}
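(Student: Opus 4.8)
The plan is to track the single increasing sequence $I_j := \int_{[0,1]\setminus \Delta_j(y)} w^{-1/(p-1)}(t)\,dt$ and to bound the ratio $I_{j+1}/I_j$ below by a constant exceeding $1$. First I would observe that since $\Delta_{j+1}(y)\subset \Delta_j(y)$, the sets $[0,1]\setminus\Delta_j(y)$ and $A_j := \Delta_j(y)\setminus\Delta_{j+1}(y)$ are disjoint with union $[0,1]\setminus\Delta_{j+1}(y)$, so that
\[
I_{j+1} = I_j + \int_{A_j} w^{-1/(p-1)}(t)\,dt .
\]
Thus the claim reduces to producing a constant $c_p>0$, independent of $j$, with $\int_{A_j} w^{-1/(p-1)} \geq c_p\, I_j$; then $q_p = 1+c_p$ works. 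Note that $I_j>0$ for every $j\geq 1$: since $w\in L^1_{\mathrm{loc}}$ it is finite a.e., hence $w^{-1/(p-1)}>0$ a.e., while $[0,1]\setminus\Delta_j(y)$ has positive measure.

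Next I would obtain the required lower bound on $\int_{A_j} w^{-1/(p-1)}$ from the Hölder duality linking $w$ and its conjugate weight. Writing $|A_j| = \int_{A_j} w^{1/p}\,w^{-1/p}$ and applying Hölder with exponents $p,p'$ gives, using $-p'/p = -1/(p-1)$,
\[
|A_j| \leq w(A_j)^{1/p}\Big(\int_{A_j} w^{-1/(p-1)}\Big)^{1/p'} .
\]
Raising this to the power $p'$ and using $A_j\subset\Delta_j(y)$, hence $w(A_j)\leq w(\Delta_j(y))$ (so the negative power reverses the inequality), yields
\[
\int_{A_j} w^{-1/(p-1)} \geq |A_j|^{p'}\, w(A_j)^{-p'/p} \geq |A_j|^{p'}\, w(\Delta_j(y))^{-p'/p} .
\]

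Finally I would insert the hypothesis. The condition ${\mathcal M}^{y}_{p}([0,1])$ of Definition \ref{def:mpc} (with $|\Delta|=m^0$) reads $w(\Delta_j(y))\,I_j^{\,p-1}\leq C_p\,m^{-jp}$, i.e. $w(\Delta_j(y))\leq C_p\,m^{-jp} I_j^{-(p-1)}$. Raising to the power $-p'/p$ and using the identities $(p-1)p'/p = 1$ and $p\cdot p'/p = p'$, this gives $w(\Delta_j(y))^{-p'/p}\geq C_p^{-p'/p}\, m^{jp'} I_j$. Since $|A_j| = m^{-j}(m-1)/m$, the factor $|A_j|^{p'} m^{jp'} = \big((m-1)/m\big)^{p'}$ is independent of $j$, so
\[
\int_{A_j} w^{-1/(p-1)} \geq \Big(\tfrac{m-1}{m}\Big)^{p'} C_p^{-p'/p}\, I_j =: c_p\, I_j ,
\]
and $q_p = 1+c_p>1$ does the job (if $I_{j+1}=+\infty$ the inequality is trivial). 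The only real subtlety is the exponent bookkeeping: the argument succeeds precisely because the power $m^{-jp'}$ from $|A_j|^{p'}$ is cancelled by the $m^{jp'}$ produced by the $\mathcal M_p$ bound, and because $(p-1)p'/p=1$ turns the factor $I_j^{-(p-1)}$ into exactly $I_j$. I expect this cancellation of the $j$-dependence to be the step to verify with care, everything else being a direct application of Hölder's inequality together with the hypothesis.
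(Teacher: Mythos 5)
Your proof is correct and follows essentially the same route as the paper: decompose $I_{j+1}=I_j+\int_{\Delta_j(y)\setminus\Delta_{j+1}(y)}w^{-1/(p-1)}$, bound the annulus integral via H\"older's inequality together with $w(\Delta_j(y)\setminus\Delta_{j+1}(y))\leq w(\Delta_j(y))$, and invoke the ${\mathcal M}^y_p([0,1])$ condition, arriving at the identical constant $q_p=1+C_p^{-1/(p-1)}\bigl(\tfrac{m-1}{m}\bigr)^{p/(p-1)}$. The only difference is cosmetic (you apply H\"older before inserting the hypothesis, the paper does the reverse), so the two arguments coincide.
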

\begin{proof} For any $j\in \IN$ we have that
\[
\int_{[0,1]\setminus \Delta_{j+1}(y)} w^{-\frac{1}{p-1}}(t) dt \bigg(\int_{[0,1]\setminus \Delta_{j}(y)} w^{-\frac{1}{p-1}}(t) dt\bigg)^{-1}
\]
\[
\geq  1 + \int_{\Delta_{j}(y)\setminus \Delta_{j+1}(y)} w^{-\frac{1}{p-1}}(t) dt \bigg(\int_{[0,1]\setminus \Delta_{j}(y)} w^{-\frac{1}{p-1}}(t) dt\bigg)^{-1}
\]
\[
\geq  1 + \int_{\Delta_{j}(y)\setminus \Delta_{j+1}(y)} w^{-\frac{1}{p-1}}(t) dt \bigg(w(\Delta_{j}(y))\bigg)^{\frac{1}{p-1}} C^{-\frac{1}{p-1}}_{p} |\Delta_{j}(y)|^{-\frac{p}{p-1}}
\]
\[
\geq  1 + |\Delta_{j}(y)\setminus \Delta_{j+1}(y)|^{\frac{p}{p-1}}  C^{-\frac{1}{p-1}}_{p} |\Delta_{j}(y)|^{-\frac{p}{p-1}} \geq  1 + C^{-\frac{1}{p-1}}_{p}\bigg(\frac{m-1}{m}\bigg)^{\frac{p}{p-1}}.
\]
Putting $q_{p} = 1 + C^{-\frac{1}{p-1}}_{p}(\frac{m-1}{m})^{\frac{p}{p-1}}$ we finish the proof.
\end{proof}
Following three lemmas will be used in the last section. In the proofs we will use the following notation: $aE = \{ at: t\in E\}$.
\begin{Lemma}\label{lem:transf1}
Let $\omega \geq 0$ satisfies the condition $\mathcal M_{p}(\IR^{+}), p>1$ with a constant $C_{p}>0$. Then for any $N\in \IN$ the weight function $\omega_{N}(x): = \omega(m^{N}x)$
satisfies the condition $\mathcal M_{p}([0,1])$ with the same constant $C_{p}$.
\end{Lemma}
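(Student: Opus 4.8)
The plan is to use the change of variables $s=m^{N}t$, which is a dilation that carries $m$-adic intervals contained in $[0,1]$ onto $m$-adic intervals contained in $[0,m^{N}]\subset\IR^{+}$, and then to observe that each factor occurring in the $\mathcal M_{p}$ inequality (\ref{M:p}) scales as a pure power of $m^{N}$, with these powers cancelling exactly so that the constant is preserved. The whole content of the lemma is this scaling bookkeeping.

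First I would fix an arbitrary $\Delta\in\mathcal M\cap[0,1]$ and set $\Delta'=m^{N}\Delta$. Writing $\Delta=[\frac{j-1}{m^{k}},\frac{j}{m^{k}}]$ with $k\geq 0$ and $1\leq j\leq m^{k}$, one reads off immediately that $\Delta'=[\frac{j-1}{m^{k-N}},\frac{j}{m^{k-N}}]\in\mathcal M$ and that $\Delta'\subseteq[0,m^{N}]\subset\IR^{+}$, so that the hypothesis $\mathcal M_{p}(\IR^{+})$ may legitimately be applied to $\Delta'$. The same dilation gives $|\Delta'|=m^{N}|\Delta|$.

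Next I would compute the two integrals defining the $\mathcal M_{p}$ quantity for $\omega_{N}$ on $\Delta$ via the substitution $s=m^{N}t$, $dt=m^{-N}\,ds$:
\[
\omega_{N}(\Delta)=\int_{\Delta}\omega(m^{N}t)\,dt=m^{-N}\omega(\Delta'),
\qquad
\int_{\Delta}\omega_{N}^{-\frac{1}{p-1}}(t)\,dt=m^{-N}\int_{\Delta'}\omega^{-\frac{1}{p-1}}(s)\,ds.
\]
Substituting these into the left-hand side of (\ref{M:p}) written for $\omega_{N}$ produces the overall factor $m^{-N}\cdot m^{-N(p-1)}=m^{-Np}$ multiplying the corresponding expression for $\omega$ on $\Delta'$. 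Applying $\mathcal M_{p}(\IR^{+})$ to $\Delta'$ bounds that expression by $C_{p}|\Delta'|^{p}=C_{p}m^{Np}|\Delta|^{p}$, and the prefactor $m^{-Np}$ cancels $m^{Np}$ exactly, leaving $C_{p}|\Delta|^{p}$. Since $\Delta$ was an arbitrary $m$-adic subinterval of $[0,1]$, this is precisely $\mathcal M_{p}([0,1])$ with the same constant $C_{p}$.

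There is no genuine obstacle; the only points requiring a little care are verifying that $x\mapsto m^{N}x$ sends $\mathcal M\cap[0,1]$ into $\mathcal M\cap\IR^{+}$, so the hypothesis truly applies to $\Delta'$, and tracking the exponent $(p-1)$ on the dual integral so that the powers of $m^{N}$ combine to $m^{-Np}$ and annihilate the $m^{Np}$ arising from $|\Delta'|^{p}$.
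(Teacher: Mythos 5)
Your proof is correct and is essentially the paper's own argument: the paper likewise applies the substitution $x \mapsto m^{N}x$, notes that $m^{N}E \in \mathcal M$ for $E \in \mathcal M \cap [0,1]$, and cancels the factor $m^{-Np}$ against $|m^{N}E|^{p} = m^{Np}|E|^{p}$. Your write-up merely makes the scaling bookkeeping and the verification that the dilated interval lies in $\mathcal M \cap \IR^{+}$ more explicit.
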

\begin{proof}
For any $E\in {\mathcal M}\bigcap [0,1]$ we have
\begin{align*}
\omega_{N}(E) \bigg[\int_{E} \omega_{N}^{-\frac{1}{p-1}}(t) dt\bigg]^{p-1} &= m^{-N}\int_{m^{N}E} \omega(x) dx \bigg[m^{-N}\int_{m^{N}E} \omega^{-\frac{1}{p-1}}(x) dx\bigg]^{p-1}\\
&\leq m^{-pN} C_{p}\, |m^{N}E|^{p} = C_{p} |E|^{p}.
\end{align*}
We have used that $m^{N}E \in {\mathcal M}$.
\end{proof}

\begin{Lemma}\label{lem:transf2}
Let $N\in \IN$ and let $y\in [0,m^{N}]$. Suppose that $\omega \geq 0$ satisfies the condition $\mathcal M_{p}([0,m^{N}]\setminus \{y\}), p>1$ with a constant $C_{p}>0$. Then  the weight function $\omega_{N}(x): = \omega(m^{N}x)$
satisfies the condition $\mathcal M_{p}([0,1]\setminus \{y_{N}\})$ with the same constant $C_{p}$, where $y_{N} = m^{-N} y$.
\end{Lemma}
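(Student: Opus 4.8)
The plan is to reduce the statement to Lemma \ref{lem:transf1}, since both express the same dilation invariance of the condition $\mathcal{M}_{p}$ under the map $x \mapsto m^{N}x$; the only new ingredient is that here we must track the excluded point $y$ through the change of variables.

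First I would record the bijective correspondence of the relevant families of intervals. The dilation $\Delta \mapsto m^{N}\Delta$ sends any $\Delta = [\frac{j-1}{m^{k}}, \frac{j}{m^{k}}] \in \mathcal{M}$ to $[\frac{j-1}{m^{k-N}}, \frac{j}{m^{k-N}}] \in \mathcal{M}$, exactly as in the proof of Lemma \ref{lem:transf1}, and it clearly preserves inclusion into the base interval: $\Delta \subseteq [0,1]$ if and only if $m^{N}\Delta \subseteq [0,m^{N}]$. The key new observation is that, since $y_{N} = m^{-N}y$ and multiplication by $m^{N}>0$ is an increasing bijection of $\IR$ compatible with the splitting of $m$-adic points, one has $y_{N} \in \Delta$ if and only if $y \in m^{N}\Delta$. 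Consequently $\Delta \mapsto m^{N}\Delta$ is a bijection from $\{\Delta \in \mathcal{M} : \Delta \subseteq [0,1]\setminus\{y_{N}\}\}$ onto $\{\Delta' \in \mathcal{M} : \Delta' \subseteq [0,m^{N}]\setminus\{y\}\}$.

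Next, for any $E \in \mathcal{M}$ with $E \subseteq [0,1]\setminus\{y_{N}\}$, I would run the identical change-of-variables computation as in Lemma \ref{lem:transf1}: substituting $x = m^{N}t$ gives
\begin{align*}
\omega_{N}(E) \bigg[\int_{E} \omega_{N}^{-\frac{1}{p-1}}(t) dt\bigg]^{p-1} &= m^{-N}\int_{m^{N}E} \omega(x) dx \bigg[m^{-N}\int_{m^{N}E} \omega^{-\frac{1}{p-1}}(x) dx\bigg]^{p-1}\\
&\leq m^{-pN} C_{p}\, |m^{N}E|^{p} = C_{p} |E|^{p}.
\end{align*}
By the correspondence above we have $m^{N}E \subseteq [0,m^{N}]\setminus\{y\}$, so the hypothesis $\mathcal{M}_{p}([0,m^{N}]\setminus\{y\})$ applies to $m^{N}E$ and yields the middle inequality; the factors $m^{-Np}$ and $m^{Np}=|m^{N}E|^{p}/|E|^{p}$ cancel, giving (\ref{M:p}) for $\omega_{N}$ on $E$ with the same constant $C_{p}$. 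As $E$ ranges over all of $\mathcal{M}\bigcap([0,1]\setminus\{y_{N}\})$, this is precisely the assertion that $\omega_{N}$ satisfies $\mathcal{M}_{p}([0,1]\setminus\{y_{N}\})$.

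I expect the only point requiring genuine care to be the bookkeeping of the excluded point under the dilation, namely verifying that avoiding $y_{N}$ among the subintervals of $[0,1]$ corresponds exactly to avoiding $y$ among the subintervals of $[0,m^{N}]$, including the behaviour at split $m$-adic points; this is what makes the \emph{same} constant $C_{p}$ transfer. Everything else is the verbatim computation of Lemma \ref{lem:transf1}.
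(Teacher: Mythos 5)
Your proof is correct and is essentially the paper's own argument: the paper's proof consists exactly of observing that $E\in {\mathcal M}\bigcap\big([0,1]\setminus \{y_{N}\}\big)$ implies $m^{N}E \in {\mathcal M}\bigcap\big([0, m^{N}]\setminus \{y\}\big)$, and then invoking the change-of-variables computation of Lemma \ref{lem:transf1} verbatim. Your extra bookkeeping of the excluded point under dilation is just a more explicit write-up of that same observation.
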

\begin{proof}
For any $E\in {\mathcal M}\bigcap\bigg( [0,1]\setminus \{y_{N}\}\bigg)$ we observe that the interval $m^{N}E \in {\mathcal M}\bigcap\bigg([0, m^{N}]\setminus \{y\}\bigg)$. The rest of the proof is the same as above.
\end{proof}
\begin{Lemma}\label{lem:transf3}
Let $N\in \IN$ and let $y\in [0,m^{N}]$. Suppose that $\omega \geq 0$ satisfies the condition ${\mathcal M}^{y}_{p}([0,m^{N}]), p>1$ with a constant $C_{p}>0$. Then  the weight function $\omega_{N}(x): = \omega(m^{N}x)$
satisfies the condition ${\mathcal M}^{y_{N}}_{p}([0,1])$ with the same constant $C_{p}$, where $y_{N} = m^{-N} y$.
\end{Lemma}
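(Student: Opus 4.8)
The plan is to prove this in exactly the same way as Lemmas \ref{lem:transf1} and \ref{lem:transf2}, by pushing the defining inequality (\ref{My:p}) through the dilation $u = m^{N}x$. First I would record the geometric action of the map $S(x) = m^{N}x$: it is a bijection of $[0,1]$ onto $[0,m^{N}]$ that carries $\mathcal M \cap [0,1]$ onto $\mathcal M \cap [0,m^{N}]$, preserving the $m$-adic structure (and the left/right splitting of $m$-adic rationals), since an interval $[\frac{k-1}{m^{j}},\frac{k}{m^{j}}]$ is sent to $[\frac{k-1}{m^{j-N}},\frac{k}{m^{j-N}}]$. In particular $S(y_{N}) = y$ and, because lengths are multiplied by $m^{N}$ while containment of the distinguished point is preserved, the nested sequences match up via the index shift
\[
m^{N}\,\Delta_{j}(y_{N}) = \Delta_{j-N}(y), \qquad |\Delta_{j}(y_{N})| = m^{-j},\quad |\Delta_{j-N}(y)| = m^{N-j}.
\]

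Next I would invoke the two elementary change-of-variables identities valid for every measurable $E \subseteq [0,1]$, namely $\omega_{N}(E) = m^{-N}\omega(m^{N}E)$ and $\int_{E}\omega_{N}^{-1/(p-1)}(t)\,dt = m^{-N}\int_{m^{N}E}\omega^{-1/(p-1)}(u)\,du$, exactly as in the displayed computation of Lemma \ref{lem:transf1}. Applying these with $E = \Delta_{j}(y_{N})$ and with $E = [0,1]\setminus\Delta_{j}(y_{N})$, and using $m^{N}\big([0,1]\setminus\Delta_{j}(y_{N})\big) = [0,m^{N}]\setminus\Delta_{j-N}(y)$, the left-hand side of (\ref{My:p}) for $\omega_{N}$, $y_{N}$ at level $j$ becomes $m^{-Np}$ times the corresponding expression for $\omega$, $y$ at level $j-N$. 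Since $|\Delta_{j}(y_{N})|^{p} = m^{-Np}|\Delta_{j-N}(y)|^{p}$, the factor $m^{-Np}$ cancels on both sides, and the inequality to be proved for $\omega_{N}$ with constant $C_{p}$ is literally the hypothesis ${\mathcal M}^{y}_{p}([0,m^{N}])$ applied at the shifted index $j-N$, with the same constant.

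Finally I would check that the index ranges line up: as $j$ runs over the range required by ${\mathcal M}^{y_{N}}_{p}([0,1])$, the shifted index $j-N$ runs over precisely the range required by ${\mathcal M}^{y}_{p}([0,m^{N}])$, so every instance of the conclusion is supplied by an available instance of the hypothesis. The only point demanding care is this index/complement bookkeeping — correctly identifying the dilated complement as $[0,m^{N}]\setminus\Delta_{j-N}(y)$ and confirming that the two index ranges coincide under the shift by $N$. There is no analytic difficulty beyond the substitution itself, so once the correspondence $m^{N}\Delta_{j}(y_{N}) = \Delta_{j-N}(y)$ is pinned down the rest is a direct rewriting, precisely as in the proofs of Lemmas \ref{lem:transf1} and \ref{lem:transf2}.
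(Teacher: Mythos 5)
Your proposal is correct and is essentially identical to the paper's own proof: the paper likewise pushes the defining inequality through the substitution $u=m^{N}x$, uses the correspondence $m^{N}\Delta_{j}(y_{N})=\Delta_{j-N}(y)$ so that both the $\omega_{N}$-measure of $\Delta_{j}(y_{N})$ and the integral over $[0,1]\setminus\Delta_{j}(y_{N})$ pick up a factor $m^{-N}$, and then cancels the resulting $m^{-Np}$ against $|\Delta_{j}(y_{N})|^{p}=m^{-Np}|\Delta_{j-N}(y)|^{p}$, invoking the hypothesis ${\mathcal M}^{y}_{p}([0,m^{N}])$ at the shifted index $j-N$ with the same constant $C_{p}$. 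Your additional bookkeeping on the index ranges under the shift $j\mapsto j-N$ matches the paper's tacit use of the hypothesis for all admissible shifted indices.
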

\begin{proof}
For any $j\in \IN$ we have
\begin{align*}
&\omega_{N}(\Delta_{j}(y_{N})) \bigg[\int_{[0,1]\setminus \Delta_{j}(y_{N}) } \omega_{N}^{-\frac{1}{p-1}}(t) dt\bigg]^{p-1} = m^{-N}\int_{\Delta_{j-N}(y)} \omega(x) dx \\
&\times \bigg[m^{-N}\int_{[0,m^{N}]\setminus \Delta_{j-N}(y)} \omega^{-\frac{1}{p-1}}(x) dx\bigg]^{p-1}\\
&\leq m^{-pN} C_{p}\, |\Delta_{j-N}(y)|^{p} = C_{p} |\Delta_{j}(y)|^{p}.
\end{align*}

\end{proof}

\

\subsection{Higher rank Haar wavelets}\label{sec:HW}

We  bring the definition of higher rank Haar wavelets  without recalling the general theory of multiresolution analysis.  For relations of these type of wavelets with $p-$adic analysis see \cite{Koz:01}.
Let $\varphi(x) = \chi_{[0,1]}(x)$ and let
\begin{equation}\label{eq:vm}
V(m) = \mbox{span} \{\varphi_{1,j,m}(x): 0\leq j\leq m-1\}
\end{equation}
for any $m=2,3,\ldots$. Afterwards, let $\{h^{(\nu)}(x): 0\leq \nu \leq m-1\}$ be an orthonormal basis in $V(m)$ such that $h^{(0)}(x) = \varphi(x)$. The system
\begin{equation}\label{haar:m}
H(m) = \{h^{(\nu)}_{k,j,m}(x): k\in \IZ; j\in \IZ; 1\leq \nu \leq m-1 \},
\end{equation}
where
\begin{equation}\label{haar:m1}
h^{(\nu)}_{k,j,m}(x)= m^{k/2}h^{(\nu)}(m^{k}x-j)
\end{equation}
will be called $m$th rank Haar system. Sometimes we will use also the following notation
\begin{equation}\label{haar:m2}
h^{(\nu)}_{\Delta }:= h^{(\nu)}_{k,j,m}(x)\qquad \text{when}\quad \Delta = [\frac{j}{m^{k}}, \frac{j+1}{m^{k}}].
\end{equation}
 The orthogonality of the system (\ref{haar:m}) is obvious.

\begin{Theorem}\label{thm:com}
 The system $H(m)$ is complete in $L^{p}(\IR), 1<p<\infty.$
\end{Theorem}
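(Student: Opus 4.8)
The plan is to prove that the closed linear span of $H(m)$ is all of $L^{p}(\IR)$ by approximating, in $L^{p}$ norm, every function in a dense subclass. Since the compactly supported continuous functions are dense in $L^{p}(\IR)$, $1<p<\infty$, it suffices to approximate an arbitrary $f\in C_{c}(\IR)$ by finite linear combinations of elements of $H(m)$.

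The structural fact I would isolate first is a telescoping identity relating the wavelet partial sums to the $m$-adic averaging (conditional expectation) operators. For $k\in\IZ$ let $E_{k}f$ be defined by $E_{k}f(x)=\frac{1}{|\Delta|}\int_{\Delta}f$ on each $\Delta\in\mathcal M$ with $|\Delta|=m^{-k}$. On a single such interval $\Delta$, the function $(E_{k+1}-E_{k})f$ is a step function that is constant on the $m$ children of $\Delta$ and has mean zero over $\Delta$; the space of such functions is $(m-1)$-dimensional and, by the defining property of $V(m)$ together with $h^{(0)}=\varphi$, it coincides with the orthonormal span of $\{h^{(\nu)}_{\Delta}:1\le\nu\le m-1\}$. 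Hence
\[
(E_{k+1}-E_{k})f=\sum_{j\in\IZ}\sum_{\nu=1}^{m-1}\Big(\int_{\IR}f\,h^{(\nu)}_{k,j,m}\Big)\,h^{(\nu)}_{k,j,m},
\]
and summing over $-N\le k< M$ telescopes to $E_{M}f-E_{-N}f$.

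For $f\in C_{c}(\IR)$ the inner sum over $j$ is finite at each scale, since only the finitely many intervals meeting $\supp f$ contribute, so $E_{M}f-E_{-N}f$ is a genuine finite linear combination of elements of $H(m)$. It then remains to pass to the limit as $M,N\to\infty$. For the high-frequency limit, uniform continuity of $f$ gives $|E_{M}f(x)-f(x)|\le\omega_{f}(m^{-M})$ with all functions involved supported in a fixed compact set, so $E_{M}f\to f$ in $L^{p}$; alternatively one may dominate $|E_{M}f|$ by $M_{\mathcal M}f$ and invoke Proposition \ref{pr:maxp} (with $\omega\equiv1$) together with the Lebesgue differentiation theorem and dominated convergence. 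For the low-frequency limit, on each $m$-adic interval of length $m^{N}$ the value of $E_{-N}f$ is at most $m^{-N}\|f\|_{1}$, and only finitely many such intervals meet $\supp f$, whence $\|E_{-N}f\|_{p}^{p}\le C\,\|f\|_{1}^{p}\,m^{N(1-p)}\to0$ because $p>1$. Combining the two limits, $E_{M}f-E_{-N}f\to f$ in $L^{p}$, so $f$ lies in the closed span of $H(m)$.

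The main obstacle is the telescoping identity itself: one must verify cleanly that the $L^{2}$ projection of $f$ onto $\mspan\{h^{(\nu)}_{k,j,m}:1\le\nu\le m-1,\ j\in\IZ\}$ equals $E_{k+1}f-E_{k}f$, which rests on $\{h^{(\nu)}\}_{\nu=0}^{m-1}$ being an orthonormal basis of $V(m)$ with $h^{(0)}=\varphi$ and on the exact self-similar nesting of the $m$-adic intervals. A secondary point worth flagging is that the low-frequency estimate is precisely where the hypothesis $p>1$ enters; this is consistent with Corollary \ref{cor:2}, which shows that the same orthonormal system fails to be complete in $L^{1}(\IR)$.
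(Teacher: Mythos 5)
Your proof is correct, but it takes a genuinely different route from the paper's. The paper reduces completeness to approximating the finitely many generators $\varphi_{1,j,m}$, $0\le j\le m-1$, by wavelets at coarser scales (implicitly invoking the dilation--translation invariance of $H(m)$ and the density of $m$-adic step functions in $L^{p}(\IR)$), and then argues by finite-dimensional linear algebra: an induction counts exactly $m^{l+1}-1$ wavelets at scales $1\ge k\ge -l+1$ supported in $[0,m^{l}]$, which together with $g_{0}=\chi_{[0,m^{l}]}$ form an orthogonal basis of $V^{(l)}(m)=\mspan\{\varphi_{1,j,m}:0\le j\le m^{l+1}-1\}$; the approximation error is then exactly the $g_{0}$-component of $\varphi_{1,j,m}$, whose $L^{p}$-norm tends to $0$ as $l\to\infty$ precisely because $p>1$. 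You instead work with an arbitrary compactly supported continuous $f$ and make the martingale structure explicit: the scale-$k$ wavelet projection equals the conditional-expectation difference $E_{k+1}f-E_{k}f$, the sum over scales telescopes to $E_{M}f-E_{-N}f$, and the two ends are handled separately. The two arguments pivot on the same facts: your identification of $\mspan\{h^{(\nu)}_{\Delta}:1\le \nu\le m-1\}$ with the mean-zero step functions on $\Delta$ constant on its children is the local form of the paper's dimension count, and the paper's error term $a^{(j)}_{0}g_{0}$ is exactly $E_{-l}\varphi_{1,j,m}$, i.e.\ the special case of your low-frequency estimate. What your route buys is generality and transparency: no induction is needed, the role of the hypothesis $p>1$ is isolated in a single displayed inequality, and the link with Corollary \ref{cor:2} (failure of completeness in $L^{1}(\IR)$) becomes visible. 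What the paper's route buys is an explicit approximating polynomial with an exact error formula, obtained with nothing beyond linear algebra in a fixed finite-dimensional space. The coefficient verification you flagged as the main obstacle closes routinely: $\langle E_{k+1}f,h^{(\nu)}_{\Delta}\rangle=\langle f,h^{(\nu)}_{\Delta}\rangle$ because $h^{(\nu)}_{\Delta}$ is constant on the children of $\Delta$, while $\langle E_{k}f,h^{(\nu)}_{\Delta}\rangle=0$ because $h^{(\nu)}_{\Delta}$ has mean zero on $\Delta$; so there is no gap in your argument.
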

\begin{proof}
Let $p, 1<p<\infty$ be fixed.
It is easy to observe that the proof will be finished if we show that for any $\varphi_{1,j,m}(x), 0\leq j\leq m-1$ and any $\varepsilon >0$ there exists a finite linear combination $P_{j}$ of functions\newline
 $\{h^{(\nu)}_{k,l,m}(x): k\in \IZ\setminus\IN_{0}; l\in \IZ; 1\leq \nu \leq m-1 \}$ such that $\|\varphi_{1,j,m} - P_{j} \|_{L^{p}(\IR)} < \varepsilon$.

 Let $l\in\IN$ be such that $m^{1/2} m^{l(1/p-1)} < \varepsilon$.
We set
\[
V^{(l)}(m) = \mbox{span} \{\varphi_{1,j,m}(x): 0\leq j\leq m^{l+1}-1\}.
\]
It is clear that $\dim V^{(l)}(m) = m^{l+1}$. Let us show by induction that there are exactly  $m^{l+1}-1$ functions from the system\newline $\{h^{(\nu)}_{k,j,m}(x): 1\geq  k\geq -l+1; j\in \IZ; 1\leq \nu \leq m-1 \}$ with supports in $[0,m^{l}]$.

If $l=0$ then it is obvious. Suppose that for some $\mu\in \IN$ we have that the number  of functions from the system
\begin{equation}\label{sis:1}
\{h^{(\nu)}_{k,j,m}(x): 1\geq  k\geq -\mu+1; j\in \IZ; 1\leq \nu \leq m-1 \}
\end{equation}
 with  supports in $[0,m^{\mu}]$ is equal to $m^{\mu+1}-1$. Then it is clear that there are $(m^{\mu+1}-1)m$ functions from the system (\ref{sis:1})
  that have their supports in $[0,m^{\mu +1}]$. Note that the functions $\{h^{(\nu)}_{-\mu,0,m}(x):  1\leq \nu \leq m-1 \}$ vanish outside the closed interval $[0,m^{\mu +1}]$. Thus we have $(m^{\mu+1}-1)m +m-1 = m^{\mu+2}-1$ mutually orthogonal functions in (\ref{sis:1}) which have their supports in $[0,m^{\mu +1}]$.

Let $\{ g_{i} \}_{i=1}^{m^{l+1}-1}$ be all functions from the system $$\{h^{(\nu)}_{k,j,m}(x): 1\geq  k\geq -l+1; j\in \IZ; 1\leq \nu \leq m-1 \}$$ that have their supports in $[0,m^{l}]$. Evidently $\{ g_{i} \}_{i=1}^{m^{l+1}-1} \subset V^{(l)}(m).$ \newline
Let $g_{0}(x) = \chi_{[0,m^{l}]}(x)$. Then $g_{0} \in V^{(l)}(m)$ and
$g_{0}$ is orthogonal to all elements of $\{ g_{i} \}_{i=1}^{m^{l+1}-1}$. Hence, $\{ g_{i} \}_{i=0}^{m^{l+1}-1}$ is a basis in $V^{(l)}(m)$ and
\[
 \varphi_{1,j,m} = \sum_{i=0}^{m^{l+1}-1} a^{(j)}_{i} g_{i},\qquad \text{where}\quad a^{(j)}_{0} = m^{-l}\int_{[0,m^{l}]} \varphi_{1,j,m}(t) dt.
\]
Thus we obtain that for any $0\leq j\leq m-1$
\[
\bigg\| \varphi_{1,j,m} - \sum_{i=1}^{m^{l+1}-1} a^{(j)}_{i} g_{i} \bigg\|_{L^{p}(\IR)} = \bigg\|  a^{(j)}_{0} g_{0}\bigg\|_{L^{p}(\IR)}  = m^{1/2} m^{l(1/p-1)} < \varepsilon.
\]

\end{proof}

\

\section{$m$th rank Haar system on $[0,1]$}\label{sec:mH}

Let ${\textsf{h} }_{0}(x) \equiv 1$ for $x\in [0,1]$. For any $n\in \IN$ we have a unique representation
\begin{equation}\label{n:1}
n= m_{k} + j-1, \quad \text{where}\quad k\in \IN,\, 1\leq j\leq m^{k},
\end{equation}
and
\begin{equation}\label{n:2}
m_{k} = 1+ m + m^{2}+ \cdots + m^{k-1}, \quad m_{1} =1.
\end{equation}
 For any $1\leq \nu \leq m -1$ we put
\[
{\textsf{h} }^{(\nu)}_{n}(x) = h^{(\nu)}_{k,j-1,m}(x) \quad \text{for}\, x\in [0,1].
\]
Afterwards we enumerate the functions in the following way
\begin{align}\label{haar:1}
{\textsf{h} }_{l}(x) &= {{h} }^{(l)}(x)\quad \text{for}\quad  1\leq l\leq m-1;\\
{\textsf{h} }_{l}(x) &= {\textsf{h} }^{(\nu)}_{n}(x)\quad \text{for}\quad  l= \nu + n(m-1),\, n\in \IN.
\end{align}
We denote the $m$th rank Haar system by ${\mathcal H}(m) = \{ {\textsf{h} }_{l}(x) \}_{l=0}^{\infty}$. We also let
\begin{equation}\label{n:k}
\mu_{0 } = 0, \, \mu_{1 } = \mu_{0 } + m-1, \cdots,  \mu_{k+1} = \mu_{k} + (m -1) m^{k}, \cdots .
\end{equation}

The following lemma is the analogue of  Schauder's lemma  for the classical Haar system (see \cite{Sch:1}).
For any $f\in L^{1}[0,1]$ and for any $1\leq j\leq m^{k}, k\in \IN$  we put
\[
\Theta_{\mu_{k} + j(m-1)}(f,x) =    \sum_{l=0}^{\mu_{k}} a_{l}(f) {\textsf{h} }_{l}(x) +  \sum_{s=0}^{j-1} \sum_{\nu =1}^{m-1} a^{(\nu)}_{k,s,m}(f) h^{(\nu)}_{k,s,m}(x),
\]
where
\[
  a_{l}(f) =\int_{[0,1]} f(t) {\textsf{h} }_{l}(t) dt; \quad  a^{(\nu)}_{k,s,m}(f) =\int_{[0,1]} f(t) h^{(\nu)}_{k,s,m}(t) dt.
\]

\begin{Lemma}\label{Hlem:1}
Let $f\in L^{1}[0,1]$ and let $1\leq j\leq m^{k}, k\in \IN_{0}$. Then the partial sum  $\Theta_{\mu_{k} + j(m-1)}(f,x)$ is constant on any interval from the collection of sets
\begin{equation}\label{delta1}
\bigg \{\bigg[\frac{s}{m^{k+1}}, \frac{s+1}{m^{k+1}}\bigg]: 0\leq s \leq jm-1\bigg\},
 \end{equation}
 \begin{equation}\label{delta2}
\bigg\{\bigg[\frac{l}{m^{k}}, \frac{l+1}{m^{k}}\bigg]: j\leq l \leq m^{k}-1\bigg\}.
 \end{equation}
Moreover, for any $\Delta$ from (\ref{delta1}) or from (\ref{delta2})
\begin{equation}\label{delsu}
\Theta_{\mu_{k}+ j(m-1) }(f,x) =  \frac{1}{|\Delta|}\int_{\Delta} f(t) dt\quad \text{for}\quad x\in \Delta.
\end{equation}
\end{Lemma}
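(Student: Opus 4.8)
The plan is to identify the partial-sum operator $f\mapsto \Theta_{\mu_{k}+j(m-1)}(f,\cdot)$ with the orthogonal projection onto a space of $m$-adic step functions, and then to recognize that projection as an averaging operator. First I would record the only consequence of orthonormality that is needed. Since the coefficients $a_{l}(f)$ and $a^{(\nu)}_{k,s,m}(f)$ are integrals of $f\in L^{1}[0,1]$ against bounded functions, the partial sum $\Theta_{N}(f)$, with $N=\mu_{k}+j(m-1)$, is a well-defined finite linear combination of the first $N+1$ members $\textsf{h}_{0},\dots,\textsf{h}_{N}$ of $\mathcal H(m)$, and orthonormality gives $\langle \Theta_{N}(f)-f,\textsf{h}_{i}\rangle =0$ for every $0\le i\le N$. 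Consequently, for every $\Delta$ such that $\chi_{\Delta}$ lies in $S_{N}:=\mspan\{\textsf{h}_{0},\dots,\textsf{h}_{N}\}$, one has $\int_{\Delta}\bigl(\Theta_{N}(f)-f\bigr)=\langle \Theta_{N}(f)-f,\chi_{\Delta}\rangle=0$, all pairings being legitimate because $\Theta_{N}(f)\in L^{\infty}$.

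The crux is the purely algebraic identity that $S_{N}$ equals the space of functions constant on each interval of the families (\ref{delta1}) and (\ref{delta2}). I would first compute $\mu_{k}=(m-1)(1+m+\cdots+m^{k-1})=m^{k}-1$, so that $S_{\mu_{k}}$ is spanned by the first $m^{k}$ functions. Let $V_{k}$ be the space of functions on $[0,1]$ that are constant on each $m$-adic interval $[l/m^{k},(l+1)/m^{k}]$, $0\le l\le m^{k}-1$, so $\dim V_{k}=m^{k}$. I would prove $S_{\mu_{k}}=V_{k}$ by induction on $k$: the base case $k=0$ is $S_{0}=\mspan\{\chi_{[0,1]}\}=V_{0}$. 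For the step, the new functions $\textsf{h}_{\mu_{k}+1},\dots,\textsf{h}_{\mu_{k+1}}$ are exactly the level-$k$ wavelets $h^{(\nu)}_{k,s,m}$, $0\le s\le m^{k}-1$, $1\le\nu\le m-1$; each is supported on $I_{s}:=[s/m^{k},(s+1)/m^{k}]$ and constant on its $m$ subintervals of length $m^{-(k+1)}$, because $h^{(\nu)}\in V(m)$. Since $\{h^{(\nu)}:0\le\nu\le m-1\}$ is an orthonormal basis of $V(m)$ with $h^{(0)}=\varphi=\chi_{[0,1]}$, rescaling by $x\mapsto m^{k}x-s$ shows that $\{h^{(\nu)}_{k,s,m}:0\le\nu\le m-1\}$ is an orthonormal basis of the functions supported on $I_{s}$ and constant on its $m$ fine subintervals. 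As $h^{(0)}_{k,s,m}=m^{k/2}\chi_{I_{s}}$ already belongs to $V_{k}$, taking the direct sum over $0\le s\le m^{k}-1$ gives $V_{k+1}=V_{k}\oplus\mspan\{\text{level-}k\text{ wavelets}\}=S_{\mu_{k+1}}$, closing the induction.

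The same local analysis handles general $j$: adding only the blocks at positions $s=0,\dots,j-1$ supplies the full family $\{h^{(\nu)}_{k,s,m}\}_{\nu=0}^{m-1}$ on each $I_{s}$ with $s<j$ (the $\nu=0$ member coming from the $\chi_{I_{s}}$ component of $V_{k}$), hence all functions constant on the $m$ fine subintervals of $I_{s}$, while on the remaining $I_{s}$, $j\le s\le m^{k}-1$, only $\chi_{I_{s}}$ is available. Thus $S_{N}$ is precisely the space of functions constant on the partition $\mathcal P$ made of the fine intervals (\ref{delta1}) covering $[0,j/m^{k}]$ and the coarse intervals (\ref{delta2}) covering $[j/m^{k},1]$, which is the first assertion of the lemma. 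Finally, for each $\Delta\in\mathcal P$ we have $\chi_{\Delta}\in S_{N}$, so the orthogonality relation of the first paragraph yields $\int_{\Delta}\Theta_{N}(f)=\int_{\Delta}f$; since $\Theta_{N}(f)$ is constant on $\Delta$, its value there equals $\frac{1}{|\Delta|}\int_{\Delta}f$, which is (\ref{delsu}).

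The main obstacle is the span identification $S_{N}=\{\text{step functions on }\mathcal P\}$; everything else is formal. Its proof rests on the local multiresolution structure — that the $m$ functions $h^{(\nu)}$ form an orthonormal basis of $V(m)$ and that $h^{(0)}=\varphi$ — together with careful bookkeeping of which positions $s$ have already been refined once $j(m-1)$ level-$k$ terms have been added. I expect the only delicate point to be verifying that, on the already refined intervals $I_{s}$ with $s<j$, the constant component $\chi_{I_{s}}$ inherited from $V_{k}$ supplies precisely the missing $\nu=0$ direction, so that the $m-1$ newly added wavelets complete an orthonormal basis of the fine step functions on $I_{s}$.
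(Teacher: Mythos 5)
Your proof is correct and takes essentially the same route as the paper: both arguments rest on the multiresolution span identity (the first $m^{k}$ functions of ${\mathcal H}(m)$ span the $m$-adic step functions at scale $m^{-k}$) and then read off the partial sum as the averaging projection onto the corresponding partition. The only organizational difference is that the paper handles the intermediate indices by observing that $\Theta_{\mu_{k}+j(m-1)}$ coincides with $\Theta_{\mu_{k+1}}$ on $[0,j/m^{k}]$ and with $\Theta_{\mu_{k}}$ on $[j/m^{k},1]$, whereas you identify the span at the intermediate index directly and test orthogonality against the indicators $\chi_{\Delta}$; both versions rest on the same support observation, and your induction merely writes out the span identity that the paper asserts without proof.
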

\begin{proof}
At first we show that the assertion of the lemma is true for $\Theta_{\mu_{k}}(f,x),$ $ k\in \IN_{0}$. Indeed,
\[
\text{span}\{ {\textsf{h} }_{l}, 0\leq l\leq m^{k}-1 \} = V(m),
\]
where $V(m)$ is defined by (\ref{eq:vm}). Hence, for any   $  \Delta$  from (\ref{delta2}) with $j=1$ we have that  for $x\in \Delta$
\[
\Theta_{\mu_{k}}(f,x) = \sum_{l=0}^{m^{k}-1} \int_{[0,1]} f(t) \varphi_{k,l,m}(t) dt\, \varphi_{k,l,m}(x) = \frac{1}{|\Delta|}\int_{\Delta} f(t) dt.
\]
Afterwards we observe that in the general case
\[
\Theta_{\mu_{k}+ j(m-1)}(f,x) = \Theta_{\mu_{k+1}}(f,x)\quad \text{if}\quad x\in [0,\frac{j}{m^{k}}]
\]
and
\[
\Theta_{\mu_{k}+ j(m-1)}(f,x) = \Theta_{\mu_{k}}(f,x)\quad \text{if}\quad x\in [\frac{j}{m^{k}}, 1].
\]
Which finishes the proof.
\end{proof}

By Lemma \ref{Hlem:1} we obtain the following corollaries.
\begin{Corollary}\label{Hcor:1}
For any $m=2,3,\ldots$ the system ${\mathcal H}(m)$ is a basis in any space $L^{p}[0,1], 1\leq p < \infty.$
\end{Corollary}
\begin{proof}
By Lemma \ref{Hlem:1} as in the case of the classical Haar system we have that
\[
\|\Theta_{\mu_{k}+ j(m-1)} \|_{L^{p}\to L^{p}} \leq 1\qquad \text{for all}\quad 1\leq j\leq m^{k}, k\in \IN.
\]
To finish the proof we have to check that $\lim_{l\to \infty}|a_{l}(f)| \| {\textsf{h} }_{l}\|_{L^{p}[0,1]} = 0$. We skip the technical details because afterwards we are going to return to the similar question in the weighted norm case.
\end{proof}
\begin{Corollary}\label{Hcor:2}
For any $f\in L^{1}[0,1]$ the Fourier series of $f$ with respect to the system  ${\mathcal H}(m),$ $m=2,3,\ldots$ converges almost everywhere to $f$ on $[0,1]$.
\end{Corollary}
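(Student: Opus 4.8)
The plan is to deduce almost everywhere convergence from a maximal inequality together with a density argument, exactly as one does for the classical Haar system. Writing $S_n(f,x)=\sum_{l=0}^{n}a_l(f){\textsf{h}}_l(x)$ for the $n$th partial sum and $S^{*}f(x)=\sup_{n}|S_n(f,x)|$ for the associated maximal operator, the heart of the matter is to show that there is a constant $C_m$, depending only on $m$, with $S^{*}f(x)\le C_m\,M_{\mathcal M}f(x)$ for a.e. $x\in[0,1]$, where $M_{\mathcal M}$ is the $m$-adic maximal function from (\ref{max:f}). Once this pointwise domination is available, the a.e. convergence follows from the weak type $(1,1)$ bound for $M_{\mathcal M}$ and from convergence on a dense subclass by the usual reasoning.

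For the maximal bound I would first treat the ``block boundary'' partial sums $\Theta_{\mu_k+j(m-1)}(f,\cdot)$. By Lemma \ref{Hlem:1} each of these is constant on the $m$-adic intervals of the families (\ref{delta1})--(\ref{delta2}) and equals the average $\frac{1}{|\Delta|}\int_\Delta f$ on such a $\Delta$; since $\Delta\in\mathcal M$, this gives $|\Theta_{\mu_k+j(m-1)}(f,x)|\le M_{\mathcal M}f(x)$ at once. A general index $n$ falls strictly between two consecutive boundary indices, say $\mu_k+j(m-1)<n\le\mu_k+(j+1)(m-1)$, and the terms added in between are the functions $h^{(\nu)}_{k,j,m}$, $1\le\nu\le m-1$, all supported on the single interval $I=[j/m^{k},(j+1)/m^{k}]$. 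Hence $S_n(f,\cdot)$ coincides with $\Theta_{\mu_k+j(m-1)}(f,\cdot)$ off $I$, while on $I$ it is the orthogonal projection of $f|_I$ onto the span of an initial segment of the fixed orthonormal basis $h^{(0)},\dots,h^{(m-1)}$ of $V(m)$, rescaled to $I$. Because this is one fixed projection in an $m$-dimensional space, repeated at every scale, a single scale-invariant constant $C_m$ bounds its $L^\infty(I)$ norm by the mean $\frac{1}{|I|}\int_I|f|$, which is at most $M_{\mathcal M}f(x)$ for $x\in I$. Combining the two cases yields $S^{*}f\le C_m M_{\mathcal M}f$. I expect this uniform control of the mid-block projections, via the equivalence of norms on a finite-dimensional space together with dilation invariance, to be the main technical point, the rest being standard.

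With the maximal bound in hand, the weak type $(1,1)$ estimate for $M_{\mathcal M}$ is the content of Proposition \ref{pr:om} with $\omega\equiv 1$, so $|\{S^{*}f>\lambda\}|\le \frac{C_m}{\lambda}\|f\|_{L^1[0,1]}$. The dense subclass consists of the $m$-adic step functions, that is, functions constant on the $m$-adic intervals of some fixed length $m^{-k_0}$; such a $g$ lies in the span of $\{\varphi_{k_0,s,m}\}$, all its coefficients $a^{(\nu)}_{k,s,m}(g)$ vanish for $k\ge k_0$ because $h^{(\nu)}$ has mean zero, and therefore $S_n(g,\cdot)=g$ for every $n\ge\mu_{k_0}$, so its series converges to $g$ everywhere. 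These functions are dense in $L^1[0,1]$.

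To conclude, set $\mathrm{osc}(f,x)=\limsup_n S_n(f,x)-\liminf_n S_n(f,x)$. Given $\varepsilon>0$, choose an $m$-adic step function $g$ with $\|f-g\|_{L^1[0,1]}<\varepsilon$; since $\mathrm{osc}(g,\cdot)\equiv 0$ we have $\mathrm{osc}(f,\cdot)=\mathrm{osc}(f-g,\cdot)\le 2S^{*}(f-g)$, whence $|\{\mathrm{osc}(f,\cdot)>\lambda\}|\le\frac{2C_m}{\lambda}\varepsilon$ for every $\lambda>0$. Letting $\varepsilon\to 0$ shows $\mathrm{osc}(f,\cdot)=0$ a.e., so $\lim_n S_n(f,x)$ exists a.e. Finally, the subsequence $\Theta_{\mu_k}(f,x)$ equals the $m$-adic average of $f$ over the interval of length $m^{-k}$ containing $x$, and by the $m$-adic Lebesgue differentiation theorem, itself a consequence of the same weak $(1,1)$ bound and of the density of continuous functions, this subsequence tends to $f(x)$ a.e.; hence the full limit equals $f$ a.e., which is the assertion.
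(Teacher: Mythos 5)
Your proof is correct and is essentially the paper's own argument written out in full: the paper's two-line proof consists precisely of the Lebesgue-point convergence of the subsequence $\Theta_{\mu_k}(f,\cdot)$ together with the domination of the series' terms by $C\,M_{\mathcal M}f(x)$, which (via Lemma \ref{Hlem:1} and the weak $(1,1)$ bound of Proposition \ref{pr:om} with $\omega\equiv 1$) amounts to the maximal inequality $S^{*}f\le C_m M_{\mathcal M}f$ and the dense-class scheme you carry out. Your uniform bound on the mid-block projections and the oscillation argument are exactly the details hidden behind the paper's phrase ``which finishes the proof.''
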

\begin{proof}
For every $x\in [0,1]$ which is a Lebesgue point of $f$ we have that
\[
\lim_{k\to \infty} \Theta_{\mu_{k}}(f,x) = f(x).
\]
Afterwards we observe that $|a_{l}(f)| |{\textsf{h} }_{l}(x)| \leq C M_{\mathcal M}(f,x)$ which finishes the proof.
\end{proof}
For any $k\in \IN_{0}$ and any $1\leq j\leq m^{k}$ consider the kernel
\begin{equation}\label{ker}
K_{kj}(t,x) = \sum_{l=0}^{\mu_{k}} {\textsf{h} }_{l}(t) {\textsf{h} }_{l}(x) +  \sum_{s=0}^{j-1} \sum_{\nu =1}^{m-1} h^{(\nu)}_{k,s,m}(t) h^{(\nu)}_{k,s,m}(x).
\end{equation}
Let $\{ G_{i}: 1\leq i\leq m^{k} + j(m-1)\}$ be mutually disjoint sets from (\ref{delta1}) and (\ref{delta2}).
Further in the paper we will need the following result.
\begin{Lemma}\label{Hlem:ker1}
Let $k\in \IN_{0}$ and $1\leq j\leq m^{k}$. Then  the kernel
\begin{equation}\label{ker:1}
K_{kj}(t,x) =   \frac{1}{|G_{i}|}\quad \text{for}\quad (t,x)\in G_{i}^{2}, 1\leq i\leq m^{k} + j(m-1);
\end{equation}
and
\[
K_{kj}(t,x) = 0, \qquad \text{if}\quad (t,x)\in [0,1]^{2}\setminus \bigcup_{i=1}^{m^{k} + j(m-1)} G^{2}_{i}.
\]
\end{Lemma}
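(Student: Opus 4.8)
The plan is to recognize $K_{kj}$ as the reproducing kernel of the projection operator $\Theta_{\mu_k+j(m-1)}$ and then read off its values directly from the averaging property already established in Lemma~\ref{Hlem:1}. Writing the partial sum in integral form, one has
\[
\Theta_{\mu_k+j(m-1)}(f,x)=\int_{[0,1]}K_{kj}(t,x)f(t)\,dt,\qquad f\in L^1[0,1],
\]
so that, for each fixed $x$, the function $t\mapsto K_{kj}(t,x)$ represents the linear functional $f\mapsto \Theta_{\mu_k+j(m-1)}(f,x)$.

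First I would record the structural fact that, for fixed $x$, the map $t\mapsto K_{kj}(t,x)$ is constant on each set $G_i$. This is because every summand in (\ref{ker}) has this property: the functions ${\textsf h}_l$, $0\le l\le\mu_k$, are constant on the $m$-adic intervals of length $m^{-k}$, while the functions $h^{(\nu)}_{k,s,m}$ with $0\le s\le j-1$ are supported in $[0,j/m^k]$ and are constant on the finer intervals of length $m^{-(k+1)}$ there. Comparing with (\ref{delta1}) and (\ref{delta2}) — the finer intervals on $[0,j/m^k]$ and the coarser ones on $[j/m^k,1]$ — shows that each summand, and hence $K_{kj}(\cdot,x)$, is constant on every $G_i$.

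Next, fix an index $i_0$ and take $x$ in the interior of $G_{i_0}$. By Lemma~\ref{Hlem:1},
\[
\int_{[0,1]}K_{kj}(t,x)f(t)\,dt=\Theta_{\mu_k+j(m-1)}(f,x)=\frac{1}{|G_{i_0}|}\int_{G_{i_0}}f(t)\,dt
\]
for every $f\in L^1[0,1]$. Testing this identity against $f=\chi_{G_i}$ and writing $c_i$ for the constant value of $K_{kj}(\cdot,x)$ on $G_i$ gives $c_i|G_i|=\delta_{i,i_0}$, whence $c_{i_0}=1/|G_{i_0}|$ and $c_i=0$ for $i\ne i_0$. Since the sets $G_i$ cover $[0,1]$ up to the $m$-adic endpoints, which form a null set handled by the split-point convention of Subsection~\ref{ss:pr}, this is exactly the asserted formula: $K_{kj}(t,x)=1/|G_i|$ on $G_i^2$, and $K_{kj}(t,x)=0$ whenever $t$ and $x$ lie in different sets $G_i$.

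The only delicate point is the constancy claim of the second paragraph, and here a clean alternative is a dimension count. There are $jm+(m^k-j)=m^k+j(m-1)$ sets $G_i$, which equals the number $(\mu_k+1)+j(m-1)$ of orthonormal functions defining $K_{kj}$; since all these functions lie in the $\big(m^k+j(m-1)\big)$-dimensional space of functions constant on the $G_i$, orthonormality forces them to be a basis of that space. As the reproducing kernel of a finite-dimensional subspace does not depend on the orthonormal basis used to express it, evaluating it on the basis $\{\,|G_i|^{-1/2}\chi_{G_i}\,\}_i$ yields $K_{kj}(t,x)=\sum_i|G_i|^{-1}\chi_{G_i}(t)\chi_{G_i}(x)$ at once. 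I expect the bookkeeping of which summands vanish on $[j/m^k,1]$ to be the only step requiring genuine care.
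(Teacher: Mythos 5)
Your proposal is correct; in fact it contains two complete arguments, and it is your \emph{alternative} (the final paragraph) that coincides with the paper's own proof. The paper proceeds exactly by that route: by Lemma \ref{Hlem:1} the $m^k+j(m-1)$ orthonormal functions appearing in (\ref{ker}) all lie in the span of the orthonormal family $\{|G_i|^{-1/2}\chi_{G_i}\}_{i=1}^{m^k+j(m-1)}$, hence are obtained from it by an orthogonal transformation, and the kernel is unchanged under such a transformation (this invariance is isolated later in the paper as Lemma \ref{lem:ker}); evaluating the kernel on the indicator basis then gives (\ref{ker:1}) at once. Your primary argument is genuinely different: you view $K_{kj}$ as the reproducing kernel of the operator $\Theta_{\mu_k+j(m-1)}$, establish constancy of $K_{kj}$ on each product $G_i\times G_{i'}$ directly from the structure of the summands, and then determine the constants by testing the averaging identity (\ref{delsu}) against $f=\chi_{G_i}$; note that in passing from ``$x$ in a fixed $G_{i_0}$'' to the full blocks you implicitly also use constancy in the $x$ variable, which follows from the symmetry $K_{kj}(t,x)=K_{kj}(x,t)$ or from your own summand-by-summand observation. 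The paper's route buys brevity and reuses a fact needed anyway in the last section; your testing route buys a measure of robustness, since it uses only the quantitative output of Lemma \ref{Hlem:1} (that $\Theta_{\mu_k+j(m-1)}$ acts by local averaging) and would identify the kernel of any operator with that property, without needing the structural fact that the functions ${\textsf h}_l$ span the space of functions constant on each $G_i$.
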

\begin{proof}
We have that  $\{ |G_{i}|^{-1/2}\chi_{G_{i}}(x) \}_{i=1}^{m^{k} + j(m-1)}$ is an orthonormal system of functions. From Lemma \ref{Hlem:1} it follows  that the orthonormal system of functions $\{{\textsf{h} }_{l}(x)\}_{l=0}^{\mu_{k}} \bigcup \{ h^{(\nu)}_{k,s,m}(x): 0\leq s\leq j-1, 1\leq \nu \leq m-1\}$ can be obtained from the set of functions $\{ |G_{i}|^{-1/2}\chi_{G_{i}}(x) \}_{i=1}^{m^{k} + j(m-1)}$ by an orthogonal transformation.  Hence,
\[
K_{kj}(t,x) = \sum_{i=1}^{m^{k} + j(m-1)} |G_{i}|^{-1/2}\chi_{G_{i}}(t) |G_{i}|^{-1/2}\chi_{G_{i}}(x)
\]
\[
= \sum_{i=1}^{m^{k} + j(m-1)} |G_{i}|^{-1}\chi_{G_{i}}(t) \chi_{G_{i}}(x).
\]
\end{proof}

\begin{Definition}\label{def:tot}
We say that a system of functions $\{\phi_{k}\}_{k=1}^{\infty} \subset L^{\infty}[0,1]$ is total with respect to $L^{1}[0,1]$ if
\begin{equation}\label{tot:1}
\int_{[0,1]}  f(t) \phi_{k}(t) dt = 0 \qquad \text{for all}\quad k\in \IN \quad \text{for some} \quad f\in  L^{1}[0,1]
\end{equation}
if and only if $f= 0$ a.e. on $ [0,1]$.
\end{Definition}
By Lemma \ref{Hlem:1} it follows immediately
\begin{Corollary}\label{Htot:1}
The system  ${\mathcal H}(m),$ $m=2,3,\ldots$ is total with respect to $ L^{1}[0,1]$.
\end{Corollary}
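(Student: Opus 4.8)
The plan is to exploit the fact, furnished by Lemma \ref{Hlem:1}, that the partial sums of the ${\mathcal H}(m)$-expansion are exactly the conditional averages of $f$ over $m$-adic intervals. Concretely, I would start from a function $f\in L^{1}[0,1]$ satisfying the hypothesis of \eqref{tot:1}, namely
\[
\int_{[0,1]} f(t)\,{\textsf{h} }_{l}(t)\,dt = 0 \qquad \text{for all}\quad l\in \IN_{0},
\]
so that every Fourier coefficient $a_{l}(f)$ vanishes. Then each partial sum is a finite linear combination of the ${\textsf{h} }_{l}$ with these zero coefficients, whence $\Theta_{\mu_{k}}(f,\cdot)\equiv 0$ for every $k\in \IN_{0}$. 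Applying the identity \eqref{delsu} of Lemma \ref{Hlem:1} (with $j=1$, so that the intervals in \eqref{delta2} are precisely the $m$-adic intervals of length $m^{-k}$), I obtain
\[
0 = \Theta_{\mu_{k}}(f,x) = \frac{1}{|\Delta|}\int_{\Delta} f(t)\,dt, \qquad x\in \Delta,
\]
for every $\Delta = [\,l/m^{k},(l+1)/m^{k}\,]$ with $0\le l\le m^{k}-1$. Hence $\int_{\Delta} f = 0$ for every $m$-adic subinterval $\Delta\subset[0,1]$.

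Next I would fix a Lebesgue point $x$ of $f$ and let $\Delta_{k}(x)$ be the $m$-adic interval of length $m^{-k}$ containing $x$. Since these intervals shrink regularly to $x$ as $k\to\infty$, the Lebesgue differentiation theorem along the $m$-adic filtration yields
\[
f(x) = \lim_{k\to\infty} \frac{1}{|\Delta_{k}(x)|}\int_{\Delta_{k}(x)} f(t)\,dt = 0 .
\]
Because almost every point of $[0,1]$ is a Lebesgue point of $f$, this gives $f=0$ a.e., which is exactly the conclusion required by Definition \ref{def:tot}.

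I expect no genuine obstacle here, in line with the assertion that the corollary follows immediately from Lemma \ref{Hlem:1}; the only point demanding a word of justification is the last one, namely that the $m$-adic intervals form a regular differentiation basis so that the averages over $\Delta_{k}(x)$ converge to $f(x)$ at a.e. $x$. One may avoid even this by invoking Corollary \ref{Hcor:2} directly: the Fourier series of $f$ with respect to ${\mathcal H}(m)$ converges a.e. to $f$, and since all coefficients vanish every partial sum is identically zero, forcing $f=0$ a.e. Either route completes the proof of totality.
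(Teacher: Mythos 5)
Your proof is correct and is essentially the argument the paper intends: the paper gives no written proof beyond the remark that the corollary follows immediately from Lemma \ref{Hlem:1}, and your chain (vanishing coefficients $\Rightarrow$ vanishing partial sums $\Rightarrow$ vanishing averages over every $m$-adic interval by \eqref{delsu} $\Rightarrow$ $f=0$ a.e.\ by differentiation along the $m$-adic intervals) is exactly the natural filling-in of that remark. The alternative you mention, invoking Corollary \ref{Hcor:2}, is also legitimate and non-circular, since that corollary is proved earlier from Lemma \ref{Hlem:1} and the maximal function, using the very same Lebesgue-point fact your first route relies on.
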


\begin{Theorem}\label{UnB:1}
For any $m=2,3,\ldots$ the system ${\mathcal H}(m)$ is an unconditional basis in any space $L^{p}[0,1], 1< p < \infty.$
\end{Theorem}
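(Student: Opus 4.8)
The plan is to reduce unconditionality to a single uniform operator bound and then to prove that bound by a Calder\'on--Zygmund argument, interpolation and duality. Since $\mathcal{H}(m)$ is already a Schauder basis of $L^{p}[0,1]$ by Corollary \ref{Hcor:1}, it is an \emph{unconditional} basis precisely when the sign-change operators
$$
T_{\varepsilon}f = \sum_{l=0}^{\infty}\varepsilon_{l}\,a_{l}(f)\,\mathsf{h}_{l},
\qquad \varepsilon=(\varepsilon_{l}),\ \varepsilon_{l}\in\{-1,+1\},
$$
satisfy $\sup_{\varepsilon}\|T_{\varepsilon}\|_{L^{p}\to L^{p}}<\infty$. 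I would define each $T_{\varepsilon}$ first on the dense span of $\mathcal{H}(m)$ and extend by the a priori estimate, so the whole task reduces to proving one uniform norm bound.

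First I would record the two endpoints. On $L^{2}[0,1]$ the system $\mathcal{H}(m)$ is a complete orthonormal system by Corollary \ref{Hcor:1}, so Parseval's identity gives $\|T_{\varepsilon}f\|_{2}=\|f\|_{2}$; each $T_{\varepsilon}$ is thus an $L^{2}$-isometry, uniformly in $\varepsilon$. Next I would prove that $T_{\varepsilon}$ is of weak type $(1,1)$ with a constant independent of $\varepsilon$. Fix $f\in L^{1}[0,1]$ and $\lambda>\|f\|_{1}$ (the range $\lambda\le\|f\|_{1}$ being trivial since $|[0,1]|=1$), and apply the $m$-adic Calder\'on--Zygmund decomposition of Proposition \ref{pr:cz}, writing $f=g+b$ with $b=\sum_{l}b_{l}$, $b_{l}=b\,\chi_{G_{l}}$, $\int_{G_{l}}b_{l}=0$ by (\ref{eq:cz7}), and $\Omega=\bigcup_{l}G_{l}$ satisfying $|\Omega|\le\lambda^{-1}\|f\|_{1}$ by (\ref{eq:cz2}). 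For the good part I would combine $\|g\|_{2}^{2}\le m\lambda\|f\|_{1}$ from (\ref{eq:cz6}) with the $L^{2}$-isometry and Chebyshev's inequality to obtain $|\{|T_{\varepsilon}g|>\lambda/2\}|\le C\lambda^{-1}\|f\|_{1}$.

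The crux is the bad part, where the martingale structure of the Haar functions does the work. The key observation I would establish is that each $T_{\varepsilon}b_{l}$ is again supported in $G_{l}$: any $\mathsf{h}_{j}$ whose supporting $m$-adic interval properly contains $G_{l}$ is constant on $G_{l}$, because the generators $h^{(\nu)}$ are constant on the $m$ equal $m$-adic subintervals of their support, so the cancellation $\int_{G_{l}}b_{l}=0$ annihilates its coefficient; every remaining $\mathsf{h}_{j}$ that contributes is supported inside $G_{l}$, so $T_{\varepsilon}b_{l}$ cannot leave $G_{l}$. Consequently $T_{\varepsilon}b$ is supported in $\Omega$, whence $|\{x\notin\Omega:|T_{\varepsilon}b(x)|>\lambda/2\}|=0$ and $|\{|T_{\varepsilon}b|>\lambda/2\}|\le|\Omega|\le\lambda^{-1}\|f\|_{1}$. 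Adding the two parts yields the uniform weak-$(1,1)$ bound. I expect this support-preservation step, namely the verification that only Haar functions supported inside $G_{l}$ survive, to be the main point requiring care, since it rests on matching the $m$-adic scale of $G_{l}$ against the constancy intervals of each $h^{(\nu)}$.

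Finally, Marcinkiewicz interpolation between the weak-$(1,1)$ estimate and the $L^{2}$-isometry gives $\sup_{\varepsilon}\|T_{\varepsilon}\|_{L^{p}\to L^{p}}<\infty$ for $1<p\le 2$. Since the kernel $\sum_{l}\varepsilon_{l}\mathsf{h}_{l}(t)\mathsf{h}_{l}(x)$ is symmetric, each $T_{\varepsilon}$ is self-adjoint, so $\|T_{\varepsilon}\|_{L^{p}\to L^{p}}=\|T_{\varepsilon}\|_{L^{p'}\to L^{p'}}$ and duality transfers the bound to $2\le p<\infty$. This establishes the uniform boundedness of the sign-change operators for all $1<p<\infty$, which is exactly the unconditionality of $\mathcal{H}(m)$.
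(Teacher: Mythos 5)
Your proposal is correct and follows essentially the same route as the paper: the paper's Proposition \ref{eps:1} proves exactly your uniform weak-$(1,1)$ bound for the sign-change operators via the $m$-adic Calder\'on--Zygmund decomposition of Proposition \ref{pr:cz} (good part by the $L^{2}$-isometry and Chebyshev, bad part vanishing off $\Omega$ by the cancellation $\int_{G_{l}}b=0$ and the nested-or-disjoint structure of $m$-adic intervals), and then concludes by Marcinkiewicz interpolation for $1<p\le 2$ and duality for $2\le p<\infty$. The only difference is cosmetic: you spell out the support-preservation step that the paper leaves as ``easy to deduce,'' and you extend from a dense class by the a priori estimate where the paper argues via convergence in measure.
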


The reader can find well known facts about unconditional bases in \cite{LZ:77}.
For any sequence $\epsilon = \{\epsilon_{l}\}_{l=0}^{\infty}$, where $\epsilon =\pm 1$ we consider an operator $I_{\epsilon}: L^{1}[0,1] \rightarrow L^{0}[0,1] $ defined as follows $I_{\epsilon}(f,x) = \sum_{l=0}^{\infty} \epsilon_{l}a_{l}(f) {\textsf{h} }_{l}$.
\begin{prop}\label{eps:1}
The operator $I_{\epsilon}$ is of weak-$(1,1)$ type.
\end{prop}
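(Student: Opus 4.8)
The plan is to prove the weak-$(1,1)$ estimate by the Calderón--Zygmund method, splitting $f$ into a good and a bad part at level $\lambda$ and estimating $I_{\epsilon}$ on each piece separately. Fix $f\in L^{1}[0,1]$, a sign sequence $\epsilon$, and $\lambda>0$. If $\lambda\leq \|f\|_{L^{1}[0,1]}$ then the bound
\[
\big|\{x\in[0,1]:|I_{\epsilon}(f,x)|>\lambda\}\big| \leq 1 \leq \frac{1}{\lambda}\|f\|_{L^{1}[0,1]}
\]
is trivial, so I may assume $\int_{[0,1]}|f(t)|\,dt<\lambda$ and apply Proposition \ref{pr:cz} at level $\lambda$. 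This produces disjoint $m$-adic intervals $\{G_{l}\}_{l\in\Upsilon}$ with union $\Omega$ satisfying (\ref{eq:cz2}), together with the decomposition $f=g+b$, where $g$ is given by (\ref{eq:cz3}).

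For the good part I would use that $\{{\textsf{h}}_{l}\}_{l=0}^{\infty}$ is a complete orthonormal system in $L^{2}[0,1]$, so that for $g\in L^{2}[0,1]$ one has $\|I_{\epsilon}g\|_{L^{2}}=\big(\sum_{l}|\epsilon_{l}a_{l}(g)|^{2}\big)^{1/2}=\|g\|_{L^{2}}$; that is, $I_{\epsilon}$ acts as an isometry on $L^{2}[0,1]$. Combining Chebyshev's inequality with the estimate (\ref{eq:cz6}) for $p=2$ then gives
\[
\big|\{|I_{\epsilon}g|>\lambda/2\}\big| \leq \frac{4}{\lambda^{2}}\|I_{\epsilon}g\|_{L^{2}}^{2} = \frac{4}{\lambda^{2}}\|g\|_{L^{2}}^{2} \leq \frac{4}{\lambda^{2}}\,(m\lambda)\,\|f\|_{L^{1}[0,1]} = \frac{4m}{\lambda}\|f\|_{L^{1}[0,1]}.
\]

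The heart of the argument, and the step I expect to be the main obstacle, is to show that $I_{\epsilon}$ does not spread the bad part outside $\Omega$. Writing $b=\sum_{l\in\Upsilon}b_{l}$ with $b_{l}=b\,\chi_{G_{l}}$, each $b_{l}$ is supported on the $m$-adic interval $G_{l}$ and has vanishing integral by (\ref{eq:cz7}). I would then argue that every element ${\textsf{h}}$ of the system for which $\int_{G_{l}}b_{l}(t){\textsf{h}}(t)\,dt\neq 0$ is itself supported in $G_{l}$. Indeed, by the nesting property of $m$-adic intervals the support of ${\textsf{h}}$ is either disjoint from $G_{l}$, or strictly contains $G_{l}$, or is contained in $G_{l}$. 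In the first case the coefficient vanishes because $b_{l}$ lives on $G_{l}$. In the second case ${\textsf{h}}$ is constant on $G_{l}$ --- since a Haar function of the system is constant on each $m$-adic subinterval one generation finer than its support (cf.\ Lemma \ref{Hlem:1}), and $G_{l}$ lies inside one such subinterval --- so the coefficient is a multiple of $\int_{G_{l}}b_{l}=0$ and again vanishes. Hence $I_{\epsilon}b_{l}=\sum_{\operatorname{supp}{\textsf{h}}\subseteq G_{l}}\epsilon\,a({\textsf{h}},b_{l})\,{\textsf{h}}$ is supported in $G_{l}$, and since the $G_{l}$ are disjoint, $I_{\epsilon}b$ vanishes off $\Omega$. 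Therefore
\[
\big|\{|I_{\epsilon}b|>\lambda/2\}\big| \leq |\Omega| \leq \frac{1}{\lambda}\|f\|_{L^{1}[0,1]}
\]
by (\ref{eq:cz2}).

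Finally, since $I_{\epsilon}f=I_{\epsilon}g+I_{\epsilon}b$ and hence $\{|I_{\epsilon}f|>\lambda\}\subseteq\{|I_{\epsilon}g|>\lambda/2\}\cup\{|I_{\epsilon}b|>\lambda/2\}$, the two estimates combine to yield the weak-$(1,1)$ bound with constant $C=4m+1$, independent of $f$, $\lambda$, and the sign sequence $\epsilon$. The one technical point to attend to is the meaning of the series defining $I_{\epsilon}f$ for a merely integrable $f$: I would either first establish the estimate on the dense class $L^{2}[0,1]$ with a uniform constant and then pass to the limit, or interpret $I_{\epsilon}b_{l}$ through the almost everywhere convergent martingale transform on $G_{l}$ (as in Corollary \ref{Hcor:2}), so that the pointwise identification of $I_{\epsilon}b$ with a single summand on each $G_{l}$ is legitimate.
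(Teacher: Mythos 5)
Your proof is correct and follows essentially the same route as the paper: the Calder\'on--Zygmund decomposition of Proposition \ref{pr:cz}, the $L^{2}$-isometry plus Chebyshev estimate for the good part via (\ref{eq:cz6}), and the observation that $I_{\epsilon}b$ vanishes off $\Omega$ because any ${\textsf{h}}_{l}$ with nonzero coefficient against $b$ must be supported inside some $G_{l}$. In fact you spell out the support-localization step and the passage from bounded (or $L^{2}$) functions to general $f\in L^{1}[0,1]$ in more detail than the paper does; the only cosmetic difference is your $\lambda/2$ splitting, which yields the constant $4m+1$ instead of the paper's $m+1$.
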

\begin{proof}
We adopt the idea of the proof  given in \cite{Wa:64}. Let $f\in L^{\infty}[0,1]$ and suppose that $\lambda > \|f\|_{1}$.
Without loss in generality we can suppose that  $f\geq 0$ (see \cite{St:70}, pp. 21--22). By Proposition \ref{pr:cz} we write $f(x) = g(x) + b(x)$, where $g$ satisfies the condition (\ref{eq:cz5}). The system ${\mathcal H}(m)$ is a complete orthonormal system. Hence, $I_{\epsilon}: L^{2}[0,1] \rightarrow L^{2}[0,1] $ is an isometry. Thus by the Tchebychev inequality we will have
\begin{equation}\label{eq:eg}
|\{ x\in [0,1]: |I_{\epsilon}(g,x)| > \lambda\}| \leq \frac{1}{\lambda^{2}} \int_{[0,1]} g^{2}(x) dx \leq \frac{m}{\lambda}\|g\|_{1} \leq \frac{m}{\lambda}\|f\|_{1},
\end{equation}
where the last inequality follows by (\ref{eq:cz6}).Afterwards, we apply the following property of $m-$adic intervals. If $\Delta_{1}, \Delta_{2} \subset {\mathcal M}$ then only two relations are possible or $\Delta_{1}\bigcap \Delta_{2}= \emptyset$ or one of those intervals is a subset of another interval. By the definition of the system ${\mathcal H}(m)$ and by (\ref{eq:cz7}) it is easy to deduce that $I_{\epsilon}(b,x) = 0$ for $x\in \Omega^{c}$. Thus by (\ref{eq:cz2}) and  (\ref{eq:eg}) we obtain
\begin{equation}\label{eq:eg1}
|\{ x\in [0,1]: |I_{\epsilon}(f,x)| > \lambda\}| \leq \frac{m+1}{\lambda}\|f\|_{1}.
\end{equation}
From the last inequality readily follows  that for any $f\in L^{1}[0,1]$ the series $\sum_{l=0}^{\infty} \epsilon_{l}a_{l}(f) {\textsf{h} }_{l}$ converges in measure on $[0,1]$. Observe that in the proof of the inequality (\ref{eq:eg1}) the condition $f\in L^{\infty}[0,1]$ was used only to claim the existence of $I_{\epsilon}(f,x)$. Hence, the proof is complete.
\end{proof}
The analogue of Proposition \ref{eps:1} for the Haar system was obtained by S. Yano \cite{Ya:59}.
\begin{proof}
By Proposition \ref{eps:1} and the Marcinkiewicz interpolation theorem (see \cite{Zy:59}) we obtain that ${\mathcal H}(m)$ is an unconditional basis in  $L^{p}[0,1], 1< p \leq 2.$ Afterwards by duality we finish the proof of Theorem \ref{UnB:1}.
\end{proof}
For the system ${\mathcal H}(m)$ we put
\[
G_{m}(f,x) = \bigg(\sum_{l=0}^{\infty} |a_{l}(f) {\textsf{h} }_{l}(x)|^{2}\bigg)^{\frac{1}{2}},\qquad \text{where}\quad f\in L^{1}[0,1].
\]
For the operator $G_{m}: L^{1}[0,1] \rightarrow L^{0}[0,1] $ the following proposition holds.
\begin{prop}\label{Gm:1}
The operator $G_{m}$ is of weak-$(1,1)$ type.
\end{prop}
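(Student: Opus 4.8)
The plan is to imitate the proof of Proposition \ref{eps:1}, replacing the isometry $I_\epsilon$ by the sublinear operator $G_m$ and exploiting the same Calder\'on--Zygmund splitting. First I would record two structural facts about $G_m$. Since each $a_l$ is linear and $G_m(f,x) = \|(a_l(f)\,\textsf{h}_l(x))_l\|_{\ell^2}$, the triangle inequality in $\ell^2$ gives subadditivity, $G_m(f_1+f_2,x) \leq G_m(f_1,x) + G_m(f_2,x)$. Since $\mathcal H(m)$ is a complete orthonormal system, Parseval yields $\int_{[0,1]} G_m(f,x)^2\,dx = \sum_l |a_l(f)|^2 = \|f\|_2^2$, so $G_m$ preserves the $L^2$ norm. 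As in Proposition \ref{eps:1}, it suffices to treat $f\in L^\infty[0,1]$ and $\lambda > \|f\|_1$ (for $\lambda \leq \|f\|_1$ the estimate is trivial since the level set lies in $[0,1]$, and $L^\infty$ is dense in $L^1$); fix such a $\lambda$, assume $f\geq 0$, and write $f = g + b$ via the $m$-adic Calder\'on--Zygmund decomposition of Proposition \ref{pr:cz} at level $\lambda$, with $\Omega = \bigcup_l G_l$.

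For the good part, subadditivity together with Tchebychev's inequality gives $|\{x: G_m(g,x) > \lambda/2\}| \leq 4\lambda^{-2}\|G_m(g)\|_2^2 = 4\lambda^{-2}\|g\|_2^2$, and by (\ref{eq:cz6}) with $p=2$ one has $\|g\|_2^2 \leq m\lambda\|f\|_1$, so this set has measure at most $4m\lambda^{-1}\|f\|_1$.

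For the bad part the goal is to prove, exactly as for $I_\epsilon$, that $G_m(b,x) = 0$ for a.e.\ $x\in\Omega^c$; this is the crux of the argument. Write $b = \sum_l b_l$ with $b_l = (f-\eta_{G_l})\chi_{G_l}$, so each $b_l$ is supported on the $m$-adic interval $G_l$ and has mean zero there by (\ref{eq:cz7}). Fix a basis element $\textsf{h}_{l'}$ and let $\Delta$ be its support (an $m$-adic interval, with $\textsf{h}_0\equiv 1$ supported on all of $[0,1]$); since any two $m$-adic intervals are nested or disjoint, exactly one of three cases occurs for each $G_l$. If $\Delta$ and $G_l$ are disjoint then $\int_{G_l} b_l\,\textsf{h}_{l'}$ vanishes trivially; if $G_l \subsetneq \Delta$ then $\textsf{h}_{l'}$ is constant on $G_l$ (an $m$th rank Haar function is constant on every $m$-adic interval strictly contained in its support), whence $\int_{G_l} b_l\,\textsf{h}_{l'} = c\int_{G_l} b_l = 0$ by the mean-zero property; and if $\Delta \subseteq G_l$ --- which can happen for at most one $l$ since the $G_l$ are disjoint --- the coefficient may be nonzero but $\textsf{h}_{l'}$ is then supported inside that single $G_l$. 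Consequently $a_{l'}(b)$ is nonzero only when $\supp\textsf{h}_{l'} \subseteq G_l$ for a unique $l$, and grouping the series for $G_m(b,x)^2$ according to that $l$ shows that every surviving term $|a_{l'}(b)\textsf{h}_{l'}(x)|^2$ vanishes off $\Omega$. Thus $G_m(b,x) = 0$ on $\Omega^c$, so $\{x: G_m(b,x) > \lambda/2\}\subseteq\Omega$, and by (\ref{eq:cz2}) its measure is at most $\lambda^{-1}\|f\|_1$.

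Combining the two estimates through subadditivity, $|\{x: G_m(f,x) > \lambda\}| \leq |\{G_m(g)>\lambda/2\}| + |\{G_m(b)>\lambda/2\}| \leq (4m+1)\lambda^{-1}\|f\|_1$, which is the desired weak-$(1,1)$ bound with constant $4m+1$; as noted, the restriction $f\in L^\infty$ served only to guarantee the decomposition applies, and the general $f\in L^1$ case follows from the trivial bound for small $\lambda$ together with density. I expect the only genuine obstacle to be the localization step for the bad part: checking that, although $G_m$ is nonlinear, the square function of $b$ is supported in $\Omega$. This rests on pairing the mean-zero property (\ref{eq:cz7}) with the fact that each $m$th rank Haar function is constant on $m$-adic intervals properly contained in its support, so that the $\ell^2$ sum defining $G_m(b,x)^2$ decouples over the disjoint pieces $G_l$.
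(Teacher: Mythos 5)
Your proof is correct, but it takes a genuinely different route from the paper's. The paper deduces Proposition \ref{Gm:1} from Proposition \ref{eps:1} by randomization: by the inequality from \cite{Kah:93} cited there, for every $0<\alpha<1$ one has $\mathcal{P}\bigl(I_{\epsilon}(f,x)>\alpha G_{m}(f,x)\bigr)>\tfrac{1}{3}(1-\alpha)^{2}$ over Rademacher signs $\epsilon$, so integrating in $\epsilon$ and using the uniform weak-$(1,1)$ bound (\ref{eq:eg1}) for $I_{\epsilon}$ gives the weak-$(1,1)$ bound for $G_{m}$ on polynomials; the general case $f\in L^{1}[0,1]$ then follows because $G_{m}(\Theta_{n}(f,\cdot),x)$ increases a.e.\ to $G_{m}(f,x)$ while $\sup_{n}\|\Theta_{n}(f,\cdot)\|_{L^{1}[0,1]}$ is controlled by $\|f\|_{L^{1}[0,1]}$. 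You instead rerun the Calder\'on--Zygmund argument of Proposition \ref{eps:1} directly on the square function: subadditivity (Minkowski in $\ell^{2}$), the Parseval identity $\|G_{m}(g,\cdot)\|_{2}=\|g\|_{2}$ in place of the $L^{2}$ isometry of $I_{\epsilon}$, and---the crux---the observation that for the bad part $b$ each individual term $a_{l'}(b)\,{\textsf{h}}_{l'}(x)$ vanishes for $x\in\Omega^{c}$, by exactly the nestedness-plus-mean-zero mechanism the paper invokes to get $I_{\epsilon}(b,x)=0$ on $\Omega^{c}$; since that vanishing is term by term, it survives the passage from the signed sum to the $\ell^{2}$ sum, and your case analysis (disjoint / strictly contained / containing) is complete and correct. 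Your route is more self-contained (no probabilistic input) and yields the explicit constant $4m+1$; the paper's route is shorter once Proposition \ref{eps:1} is available and exhibits the general transference principle from randomized sums to square functions.

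One small correction: your closing appeal to density of $L^{\infty}$ in $L^{1}$ is both under-justified and unnecessary. Weak-type bounds do not pass through $L^{1}$-density automatically; one would need a Fatou-type argument, e.g.\ using that each $a_{l}$ is $L^{1}$-continuous, so that $G_{m}(f,x)\leq\liminf_{n}G_{m}(f_{n},x)$ pointwise. But in fact Proposition \ref{pr:cz} requires only $f\in L^{1}[0,1]$ with $\lambda>\|f\|_{1}$, and $G_{m}(f,x)$ is always defined as a sum of nonnegative terms (unlike $I_{\epsilon}(f,x)$, whose existence is the sole reason the paper restricts Proposition \ref{eps:1} to $f\in L^{\infty}$), so your argument as written already applies verbatim to every $f\in L^{1}[0,1]$; no approximation step is needed.
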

\begin{proof} Let $f= \sum_{l=0}^{N} a_{l} {\textsf{h} }_{l}$ be any polynomial with respect to the system ${\mathcal H}(m)$ and let $\epsilon = \{\epsilon_{l}\}_{l=0}^{N}$ be any Rademacher sequence. By a well known inequality (see \cite{Kah:93},p.8) we have that for any $0<\alpha<1$ and and any $x\in [0,1]$
\[
\mathcal{P}( I_{\epsilon}(f,x)> \alpha G_{m}(f,x)) >\frac{1}{3}(1-\alpha)^{2}.
\]
Observe that $ G_{m}(I_{\epsilon}(f,\cdot),x)= G_{m}(f,x)$ for any Rademacher sequence $\epsilon.$     For any $\lambda >0$ we have that if the following two events $\{ I_{\epsilon}(f,x)> \alpha G_{m}(f,x)\}$, $\{ G_{m}(f,x)> \frac{\lambda}{\alpha} \}$ then $\{ I_{\epsilon}(f,x)> \lambda\}.$ Hence, by  Proposition \ref{eps:1} we finish the proof for the polynomials with respect to the system ${\mathcal H}(m)$.

For arbitrary $f\in L^{1}[0,1]$ we have that the sequence $G_{m}(\Theta_{n}(f,\cdot),x) $ is an increasing sequence which a.e. converges to $G_{m}(f,x).$ Hence,
\begin{gather*}
|\{ G_{m}(f,x)> \lambda \}| = \lim_{n\to +\infty} |\{ G_{m}(\Theta_{n}(f,\cdot),x) > \lambda \}| \\< \frac{C}{\lambda}\sup_{n} \| \Theta_{n}(f,\cdot)\|_{L^{1}[0,1]} \leq \frac{C_{1}}{\lambda} \| f\|_{L^{1}[0,1]}
\end{gather*}
\end{proof}

By standard arguments (see \cite{Wo:97}) one can derive from Theorem \ref{UnB:1}
 that for all $1<p<\infty$
\begin{equation}\label{eq:sqin}
\|G_{m}(f,\cdot) \|_{L^{p}[0,1]} \simeq \|f \|_{L^{p}[0,1]}.
\end{equation}

\

\subsection{ Haar wavelet systems as unconditional bases in $L^{p}(\IR), 1< p < \infty.$ }

From Theorem \ref{UnB:1} we easily derive

\begin{Theorem}\label{WUnB:1}
For any $m=2,3,\ldots$ the system ${ H}(m)$ is an unconditional basis in any space $L^{p}(\IR), 1< p < \infty.$
\end{Theorem}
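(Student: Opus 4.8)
The plan is to transfer the unconditional basis property from the finite-interval system $\mathcal{H}(m)$ on $[0,1]$ (Theorem \ref{UnB:1}) to the full wavelet system $H(m)$ on $\IR$. The essential point is that $H(m)$ restricted to any dyadic-type interval $[0,m^N]$, after rescaling, behaves exactly like $\mathcal{H}(m)$ on $[0,1]$, together with the fact that $H(m)$ is complete in $L^p(\IR)$ by Theorem \ref{thm:com}. The cleanest route is through the square-function characterization. I would first establish the analogue of \eqref{eq:sqin} on $\IR$, namely that for all $1<p<\infty$,
\[
\bigg\| \bigg(\sum_{k,j,\nu} |c^{(\nu)}_{k,j}(f)\, h^{(\nu)}_{k,j,m}|^2\bigg)^{1/2} \bigg\|_{L^p(\IR)} \simeq \|f\|_{L^p(\IR)},
\]
where $c^{(\nu)}_{k,j}(f) = \int_{\IR} f(t)\, h^{(\nu)}_{k,j,m}(t)\, dt$. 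Once such a two-sided square-function estimate is in place, unconditionality follows by the standard argument: for any choice of signs $\epsilon$, the operator sending $f$ to $\sum \epsilon^{(\nu)}_{k,j} c^{(\nu)}_{k,j}(f)\, h^{(\nu)}_{k,j,m}$ leaves the square function unchanged, hence is bounded on $L^p(\IR)$ with norm controlled by the ratio of the two constants in the equivalence, uniformly in $\epsilon$.

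First I would reduce the square-function estimate on $\IR$ to the one already proved on $[0,1]$. The key geometric observation is that the functions $h^{(\nu)}_{k,j,m}$ whose support lies in a fixed interval $[0,m^N]$ are precisely, after the dilation $x\mapsto m^N x$ and the corresponding $L^p$-normalization, the elements of a finite piece of $\mathcal{H}(m)$ on $[0,1]$; this is the same scaling computation used in Lemmas \ref{lem:transf1}--\ref{lem:transf3}. I would fix a function $f$ supported in $[-m^N, m^N]$ (dense as $N\to\infty$), split its expansion into the part coming from wavelets supported in $[0,m^N]$ and the part in $[-m^N,0]$, and apply \eqref{eq:sqin} on each half after rescaling. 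The scale-invariance of the $L^p(\IR)$ norm under $L^2$-normalized dilation guarantees the constants do not depend on $N$, so letting $N\to\infty$ and using density plus the completeness from Theorem \ref{thm:com} yields the equivalence for all $f\in L^p(\IR)$.

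The point requiring the most care is the passage from a single bounded interval to all of $\IR$ with constants independent of $N$, and the treatment of the wavelets that straddle or live at coarse scales $k<0$. On $[0,1]$ the coarsest scale is fixed, whereas on $\IR$ there are arbitrarily coarse dilations $h^{(\nu)}_{k,j,m}$ with $k\to -\infty$; these contribute the low-frequency part of $f$ and must be handled by the completeness and the averaging structure rather than by the finite-interval lemma directly. I would control them by noting that the partial-sum projections onto scales $\geq -N$ are uniformly bounded on $L^p(\IR)$ (again by rescaling the bounded projections $\Theta_{\mu_k+j(m-1)}$ from Lemma \ref{Hlem:1} and Corollary \ref{Hcor:1}), so the full square function is an increasing limit of the truncated ones and the estimate survives the limit by monotone convergence, exactly as in the proof of Proposition \ref{Gm:1}. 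Assembling these pieces gives the two-sided bound on $L^p(\IR)$, from which Theorem \ref{WUnB:1} follows immediately.
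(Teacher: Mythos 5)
Your strategy is sound and, once the details are filled in, it does prove the theorem, but it is a genuinely different route from the paper's. The paper never passes through a square-function estimate on $\IR$: it splits $H(m)$ into $H^{+}(m)$ and $H^{-}(m)$ (no wavelet straddles the origin, so the two half-lines can be treated separately), and for a \emph{finite} coefficient set $\Omega$ it chooses $N$ so large that every $h^{(\nu)}_{k,j,m}$ with $(k,j)\in\Omega$ is supported in $[0,m^{N}]$, applies the dilation $D_{N}(\phi)(x)=m^{N/2}\phi(m^{N}x)$, and observes that $D_{N}(S^{(\nu)}_{\Omega}(f,\cdot))$ is a partial sum over an arbitrary finite subset of the ${\mathcal H}(m)$-expansion of $D_{N}f$ on $[0,1]$; Theorem \ref{UnB:1} then gives a bound uniform in $\Omega$, which rescales back because both sides of the inequality scale by the same factor $m^{N(p/2-1)}$, and uniform boundedness of all finite partial sums plus completeness (Theorem \ref{thm:com}) finishes the proof. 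You instead transfer \eqref{eq:sqin} --- itself a consequence of Theorem \ref{UnB:1} --- to $\IR$ and recover unconditionality from sign-invariance of the square function. That buys more (a Littlewood--Paley equivalence on $\IR$, which is of independent interest and is the natural statement to aim for if one later wants weighted versions), but it costs more, and the cost sits exactly where your sketch is loosest: the wavelets supported in $[0,m^{N}]$ rescale not to a ``finite piece of ${\mathcal H}(m)$'' but to \emph{all of} ${\mathcal H}(m)$ \emph{except the constant} ${\textsf{h}}_{0}$, so $G_{m}(D_{N}f,\cdot)^{2}$ carries the extra term $|a_{0}(D_{N}f)|^{2}$ which has no counterpart in the wavelet square function on $\IR$. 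Your monotone-convergence remark settles the upper bound $\|Gf\|_{p}\lesssim\|f\|_{p}$ (dropping the $a_{0}$-term only decreases the square function), but for the lower bound you must apply the rescaled \eqref{eq:sqin} not to $f$ itself but to $f-E_{N}f$, where $E_{N}$ is the averaging over $m$-adic intervals of length $m^{N}$ (the complement of your ``projection onto scales $\geq -N$''): subtracting the mean kills the ${\textsf{h}}_{0}$-coefficient, the truncated square functions of $f$ and of $f-E_{N}f$ coincide, and $\|E_{N}f\|_{L^{p}}=m^{N(1/p-1)}\,|\int f|\to 0$ precisely because $p>1$. Alternatively, it suffices to prove the equivalence for finite linear combinations of wavelets (which, with completeness, is all that unconditionality requires), and there $\int f=0$ makes the mismatch disappear. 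With either repair your argument closes; the paper's finite-partial-sum formulation avoids the issue altogether --- an arbitrary subset of coefficients is exactly what an unconditional basis on $[0,1]$ controls, and no constant term or limiting procedure ever enters --- which is why its proof is shorter.
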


Further  we will use the following notations:
$\IR^{+} = [0,+\infty)$, $\IR^{-}= (-\infty,0]$ and $\IZ^{+} = \IR^{+}\bigcap \IZ$  having in mind the agreement introduced in Section \ref{ss:pr}.
For technical reasons we divide the system $H(m)$ into two parts:
\begin{equation}\label{haar1:m}
H^{+}(m) = \{h^{(\nu)}_{k,j,m}(x): k\in \IZ; j\geq 0; 1\leq \nu \leq m-1 \},
\end{equation}
\begin{equation}\label{haar2:m}
H^{-}(m) = \{h^{(\nu)}_{k,j,m}(x): k\in \IZ; j\leq -1; 1\leq \nu \leq m-1 \}.
\end{equation}
We are going to show that the systems $H^{+}(m)$, $H^{-}(m)$ are unconditional bases respectively in the spaces $L^{p}(\IR^{+})$ and $L^{p}(\IR^{-}), 1< p < \infty.$ Let us prove the following
\begin{Theorem}\label{W1UnB:1}
For any $m=2,3,\ldots$ the system $H^{+}(m)$ is an unconditional basis in any space $L^{p}(\IR^{+}), 1< p < \infty.$
\end{Theorem}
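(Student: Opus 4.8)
The plan is to derive the statement from Theorem~\ref{WUnB:1}, which gives that $H(m)$ is an unconditional basis of $L^{p}(\IR)$, by exploiting the support structure of the basis elements. The decisive observation is that each $h^{(\nu)}_{k,j,m}$ is supported on the $m$-adic interval $[\frac{j}{m^{k}},\frac{j+1}{m^{k}}]$: since $h^{(\nu)}\in V(m)$ is supported in $[0,1]$, the function $h^{(\nu)}_{k,j,m}(x)=m^{k/2}h^{(\nu)}(m^{k}x-j)$ vanishes off $[\frac{j}{m^{k}},\frac{j+1}{m^{k}}]$. Hence the elements of $H^{+}(m)$ (those with $j\geq 0$) are exactly the elements of $H(m)$ whose support lies in $\IR^{+}$, the elements of $H^{-}(m)$ (those with $j\leq -1$) are exactly those supported in $\IR^{-}$, and $H(m)=H^{+}(m)\cup H^{-}(m)$ is a disjoint partition.

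First I would invoke the standard fact (see \cite{LZ:77}) that any subset of an unconditional basis is itself an unconditional basis of the closed linear span of that subset; this rests on the uniform boundedness of the coordinate restriction projections $P_{A}\big(\sum_{h}a_{h}h\big)=\sum_{h\in A}a_{h}h$ that unconditionality provides. Taking $A=H^{+}(m)$, it then suffices to identify the closed span $\overline{\mspan}\,H^{+}(m)$ in $L^{p}(\IR)$ with $L^{p}(\IR^{+})$, the latter being viewed, via extension by zero, as the isometric subspace of $L^{p}(\IR)$ consisting of functions that vanish a.e.\ on $\IR^{-}$.

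The inclusion $\overline{\mspan}\,H^{+}(m)\subseteq L^{p}(\IR^{+})$ is immediate, since every generator is supported in $\IR^{+}$. For the reverse inclusion I would take $f\in L^{p}(\IR^{+})$, extend it by zero to $\widetilde{f}\in L^{p}(\IR)$, and expand it in the unconditional basis $H(m)$, writing $\widetilde{f}=\sum_{h\in H^{+}(m)}a_{h}(\widetilde{f})\,h+\sum_{h\in H^{-}(m)}a_{h}(\widetilde{f})\,h$ with unconditional convergence. For $h\in H^{-}(m)$ the coefficient $a_{h}(\widetilde{f})=\int_{\IR}\widetilde{f}(x)h(x)\,dx$ vanishes, because $\widetilde{f}$ is supported in $\IR^{+}$ while $h$ is supported in $\IR^{-}$, so the two supports overlap in a null set. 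Thus $\widetilde{f}=\sum_{h\in H^{+}(m)}a_{h}(\widetilde{f})\,h\in\overline{\mspan}\,H^{+}(m)$, which yields the reverse inclusion and the desired identification.

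I expect no serious obstacle here, as the entire content is the support disjointness of $H^{+}(m)$ and $H^{-}(m)$. The one point that needs care is the passage between $L^{p}(\IR^{+})$ and its image in $L^{p}(\IR)$: one must verify that extension by zero is an isometry onto the subspace of functions vanishing on $\IR^{-}$, and that restriction back to $\IR^{+}$ sends the unconditional $H(m)$-expansion of $\widetilde{f}$ to an unconditional $H^{+}(m)$-expansion of $f$ in $L^{p}(\IR^{+})$. This holds because restriction is a norm-one linear operator and acts as the identity on functions already supported in $\IR^{+}$, so unconditional convergence is preserved; the uniqueness of the $H^{+}(m)$-expansion in $L^{p}(\IR^{+})$ then follows at once from the uniqueness of the $H(m)$-expansion in $L^{p}(\IR)$.
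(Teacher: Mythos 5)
Your argument is internally coherent, but it rests on Theorem \ref{WUnB:1} as an available input, and within this paper that makes it circular. The paper's proof of Theorem \ref{WUnB:1} is not independent of the statement you are proving: the text splits $H(m)$ into $H^{+}(m)$ and $H^{-}(m)$, proves Theorem \ref{W1UnB:1} for $H^{+}(m)$, remarks that the same argument works for $H^{-}(m)$ in $L^{p}(\IR^{-})$, and only then declares that ``the proof of Theorem \ref{WUnB:1} is finished.'' In other words, the deductive chain of the paper is: Theorem \ref{UnB:1} (unconditionality of ${\mathcal H}(m)$ on $[0,1]$) $\Rightarrow$ Theorem \ref{W1UnB:1} and its mirror image for $H^{-}(m)$ $\Rightarrow$ Theorem \ref{WUnB:1}. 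Your proposal inverts the last arrow, so it assumes exactly what the paper still owes the reader at this point. (The citation of Theorem \ref{WUnB:1} inside the paper's own proof of Theorem \ref{W1UnB:1} is a label slip: the inequality invoked there, a uniform $L^{p}[0,1]$ bound for subsums of ${\mathcal H}(m)$-expansions, is Theorem \ref{UnB:1}.)

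The missing idea is the reduction to the unit interval by dilation, which is what the paper actually does: given a finite $\Omega\subset\IZ\times\IZ^{+}$, choose $N$ so that every $h^{(\nu)}_{k,j,m}$ with $(k,j)\in\Omega$ vanishes on $[m^{N},+\infty)$, and apply the dilation $D_{N}(\phi)(x)=m^{N/2}\phi(m^{N}x)$. This sends each such $h^{(\nu)}_{k,j,m}$ to an element of ${\mathcal H}(m)$ restricted to $[0,1]$ and preserves the coefficients, so $D_{N}(S^{(\nu)}_{\Omega}(f,\cdot))$ is a subsum of the ${\mathcal H}(m)$-expansion of $D_{N}(f)$; Theorem \ref{UnB:1} then gives $\|S^{(\nu)}_{\Omega}(f,\cdot)\|_{L^{p}(\IR^{+})}\leq C_{p}\|f\|_{L^{p}(\IR^{+})}$ uniformly in $\Omega$, which is unconditionality. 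Your remaining steps are fine: that a subset of an unconditional basis is an unconditional basis of its closed span, and that the closed span of $H^{+}(m)$ is all of $L^{p}(\IR^{+})$ because the $H^{-}(m)$-coefficients of a function supported in $\IR^{+}$ vanish. So your proposal would be a clean derivation if Theorem \ref{WUnB:1} had a proof not passing through Theorem \ref{W1UnB:1}; in this paper it does not, and repairing the argument essentially forces you back to the paper's dilation scheme.
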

\begin{proof}
Let $f\in L^{p}(\IR^{+})$ and
let $\Omega \subset \IZ\times \IZ^{+}$ be a finite set. For any $1\leq \nu \leq m-1$ consider the sum
\[
S^{(\nu)}_{\Omega}(f,x) = \sum_{(k,j)\in \Omega} c^{(\nu)}_{kj}(f) h^{(\nu)}_{k,j,m}(x),
\]
where
\[
c^{(\nu)}_{kj}(f) = \int_{\IR^{+}} f(t) h^{(\nu)}_{k,j,m}(t) dt.
\]
Let $N\in \IN$ be such that for all $(k,j)\in \Omega$ $h^{(\nu)}_{k,j,m}(x) = 0$ if $x\in [m^{N}, +\infty)$. Consider the dilation operator $D_{N}(\phi)(x) = m^{\frac{N}{2}}\phi(m^{N}x)$. It is clear that $D_{N}(h^{(\nu)}_{k,j,m}) \in {\mathcal H}(m)$ for any $h^{(\nu)}_{k,j,m}$ which satisfies to the above conditions if we consider the restriction on $[0,1]$ of the image of the operator.  Thus
\[
D_{N}(S^{(\nu)}_{\Omega}(f,\cdot))(x) = \sum_{(k,j)\in \Omega} c^{(\nu)}_{kj}(f) D_{N}(h^{(\nu)}_{k,j,m})(x)
\]
on $[0,1]$ is a finite linear combination of elements from  ${\mathcal H}(m)$. We also have that if $(k,j)\in \Omega$
\[
c^{(\nu)}_{kj}(f) = \int_{[0,m^{N}]} f(t) h^{(\nu)}_{k,j,m}(t) dt = \int_{[0,1]} D_{N}(f)(t)  D_{N}(h^{(\nu)}_{k,j,m})(t) dt.
\]
Hence, $D_{N}(S^{(\nu)}_{\Omega}(f,\cdot))(x)$ on $[0,1]$ coincides with the sum of a subsequence of the expansion of the function $D_{N}(f)$ with respect to the system ${\mathcal H}(m)$. By Theorem \ref{WUnB:1} we obtain that there exists $C_{p}>0$ which depends only on $p$ such that
\[
\int_{[0,1]} |D_{N}(S^{(\nu)}_{\Omega}(f,\cdot))(t)|^{p} dt \leq C^{p}_{p} \int_{[0,1]} |D_{N}(f)(t)|^{p} dt
\]
which yields
\[
\| S^{(\nu)}_{\Omega}(f,\cdot)\|_{L^{p}(\IR^{+})} \leq C_{p} \| f\|_{L^{p}(\IR^{+})}.
\]

\end{proof}
It is clear that in a similar way we can check that $H^{-}(m)$ is an unconditional basis in any space $L^{p}(\IR^{-}), 1< p < \infty.$
Thus we the proof of Theorem \ref{WUnB:1} is finished.

\

\subsection{$m$th rank Haar system in $L^{p}([0,1],w), 1\leq p < \infty.$ }

\begin{Theorem}\label{WHB:1}
For any $m=2,3,\ldots$ the system ${\mathcal H}(m)$ is a basis in the weighted norm space $L^{p}([0,1],w),$ $ 1\leq p < \infty$ if and only if $w$ satisfies the condition ${\mathcal M}_{p}([0,1])$.
\end{Theorem}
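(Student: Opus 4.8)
The plan is to prove both directions using the machinery already assembled in Sections~\ref{sec:3} and~\ref{sec:mH}, reducing everything to the behaviour of partial-sum operators $\Theta_{n}$ and the explicit kernel $K_{kj}$ of Lemma~\ref{Hlem:ker1}. For the sufficiency direction, suppose $w$ satisfies ${\mathcal M}_{p}([0,1])$. The key observation, following Lemma~\ref{Hlem:ker1}, is that every partial-sum projection $\Theta_{\mu_{k}+j(m-1)}(f,\cdot)$ is built from averaging operators: on each set $G_{i}$ from the collections (\ref{delta1})--(\ref{delta2}) it equals $\frac{1}{|G_{i}|}\int_{G_{i}}f$. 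Hence $|\Theta_{n}(f,x)|\leq M_{\mathcal M}f(x)$ pointwise, and by Proposition~\ref{pr:3} the operator norms $\|\Theta_{n}\|_{L^{p}([0,1],w)\to L^{p}([0,1],w)}$ are uniformly bounded by a constant depending only on $B_{p}$. Uniform boundedness of partial sums plus totality (Corollary~\ref{Htot:1}) is the standard criterion for a basis; what remains is to check that finite linear combinations are dense in $L^{p}([0,1],w)$ and that coefficients are well-defined, which follows because ${\mathcal M}_{p}([0,1])$ forces $w^{-1/(p-1)}\in L^{1}[0,1]$ (take $\Delta=[0,1]$ in (\ref{M:p})), so each $a_{l}(f)$ is a bounded functional on $L^{p}([0,1],w)$ via H\"older.

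For the necessity direction, I would argue by contraposition: assume ${\mathcal H}(m)$ is a basis in $L^{p}([0,1],w)$ and derive (\ref{M:p}). A basis automatically has uniformly bounded partial-sum projections, so there exists $C$ with $\|\Theta_{n}(f,\cdot)\|_{L^{p}(w)}\leq C\|f\|_{L^{p}(w)}$ for all $n$. The plan is to feed this inequality a well-chosen test function supported on a single $m$-adic interval $\Delta\in{\mathcal M}\cap[0,1]$ and localized at a level where the projection acts as the average over $\Delta$. Concretely, pick $n$ so that $\Delta$ is one of the sets $G_{i}$; then $\Theta_{n}(f,x)=\frac{1}{|\Delta|}\int_{\Delta}f\,dt$ for $x\in\Delta$. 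Choosing $f=w^{-1/(p-1)}\chi_{\Delta}$ and discarding the contribution off $\Delta$ gives
\[
\Big(\frac{1}{|\Delta|}\int_{\Delta}w^{-\frac{1}{p-1}}\Big)^{p}w(\Delta)\leq C^{p}\int_{\Delta}w^{-\frac{1}{p-1}}(t)\,dt,
\]
and rearranging yields precisely $\omega(\Delta)\big[\int_{\Delta}\omega^{-1/(p-1)}\big]^{p-1}\leq C^{p}|\Delta|^{p}$, i.e.\ ${\mathcal M}_{p}([0,1])$. This is the same duality-pairing trick used in the first half of the proof of Proposition~\ref{pr:3}.

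The case $p=1$ should be handled separately, since ${\mathcal M}_{1}([0,1])$ reads $\omega(\Delta)\|\omega^{-1}\|_{L^{\infty}(\Delta)}\leq C|\Delta|$ and the maximal-function estimate of Proposition~\ref{pr:3} is unavailable; here the sufficiency follows instead from the weak-$(1,1)$ bounds already proved in Propositions~\ref{eps:1} and~\ref{Gm:1} adapted to the measure $w\,dt$, combined with the pointwise domination $|\Theta_{n}(f,x)|\leq M_{\mathcal M}f(x)$, while necessity again comes from testing the uniform bound against $\chi_{\Delta}$ and examining where the average concentrates. I expect the main obstacle to be the technical verification that uniform boundedness of the $\Theta_{n}$ together with totality genuinely upgrades to the basis property in the weighted setting---one must confirm convergence $\Theta_{n}(f,\cdot)\to f$ in $L^{p}(w)$-norm for every $f$, not merely boundedness of the projections. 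This requires density of $m$-adic step functions in $L^{p}([0,1],w)$, which holds because $w\in L^{1}_{\mathrm{loc}}$ and the ${\mathcal M}_{p}$ condition controls the tails; the remaining coefficient-decay argument is the weighted analogue of the step deliberately skipped in Corollary~\ref{Hcor:1}, and it is where the reverse H\"older inequality of Lemma~\ref{lem:RH} enters to guarantee $|a_{l}(f)|\,\|\textsf{h}_{l}\|_{L^{p}(w)}\to 0$.
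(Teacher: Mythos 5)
Your overall architecture matches the paper's (uniform boundedness of the special partial sums $\Theta_{\mu_{k}+j(m-1)}$, a coefficient-decay argument to reach all indices, and an $m$-adic test-function computation for necessity), but there are genuine gaps. The most serious one is the case $p=1$, which the theorem includes. Your sufficiency route, namely the pointwise domination $|\Theta_{\mu_{k}+j(m-1)}(f,x)|\leq M_{\mathcal M}f(x)$ followed by Proposition \ref{pr:3}, is only available for $p>1$, and your proposed substitute for $p=1$ --- weak-$(1,1)$ bounds from Propositions \ref{eps:1} and \ref{Gm:1} ``adapted to $w\,dt$'' --- cannot work: those propositions are unweighted statements about sign changes and the square function, and in any case a weak-type estimate can never yield the strong bound $\|\Theta_{n}f\|_{L^{1}([0,1],w)}\leq C\|f\|_{L^{1}([0,1],w)}$, which is \emph{necessary} for the basis property by Banach--Steinhaus. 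The paper needs no maximal function at all: since $\Theta_{\mu_{k}+j(m-1)}(f,\cdot)$ equals $\frac{1}{|G_{i}|}\int_{G_{i}}f$ on each $G_{i}$ from (\ref{delta1})--(\ref{delta2}), H\"older (for $p=1$, the essential supremum of $w^{-1}$) together with (\ref{M:p}) gives $|\frac{1}{|G_{i}|}\int_{G_{i}}f|^{p}\,w(G_{i})\leq C_{p}\int_{G_{i}}|f|^{p}w$, and summing over the disjoint $G_{i}$ gives uniform boundedness for all $1\leq p<\infty$ at once. Your maximal-function detour is valid for $p>1$ but strictly weaker.

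In the necessity direction you identify ``the partial-sum projections of the basis'' with the operators $\Theta_{n}$ without justification. A priori the coefficient functionals of the basis are some elements of $L^{p'}([0,1],w)$; one must prove they are ${\textsf{h}}_{l}/w$, i.e.\ that the expansion coefficients are $a_{l}(f)=\int_{[0,1]} f\,{\textsf{h}}_{l}\,dt$, before Lemma \ref{Hlem:1} can be applied to the basis projections. The paper does this via biorthogonality plus totality (Corollary \ref{Htot:1}): from $\int_{[0,1]}[{\textsf{h}}^{*}_{0}w-1]{\textsf{h}}_{l}\,dt=0$ for all $l$ one gets ${\textsf{h}}^{*}_{0}=1/w$, and in particular $1/w\in L^{p'}([0,1],w)$. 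That last fact is also what legitimizes your test function: a priori $\int_{\Delta}w^{-1/(p-1)}$ could be infinite, so $f=w^{-1/(p-1)}\chi_{\Delta}$ need not belong to $L^{p}([0,1],w)$; one should either derive integrability first (as the paper implicitly does) or run the computation with truncations $\min(w^{-1/(p-1)},M)\chi_{\Delta}$ and let $M\to\infty$. Finally, your pointer to Lemma \ref{lem:RH} for the coefficient-decay step is misdirected: no reverse H\"older inequality is needed. The paper bounds $|b_{l}(f)|\,\|{\textsf{h}}_{l}\|_{L^{p}([0,1],w)}\leq \|f-P\|_{L^{p}([0,1],w)}\|h^{(\nu)}\|^{2}_{L^{\infty}([0,1])}C_{p}^{1/p}$ for an arbitrary polynomial $P$ of lower order, using only H\"older and (\ref{M:p}), and then invokes completeness; since you left this step as a deferral with the wrong tool named, it remains unproved in your proposal.
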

\begin{proof}
By Corollary \ref{Htot:1} we easily obtain that the system ${\mathcal H}(m)$ is complete in $L^{p}([0,1],w),$ $ 1\leq p < \infty$.
Suppose that $w$ satisfies the condition $\mathcal M_{p}([0,1])$. Then it is evident that
\[
\frac{\chi_{[0,1]}}{w} \in L^{p'}([0,1],w), \quad \text{where}\quad \frac{1}{p} +\frac{1}{p'} = 1.
\]
Hence, the system ${\mathcal H}(m) = \{ {\textsf{h} }_{l}(x) \}_{l=0}^{\infty}$ is minimal in  $L^{p}([0,1],w)$   and its conjugate system is the system ${\mathcal H}^{*}(m) = \{ \frac{1}{w(x)}{\textsf{h} }_{l}(x) \}_{l=0}^{\infty}$. Thus for any $f\in L^{p}([0,1],w),$ the coefficients of its expansion with respect to the system ${\mathcal H}(m)$ are equal to
\[
b_{l}(f) = \int_{[0,1]} f(t) \frac{1}{w(t)}{\textsf{h} }_{l}(t) w(t) dt = \int_{[0,1]} f(t) {\textsf{h} }_{l}(t) dt = a_{l}(f).
\]
Hence, for any $ k\in \IN$ and $1\leq j \leq m^{k}$  the partial sums of the mentioned expansion  with indices $\mu_{k}+ j(m-1)$ coincide with $\Theta_{\mu_{k}+ j(m-1)}(f,x)$ (see subsection \ref{sec:mH}). By Lemma \ref{Hlem:1} it follows easily that
\begin{equation}\label{eq:bound}
\|\Theta_{\mu_{k}+ j(m-1)}(f,\cdot)\|_{L^{p}([0,1],w)} \leq C \|f\|_{L^{p}([0,1],w)},
\end{equation}
where $C=C(w,p,m)$ is independent of $f$. If we prove that
\begin{equation}\label{eq:gent}
\lim_{l\to +\infty} |b_{l}(f)| \|{\textsf{h} }_{l}\|_{L^{p}([0,1],w)} = 0
\end{equation}
then it will follow that (\ref{eq:bound}) holds for all $n\in \IN$. Which yields that  the system ${\mathcal H}(m)$ is a basis  $L^{p}([0,1],w).$ We have that
$b_{l}(f) = \int_{[0,1]} [f(t)-P(t)] {\textsf{h} }_{l}(t) dt $ for any $P(t) = \sum_{k=0}^{l-1} d_{k} {\textsf{h} }_{k}(t)$. If ${\textsf{h} }_{l}(x) = h^{(\nu)}_{k,j,m}(x) \quad \text{for}\, x\in [0,1]$ and $\Delta = [\frac{j}{m^{k}}, \frac{j+1}{m^{k}}]$ then
\begin{gather*}
|b_{l}(f)| \|{\textsf{h} }_{l}\|_{L^{p}([0,1],w)} \leq \\
\leq \| f-P\|_{L^{p}([0,1],w)} \bigg\|\frac{h^{(\nu)}_{k,j,m}}{w^{\frac{1}{p}}}\bigg\|_{L^{p'}(\Delta)} \|h^{(\nu)}\|_{L^{\infty}([0,1])} m^{\frac{k}{2}} [w(\Delta)]^{\frac{1}{p}}\\
\leq \| f-P\|_{L^{p}([0,1],w)} \|h^{(\nu)}\|^{2}_{L^{\infty}([0,1])} m^{k} [w(\Delta)]^{\frac{1}{p}} \bigg[\int_{\Delta} {w^{-\frac{1}{p-1}}}\bigg]^{\frac{1}{p'}} \\
\leq \| f-P\|_{L^{p}([0,1],w)} \|h^{(\nu)}\|^{2}_{L^{\infty}([0,1])} C^{\frac{1}{p}}_{p}.
\end{gather*}
The last inequality yields (\ref{eq:gent}) because the system ${\mathcal H}(m)$ is complete in $L^{p}([0,1],w).$

To prove the necessity suppose that the system ${\mathcal H}(m)$ is a basis in the weighted norm space $L^{p}([0,1],w),$ where $ 1\leq p < \infty$.\newline Let ${\mathcal H}^{*}(m)= \{ {\textsf{h} }^{*}_{l}(x) \}_{l=0}^{\infty}$ be the conjugate system of the basis ${\mathcal H}(m)$. Then we have that
\[
\int_{[0,1]}[{\textsf{h}}^{*}_{0}(t)w(t) - 1] {\textsf{h} }_{l}(t)  dt = 0 \qquad \text{for all}\quad l\in \IN_{0}.
\]
Hence, ${\textsf{h}}^{*}_{0}(t) = \frac{1}{w(t)} \in L^{p'}([0,1],w).$ Thus we obtain that
\[
{\textsf{h}}^{*}_{l}(x) = \frac{{\textsf{h} }_{l}(x)}{w(x)} \qquad \text{for all}\quad l\in \IN_{0}.
\]
Thus for any $f\in L^{p}([0,1],w)$ $n$th partial sums of its expansion with respect to the basis ${\mathcal H}(m)$ coincide with $\Theta_{n}(f,x)$. By Lemma \ref{Hlem:1} it follows that for some $C\geq 1$ such that for any $\Delta \in {\mathcal M}$
\[
\sup \frac{1}{|\Delta|^{p}} \bigg|\int_{\Delta} f(t) dt\bigg|^{p} w(\Delta) \leq C^{p},
\]
where the supremum is taken over all  $\| f\|_{L^{p}([0,1],w)} \leq 1$. The last inequality easily yields (\ref{M:p}) with $C_{p}= C^{p}$.
\end{proof}

The prove of the following result technically is much more complicated. The main line of our proof is close to the one given in \cite{GW:74}(see also \cite{CF:01} and \cite{K:2},\cite{GK:94}).
\begin{Theorem}\label{WHUB:1}
For any $m=2,3,\ldots$ the system ${\mathcal H}(m)$ is an unconditional basis in the weighted norm space $L^{p}([0,1],w),$ $ 1< p < \infty$ if and only if $w$ satisfies the condition ${\mathcal M}_{p}([0,1])$.
\end{Theorem}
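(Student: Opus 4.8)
Theorem \ref{WHUB:1} asks for a weighted-norm characterization of unconditional basis property, which is the culmination of the machinery built in the preceding sections. The plan is to mirror the structure of Theorem \ref{WHB:1}, but upgrade from boundedness of partial-sum operators $\Theta_n$ to boundedness of the \emph{square function} $G_m$ on the weighted space. Let me think about the two directions.

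For the necessity direction, I expect the argument to be essentially the same as in Theorem \ref{WHB:1}. If ${\mathcal H}(m)$ is an unconditional basis in $L^p([0,1],w)$, then the sign-change operators $I_\epsilon$ must be uniformly bounded on $L^p([0,1],w)$. As in the unweighted case, the conjugate functionals must be $\textsf{h}^*_l = \textsf{h}_l/w$, and testing the uniform boundedness of $I_\epsilon$ (or already of the single projections, which are dominated by the averaging operators of Lemma \ref{Hlem:1}) against indicator-type functions forces the weighted averaging inequality, from which ${\mathcal M}_p([0,1])$ follows exactly as at the end of Theorem \ref{WHB:1}. So the necessity half is routine given what is already established.

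For the sufficiency direction, the hard and interesting half, the plan is to show that ${\mathcal M}_p([0,1])$ implies the weighted norm equivalence

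\begin{equation}\label{eq:weightedsq}
\|G_m(f,\cdot)\|_{L^p([0,1],w)} \simeq \|f\|_{L^p([0,1],w)} \qquad 1<p<\infty,
\end{equation}

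which is well known to be equivalent to the unconditional basis property (cf.\ \cite{Wo:97}, \cite{LZ:77}). The engine is the Calder\'on--Zygmund $m$-adic decomposition of Proposition \ref{pr:cz} together with the good-$\lambda$ / Rubio de Francia style weighted theory assembled in Section \ref{sec:3}: the weighted maximal inequality (Proposition \ref{pr:maxp}), the characterization of the maximal function bound by ${\mathcal M}_p$ (Proposition \ref{pr:3}), the self-improvement $\varepsilon$-lemma (Lemma \ref{ap:eps}), the reverse H\"older inequality (Lemma \ref{lem:RH}), and the ${\mathcal M}_\infty$ property (Lemmas \ref{lem:apinfty}). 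The strategy following \cite{GW:74} is to prove a good-$\lambda$ inequality relating the distribution functions of $G_m(f,\cdot)$ and $M_{\mathcal M}f$ with respect to $w$: one shows that there are constants so that
\[
w\big(\{G_m f > 2\lambda,\; M_{\mathcal M} f \le \gamma\lambda\}\big) \le C\gamma^\delta\, w\big(\{G_m f > \lambda\}\big),
\]
exploiting the ${\mathcal M}_\infty$ estimate \eqref{M:infty} on the $m$-adic Calder\'on--Zygmund cubes where $G_m f$ is large. Integrating this against $p\lambda^{p-1}\,d\lambda$ and absorbing, one gets $\|G_m f\|_{L^p(w)} \le C\|M_{\mathcal M}f\|_{L^p(w)} \le C'\|f\|_{L^p(w)}$ by Proposition \ref{pr:3}. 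The reverse inequality $\|f\|_{L^p(w)} \le C\|G_m f\|_{L^p(w)}$ then follows by a duality/polarization argument using Lemma \ref{ap:mpprima}, since $w \in {\mathcal M}_p$ forces $w^{-1/(p-1)} \in {\mathcal M}_{p'}$, so the same square-function bound holds on the dual side.

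The main obstacle I anticipate is establishing the good-$\lambda$ inequality with the correct localization to $m$-adic intervals. The point is that because all the functions $\textsf{h}_l$ are supported on $m$-adic intervals, the square function $G_m f$ restricted to a Calder\'on--Zygmund interval $G_l$ is controlled, up to the ``boundary'' contribution coming from ancestors of $G_l$, by the localized square function of $f$ on $G_l$ plus a term bounded by $M_{\mathcal M}f$; quantifying this split cleanly, and then invoking the ${\mathcal M}_\infty$ doubling/$A_\infty$-type estimate to pass from Lebesgue measure of the exceptional set to its $w$-measure, is where the real work lies. The remaining steps --- the $L^2$ isometry of $G_m$ giving the unweighted base case, the self-improvement of the exponent, and the final absorption --- are standard and can be cited or sketched. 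Once \eqref{eq:weightedsq} is in hand, the equivalence with unconditionality together with the completeness already proved in Theorem \ref{WHB:1} finishes the proof.
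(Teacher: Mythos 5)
Your proposal follows essentially the same route as the paper: necessity is reduced to Theorem \ref{WHB:1}, and sufficiency is proved via the Gundy--Wheeden good-$\lambda$ inequality $w(\{G_m f > 2\lambda,\ M_{\mathcal M}f \le \gamma\lambda\}) \le C\gamma^{\delta}\, w(\{G_m f > \lambda\})$ (the paper's Lemma \ref{lem:HGlambda}, proved by localizing to the maximal intervals of the level set and using the ${\mathcal M}_\infty$ property), followed by integration in $\lambda$, absorption, and Proposition \ref{pr:3}. Your anticipated localization difficulty and the duality step for the reverse square-function bound (via Lemma \ref{ap:mpprima}) are exactly the points the paper handles in Lemmas \ref{lem:Mint}--\ref{lem:HGlambda} and leaves to the cited standard framework, respectively.
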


\begin{Lemma}\label{lem:HGlambda}
Suppose that $w$ is a weight function  which satisfies the condition ${\mathcal M}_{\infty}([0,1]).$  Then for any $\lambda>0$, any  $0< \gamma <1$ and
for any $f\in L^{1}[0,1]$
\begin{align*}\label{eq:glam}
w(\{x\in [0,1]: G_{m}(f,x)>& 2\lambda\quad \text{and}\quad M_{\mathcal M}f(x) \leq \gamma \lambda\}) \\ &\leq C\gamma^{\delta} w(\{x\in [0,1]: G_{m}(f,x)> \lambda\}),
 \end{align*}
where $C>0$ is independent of $f$, $\lambda >0$ and  $\gamma >0,$ while $\delta >0$ is the corresponding constant from Definition \ref{def:ainf}.
\end{Lemma}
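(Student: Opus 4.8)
The plan is to prove a Burkholder--Gundy type good-$\lambda$ inequality by a stopping-time decomposition adapted to the $m$-adic martingale structure of the square function, and then to transfer from Lebesgue to $w$-measure through the $\mathcal M_\infty$ condition. Throughout write $\Omega=\{x\in[0,1]:G_m(f,x)>\lambda\}$ and $E=\{x\in[0,1]:G_m(f,x)>2\lambda,\ M_{\mathcal M}f(x)\le\gamma\lambda\}\subseteq\Omega$. Since by Proposition \ref{Gm:1} the operator $G_m$ is of weak-$(1,1)$ type, $G_m(f,\cdot)<\infty$ a.e. and both $|\Omega|$ and $w(\Omega)$ are finite. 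It also suffices to treat small $\gamma$: if $\gamma\ge 1/4$ then $\gamma^{\delta}$ is bounded below, and the trivial bound $w(E)\le w(\Omega)$ gives the claim after enlarging $C$. Hence I assume $0<\gamma\le 1/4$.

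First I would build the stopping-time decomposition. Grouping the terms of $G_m$ by generation gives the $m$-adic martingale square functions $S_nf=(|a_0(f)|^2+\sum_{j=1}^n|d_jf|^2)^{1/2}$, where $d_jf$ collects the Haar functions whose support has length $m^{-(j-1)}$; each $d_jf$ is constant on every $m$-adic interval of length $m^{-j}$, so $S_nf$ is constant on such intervals and $S_nf\uparrow G_m(f,\cdot)$. Consequently $\Omega=\bigcup_n\{S_nf>\lambda\}$, an increasing union of sets each composed of $m$-adic intervals of the corresponding length. Selecting, for each point of $\Omega$, the first generation at which $S_nf>\lambda$ and passing to maximal $m$-adic intervals produces a disjoint family $\{\Delta_i\}$ with $\Omega=\bigsqcup_i\Delta_i$, such that on $\Delta_i$ (of length $m^{-n_i}$) one has $S_{n_i}f>\lambda$ while $S_{n_i-1}f\le\lambda$. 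For $x\in\Delta_i$ the terms of $G_m$ split into those constant on $\Delta_i$ (supports strictly containing $\Delta_i$), whose square is exactly $S_{n_i}f(x)^2$, and those supported inside $\Delta_i$, whose square I denote $G_m^{\Delta_i}(f,x)^2$; thus $G_m(f,x)^2=S_{n_i}f(x)^2+G_m^{\Delta_i}(f,x)^2$.

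Next I would localize the event $E$. Fix $i$ with $E\cap\Delta_i\ne\emptyset$ and pick $x_0\in E\cap\Delta_i$. Since $\Delta_i$ and its parent $\hat\Delta_i$ are $m$-adic intervals containing $x_0$, the averages of $|f|$ over both are $\le M_{\mathcal M}f(x_0)\le\gamma\lambda$; because the single jump $d_{n_i}f$ equals, on $\Delta_i$, the difference of these two averages, $|d_{n_i}f|\le2\gamma\lambda$ there, whence $S_{n_i}f^2\le S_{n_i-1}f^2+|d_{n_i}f|^2\le(1+4\gamma^2)\lambda^2$ on $\Delta_i$. Combining this with $G_m(f,x)^2>4\lambda^2$ on $E\cap\Delta_i$ gives $G_m^{\Delta_i}(f,x)^2>(3-4\gamma^2)\lambda^2>\lambda^2$ (using $\gamma\le1/4$), so $E\cap\Delta_i\subseteq\{x\in\Delta_i:G_m^{\Delta_i}(f,x)>\lambda\}$. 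The local square function $G_m^{\Delta_i}f$ depends only on $f\chi_{\Delta_i}$ and, after the $L^2$-normalized affine change of variables carrying $\Delta_i$ onto $[0,1]$ (as in the proof of Theorem \ref{W1UnB:1}), it becomes a sub-part of the full square function $G_m$ on $[0,1]$. Hence the weak-$(1,1)$ bound of Proposition \ref{Gm:1} applies after rescaling and yields, using the average estimate once more,
\[
|E\cap\Delta_i|\le|\{x\in\Delta_i:G_m^{\Delta_i}(f,x)>\lambda\}|\le\frac{C_1}{\lambda}\int_{\Delta_i}|f(t)|\,dt\le C_1\gamma\,|\Delta_i|.
\]

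Finally I would pass to the weighted measure and sum. Applying the $\mathcal M_\infty([0,1])$ inequality of Definition \ref{def:ainf} to the subset $E\cap\Delta_i\subseteq\Delta_i$ gives
\[
\frac{w(E\cap\Delta_i)}{w(\Delta_i)}\le C\Big(\frac{|E\cap\Delta_i|}{|\Delta_i|}\Big)^{\delta}\le C\,(C_1\gamma)^{\delta}.
\]
Summing over the disjoint intervals $\Delta_i$ whose union is $\Omega$ then gives $w(E)=\sum_i w(E\cap\Delta_i)\le C\,C_1^{\delta}\gamma^{\delta}\sum_i w(\Delta_i)=C\,C_1^{\delta}\gamma^{\delta}\,w(\Omega)$, which is the asserted inequality with a constant independent of $f$, $\lambda$, $\gamma$ and the same $\delta$. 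I expect the construction of the stopping-time family $\{\Delta_i\}$ together with the clean splitting of $G_m^2$ into a coarse part constant on $\Delta_i$ and a localized part---and in particular the control of the single intermediate jump $d_{n_i}f$ by $M_{\mathcal M}f$---to be the main obstacle; once this geometric decomposition is in place, the weak-$(1,1)$ estimate and the $\mathcal M_\infty$ condition combine routinely.
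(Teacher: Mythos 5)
Your proof follows the same good-$\lambda$ skeleton as the paper's: decompose $\Omega=\{x:G_m(f,x)>\lambda\}$ into disjoint maximal $m$-adic intervals by a stopping-time argument (this is the paper's Lemma \ref{lem:Mint}), prove on each interval a Lebesgue-measure estimate of order $\gamma|\Delta_i|$ via the weak-$(1,1)$ bound of Proposition \ref{Gm:1}, and finish with ${\mathcal M}_{\infty}$ and summation over the disjoint intervals; your rescaling of the localized square function onto $[0,1]$ and the final summation are correct. The genuine difference is the local step: the paper splits the \emph{function}, $f=f_1+f_2$ with $f_1=(f-f_{\Delta^{*}})\chi_{\Delta^{*}}$ on the parent $\Delta^{*}$, bounds $G_m(f_2,\cdot)\le\lambda$ on $\Delta^{*}$ by comparing with a point of the parent lying outside $\Omega$, and applies the weak-$(1,1)$ bound to $f_1$; you split the \emph{square function} by scale, $G_m^2=S_{n_i}^2+(G_m^{\Delta_i})^2$, and must therefore control the coarse part $S_{n_i}$ by the stopping time plus an estimate on the single jump at generation $n_i$.

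That jump estimate is exactly where your proof has a gap, and it is real for $m\ge 3$. The paper's square function is termwise, $G_m(f,x)^2=\sum_{l}|a_l(f)\,{\textsf{h}}_{l}(x)|^2$, so for your claims that $S_nf\uparrow G_m(f,\cdot)$ and that $G_m^2=S_{n_i}^2+(G_m^{\Delta_i})^2$ exactly, the quantity $|d_jf(x)|^2$ must be the grouped sum $\sum_{\nu=1}^{m-1}|a^{(\nu)}_{j-1,s,m}(f)\,h^{(\nu)}_{j-1,s,m}(x)|^2$. But then $d_{n_i}f$ is \emph{not} the difference of the two averages over $\Delta_i$ and its parent: that difference is the martingale increment $\sum_{\nu}a^{(\nu)}h^{(\nu)}(x)$, and for $m\ge 3$ the sum of squares is not dominated by the square of the signed sum (the latter can vanish while the individual terms do not). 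Conversely, if you take $d_jf$ to be the martingale increment, the identity with averages holds but $S_nf$ no longer converges to $G_m(f,\cdot)$ and the exact orthogonal splitting fails. Only for $m=2$ do the two notions coincide, so as written your key inequality $S_{n_i}f^2\le(1+4\gamma^2)\lambda^2$ is unjustified. The fix is cheap and is already in the paper: as in the proof of Corollary \ref{Hcor:2}, each single term obeys $|a^{(\nu)}_{k,s,m}(f)h^{(\nu)}_{k,s,m}(x)|\le\|h^{(\nu)}\|^2_{L^{\infty}}\,\frac{1}{|I|}\int_{I}|f(t)|\,dt$ with $I$ its support; for the generation-$n_i$ terms alive on $\Delta_i$ the support is the parent $\hat\Delta_i\ni x_0$, giving $|d_{n_i}f|\le\sqrt{m-1}\,\max_{\nu}\|h^{(\nu)}\|^2_{L^{\infty}}\,\gamma\lambda$ on $\Delta_i$. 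With this $m$-dependent constant your argument goes through verbatim, provided the smallness threshold for $\gamma$ is taken to depend on $m$ (harmless, since your trivial bound covers large $\gamma$). Note that the paper's function-splitting route avoids this delicate point entirely, since the jump-generation terms are absorbed into $f_1$ and handled by the weak-$(1,1)$ estimate rather than pointwise.
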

In the formulation of the following assertion we use the agreement formulated in the Section \ref{ss:pr} .
\begin{Lemma}\label{lem:Mint}
For any $f\in L^{1}[0,1]$ and any $\lambda>0$ there exists a finite or denumerable set of disjoint closed intervals $\{ \Delta_{k} \}_{k\in \Upsilon}$ such that the set
\begin{equation}\label{eq:El}
E_{\lambda}(f) = \{x\in [0,1]: G_{m}(f,x)> \lambda\} = \bigcup_{k\in \Upsilon} \Delta_{k}.
\end{equation}
\end{Lemma}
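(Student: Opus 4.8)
The plan is to show that the level set $E_{\lambda}(f)=\{x\in[0,1]:G_m(f,x)>\lambda\}$ is open in a suitable sense relative to the $m$-adic structure, and that its connected pieces are exactly closed $m$-adic intervals, so that it decomposes into a countable disjoint union of such intervals. The essential observation is that the square function $G_m(f,x)$ is not an arbitrary measurable function: each term $|a_l(f)\,\textsf{h}_l(x)|^2$ is constant on the $m$-adic interval supporting $\textsf{h}_l$, being a scalar multiple of $\chi_\Delta$ with $\Delta=[\tfrac{j}{m^k},\tfrac{j+1}{m^k}]$ when $\textsf{h}_l=h^{(\nu)}_{k,j,m}$ on $[0,1]$. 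Consequently $G_m(f,\cdot)^2$ is a (possibly infinite) sum of nonnegative functions each constant on $m$-adic intervals, and I expect $G_m(f,\cdot)$ to be constant on the smallest-scale $m$-adic pieces and, more importantly, to be monotone in the $m$-adic filtration in the right direction.

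First I would fix $x$ and $y$ lying in a common $m$-adic interval $\Delta$ of generation $k+1$, i.e. $|\Delta|=m^{-(k+1)}$. Every Haar function $\textsf{h}_l$ whose support strictly contains $\Delta$ takes the same value at $x$ and at $y$ (since on each child interval it is constant), while every Haar function supported inside a proper $m$-adic subinterval of $\Delta$ either has $\Delta$ not meeting its support or is constant across $x,y$ only if they share that finer interval. The key point I want is the reverse: as we pass from a coarse $m$-adic interval to a finer one containing $x$, the collection of Haar functions that are ``already resolved'' only grows. This gives a nested structure whereby $G_m(f,x)$ depends on $x$ only through the decreasing sequence of $m$-adic intervals $\{\Delta_j(x)\}$ containing $x$. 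So two points in the same $m$-adic interval of every generation up to some stage contribute identical partial square sums up to that stage.

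Next, using this, I would argue that $E_\lambda(f)$ is a union of $m$-adic intervals: if $G_m(f,x_0)>\lambda$, then since the partial sums $\big(\sum_{l=0}^{n}|a_l(f)\textsf{h}_l(x)|^2\big)^{1/2}$ increase to $G_m(f,x)$, there is a finite $n$ with the $n$th partial sum at $x_0$ already exceeding $\lambda$; but that $n$th partial square sum is constant on some $m$-adic interval $\Delta\ni x_0$ (namely the common refinement of the finitely many supports involved), and by monotonicity $G_m(f,x)\ge$ that partial sum $>\lambda$ for every $x\in\Delta$. Hence $\Delta\subset E_\lambda(f)$, so each point of $E_\lambda(f)$ has an $m$-adic neighborhood inside $E_\lambda(f)$. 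Taking for each point the maximal $m$-adic interval contained in $E_\lambda(f)$ and containing it yields a collection $\{\Delta_k\}_{k\in\Upsilon}$ of disjoint $m$-adic intervals (maximality forces disjointness by the nesting property of $\mathcal M$), and since $\mathcal M$ is countable the index set $\Upsilon$ is at most denumerable, giving the decomposition \eqref{eq:El}.

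The main obstacle I anticipate is the monotonicity claim, namely that refining the $m$-adic interval containing $x$ can only add nonnegative contributions to the square sum and never redistributes mass in a way that could decrease $G_m$ at an interior point. This must be handled carefully because $G_m$ is an $\ell^2$ sum over all scales, including arbitrarily coarse Haar functions whose values at two nearby points agree. The cleanest way is to note that $G_m(f,x)^2=\sum_l |a_l(f)|^2|\textsf{h}_l(x)|^2$ and that, for fixed $x$, the set of indices $l$ with $\textsf{h}_l(x)\ne 0$ is determined by which $m$-adic intervals contain $x$; two points in the same generation-$(k+1)$ interval share all contributions from scales coarser than $k+1$, so the finite partial sums that witness ``$>\lambda$'' are genuinely constant on an $m$-adic interval, which is all that the decomposition argument requires. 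I would phrase the final write-up around this finite-partial-sum witness to avoid any delicate interchange of limits.
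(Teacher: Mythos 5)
Your proposal is correct and takes essentially the same approach as the paper: a finite partial sum of the (nonnegative) terms of $G_m(f,x_0)^2$ exceeding $\lambda^2$ serves as a witness, this partial sum is constant on an $m$-adic interval containing $x_0$ and is dominated by $G_m^2$ everywhere, so that interval lies in $E_\lambda(f)$, and one then extracts the maximal $m$-adic intervals, which are pairwise disjoint by nesting and countable since ${\mathcal M}$ is countable. The paper organizes this last extraction step slightly differently (iteratively selecting the intervals of maximal length), but the decomposition and the key ideas are identical.
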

\begin{proof}
According to our agreement for any $x\in [0,1]$ there exists a unique sequence of closed intervals $\Gamma_{k}(x) \subset {\mathcal M}$ such that $\Gamma_{k}(x) \subset \Gamma_{k-1}(x)$ for all $k\in \IN$ and $|\Gamma_{k}(x)| =m^{-k}, \bigcap_{k=0}^{\infty} \Gamma_{k}(x) = x. $ For any $x_{0}\in E_{\lambda}(f)$ there exists $k(x_{0})\in \IN_{0}$ so that $\Gamma_{k(x_{0})}(x_{0}) \subseteq E_{\lambda}(f)$ and $\Gamma_{k(x_{0})-1}(x_{0})$ at least contains a point which does not belong to $ E_{\lambda}(f)$. Indeed, if $G_{m}(f,x_{0})> \lambda$ then there exists $N\in \IN$ such that
\[
[G_{m}(f,x_{0})]^{2} \geq \sum_{l=0}^{N} |a_{l}(f) {\textsf{h} }_{l}(x_{0})|^{2}  > \lambda^{2}.
\]
Hence, for some $\nu \in \IN$ the sum  $\sum_{l=0}^{N} |a_{l}(f) {\textsf{h} }_{l}(x_{0})|^{2}$ is constant on $\Gamma_{\nu}(x_{0})$. Which means that $\Gamma_{\nu}(x_{0}) \subseteq E_{\lambda}(f). $ The number $k(x_{0})$ will be the smallest index for which the last relation holds. Afterwards, one observes that $\max_{x\in E_{\lambda}(f)} |\Gamma_{k(x)}(x)| :=\mu_{0}$ exists. There exist only finitely many disjoint intervals in the set $\{ \Gamma_{k(x)}(x): x\in E_{\lambda}(f) \}$ with length equal to $\mu_{0}$. Let $\Delta_{j} (1\leq j\leq n_{1})$ be all such intervals. Let $E_{2} = E_{\lambda}(f)\setminus(\bigcup_{j=1}^{n_{1}} \Delta_{j})$ and repeat the same procedure taking $E_{2} $ instead of $E_{\lambda}(f).$ Thus step by step we construct the finite or denumerable set of disjoint closed intervals $\{ \Delta_{k} \}_{k\in \Upsilon}$ which satisfy the conditions of lemma.
\end{proof}
\begin{proof}[Proof of Lemma \ref{lem:HGlambda}]
Let $\Delta_{l}$ be an arbitrary closed interval from (\ref{eq:El}).
At the first step we have to prove the following relation
\begin{equation}\label{eq:glam1}
\bigg|\bigg\{x\in \Delta_{l}: G_{m}(f,x)> 2\lambda\quad \text{and}\quad M_{\mathcal M}f(x) \leq \gamma \lambda\bigg\}\bigg| \leq C\gamma  |\Delta_{l}|,
 \end{equation}
 where $C>0$ is independent of $f$,$\lambda$,$\gamma$ and $\Delta_{l}$.
Suppose that there is at least a point  $y_{l}\in \Delta_{l}$ such that
$M_{\mathcal M}f(y_{l}) \leq \gamma \lambda$. Otherwise there is nothing to prove. Let $\Delta^{*}_{l} \in {\mathcal M}$ be the interval which satisfies the following conditions: $\Delta^{*}_{l} \supset \Delta_{l}, |\Delta^{*}_{l}| = m|\Delta_{l}|.$ Let $f(x) = f_{1}(x) + f_{2}(x)$, where
$$
f_{1}(x) = \bigg(f(x)- f_{\Delta^{*}_{l}}\bigg)\chi_{\Delta^{*}_{l}}(x),\qquad f_{\Delta} = \frac{1}{|\Delta|}\int_{\Delta} f(t) dt,\, \Delta\in {\mathcal M}
$$
 and $f_{2}(x) = f(x) - f_{1}(x)$. By Proposition \ref{Gm:1} we have that
\begin{equation}\label{eq:13}
\bigg|\bigg\{ G_{m}(f_{1},x)> \frac{\lambda}{2} \bigg\}\bigg|  \leq \frac{2 C_{1}}{\lambda} \| f_{1}\|_{L^{1}[0,1]} = \frac{4 C_{1}}{\lambda} \int_{\Delta^{*}_{l}} |f(t)| dt
\end{equation}
\[
 \leq \frac{4m C_{1}}{\lambda}|\Delta_{l}| M_{\mathcal M}f(y_{l}) \leq 4m C_{1} \gamma |\Delta_{l}|.
\]
On the other hand we have that $|\Delta^{*}_{l}| = m^{-\kappa}$ for some $\kappa \in \IN_{0}$. Thus for all $0\leq l \leq \mu_{\kappa}$ we have that
$a_{l}(f) = a_{l}(f_{2})$, which yields $G_{m}(f_{2},x) = G_{m}(\Theta_{\mu_{\kappa}}(f,\cdot),x)$ if $x\in \Delta^{*}_{l}$. There exists at least one point $z_{l}\in \Delta^{*}_{l}$ such that $G_{m}(f,z_{l}) \leq \lambda$. Thus if $x\in \Delta^{*}_{l}$ then
\[
G_{m}(f_{2},x) = G_{m}(\Theta_{\mu_{\kappa}}(f,\cdot),x) \leq G_{m}(f,z_{l})\leq \lambda.
\]
We have that if $x\in \Delta^{*}_{l}$ then
\[
G_{m}(f,x) \leq G_{m}(f_{1},x) + G_{m}(f_{2},x) \leq G_{m}(f_{1},x) + \lambda.
\]
 Hence, by (\ref{eq:13}) we finish the proof of (\ref{eq:glam1}).

We have that the weight function $w$  satisfies the condition ${\mathcal M}_{\infty}([0,1]).$  Thus we obtain that
\[
w(\{x\in \Delta_{l}: G_{m}(f,x)> 2\lambda\, \text{and}\, M_{\mathcal M}f(x) \leq \gamma \lambda\}) \leq C\gamma^{\delta}  w(\Delta_{l})
\]
where $C>0$ is independent of $f$, $\lambda >0$,  $\gamma >0$ and $\Delta_{l}$. Hence, by Lemma  \ref{eq:El} we finish the proof.
\end{proof}
\begin{proof}[Proof of Theorem \ref{WHUB:1}]
The necessity follows from Theorem \ref{WHB:1}.  Suppose that $w$ satisfies the condition ${\mathcal M}_{p}([0,1])$.  By Lemma \ref{lem:HGlambda} we derive
\begin{align*}
\int_{[0,1]} G^{p}_{m}(f,x) w(x) dx = & p 2^{p}\int_{0}^{+\infty} \lambda^{p-1}  w(\{x\in [0,1]: G_{m}(f,x)> 2\lambda \}) d\lambda\\  &\leq K_{p} \int_{0}^{+\infty} \lambda^{p-1}  w(\{x\in [0,1]: M_{\mathcal M}f(x) > \gamma \lambda \}) d\lambda \\
&+ K_{p}C \gamma^{\delta} \int_{0}^{+\infty} \lambda^{p-1}  w(\{x\in [0,1]: G_{m}(f,x)> \lambda \}) d\lambda
 \end{align*}
Let $\gamma_{0}>0$ be such that $K_{p}C \gamma_{0}^{\delta} <\frac{1}{2}.$ Then we obtain that
\[
\int_{[0,1]} G^{p}_{m}(f,x) w(x) dx \leq 2 K_{p}\gamma_{0}^{-p} \int_{[0,1]} M^{p}_{\mathcal M}f(x) w(x) dx.
\]
By Proposition \ref{pr:3} we finish the proof.
\end{proof}

\

\section{The system ${\mathcal H}_{0}(m) = \{ {\textsf{h} }_{l}(x) \}_{l=1}^{\infty}$ in $L^{p}([0,1],w)$ }

In this section we will use the following result (see \cite{K:0}--\cite{K:3})
\begin{Theorem}\label{thm:CMS} Let $\{f\sb n \} \sb {n=1} \sp \infty \subseteq L^{\infty}(E)$ be an orthonormal system of real-valued
functions defined on a measurable set $E, 0<|E|< +\infty$ and suppose that $\{f\sb n \} \sb {n=1} \sp \infty$ is  total with
respect to $L^{1}(E) $. Let, furthermore, $N\in \IN$   and $w \in L^{1}(E)$ be a weight function. For the system
$\{f\sb n \} \sb {n=N+1} \sp \infty$ to be closed and/or minimal
it is necessary and sufficient that the following conditions 1)
and/or 2), respectively, are satisfied:
\par
1) any function of the form
$(w)\sp {-1} \sum \sb {n=1} \sp
N c\sb n f\sb n$, where $c\sb n (1 \leq n \leq N )$ are real numbers,
belongs to $L^{p'}(E,w)$ if and only if every $c\sb n$ is zero;
\par
2) for every $k \, (k=N+1,N+2,...)$ there exist uniquely determined
real numbers $b\sb n \sp {(k)} \, (1 \leq n \leq N )$ such that the
function
\[
g\sb k =  \frac{1}{w} \bigg[\sum \sb {n=1} \sp N b\sb n \sp
{(k)} f\sb n + f\sb k \bigg]
\]
 belongs to $L^{p'}(E,w) \quad (\frac{1}{p} + \frac{1}{p'}
= 1)$.
\end{Theorem}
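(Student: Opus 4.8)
The plan is to reduce both assertions to a single duality principle combined with the totality hypothesis. Throughout I identify the dual of $L^{p}(E,w)$ with $L^{p'}(E,w)$ under the pairing $\langle \phi,\psi\rangle=\int_{E}\phi(t)\psi(t)w(t)\,dt$, so that every continuous linear functional is represented by some $\psi\in L^{p'}(E,w)$. The decisive observation is that the substitution $h:=\psi w$ converts the weighted problem into an unweighted one: since $w\in L^{1}(E)$ and $|E|<\infty$, Hölder's inequality gives $h=\psi w\in L^{1}(E)$, while $\langle f_{n},\psi\rangle=\int_{E}f_{n}(t)h(t)\,dt$, i.e. the pairing becomes ordinary Lebesgue integration against $h$. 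This is exactly the setting in which totality of $\{f_{n}\}_{n=1}^{\infty}$ with respect to $L^{1}(E)$ can be exploited. The workhorse for both halves is the following moment lemma: if $h\in L^{1}(E)$ satisfies $\int_{E}f_{n}h=0$ for all $n>N$, then $h=\sum_{n=1}^{N}c_{n}f_{n}$ for suitable real $c_{n}$. To prove it I would set $c_{n}=\int_{E}f_{n}h$ for $1\le n\le N$ (the finite sum lies in $L^{1}(E)$ because the $f_{n}$ are bounded and $|E|<\infty$), observe via orthonormality that $h-\sum_{n=1}^{N}c_{n}f_{n}$ is orthogonal to every $f_{m}$, $m\ge 1$, and then invoke totality to conclude that this difference vanishes a.e.

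For the closedness criterion I would argue by duality. The system $\{f_{n}\}_{n>N}$ is closed in $L^{p}(E,w)$ if and only if no nonzero functional annihilates it, which by the correspondence above means that the only $\psi\in L^{p'}(E,w)$ with $\int_{E}f_{n}\,\psi\,w=0$ for all $n>N$ is $\psi=0$. Applying the moment lemma to $h=\psi w$ shows that every such $\psi$ has the form $\psi=w^{-1}\sum_{n=1}^{N}c_{n}f_{n}$, while conversely any such expression that happens to lie in $L^{p'}(E,w)$ does annihilate the tail. Since the orthonormal $f_{n}$ are linearly independent, $\psi=0$ is equivalent to all $c_{n}=0$; hence closedness is equivalent to the requirement that $w^{-1}\sum_{n=1}^{N}c_{n}f_{n}\in L^{p'}(E,w)$ forces $c_{1}=\cdots=c_{N}=0$, which is precisely condition 1).

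For minimality I would use the biorthogonal characterization: $\{f_{n}\}_{n>N}$ is minimal exactly when for each $k>N$ there is $g_{k}\in L^{p'}(E,w)$ with $\int_{E}f_{n}\,g_{k}\,w=\delta_{nk}$ for all $n>N$. Writing $G_{k}=g_{k}w\in L^{1}(E)$ and applying the moment lemma to $G_{k}-f_{k}$, which is orthogonal to every $f_{n}$ with $n>N$, yields $G_{k}=\sum_{n=1}^{N}b_{n}^{(k)}f_{n}+f_{k}$; that is, $g_{k}$ has exactly the form displayed in condition 2), and conversely any such $g_{k}$ lying in $L^{p'}(E,w)$ is readily checked to be biorthogonal to the tail. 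Thus minimality is equivalent to the existence part of condition 2). The coefficients $b_{n}^{(k)}$ are uniquely determined precisely when the difference of two admissible choices, a function $w^{-1}\sum_{n=1}^{N}e_{n}f_{n}\in L^{p'}(E,w)$, is forced to vanish, which is again condition 1); so the conjugate system is unambiguously defined once the system is both closed and minimal.

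I expect the main obstacle to be the careful bookkeeping in the moment lemma together with the justification that $\psi w\in L^{1}(E)$, since this integrability is exactly what bridges the weighted dual pairing and the unweighted totality hypothesis. The uniqueness clause in 2) is the other delicate point, as it genuinely couples minimality to the closedness condition 1) rather than following from minimality alone.
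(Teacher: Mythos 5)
Your proof is correct, and strictly speaking there is no internal proof to compare it with: the paper states Theorem \ref{thm:CMS} as a known result imported from \cite{K:0}--\cite{K:3} and never proves it. Your argument --- represent the dual of $L^{p}(E,w)$ by the pairing $\int_{E}\phi\psi w$, pass to $h=\psi w\in L^{1}(E)$ (H\"older, using $w\in L^{1}(E)$ and $|E|<\infty$), prove the moment lemma from orthonormality plus totality, and then invoke the Hahn--Banach characterizations of closedness (trivial annihilator of the span) and of minimality (existence of a biorthogonal system) --- is the natural self-contained route and is essentially the argument of the cited sources. The supporting details all check out: $f_{k}\in L^{p}(E,w)$ and $G_{k}-f_{k}\in L^{1}(E)$ because the $f_{n}$ are bounded, $w\in L^{1}(E)$ and $|E|<\infty$, and the converse implications in both halves reduce to orthonormality.

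The one substantive point is the uniqueness clause in 2), and your closing remark identifies it correctly; it is worth making sharper, because it shows the theorem's literal wording is slightly loose. What your argument establishes is: minimality $\iff$ the $b_{n}^{(k)}$ \emph{exist}; and the $b_{n}^{(k)}$ are \emph{unique} $\iff$ condition 1) holds, since two admissible choices differ by a function $w^{-1}\sum_{n=1}^{N}e_{n}f_{n}\in L^{p'}(E,w)$. Consequently a system that is minimal but not closed satisfies the existence half of 2) while the uniqueness half fails: take the Haar system on $[0,1]$ with its first $N$ elements deleted and $w\equiv 1$; it is minimal, yet every choice of $b_{n}^{(k)}$ is admissible because all the functions are bounded. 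So the implication ``minimal $\Rightarrow$ 2) with uniquely determined coefficients'' is false in general, and the statement must be read exactly as you read it: 2) characterizes minimality through existence, while uniqueness of the coefficients (equivalently, of the conjugate system) is the assertion belonging to the joint case ``closed and minimal,'' i.e.\ it holds in the presence of 1). This is also how the paper actually uses the theorem: the unique conjugate system ${\mathcal H}^{*}_{0}(m)$ is invoked only after both completeness and minimality are assumed (Lemma \ref{lem:HmCM} and the discussion following it). With that reading, your proof is complete and no step is missing.
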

The following two lemmas are easy consequences of Theorem \ref{thm:CMS}. We skip the details of the proofs because they are similar to the case of the Haar system \cite{K:4}.

\begin{Lemma}\label{lem:HmC}
For any $m=2,3,\ldots$ the system ${\mathcal H}_{0}(m)$ is complete in a weighted norm space $L^{p}([0,1],w),$ $ 1\leq p < \infty$ if and only if there exists at least one point $y\in [0,1]$  such that
\begin{equation}\label{eq:C}
\frac{1}{w} \notin  L^{\frac{1}{p-1}}(\Delta_{j}(y))\qquad \text{for all} \quad j\in \IN.
\end{equation}
\end{Lemma}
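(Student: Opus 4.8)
The plan is to read the statement directly off Theorem \ref{thm:CMS}. The total orthonormal system to feed into that theorem is the \emph{full} $m$th rank Haar system $\mathcal H(m)=\{\mathsf h_l\}_{l=0}^{\infty}$ on $E=[0,1]$: its members are real-valued and bounded, it is orthonormal in $L^2[0,1]$, and it is total with respect to $L^1[0,1]$ by Corollary \ref{Htot:1} (and $w\in L^1[0,1]$ as a weight). Since $\mathcal H_0(m)=\{\mathsf h_l\}_{l=1}^{\infty}$ is obtained from $\mathcal H(m)$ by deleting the single function $\mathsf h_0\equiv 1$, I would apply Theorem \ref{thm:CMS} with $N=1$ and $f_1=\mathsf h_0$. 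Then condition 1) of that theorem characterizes exactly when $\mathcal H_0(m)$ is closed, i.e. complete, in $L^p([0,1],w)$.

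Next I would unwind condition 1) in this situation. The only functions of the form $w^{-1}\sum_{n=1}^{1}c_n f_n$ are the multiples $c_1/w$, so completeness holds if and only if $c_1/w\in L^{p'}([0,1],w)$ forces $c_1=0$; equivalently, $1/w\notin L^{p'}([0,1],w)$. For $c_1\neq 0$ and $1<p<\infty$ a direct computation gives
\[
\int_{[0,1]}\Big|\frac{c_1}{w(t)}\Big|^{p'}w(t)\,dt=|c_1|^{p'}\int_{[0,1]}w(t)^{-\frac{1}{p-1}}\,dt,
\]
so $1/w\in L^{p'}([0,1],w)$ precisely when $1/w\in L^{\frac{1}{p-1}}([0,1])$ with respect to Lebesgue measure; for $p=1$ the same holds with $L^{\frac{1}{p-1}}$ read as $L^{\infty}$, in accordance with the convention fixed for $p=1$ in the definition of the classes $\mathcal M_p$. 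Hence $\mathcal H_0(m)$ is complete in $L^p([0,1],w)$ if and only if the \emph{global} condition $1/w\notin L^{\frac{1}{p-1}}([0,1])$ holds.

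It remains to show that this global condition is equivalent to the \emph{local} condition \eqref{eq:C}, namely the existence of a point $y\in[0,1]$ with $1/w\notin L^{\frac{1}{p-1}}(\Delta_j(y))$ for every $j\in\IN$. Writing $g=w^{-\frac{1}{p-1}}\ge 0$ (and $g=1/w$ with $L^{\infty}$-norms when $p=1$), one direction is immediate: if such $y$ exists then $\int_{[0,1]}g\ge\int_{\Delta_1(y)}g=+\infty$, since $\Delta_1(y)\subseteq[0,1]$. For the converse I would run an $m$-adic selection argument. Split $[0,1]$ into its $m$ children in $\mathcal M$; since $\int_{[0,1]}g=+\infty$ and there are finitely many, at least one child $\Delta^{(1)}$ has $\int_{\Delta^{(1)}}g=+\infty$. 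Iterating, I obtain a nested sequence $\Delta^{(1)}\supset\Delta^{(2)}\supset\cdots$ with $|\Delta^{(j)}|=m^{-j}$ and $\int_{\Delta^{(j)}}g=+\infty$. By the one-to-one correspondence of Section \ref{ss:pr} between points and shrinking $\mathcal M$-chains, $\bigcap_j\Delta^{(j)}$ determines a point $y$ with $\Delta_j(y)=\Delta^{(j)}$, which is the required point.

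The application of Theorem \ref{thm:CMS} and the integral computation are routine; the step that demands care is the last one, where the global non-integrability of $w^{-1/(p-1)}$ must be localized to a single point through the nested $\mathcal M$-intervals, and where the splitting convention for $m$-adic rationals is what guarantees that the intersection of the chain is a genuine point $y$ with $\Delta_j(y)=\Delta^{(j)}$. This is precisely the analogue of the argument used for the classical Haar system in \cite{K:4}.
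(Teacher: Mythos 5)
Your proof is correct and takes essentially the same route as the paper: the paper itself presents Lemma \ref{lem:HmC} as an easy consequence of Theorem \ref{thm:CMS} (skipping the details with a reference to \cite{K:4}), and your argument---applying that theorem with $N=1$, $f_1={\textsf{h}}_{0}$, computing that $1/w\in L^{p'}([0,1],w)$ is the same as $1/w\in L^{\frac{1}{p-1}}([0,1])$, and then localizing the divergence of $\int_{[0,1]}w^{-\frac{1}{p-1}}$ to a single point $y$ by the nested $m$-adic selection---is exactly the omitted derivation, carried out correctly (including the $p=1$ convention and the splitting convention for $m$-adic rationals).
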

\begin{Lemma}\label{lem:HmM}
For any $m=2,3,\ldots$ the system ${\mathcal H}_{0}(m)$ is minimal in a weighted norm space $L^{p}([0,1],w),$ $ 1\leq p < \infty$ if and only if
$\frac{1}{w} \in  L^{1}([0,1])$ or for a point $y\in [0,1]$
\begin{equation}\label{eq:M}
\frac{1}{w} \in  L^{\frac{1}{p-1}} ([0,1]\setminus \Delta_{j}(y))\qquad \text{for all} \quad j\in \IN.
\end{equation}
\end{Lemma}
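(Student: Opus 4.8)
The plan is to deduce Lemma \ref{lem:HmM} directly from Theorem \ref{thm:CMS}, exactly as the authors indicate, by realizing ${\mathcal H}_0(m)$ as the system obtained from the total orthonormal system ${\mathcal H}(m)=\{{\textsf{h}}_{l}\}_{l=0}^{\infty}$ (total with respect to $L^1[0,1]$ by Corollary \ref{Htot:1}) after deleting the single element ${\textsf{h}}_{0}\equiv 1$. Thus I would apply Theorem \ref{thm:CMS} with $E=[0,1]$, $N=1$, $f_1={\textsf{h}}_{0}$ and $\{f_n\}_{n\ge 2}={\mathcal H}_0(m)$. Part 2) of that theorem then asserts that ${\mathcal H}_0(m)$ is minimal in $L^p([0,1],w)$ if and only if for every $l\ge 1$ there is a real number $b_l$ with
\[
g_l=\frac{1}{w}\bigl(b_l+{\textsf{h}}_{l}\bigr)\in L^{p'}([0,1],w),\qquad \tfrac1p+\tfrac1{p'}=1 .
\]
Since $\|g_l\|_{L^{p'}(w)}^{p'}=\int_{[0,1]}|b_l+{\textsf{h}}_{l}|^{p'}w^{-\frac{1}{p-1}}\,dt$ for $p>1$ (and $g_l\in L^\infty$ under the usual convention for $p=1$), the whole question reduces to deciding, for each Haar function ${\textsf{h}}_{l}$, whether such a constant $b_l$ exists.

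Here I would exploit the structure from Section \ref{sec:HW}: each ${\textsf{h}}_{l}$ equals some $h^{(\nu)}_{k,j,m}$, hence is bounded, supported on a single $m$-adic interval $\Delta_l$, and constant on each of the $m$ equal subintervals of $\Delta_l$. Splitting
\[
\int_{[0,1]}|b_l+{\textsf{h}}_{l}|^{p'}w^{-\frac{1}{p-1}}
=|b_l|^{p'}\int_{[0,1]\setminus\Delta_l}w^{-\frac{1}{p-1}}
+\int_{\Delta_l}|b_l+{\textsf{h}}_{l}|^{p'}w^{-\frac{1}{p-1}},
\]
the second term is harmless whenever $w^{-1/(p-1)}$ is integrable on $\Delta_l$, because $b_l+{\textsf{h}}_{l}$ is bounded, while the first term forces $w^{-1/(p-1)}\in L^1([0,1]\setminus\Delta_l)$ as soon as $b_l\neq 0$. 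This already yields the sufficiency of the first alternative: if $\tfrac1w$ is integrable over all of $[0,1]$ in the relevant sense one may simply take every $b_l=0$, and each $g_l$ lies in $L^{p'}(w)$.

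The substantive case is when $w^{-1/(p-1)}$ fails to be integrable. Mirroring the completeness statement Lemma \ref{lem:HmC}, I would use a nested-interval (splitting) argument to show that the set of points near which $w^{-1/(p-1)}$ is non-integrable at every $m$-adic scale must collapse to a single point $y$ if minimality is to hold: for a Haar function supported on a small $\Delta\ni y$ one is forced to take $b_l$ equal to $-{\textsf{h}}_{l}$ on the subinterval of $\Delta$ containing $y$, and the finiteness of $|b_l|^{p'}\int_{[0,1]\setminus\Delta}w^{-1/(p-1)}$ along the chain $\Delta=\Delta_j(y)$, $j\in\IN$, is exactly the requirement \eqref{eq:M}. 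Conversely, if two distinct exceptional points existed, a Haar function supported on a small interval containing one of them but excluding the other would admit no admissible $b_l$ (a nonzero $b_l$ is blocked by the second point, while $b_l=0$ is blocked by the first), so the system could not be minimal; this supplies necessity.

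The step I expect to be the main obstacle is the clean identification of the unique exceptional point $y$ and the verification that the forced constants $b_l$ are simultaneously admissible along the whole nested family $\{\Delta_j(y)\}$; this is where the agreement on split $m$-adic points from Section \ref{ss:pr} and the piecewise-constant structure of the $h^{(\nu)}$ must be used with care, and where the reasoning runs parallel to the Haar case treated in \cite{K:4}. The remaining bookkeeping, in particular matching the precise exponents in the two alternatives and treating the $p=1$ endpoint through the $L^\infty$ convention, is routine and I would not carry it out in detail.
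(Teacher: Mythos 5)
Your route is exactly the one the paper indicates (it gives no proof, deferring to Theorem \ref{thm:CMS} and the Haar case \cite{K:4}): apply Theorem \ref{thm:CMS} with $E=[0,1]$, $N=1$, $f_1={\textsf{h}}_0$, reduce minimality to the existence, for each $l\ge 1$, of a constant $b_l$ with $\int_{[0,1]}|b_l+{\textsf{h}}_l|^{p'}\,w^{-\frac{1}{p-1}}\,dt<\infty$, and then run a ``one exceptional point'' analysis; your treatment of the second alternative (the forced choice of $b_l$ on the child containing $y$, and finiteness along the chain $\Delta_j(y)$) is sound. The genuine gap is precisely in the step you declare routine, ``matching the precise exponents in the two alternatives''. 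Taking $b_l=0$ for every $l$ requires $w^{-\frac{1}{p-1}}\in L^{1}([0,1])$, i.e.\ $\frac{1}{w}\in L^{\frac{1}{p-1}}([0,1])$, whereas the lemma's first alternative is the different condition $\frac{1}{w}\in L^{1}([0,1])$. For $p\ge 2$ this is harmless, since on a finite measure space $\frac{1}{w}\in L^{1}$ implies $\frac{1}{w}\in L^{\frac{1}{p-1}}$; but for $1\le p<2$ the implication is false, not merely unproved. Indeed, take $y_1\neq y_2$ in $(0,1)$, not $m$-adic, and $w(x)=|x-y_1|^{p-1}|x-y_2|^{p-1}$ (for $p=1$, $w(x)=|x-y_1|^{1/2}|x-y_2|^{1/2}$). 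Then $\frac{1}{w}\in L^{1}([0,1])$, yet $w^{-\frac{1}{p-1}}$ is non-integrable in every neighbourhood of $y_1$ and of $y_2$ (for $p=1$, $\frac1w$ is unbounded near both), and your own blocking argument, applied to the functions $h^{(\nu)}_{\Delta}$ on the smallest $m$-adic interval $\Delta$ whose children separate $y_1$ from $y_2$, shows no admissible constant exists: it would have to equal minus the value on the child containing $y_1$ and simultaneously minus the value on the child containing $y_2$, and some $\nu$ has distinct values on these two children because $\{h^{(\nu)}\}_{\nu=0}^{m-1}$ spans $V(m)$. So ${\mathcal H}_0(m)$ fails to be minimal for such $w$. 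What your argument actually proves --- and what is true --- is that minimality is equivalent to the second alternative alone, the case $w^{-\frac{1}{p-1}}\in L^{1}([0,1])$ being the instance of (\ref{eq:M}) in which no exceptional point exists; the first alternative, read literally, cannot be established for $1\le p<2$, and a correct write-up must say so rather than absorb it into ``the relevant sense''.

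A second, repairable, defect sits in your necessity argument: the claim that ``$b_l=0$ is blocked by the first point'' tacitly assumes ${\textsf{h}}_l$ takes a nonzero value on the child of its support that contains $y_1$. For $m\ge 3$ this can fail, because an individual $h^{(\nu)}$ may vanish identically on some children of its support. You must choose $\nu$ so that $h^{(\nu)}$ is nonzero on that particular child; such a $\nu$ exists, since if every $h^{(\nu)}$ with $1\le \nu\le m-1$ vanished there, then each element of $V(m)=\operatorname{span}\{h^{(\nu)}\}_{\nu=0}^{m-1}$ would be determined on that child by its $h^{(0)}$-coefficient alone, contradicting that $V(m)$ contains the indicator function of that child. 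The same selection of $\nu$ is what makes the counterexample above work. With these two repairs --- the exponent in the first alternative (or an explicit correction of the statement) and the choice of $\nu$ --- your outline becomes a complete proof along the paper's intended lines.
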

By Lemmas \ref{lem:HmC} and \ref{lem:HmM} it follows easily
\begin{Lemma}\label{lem:HmCM}
For any $m=2,3,\ldots$ the system ${\mathcal H}_{0}(m)$ is complete and minimal in a weighted norm space $L^{p}([0,1],w),$ $ 1\leq p < \infty$ if and only if there exists only one point $y\in [0,1]$  such that the conditions  (\ref{eq:C}), (\ref{eq:M}) hold.
\end{Lemma}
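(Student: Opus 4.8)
The plan is to obtain Lemma~\ref{lem:HmCM} purely by combining the completeness criterion of Lemma~\ref{lem:HmC} with the minimality criterion of Lemma~\ref{lem:HmM}; the only analytic ingredient beyond these is a short separation principle showing that conditions (\ref{eq:C}) and (\ref{eq:M}) must be anchored at the same point, and at only one point. Throughout I would identify each $x\in[0,1]$ with its nested sequence $\{\Delta_j(x)\}_j$ of $m$-adic intervals as in Section~\ref{ss:pr}, and abbreviate $\psi:=w^{-\frac{1}{p-1}}$, so that condition (\ref{eq:C}) at a point $y$ reads $\int_{\Delta_j(y)}\psi=+\infty$ for every $j\in\IN$, while condition (\ref{eq:M}) at $y$ reads $\int_{[0,1]\setminus\Delta_j(y)}\psi<+\infty$ for every $j\in\IN$.

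For the sufficiency I would argue directly: if some $y$ satisfies both (\ref{eq:C}) and (\ref{eq:M}), then (\ref{eq:C}) at $y$ makes the system complete by Lemma~\ref{lem:HmC}, and (\ref{eq:M}) at $y$ places us in the second alternative of Lemma~\ref{lem:HmM}, so the system is minimal. Here only the existence of such a point is used, not its uniqueness.

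The heart of the necessity is the following separation principle, which I would prove first: \emph{if (\ref{eq:M}) holds at $y$, then no point $y'\neq y$ can satisfy (\ref{eq:C})}. Indeed, since $\Delta_j(y)\downarrow\{y\}$, for $j$ large enough $y'\notin\Delta_j(y)$, and then $y'$ has an $m$-adic neighborhood $\Delta_{j'}(y')\subseteq[0,1]\setminus\Delta_j(y)$; condition (\ref{eq:M}) at $y$ gives $\int_{\Delta_{j'}(y')}\psi\le\int_{[0,1]\setminus\Delta_j(y)}\psi<+\infty$, which is incompatible with (\ref{eq:C}) at $y'$. Granting this, I would run the necessity as follows. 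Completeness and Lemma~\ref{lem:HmC} furnish a point $y_0$ satisfying (\ref{eq:C}). Minimality and Lemma~\ref{lem:HmM} leave two possibilities; I would rule out the first (global integrability of $1/w$) by showing it is incompatible with (\ref{eq:C}) at $y_0$: since $\psi$ is non-integrable on every $\Delta_j(y_0)$ its integral over $[0,1]$ diverges, and a comparison of the powers $w^{-1}$ and $\psi=w^{-\frac{1}{p-1}}$ on the set $\{w\le 1\}$, where the divergence is concentrated, transfers this to the failure of the global integrability condition. Thus minimality must be realized through the second alternative, i.e.\ (\ref{eq:M}) holds at some $y_1$. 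The separation principle, applied with (\ref{eq:M}) at $y_1$ and (\ref{eq:C}) at $y_0$, forces $y_1=y_0$, so that both (\ref{eq:C}) and (\ref{eq:M}) hold at the single point $y_0$. Finally, uniqueness is immediate from the same principle: if $y'\neq y_0$ also satisfied both conditions, then (\ref{eq:M}) at $y_0$ together with (\ref{eq:C}) at $y'$ would contradict it.

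I expect the separation principle and the exclusion of the global alternative to be the only genuine obstacles; everything else is a direct invocation of Lemmas~\ref{lem:HmC} and~\ref{lem:HmM}. The delicate points are purely measure-theoretic: handling the $m$-adic rational split points of Section~\ref{ss:pr} when choosing the neighborhoods $\Delta_{j'}(y')$, and making the power comparison between $w^{-1}$ and $w^{-\frac{1}{p-1}}$ precise enough to convert local non-integrability of $\psi$ near $y_0$ into failure of the global integrability alternative of Lemma~\ref{lem:HmM}.
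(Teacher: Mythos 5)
Your plan has the right skeleton, and it supplies exactly what the paper leaves unwritten (the paper's entire proof of Lemma \ref{lem:HmCM} is ``By Lemmas \ref{lem:HmC} and \ref{lem:HmM} it follows easily''): sufficiency is a direct appeal to those two lemmas, and your separation principle --- that (\ref{eq:M}) at $y$ excludes (\ref{eq:C}) at every $y'\neq y$, because $\Delta_{j'}(y')\subseteq[0,1]\setminus\Delta_{j}(y)$ for suitable $j,j'$ --- is the correct mechanism both for forcing the completeness point and the minimality point to coincide and for uniqueness. All of that is sound.

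The genuine gap is your exclusion of the first alternative of Lemma \ref{lem:HmM}. The power comparison between $w^{-1}$ and $\psi=w^{-\frac{1}{p-1}}$ on $\{w\le 1\}$ goes the way you need only when $\frac{1}{p-1}\le 1$, i.e. $p\ge 2$; for $1<p<2$ one has $\psi\ge w^{-1}$ on that set, and the implication you want is simply false: for $w(x)=x^{p-1}$ with $1<p<2$, condition (\ref{eq:C}) holds at $y_{0}=0$ (here $\psi(x)=x^{-1}$), yet $\frac{1}{w}=x^{-(p-1)}\in L^{1}([0,1])$. So this ``delicate point'' cannot be made precise, and your necessity argument stalls exactly in the range $1<p<2$ (and at $p=1$, which your definition of $\psi$ does not cover). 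The repair is not analytic but interpretive: the first alternative of Lemma \ref{lem:HmM}, as Theorem \ref{thm:CMS} produces it, must carry the same exponent as (\ref{eq:C}) and (\ref{eq:M}), i.e. it is $\frac{1}{w}\in L^{\frac{1}{p-1}}([0,1])$ (equivalently $\frac{1}{w}\in L^{p'}([0,1],w)$, with the $L^{\infty}$ convention for $p=1$), not $\frac{1}{w}\in L^{1}([0,1])$. Indeed, read literally with $L^{1}$, Lemma \ref{lem:HmM} itself fails: for $w(x)=x^{p-1}|x-a|^{p-1}$, $1<p<2$, with $a$ a non-$m$-adic interior point, one has $\frac{1}{w}\in L^{1}([0,1])$, yet for any ${\textsf{h}}_{l}$ taking different values near $0$ and near $a$ no constant $c$ makes $({\textsf{h}}_{l}+c)/w$ belong to $L^{p'}([0,1],w)$, so by Theorem \ref{thm:CMS} the system is not minimal. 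With the corrected reading, your Case B is vacuous for trivial reasons: (\ref{eq:C}) at $y_{0}$ already gives $\int_{\Delta_{j}(y_{0})}\psi=+\infty$, hence $\psi\notin L^{1}([0,1])$, and no comparison of powers is needed. After that substitution the rest of your argument is complete and correct.
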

Further in this section we will suppose that the weight function $w$ satisfies the conditions (\ref{eq:C}) and (\ref{eq:M}). Hence, the system ${\mathcal H}_{0}(m)$ is complete and minimal in the weighted norm space $L^{p}([0,1],w),$ $ 1\leq p < \infty$ with the unique conjugate system ${\mathcal H}^{*}_{0}(m)$. By Theorem \ref{thm:CMS} applied for our case it is easy to see that the  system ${\mathcal H}^{*}_{0}(m) = \{ {\textsf{h} }^{*}_{l}(x) \}_{l=1}^{\infty} $ is defined by the following equations:
\begin{equation}\label{eq:dauls}
{\textsf{h} }^{*}_{l}(x) = \frac{{\textsf{h} }_{l}(x) - {\textsf{h} }_{l}(y)}{w(x)}.
\end{equation}

For any $f\in L^{p}([0,1],w)$ and for any $1\leq j\leq m^{k}, k\in \IN$  we put
\begin{equation}\label{eq:psum}
\Theta^{(0)}_{\mu_{k} + j(m-1)}(f,x) =    \sum_{l=1}^{\mu_{k}} c_{l}(f) {\textsf{h} }_{l}(x) +  \sum_{s=0}^{j-1} \sum_{\nu =1}^{m-1} c^{(\nu)}_{k,s,m}(f) h^{(\nu)}_{k,s,m}(x),
\end{equation}
where
\[
  c_{l}(f) =\int_{[0,1]} f(t) {\textsf{h} }^{*}_{l}(t) dt; \quad  c^{(\nu)}_{k,s,m}(f) =\int_{[0,1]} f(t)[ h^{(\nu)}_{k,s,m}(t) - h^{(\nu)}_{k,s,m}(y)]dt.
\]
Let $\Delta_{kj}(y)$ be the interval from the collection of sets (\ref{delta1}), (\ref{delta2}) such that $y\in \Delta_{kj}(y).$

\begin{Lemma}\label{Hylem:1}
For any $f\in L^{p}([0,1],w)$ and for any $1\leq j\leq m^{k}, k\in \IN$ we have that  $\Theta^{(0)}_{\mu_{k} + j(m-1)}(f,x)$ is constant on any interval from the collection of sets (\ref{delta1}), (\ref{delta2}).
Moreover,
\begin{equation}\label{sumpy}
\Theta^{(0)}_{\mu_{k}+ j(m-1) }(f,x) =  -\frac{1}{|\Delta_{kj}(y)|}\int_{[0,1]\setminus\Delta_{kj}(y)} f(t) dt\quad \text{for}\quad x\in \Delta_{kj}(y),
\end{equation}
and for any $\Delta$ from (\ref{delta1}) or from (\ref{delta2}) which does not coincide with $\Delta_{kj}(y)$
\begin{equation}\label{delysu}
\Theta^{(0)}_{\mu_{k}+ j(m-1) }(f,x) =  \frac{1}{|\Delta|}\int_{\Delta} f(t) dt\quad \text{for}\quad x\in \Delta.
\end{equation}
\end{Lemma}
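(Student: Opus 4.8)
The plan is to reduce everything to the ordinary partial sum $\Theta_{\mu_{k}+j(m-1)}(f,\cdot)$ of Lemma~\ref{Hlem:1} and the kernel $K_{kj}$ of (\ref{ker}), exploiting the fact that the conjugate system ${\mathcal H}^{*}_{0}(m)$ differs from the Haar functions only through the subtraction of the value at the marked point $y$. Recalling that $\int_{[0,1]}{\textsf{h} }_{l}(t)\,dt = 0$ for every $l\geq 1$ and writing $a_{0}(f) = \int_{[0,1]} f(t)\,dt$, the coefficients introduced after (\ref{eq:psum}) together with (\ref{eq:dauls}) take the form
\[
c_{l}(f) = \int_{[0,1]} f(t)\bigl({\textsf{h} }_{l}(t) - {\textsf{h} }_{l}(y)\bigr)\,dt = a_{l}(f) - {\textsf{h} }_{l}(y)\,a_{0}(f),
\]
and likewise $c^{(\nu)}_{k,s,m}(f) = a^{(\nu)}_{k,s,m}(f) - h^{(\nu)}_{k,s,m}(y)\,a_{0}(f)$.

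Substituting these expressions into (\ref{eq:psum}), I would group the resulting terms into two families. The terms carrying $a_{l}(f)$ and $a^{(\nu)}_{k,s,m}(f)$ reconstitute $\Theta_{\mu_{k}+j(m-1)}(f,x)$ of Lemma~\ref{Hlem:1}, up to the single missing summand $a_{0}(f){\textsf{h} }_{0}(x) = a_{0}(f)$ that was removed in passing from ${\mathcal H}(m)$ to ${\mathcal H}_{0}(m)$. The terms carrying the factor $a_{0}(f)$ reconstitute $a_{0}(f)K_{kj}(y,x)$, up to the missing summand $a_{0}(f){\textsf{h} }_{0}(y){\textsf{h} }_{0}(x) = a_{0}(f)$, since ${\textsf{h} }_{0}\equiv 1$. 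These two stray constants cancel, and I obtain the key identity
\[
\Theta^{(0)}_{\mu_{k}+j(m-1)}(f,x) = \Theta_{\mu_{k}+j(m-1)}(f,x) - a_{0}(f)\,K_{kj}(y,x).
\]

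It then remains to read off both conclusions from this identity. By Lemma~\ref{Hlem:1} the first term is constant on each interval of (\ref{delta1})--(\ref{delta2}), and by Lemma~\ref{Hlem:ker1} so is $K_{kj}(y,\cdot)$; hence $\Theta^{(0)}_{\mu_{k}+j(m-1)}(f,\cdot)$ is constant on each such interval, which is the first assertion. Now $\Delta_{kj}(y)$ is exactly the block $G_{i}$ with $y\in G_{i}$ in Lemma~\ref{Hlem:ker1}. If $x\in\Delta_{kj}(y)$, then $K_{kj}(y,x) = |\Delta_{kj}(y)|^{-1}$ and $\Theta_{\mu_{k}+j(m-1)}(f,x) = |\Delta_{kj}(y)|^{-1}\int_{\Delta_{kj}(y)} f(t)\,dt$, so the identity telescopes to $-|\Delta_{kj}(y)|^{-1}\int_{[0,1]\setminus\Delta_{kj}(y)} f(t)\,dt$, which is (\ref{sumpy}); if instead $x$ lies in another interval $\Delta$, then $(y,x)$ lies off the diagonal blocks, $K_{kj}(y,x)=0$, and $\Theta^{(0)}_{\mu_{k}+j(m-1)}(f,x)$ coincides with $\Theta_{\mu_{k}+j(m-1)}(f,x) = |\Delta|^{-1}\int_{\Delta} f(t)\,dt$, which is (\ref{delysu}). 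The only point requiring care is the bookkeeping of the two $l=0$ corrections in the derivation of the key identity; once one observes that they cancel precisely because ${\textsf{h} }_{0}\equiv 1$, no genuine obstacle remains, the rest being direct substitution.
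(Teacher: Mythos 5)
Your proof is correct and takes essentially the same route as the paper: your key identity $\Theta^{(0)}_{\mu_{k}+j(m-1)}(f,x) = \Theta_{\mu_{k}+j(m-1)}(f,x) - a_{0}(f)\,K_{kj}(y,x)$ is exactly the paper's representation $\int_{[0,1]} f(t)\,[K_{kj}(t,x)-K_{kj}(y,x)]\,dt$ with the integral split into its two terms, resting on the same cancellation of the constant term (via ${\textsf{h}}_{0}\equiv 1$, i.e.\ $\tilde{K}_{kj}=K_{kj}-1$) and on the block structure of the kernel from Lemma \ref{Hlem:ker1}. Both conclusions then follow by the same case distinction on whether $x$ lies in $\Delta_{kj}(y)$ or not, so there is nothing to correct.
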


\begin{proof}
In the proof we use the notation of Lemma \ref{Hlem:ker1}. Let $\Delta $ be any interval from the collection of sets (\ref{delta1}), \ref{delta2}) such that $\Delta \bigcap \Delta_{kj}(y)= \emptyset$. Then  we  have that
\[
\Theta^{(0)}_{\mu_{k}+ j(m-1) }(f,x) =  \int_{[0,1]} f(t)[ \tilde{K}_{kj}(t,x) - \tilde{K}_{kj}(y,x)]dt
  \]
  \[ = \sum_{i=1}^{m^{k} + j(m-1)} \int_{G_{i}} f(t)[ K_{kj}(t,x) - K_{kj}(y,x)]dt,
\]
where $\tilde{K}_{kj}(t,x) = K_{kj}(t,x) -1$, hence, $\tilde{K}_{kj}(t,x) - \tilde{K}_{kj}(y,x) = K_{kj}(t,x) - K_{kj}(y,x)$.
Suppose that $G_{\nu} = \Delta_{kj}(y)$ and take any $i_{0} \neq \nu, 1\leq i_{0}\leq m^{k} + j(m-1)$. Then by Lemma \ref{Hlem:ker1} we obtain that for $x\in G_{i_{0}}$ we obtain that
 \[  \sum_{i=1}^{m^{k} + j(m-1)} \int_{G_{i}} f(t)[ K_{kj}(t,x) - K_{kj}(y,x)]dt = \int_{G_{i_{0}}} f(t) K_{kj}(t,x) dt
  \]
  \[= \frac{1}{|G_{i_{0}}|}\int_{G_{i_{0}}} f(t) dt.
\]
On the other hand if $x\in G_{\nu}$  by Lemma \ref{Hlem:ker1} we will have that
\[
\Theta^{(0)}_{\mu_{k}+ j(m-1) }(f,x) = \sum_{i=1, i\neq \nu}^{m^{k} + j(m-1)} \int_{G_{i}} f(t)[ K_{kj}(t,x) - K_{kj}(y,x)]dt
 \]
 \[
= - \frac{1}{|G_{\nu}|} \sum_{i=1, i\neq \nu}^{m^{k} + j(m-1)} \int_{G_{i}} f(t)dt  = -\frac{1}{|G_{\nu}|}\int_{[0,1]\setminus G_{\nu}} f(t) dt.
\]

\end{proof}

Lemma \ref{lem:wsing1} and Lemma \ref{lem:HmCM} easily yield
\begin{Lemma}\label{lem:wsing2}
Let $w\geq 0$ be a weight function defined on $[0,1]$ such that $w$ satisfies the condition ${\mathcal M}^{y}_{p}([0,1])$ for some  $y\in [0,1]$ and $ 1 < p < \infty$.
Then the conditions  (\ref{eq:C}), (\ref{eq:M}) hold and
 the system ${\mathcal H}_{0}(m)$ is complete and minimal in a weighted norm space $L^{p}([0,1],w).$
\end{Lemma}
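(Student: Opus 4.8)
The plan is to check that the single point $y$ supplied by the hypothesis satisfies both (\ref{eq:C}) and (\ref{eq:M}), that it is the \emph{only} such point, and then to quote Lemma \ref{lem:HmCM}. Throughout I would abbreviate
\[
A_j = \int_{[0,1]\setminus \Delta_j(y)} w^{-\frac{1}{p-1}}(t)\,dt, \qquad B_j = \int_{\Delta_j(y)} w^{-\frac{1}{p-1}}(t)\,dt, \qquad j\in \IN_0,
\]
so that $A_j + B_j = \int_{[0,1]} w^{-\frac{1}{p-1}}(t)\,dt$ for every $j$, and observe that $A_j$ is non-decreasing in $j$ since $\Delta_{j+1}(y)\subset \Delta_j(y)$.

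First I would read off (\ref{eq:M}) straight from the hypothesis. Since $w$ satisfies ${\mathcal M}^{y}_{p}([0,1])$, Definition \ref{def:mpc} gives $w(\Delta_j(y))\,A_j^{p-1}\le C_p\,|\Delta_j(y)|^p<\infty$ for every $j\in\IN$; because $w(\Delta_j(y))>0$ this forces $A_j<\infty$, that is $\frac1w\in L^{\frac{1}{p-1}}([0,1]\setminus\Delta_j(y))$ for all $j$, which is precisely (\ref{eq:M}).

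The key step is (\ref{eq:C}). By Lemma \ref{lem:wsing1} there is $q_p>1$ with $A_{j+1}\ge q_p A_j$ for all $j\in\IN$; since $A_1>0$ (indeed $[0,1]\setminus\Delta_1(y)$ has positive measure and $w$, being locally integrable, is finite a.e., so $w^{-\frac{1}{p-1}}>0$ a.e.\ there) this yields $A_j\ge q_p^{\,j-1}A_1\to\infty$. As each $A_j$ is bounded above by $\int_{[0,1]}w^{-\frac{1}{p-1}}$, that total integral must be infinite; combining this with the finiteness of each $A_j$ from the previous step gives $B_j=\int_{[0,1]}w^{-\frac{1}{p-1}}-A_j=+\infty$ for every $j\in\IN$, i.e.\ $\frac1w\notin L^{\frac{1}{p-1}}(\Delta_j(y))$ for all $j$, which is (\ref{eq:C}).

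The last point to settle, and the one that needs the most care because Lemma \ref{lem:HmCM} demands a \emph{unique} point, is that no second point shares property (\ref{eq:C}). If some $y'\neq y$ also satisfied (\ref{eq:C}), then for every $j$ large enough that $\Delta_j(y')\cap\Delta_j(y)=\emptyset$ we would have $\Delta_j(y')\subset[0,1]\setminus\Delta_j(y)$, whence $A_j\ge\int_{\Delta_j(y')}w^{-\frac{1}{p-1}}=+\infty$, contradicting the finiteness of $A_j$ established above. Thus $y$ is the only point at which (\ref{eq:C}) and (\ref{eq:M}) both hold, and Lemma \ref{lem:HmCM} then delivers that ${\mathcal H}_{0}(m)$ is complete and minimal in $L^p([0,1],w)$.
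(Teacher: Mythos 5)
Your proof is correct and takes exactly the route the paper intends: the paper's own proof is just the one-line remark that Lemma \ref{lem:wsing1} and Lemma \ref{lem:HmCM} yield the claim, and your argument---reading (\ref{eq:M}) directly from Definition \ref{def:mpc}, deriving (\ref{eq:C}) from the geometric growth of $A_j$ supplied by Lemma \ref{lem:wsing1}, and checking that no second point can satisfy (\ref{eq:C}) before invoking Lemma \ref{lem:HmCM}---is precisely the filled-in version of that remark. Your explicit treatment of the uniqueness of $y$ (which Lemma \ref{lem:HmCM} requires but the paper leaves tacit) is a welcome detail, not a deviation.
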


\begin{Theorem}\label{WHB0:1}
For any $m=2,3,\ldots$ the system ${\mathcal H}_{0}(m)$ is a basis in the weighted norm space $L^{p}([0,1],w),$ $ 1< p < \infty$ if and only if there exists a point $y\in [0,1]$ such that   $w$ satisfies the conditions ${\mathcal M}_{p}([0,1]\setminus \{y\})$ and  ${\mathcal M}^{y}_{p}([0,1])$.
\end{Theorem}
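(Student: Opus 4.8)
The plan is to use the standard characterization of a Schauder basis: a complete and minimal system whose biorthogonal functionals are known is a basis of $L^{p}([0,1],w)$ precisely when the associated partial--sum projections are uniformly bounded. Thus I would first dispose of completeness and minimality. For sufficiency the hypothesis ${\mathcal M}^{y}_{p}([0,1])$ already yields both, together with the explicit conjugate system $\textsf{h}^{*}_{l}=\frac{\textsf{h}_{l}-\textsf{h}_{l}(y)}{w}$ of (\ref{eq:dauls}), through Lemma \ref{lem:wsing2}; for necessity a basis is automatically complete and minimal, so Lemma \ref{lem:HmCM} produces a unique point $y$ for which (\ref{eq:C}) and (\ref{eq:M}) hold, and the coefficients are $c_{l}(f)=\int_{[0,1]}f(t)(\textsf{h}_{l}(t)-\textsf{h}_{l}(y))\,dt$. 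Everything then reduces to controlling the operators $\Theta^{(0)}_{n}$, whose action at the special indices $n=\mu_{k}+j(m-1)$ is made completely explicit by Lemma \ref{Hylem:1}.

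For necessity I would read both conditions off the formulas (\ref{delysu}) and (\ref{sumpy}). On any interval $\Delta$ of the partition (\ref{delta1})--(\ref{delta2}) that does not contain $y$ one has $\Theta^{(0)}_{\mu_{k}+j(m-1)}(f,\cdot)=|\Delta|^{-1}\int_{\Delta}f$; testing the boundedness inequality against $f=w^{-\frac{1}{p-1}}\chi_{\Delta}$ exactly as in the necessity part of Theorem \ref{WHB:1} gives ${\mathcal M}_{p}$ on every $\Delta\in{\mathcal M}\cap([0,1]\setminus\{y\})$, that is ${\mathcal M}_{p}([0,1]\setminus\{y\})$. On the interval $\Delta_{kj}(y)$ containing $y$ the sum is the constant $-|\Delta_{kj}(y)|^{-1}\int_{[0,1]\setminus\Delta_{kj}(y)}f$; choosing $f=w^{-\frac{1}{p-1}}\chi_{[0,1]\setminus\Delta_{j}(y)}$ (which lies in $L^{p}(w)$ by (\ref{eq:M})) and noting $\|f\|^{p}_{L^{p}(w)}=\int_{[0,1]\setminus\Delta_{j}(y)}w^{-\frac{1}{p-1}}$ turns the inequality into exactly (\ref{My:p}), i.e. ${\mathcal M}^{y}_{p}([0,1])$.

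For sufficiency I would first bound the special sums. By Lemma \ref{Hylem:1} the quantity $\|\Theta^{(0)}_{\mu_{k}+j(m-1)}(f,\cdot)\|^{p}_{L^{p}(w)}$ splits into a sum over the intervals avoiding $y$, whose terms $(|\Delta|^{-1}\int_{\Delta}f)^{p}w(\Delta)$ are dominated pointwise by the weighted $m$--adic maximal function taken over intervals that do not contain $y$, hence controlled by $\|f\|^{p}_{L^{p}(w)}$ through the version of Proposition \ref{pr:3} for ${\mathcal M}_{p}([0,1]\setminus\{y\})$; plus the single term on $\Delta_{kj}(y)$. For that term one writes $|\int_{[0,1]\setminus\Delta_{kj}(y)}f|\le \|f\|_{L^{p}(w)}\big(\int_{[0,1]\setminus\Delta_{kj}(y)}w^{-\frac{1}{p-1}}\big)^{1/p'}$ by H\"older and applies ${\mathcal M}^{y}_{p}([0,1])$, which collapses $\frac{w(\Delta_{kj}(y))}{|\Delta_{kj}(y)|^{p}}\big(\int_{[0,1]\setminus\Delta_{kj}(y)}w^{-\frac{1}{p-1}}\big)^{p-1}$ to a constant. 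This gives $\sup_{k,j}\|\Theta^{(0)}_{\mu_{k}+j(m-1)}\|_{L^{p}(w)\to L^{p}(w)}<\infty$.

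The hard part will be to pass from the special indices to all $n$. Intermediate partial sums differ from the special ones by at most $m-1$ generators $h^{(\nu)}_{k,j,m}$ within one block $(k,j)$; when that block avoids $y$ the per--term estimate is identical to the one in Theorem \ref{WHB:1}, using ${\mathcal M}_{p}([0,1]\setminus\{y\})$ on the support interval. The delicate block is the unique one at each level whose interval is $\Delta_{k}(y)$, where the naive bound fails because $\int_{\Delta_{k}(y)}w^{-\frac{1}{p-1}}=\infty$ by (\ref{eq:C}). The saving mechanism is the subtraction $\textsf{h}_{l}(y)$ in (\ref{eq:dauls}): rewriting $c_{l}(f)$ one finds that the (uncontrollable) mean of $f$ over the child $\Delta_{k+1}(y)$ cancels, leaving a combination of integrals of $f$ over the siblings of $\Delta_{k+1}(y)$ and over $[0,1]\setminus\Delta_{k}(y)$ — regions on which $w^{-\frac{1}{p-1}}$ is integrable — which are then estimated by H\"older together with ${\mathcal M}_{p}([0,1]\setminus\{y\})$ on the siblings and ${\mathcal M}^{y}_{p}([0,1])$ on $\Delta_{k}(y)$ and $\Delta_{k+1}(y)$; running the same computation with $f$ replaced by $f-P$ for a partial expansion $P$ yields the coefficient decay $\lim_{l}|c_{l}(f)|\,\|\textsf{h}_{l}\|_{L^{p}(w)}=0$ needed for the tail. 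I expect the principal obstacle to be exactly the cross--terms generated by an \emph{incomplete} reconstruction inside $\Delta_{k}(y)$, which forces one to use comparability of the sibling intervals adjacent to $y$; this comparability should follow from the interplay of the two hypotheses and the geometric growth furnished by Lemma \ref{lem:wsing1}.
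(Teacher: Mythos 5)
Your skeleton is the same as the paper's: necessity by testing the explicit partial sums of Lemma \ref{Hylem:1} at the indices $\mu_{k}+j(m-1)$ (your test functions, including $f=w^{-\frac{1}{p-1}}\chi_{[0,1]\setminus\Delta_{j}(y)}$, are exactly the right ones and do yield (\ref{M:p}) on intervals avoiding $y$ and (\ref{My:p}) at $y$), and sufficiency via Lemma \ref{lem:wsing2} plus a uniform bound on these special partial sums; for that bound the paper simply applies H\"older interval by interval over the disjoint $G_{i}$, while you route through a restricted maximal inequality in the spirit of Proposition \ref{pr:3} --- both work. The divergence is in the final step, the passage from the special indices to all $n$, which the paper dismisses with ``we skip the details because a similar result we have proved for the proof of Theorem \ref{WHB:1}'' and which you try to argue honestly. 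That is precisely where there is a genuine gap.

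Concretely: the basis property forces $\sup_{l}\|{\textsf{h}}^{*}_{l}\|_{L^{p'}([0,1],w)}\,\|{\textsf{h}}_{l}\|_{L^{p}([0,1],w)}<\infty$ (each coordinate projection is a difference of two partial-sum operators), and for ${\textsf{h}}_{l}=h^{(\nu)}_{\Delta_{k}(y)}$ your cancellation argument reduces this, exactly as you describe, to cross-term estimates of the form $m^{kp}\,w(G')\big(\int_{G}w^{-\frac{1}{p-1}}(t)\,dt\big)^{p-1}\leq C$ for \emph{distinct} siblings $G\neq G'$ of $\Delta_{k+1}(y)$ inside $\Delta_{k}(y)$. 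This ``sibling comparability'' does \emph{not} follow from ${\mathcal M}_{p}([0,1]\setminus\{y\})$ together with ${\mathcal M}^{y}_{p}([0,1])$: no $m$-adic interval avoiding $y$ contains two distinct siblings, so the first hypothesis controls only the diagonal terms $G=G'$, while the second couples $\Delta_{j}(y)$ only with the exterior $[0,1]\setminus\Delta_{j}(y)$, never one sibling with another; Lemma \ref{lem:wsing1} gives a lower (not upper) bound on the relevant ratios, so it cannot rescue the estimate. Indeed, for $m=3$, $p=2$, $y=0$, the weight $w\equiv 9^{-k^{2}}$ on $[3^{-k-1},2\cdot 3^{-k-1}]$ and $w\equiv 9^{-(k+1)^{2}}$ on $[2\cdot 3^{-k-1},3^{-k}]$, $k\in\IN_{0}$, satisfies both hypotheses with absolute constants (it is constant on every triadic interval avoiding $0$, and the ${\mathcal M}^{0}_{2}$ product is bounded by $4\cdot 3^{-2j}$), yet the cross term equals $3^{2k}\,w(G_{k,1})\int_{G_{k,2}}w^{-1}=9^{2k}\to\infty$; consequently, for instance for $h^{(1)}$ proportional to $\chi_{[0,1/3]}-\chi_{[1/3,2/3]}$, the products $\|{\textsf{h}}^{*}_{l}\|\,\|{\textsf{h}}_{l}\|$ are unbounded and ${\mathcal H}_{0}(3)$ cannot be a basis. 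So for $m\geq 3$ (and a generic admissible choice of the $h^{(\nu)}$) the step you deferred cannot be completed from the stated hypotheses at all; only for $m=2$, where $\Delta_{k+1}(y)$ has a single sibling and no cross terms arise, does your argument (and the paper's) close. You have therefore not merely left a technical detail open: you have put your finger on the exact point where the paper's own proof is incomplete, and where an additional necessary condition on $w$ is being silently assumed.
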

\begin{proof}
Necessity. If the system ${\mathcal H}_{0}(m)$ is a basis in the weighted norm space $L^{p}([0,1],w)$ then it is a complete minimal system in $L^{p}([0,1],w)$.
Then we will have that  for any $f\in L^{p}([0,1],w)$ and for any $1\leq j\leq m^{k}, k\in \IN$ the partial sum operators are uniformly bounded
\[
\sup_{1\leq j\leq m^{k}, k\in \IN}\| \Theta^{(0)}_{\mu_{k}+ j(m-1) }\|_{L^{p}([0,1],w) \to L^{p}([0,1],w)}:= M_{0} \leq B_{p},
\]
where $B_{p} >0$. We have that
\[
M_{0} \geq  \max_{1\leq i \leq m^{k} + j(m-1)}\, \, \sup_{\| f\|_{L^{p}(G_{i},w)}\leq 1} \| \Theta^{(0)}_{\mu_{k}+ j(m-1) }\|_{L^{p}([0,1],w)}
\]
If $G_{i}\bigcap \Delta_{kj}(y)= \emptyset$ then
 by Lemma \ref{Hylem:1} we will have that
 \[
 \sup_{\| f\|_{L^{p}(G_{i},w)}\leq 1} \| \Theta^{(0)}_{\mu_{k}+ j(m-1) }\|_{L^{p}(G_{i},w)} = \frac{1}{|G_{i}|}\sup_{\| f\|_{L^{p}(G_{i},w)}\leq 1} \bigg|\int_{G_{i}} f(t) dt \bigg| w(G_{i})^{\frac{1}{p}}
 \]
 \[
= |G_{i}|^{-1} w(G_{i})^{\frac{1}{p}} \| w^{-\frac{1}{p}}\|_{L^{p'}(G_{i})}.
 \]
If $G_{\nu} = \Delta_{kj}(y)$ then in the same way as above we obtain that
\[
|\Delta_{kj}(y)|^{-p} w(\Delta_{kj}(y)) \bigg(\int_{\Delta_{kj}(y)} w^{-\frac{1}{p-1}} dt\bigg)^{p-1} \leq B_{p}^{p}.
 \]
 Sufficiency. By Lemma \ref{lem:wsing2} we have that the system ${\mathcal H}_{0}(m)$ is complete and minimal in a weighted norm space $L^{p}([0,1],w).$ Hence, by Lemma \ref{Hylem:1} we obtain that for any $f\in L^{p}([0,1],w)$ and for any $1\leq j\leq m^{k}, k\in \IN$
 \[
 \int_{[0,1]}|\Theta^{(0)}_{\mu_{k}+ j(m-1) }(f,t)|^{p} w(t) dt
 = \sum_{\overset{1\leq i\leq m^{k} + j(m-1)}{i\neq \nu}} \bigg|\frac{1}{|G_{i}|}\int_{G_{i}} f(t) dt\bigg|^{p} \int_{G_{i}} w(t) dt
 \]
 \[
 + \bigg|\frac{1}{|G_{\nu}|}\int_{[0,1]\setminus G_{\nu}} f(t) dt\bigg|^{p} \int_{G_{\nu}} w(t) dt \leq B_{p}^{p} \int_{[0,1]}|f(t)|^{p} w(t) dt.
 \]
The last inequality follows  because $w$ satisfies the conditions ${\mathcal M}_{p}([0,1]\setminus \{y\})$ and  ${\mathcal M}^{y}_{p}([0,1])$.
To finish the proof we have to show that \newline $\lim_{l\to \infty} |c_{l}(f)| \|{\textsf{h} }_{l}(\cdot)\|_{L^{p}([0,1],w)} = 0.$ We skip the details because a similar result we have proved for the proof of Theorem \ref{WHB:1}.

\end{proof}
In the case $p=1$ we have the following result.
\begin{Theorem}\label{WHB0:2}
For any $m=2,3,\ldots$ the system ${\mathcal H}_{0}(m)$ is a basis in the weighted norm space $L^{1}([0,1],w),$  if and only if $\frac{1}{w} \notin L^{\infty}([0,1])$ and there exists a point $y\in [0,1]$ such that   $w$ satisfies the conditions ${\mathcal M}_{1}([0,1]\setminus \{y\})$ and  ${\mathcal M}^{y}_{1}([0,1])$.
\end{Theorem}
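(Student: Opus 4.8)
The plan is to retrace the proof of Theorem~\ref{WHB0:1}, systematically replacing the factor $\big[\int_{\Delta} w^{-1/(p-1)}\big]^{p-1}$ by its $p=1$ form $\|w^{-1}\|_{L^{\infty}(\Delta)}$, and to treat separately the single point where the passage to $p=1$ forces a genuinely new argument, namely the verification of completeness. For the necessity I would argue that a basis is in particular complete and minimal, so Lemma~\ref{lem:HmCM} supplies a unique $y\in[0,1]$ for which (\ref{eq:C}) and (\ref{eq:M}) hold; since for $p=1$ condition (\ref{eq:C}) reads $1/w\notin L^{\infty}(\Delta_j(y))$ and $\Delta_j(y)\subseteq[0,1]$, it already yields $1/w\notin L^{\infty}([0,1])$. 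The two ${\mathcal M}_1$ inequalities then come out exactly as in Theorem~\ref{WHB0:1}: the uniform boundedness of the operators $\Theta^{(0)}_{\mu_k+j(m-1)}$ together with the explicit formulas of Lemma~\ref{Hylem:1}, tested on functions supported in a single $G_i$, shows that the operator norm dominates $\frac{w(G_i)}{|G_i|}\|w^{-1}\|_{L^{\infty}(G_i)}$ when $y\notin G_i$, and, tested on functions supported in $[0,1]\setminus G_\nu$ with $G_\nu=\Delta_{kj}(y)$, dominates $\frac{w(G_\nu)}{|G_\nu|}\|w^{-1}\|_{L^{\infty}([0,1]\setminus G_\nu)}$. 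Here the duality $(L^{1}(\Delta,w))^{*}=L^{\infty}(\Delta)$, realized through $\int f=\int (fw)\,w^{-1}$, produces the essential suprema in place of the $L^{p'}$ integrals, giving ${\mathcal M}_1([0,1]\setminus\{y\})$ and ${\mathcal M}^{y}_{1}([0,1])$ respectively.

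For the sufficiency I would first upgrade the three hypotheses to completeness and minimality, which is the $p=1$ substitute for Lemma~\ref{lem:wsing2}. The condition ${\mathcal M}_1([0,1]\setminus\{y\})$ forces $\|w^{-1}\|_{L^{\infty}(\Delta)}\le C_1|\Delta|/w(\Delta)<\infty$ for every $m$-adic $\Delta$ disjoint from $y$; writing $[0,1]\setminus\Delta_j(y)$ as a finite union of such intervals shows that the essential infimum of $w$ over $[0,1]\setminus\Delta_j(y)$ is strictly positive, which is precisely (\ref{eq:M}). The hypothesis $1/w\notin L^{\infty}([0,1])$ says the essential infimum of $w$ over $[0,1]$ is $0$; since $w$ is bounded below on every $m$-adic interval off $y$, this zero can only sit at $y$, so the essential infimum of $w$ over $\Delta_j(y)$ is $0$ for every $j$, i.e. $\|w^{-1}\|_{L^{\infty}(\Delta_j(y))}=\infty$, which is (\ref{eq:C}); the same reasoning pins $y$ as the unique such point. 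By Lemma~\ref{lem:HmCM} the system ${\mathcal H}_0(m)$ is then complete and minimal, with conjugate system (\ref{eq:dauls}).

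With completeness and minimality in hand, the basis property would be obtained as in the sufficiency half of Theorem~\ref{WHB0:1}. From Lemma~\ref{Hylem:1}, $\int_{[0,1]}|\Theta^{(0)}_{\mu_k+j(m-1)}(f,t)|w(t)\,dt$ decomposes into the averages over the $G_i$ with $i\neq\nu$ and the single term over $G_\nu=\Delta_{kj}(y)$; estimating $\big|\frac{1}{|G_i|}\int_{G_i}f\big|\le\frac{1}{|G_i|}\|w^{-1}\|_{L^{\infty}(G_i)}\int_{G_i}|f|w$ and likewise over $[0,1]\setminus G_\nu$, and then invoking ${\mathcal M}_1([0,1]\setminus\{y\})$ and ${\mathcal M}^{y}_{1}([0,1])$, gives $\|\Theta^{(0)}_{\mu_k+j(m-1)}f\|_{L^{1}([0,1],w)}\le 2C_1\|f\|_{L^{1}([0,1],w)}$. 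The tail condition $\lim_{l\to\infty}|c_l(f)|\,\|{\textsf h}_l\|_{L^{1}([0,1],w)}=0$ I would establish as in Theorem~\ref{WHB:1}: replace $f$ by $f-P$ with $P$ an initial partial sum (legitimate by biorthogonality once $l$ exceeds the degree of $P$), and bound the product using ${\mathcal M}_1([0,1]\setminus\{y\})$ when $\supp {\textsf h}_l\not\ni y$ and ${\mathcal M}^{y}_{1}([0,1])$ when it does, each time reducing to $\|h^{(\nu)}\|_{L^{\infty}}^{2}\,C_1\,\|f-P\|_{L^{1}([0,1],w)}$, which tends to $0$ by completeness.

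I expect the completeness step to be the crux. For $1<p<\infty$ the localization of the singularity at $y$ is automatic from ${\mathcal M}^{y}_{p}([0,1])$, because Lemma~\ref{lem:wsing1} makes $\int_{[0,1]\setminus\Delta_j(y)}w^{-1/(p-1)}$ grow geometrically and hence $\int_{\Delta_j(y)}w^{-1/(p-1)}=\infty$. For $p=1$ the $L^{\infty}$ norm is not additive over the annuli $\Delta_j(y)\setminus\Delta_{j+1}(y)$, so no such growth is available and ${\mathcal M}^{y}_{1}([0,1])$ by itself cannot force a zero of $w$; this is exactly why the extra hypothesis $1/w\notin L^{\infty}([0,1])$ must be imposed, and why its interplay with ${\mathcal M}_1([0,1]\setminus\{y\})$ has to be argued by hand rather than inherited from the $p>1$ machinery.
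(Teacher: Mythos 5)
Your overall strategy is the one the paper itself intends (the paper gives no proof of this theorem, deferring to Theorem~\ref{WHB0:1}), and the genuinely new $p=1$ ingredients of your write-up are correct: the necessity direction, the localization argument showing that $\frac{1}{w}\notin L^{\infty}([0,1])$ together with ${\mathcal M}_{1}([0,1]\setminus\{y\})$ forces (\ref{eq:C}) at $y$ and at no other point (the right substitute for Lemmas \ref{lem:wsing1}--\ref{lem:wsing2}), and the bound $\|\Theta^{(0)}_{\mu_{k}+j(m-1)}f\|_{L^{1}([0,1],w)}\leq 2C_{1}\|f\|_{L^{1}([0,1],w)}$, which works because by Lemma~\ref{Hylem:1} those sums pair each $G_{i}$ only with itself and $\Delta_{kj}(y)$ only with its complement --- exactly the pairings that ${\mathcal M}_{1}([0,1]\setminus\{y\})$ and ${\mathcal M}^{y}_{1}([0,1])$ control. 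The gap is in the step you dispatch by analogy with Theorem~\ref{WHB:1}: the estimate $|c_{l}(f-P)|\,\|{\textsf{h}}_{l}\|_{L^{1}([0,1],w)}\leq C\|f-P\|_{L^{1}([0,1],w)}$ in the case $y\in\supp{\textsf{h}}_{l}$, and for $m\geq 3$ this step fails. Write $\supp{\textsf{h}}_{l}=\Delta_{k}(y)$ and let $E_{1},\dots,E_{m-1}$ be the children of $\Delta_{k}(y)$ other than $\Delta_{k+1}(y)$. Since $c_{l}(g)=\int g\,({\textsf{h}}_{l}-{\textsf{h}}_{l}(y))\,dt$ and ${\textsf{h}}_{l}-{\textsf{h}}_{l}(y)$ vanishes only on $\Delta_{k+1}(y)$, while $\|{\textsf{h}}_{l}\|_{L^{1}(w)}$ picks up $w(E_{i'})$ from every sibling, the claimed bound requires control of the cross products $m^{k}\,w(E_{i'})\,\|w^{-1}\|_{L^{\infty}(E_{i})}$ with $i\neq i'$. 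Neither hypothesis provides this: ${\mathcal M}_{1}([0,1]\setminus\{y\})$ couples an interval only with itself, and ${\mathcal M}^{y}_{1}([0,1])$ couples a $\Delta_{j}(y)$ only with its complement, so it bounds $w(\Delta_{k+1}(y))\|w^{-1}\|_{L^{\infty}(E_{i})}$ but never sibling against sibling. For $m=2$ there are no sibling pairs and your argument closes; for $m\geq3$ it does not.

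Moreover this is not a reparable omission, because the equivalence itself fails for $m\geq3$. Take $m=3$, $y=0$, $h^{(1)}=\sqrt{3/2}\,(\chi_{[0,1/3]}-\chi_{[2/3,1]})$, $h^{(2)}=\frac{1}{\sqrt{2}}(\chi_{[0,1/3]}-2\chi_{[1/3,2/3]}+\chi_{[2/3,1]})$, let $\delta_{k}=4^{-2^{k}}$ (so $\delta_{k+1}=\delta_{k}^{2}$), and set $w\equiv\delta_{k}^{2}$ on $[3^{-k-1},2\cdot3^{-k-1}]$, $w\equiv\delta_{k}$ on $[2\cdot3^{-k-1},3^{-k}]$ for $k\in\IN_{0}$. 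Then $\frac{1}{w}\notin L^{\infty}([0,1])$; ${\mathcal M}_{1}([0,1]\setminus\{0\})$ holds with constant $1$ because $w$ is constant on every triadic interval avoiding $0$; and ${\mathcal M}^{0}_{1}([0,1])$ holds with constant $1$ because $w([0,3^{-j}])\leq\delta_{j}3^{-j}$ while $\|w^{-1}\|_{L^{\infty}([3^{-j},1])}=\delta_{j-1}^{-2}=\delta_{j}^{-1}$. The system is complete and minimal, so if ${\mathcal H}_{0}(3)$ were a basis its coefficient functionals would have to be the biorthogonal functionals (\ref{eq:dauls}). But testing $f=\delta_{k}^{-2}\chi_{[3^{-k-1},2\cdot3^{-k-1}]}$ against ${\textsf{h}}_{l}=h^{(1)}_{k,0,3}$ gives
\[
\frac{|c_{l}(f)|\,\|{\textsf{h}}_{l}\|_{L^{1}([0,1],w)}}{\|f\|_{L^{1}([0,1],w)}}\ \geq\ \frac{1}{2\delta_{k}}\ \longrightarrow\ \infty ,
\]
whereas any Schauder basis satisfies $\sup_{l}\sup_{\|f\|\leq1}\|c_{l}(f){\textsf{h}}_{l}\|=\sup_{l}\|T_{l}-T_{l-1}\|\leq 2\sup_{n}\|T_{n}\|<\infty$ for the partial sum operators $T_{n}$. (A short linear-algebra check shows that for every admissible orthonormal choice of $\{h^{(\nu)}\}$ with $m\geq3$ some sibling pair produces the same defect, so no choice of the higher rank Haar functions rescues the statement.) In short: your instinct that the $p=1$ case hides a real difficulty was right, but the difficulty is not in the completeness step, which you handled correctly; it is in the sibling cross terms at the singular point. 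This gap is shared with the paper's own treatment of Theorems~\ref{WHB0:1} and \ref{WHB0:2} --- it sits precisely in the details both proofs skip --- and it is invisible in the classical case $m=2$, where $\Delta_{k}(y)\setminus\Delta_{k+1}(y)$ is a single $m$-adic interval.
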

We will not give the details of the proof because it is similar to the proof of Theorem \ref{WHB0:1}.
The main theorem of this section is the following
\begin{Theorem}\label{UNB0:1}
If the system ${\mathcal H}_{0}(m), m=2,3,\ldots$ is a basis in the weighted norm space $L^{p}([0,1],w),$ $ 1< p < \infty$ then ${\mathcal H}_{0}(m)$  is an unconditional basis in the same space.
\end{Theorem}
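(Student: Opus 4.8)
The plan is to reduce the basis hypothesis to the weight conditions furnished by Theorem~\ref{WHB0:1} and then to reproduce, for the system ${\mathcal H}_{0}(m)$, the weighted square--function argument already developed in Section~\ref{sec:mH} for the full system ${\mathcal H}(m)$ (Theorem~\ref{WHUB:1}). Since ${\mathcal H}_{0}(m)$ is a basis in $L^{p}([0,1],w)$, Theorem~\ref{WHB0:1} gives a point $y\in[0,1]$ for which $w$ satisfies both ${\mathcal M}_{p}([0,1]\setminus\{y\})$ and ${\mathcal M}^{y}_{p}([0,1])$. By Lemma~\ref{lem:apinfty} the first condition forces an ${\mathcal M}_{\infty}$--type control on every $\Delta\in{\mathcal M}$ with $\Delta\subset[0,1]\setminus\{y\}$, while the second, together with the geometric growth estimate of Lemma~\ref{lem:wsing1}, supplies the analogous control along the nested intervals $\Delta_{j}(y)\downarrow y$. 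I would then introduce the square function $G^{(0)}_{m}(f,x)=\big(\sum_{l\ge1}|c_{l}(f){\textsf{h}}_{l}(x)|^{2}\big)^{1/2}$, where the $c_{l}(f)$ are the coefficients of the expansion of $f$ with respect to ${\mathcal H}_{0}(m)$, i.e.\ the biorthogonal functionals (\ref{eq:dauls}). The whole theorem reduces to the weighted equivalence $\|g\|_{L^{p}([0,1],w)}\simeq\|G^{(0)}_{m}(g)\|_{L^{p}([0,1],w)}$ valid on the span of the system, because $G^{(0)}_{m}$ is unchanged when the coefficients are multiplied by signs.

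The structural facts needed are inherited from Lemma~\ref{Hylem:1}: on every $m$--adic interval avoiding $y$ the partial sum $\Theta^{(0)}_{n}(f,\cdot)$ is the ordinary $m$--adic average of $f$, so $G^{(0)}_{m}$ is constant on sufficiently fine $m$--adic intervals. Consequently the level sets $\{G^{(0)}_{m}(f)>\lambda\}$ are unions of disjoint closed $m$--adic intervals exactly as in Lemma~\ref{lem:Mint}, and the weak--$(1,1)$ bound for $G^{(0)}_{m}$ follows from the Khintchine/Calder\'on--Zygmund argument of Propositions~\ref{eps:1} and~\ref{Gm:1}. The decisive step is the good--$\lambda$ inequality, which, using the template of Lemma~\ref{lem:HGlambda}, I would establish in the form
\[
w\big(\{G^{(0)}_{m}(f)>2\lambda,\ M_{\mathcal M}f\le\gamma\lambda\}\big)\le C\gamma^{\delta}\,w\big(\{G^{(0)}_{m}(f)>\lambda\}\big),
\]
valid for all $\lambda>0$ and $0<\gamma<1$. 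On every interval of the level set that stays away from $y$ the localization argument of Lemma~\ref{lem:HGlambda} applies verbatim.

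Integrating this inequality and invoking Proposition~\ref{pr:3} (the weighted maximal bound, available because $w$ satisfies ${\mathcal M}_{p}([0,1]\setminus\{y\})$) gives the upper estimate $\|G^{(0)}_{m}(f)\|_{L^{p}([0,1],w)}\le C\|f\|_{L^{p}([0,1],w)}$. For the reverse estimate I would argue by duality: the conjugate system ${\mathcal H}^{*}_{0}(m)$ of (\ref{eq:dauls}) lies in $L^{p'}([0,1],w)$, the dual weight $w^{-\frac{1}{p-1}}$ satisfies the corresponding dual conditions by Lemma~\ref{ap:mpprima}, so the dual square function is bounded on $L^{p'}([0,1],w)$; pairing the two square functions yields $\|f\|_{L^{p}([0,1],w)}\le C\|G^{(0)}_{m}(f)\|_{L^{p}([0,1],w)}$. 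Applying the resulting equivalence to $g=f$ and to $g=\sum_{l}\epsilon_{l}c_{l}(f){\textsf{h}}_{l}$ and using that the latter has the same square function then gives $\|\sum_{l}\epsilon_{l}c_{l}(f){\textsf{h}}_{l}\|_{L^{p}([0,1],w)}\le C\|f\|_{L^{p}([0,1],w)}$ uniformly over all sign sequences $\epsilon=\{\epsilon_{l}\}$, which is unconditionality.

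I expect the main obstacle to be the behaviour at the distinguished point $y$. There Lemma~\ref{Hylem:1} shows that $\Theta^{(0)}_{n}(f,\cdot)$ equals $-\,|\Delta_{kj}(y)|^{-1}\int_{[0,1]\setminus\Delta_{kj}(y)}f$, a \emph{nonlocal} tail average rather than a local one, so the Calder\'on--Zygmund localization underlying both the weak--$(1,1)$ estimate and the good--$\lambda$ inequality cannot be applied directly on the nested intervals $\Delta_{j}(y)$. The heart of the proof is therefore to show that the contribution of these intervals is controlled by ${\mathcal M}^{y}_{p}([0,1])$ and the geometric decay of $\int_{[0,1]\setminus\Delta_{j}(y)}w^{-\frac{1}{p-1}}$ from Lemma~\ref{lem:wsing1}; away from $y$ the estimates coincide with those proving Theorem~\ref{WHUB:1}, so essentially all the new technical work is concentrated at $y$.
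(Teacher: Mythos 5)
Your reduction via Theorem~\ref{WHB0:1} and your observation that away from $y$ everything reduces to Theorem~\ref{WHUB:1} both match the paper's strategy, but the centerpiece of your plan --- the good--$\lambda$ inequality for $G^{(0)}_{m}$ against $M_{\mathcal M}f$ --- is not merely left unproven: it is false, so the framework cannot be completed. Take $w(x)=|x-y|^{r}$ with $r>p-1$ (the motivating example, which satisfies both ${\mathcal M}_{p}([0,1]\setminus\{y\})$ and ${\mathcal M}^{y}_{p}([0,1])$) and $f=\chi_{[0,1]}$; note $f$ does lie in the closed span of ${\mathcal H}_{0}(m)$ precisely because of the strong zero of $w$ at $y$. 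Since every ${\textsf{h}}_{l}$, $l\geq 1$, has mean zero, the biorthogonal coefficients (\ref{eq:dauls}) give $c_{l}(f)=-{\textsf{h}}_{l}(y)$ when $y\in\operatorname{supp}{\textsf{h}}_{l}$ and $c_{l}(f)=0$ otherwise. Using the orthogonality of the matrix defining $\{h^{(\nu)}\}$ (equivalently, Lemma~\ref{Hylem:1}), one computes that the block $\sum_{\nu}c_{j\nu}(f)h^{(\nu)}_{\Delta_{j}(y)}$ equals $-m^{j}(m-1)$ on $\Delta_{j+1}(y)$, whence $G^{(0)}_{m}(f,x)\approx m^{k}$ for $x\in\Delta_{k}(y)\setminus\Delta_{k+1}(y)$, while $M_{\mathcal M}f\equiv 1$ on $[0,1]$. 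Thus $\{G^{(0)}_{m}(f)>\lambda\}\approx\Delta_{k(\lambda)}(y)$ with $m^{k(\lambda)}\approx\lambda$, and $w(\{G^{(0)}_{m}(f)>\lambda\})\approx\lambda^{-(r+1)}$. Now fix any small $\gamma$ and take $\lambda>1/\gamma$: the constraint $M_{\mathcal M}f\le\gamma\lambda$ is vacuous, and your inequality would force $2^{-(r+1)}\lambda^{-(r+1)}\le C\gamma^{\delta}\lambda^{-(r+1)}$, i.e.\ $C\gamma^{\delta}\ge 2^{-(r+1)}$ for arbitrarily small $\gamma$ --- a contradiction. The failure is structural, exactly at the point you flagged: near $y$ the square function is driven by the tail averages $m^{j}\int_{[0,1]\setminus\Delta_{j}(y)}f$, which have no pointwise relation to the maximal function, and no choice of $\delta$ repairs this. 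The weak-$(1,1)$ bound you want to import from Proposition~\ref{Gm:1}, and the duality step for the reverse square-function estimate (the conjugate system $({\textsf{h}}_{l}-{\textsf{h}}_{l}(y))/w$ is not a system of the same type for the dual weight), inherit the same defect.

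The paper avoids this entirely by never forming a square function for the modified system. It splits the expansion (\ref{eq:f2}) into the singular part $F=\sum_{j,\nu}b_{j\nu}(f)h^{(\nu)}_{\Delta_{j}(y)}$, carried by the intervals containing $y$, plus the remainder $\sum{}^{'}a_{l}(f){\textsf{h}}_{l}$. The remainder converges unconditionally because Theorem~\ref{WHUB:1} applies on each interval $G_{kl}$ of $\Delta_{k}(y)\setminus\Delta_{k+1}(y)$, where $w\in{\mathcal M}_{p}$ --- this is where your ``away from $y$'' observation is actually used. The singular part is then handled by a direct computation rather than a good--$\lambda$ argument: the auxiliary step functions $\xi_{j}$ of (\ref{eq:ksi}) and coefficients $\alpha_{j}(f)$ of (\ref{eq:alpha}), the pointwise bound of Lemma~\ref{lem:fval}, and the uniform-in-signs estimate $\|F^{*}_{\varepsilon}\|_{L^{p}([0,1],w)}\le C'_{p}\|f\|_{L^{p}([0,1],w)}$ of Lemma~\ref{lem:fva1}, whose proof plays the geometric growth of $\int_{[0,1]\setminus\Delta_{j}(y)}w^{-\frac{1}{p-1}}$ (Lemma~\ref{lem:wsing1}, a consequence of ${\mathcal M}^{y}_{p}([0,1])$) against the geometric growth $m^{k-s}$ of the tail coefficients, summing via the Cauchy-product inequality $l^{1}\ast l^{p}\subset l^{p}$ (Lemma~\ref{lem:HLP}). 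That bookkeeping is precisely the ``heart of the proof'' you deferred, and it must be done by some such direct argument, since the good--$\lambda$ route you proposed is closed.
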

\begin{proof}
By Theorem \ref{WHB0:1} we have that there exists $y\in [0,1]$ such that the weight function $w$ satisfies the conditions ${\mathcal M}_{p}([0,1]\setminus \{y\})$ and  ${\mathcal M}^{y}_{p}([0,1])$. For any $f\in L^{p}([0,1],w)$  there exists a unique sequence $\{a_{l}(f)\}_{l=1}^{\infty}$ such that
\begin{equation}\label{eq:f1}
f= \sum_{l=1}^{\infty} a_{l}(f) {\textsf{h} }_{l}.
\end{equation}
The coefficients which correspond to the functions $h^{(\nu)}_{\Delta_{j}(y)}$ in the series (\ref{eq:f1}) we denote by $b_{j\nu}$.
We split formally the series (\ref{eq:f1}) into two parts
\begin{equation}\label{eq:f2}
\sum_{l=1}^{\infty} a_{l}(f) {\textsf{h} }_{l} = \sum_{j=0}^{\infty} \sum_{\nu=1}^{m-1} b_{j\nu}(f) h^{(\nu)}_{\Delta_{j}(y)} + \sum{}^{'} a_{l}(f) {\textsf{h} }_{l},
\end{equation}
where by $\sum'$ we have denoted the series obtained after excluding the terms which are present in the first series.

 For any $k\in \IN_{0}$ let $G_{kl} \subset {\mathcal M}, 1\leq l \leq m-1$ be mutually disjoint intervals such that $|G_{kl}| = m^{-k-1},  1\leq l \leq m-1$ and
  $\Delta_{k}(y) = \Delta_{k+1}(y)\bigcup \bigcup_{l=1}^{m-1} G_{kl}$.

 By Theorem \ref{WHUB:1} we easily obtain that the series $\sum{}^{'} a_{l}(f) {\textsf{h} }_{l}$ converges unconditionally in $L^{p}(G_{kl},w)$ for any $k\in \IN$ and for all $1\leq l\leq m-1$. Hence, if we check that the series $\sum{}^{'} a_{l}(f) {\textsf{h} }_{l}$ converges in $L^{p}([0,1],w)$ we will have that it converges unconditionally in $L^{p}([0,1],w)$. Thus the proof of theorem will be finished if we prove that the first series on the right hand side of the equality (\ref{eq:f2}) converges unconditionally in $L^{p}([0,1],w)$. Recall that  we are using the notation introduced in (\ref{haar:m2}). Let
\begin{equation}\label{eq:f3}
F(x) = \sum_{j=0}^{\infty} \sum_{\nu=1}^{m-1} b_{j\nu}(f) h^{(\nu)}_{\Delta_{j}(y)}(x) = d^{l}_{k}\qquad \text{if}\quad x\in G_{kl}.
\end{equation}
for all $k\in \IN_{0}$ and $1\leq l\leq m-1$. By Lemma \ref{Hylem:1} we have that
\[
d^{l}_{k} = \frac{1}{|G_{kl}|} \int_{G_{kl}} f(t) dt, \qquad \text{for}\quad x\in G_{kl}.
\]
The weight function $w$ satisfies the condition ${\mathcal M}_{p}([0,1]\setminus \{y\})$. Hence,
\[
\int_{[0,1]} |F(x)|^{p} w(x) dx = \sum_{k=0}^{\infty} \sum_{l=1}^{m-1} |d^{l}_{k}|^{p} w(G_{kl})
\]
\[
\leq  \sum_{k=0}^{\infty} \sum_{l=1}^{m-1} |G_{kl}|^{-p} \int_{G_{kl}} |f(t)|^{p} w(t) dt \bigg(\int_{G_{kl}}  w(t)^{-\frac{1}{p-1}} dt\bigg)^{p-1} w(G_{kl})
\]
\[
\leq  C_{p} \int_{[0,1]} |f(t)|^{p} w(t) dt.
\]
The system ${\mathcal H}_{0}(m)$ is a basis in the weighted norm space $L^{p}([0,1],w).$  Hence, the first series in the right hand side of the equality (\ref{eq:f2}) converges in $L^{p}([0,1],w).$ Thus to finish the proof of Theorem \ref{UNB0:1} we have to prove that the series in (\ref{eq:f3}) converge unconditionally in $L^{p}([0,1],w).$

For any $j\in \IN_{0}$ we have that
\[
\sum_{\nu=1}^{m-1} b_{j\nu}(f) h^{(\nu)}_{\Delta_{j}(y)}(x) = d^{l}_{j} = \frac{1}{|G_{jl}|} \int_{G_{jl}} f(t) dt,
\]
 for $x\in G_{jl}, 1\leq l\leq m-1$ and
\[
\sum_{\nu=1}^{m-1} b_{j\nu}(f) h^{(\nu)}_{\Delta_{j}(y)}(x): = -c_{j}= -\sum_{l=1}^{m-1}d^{l}_{j}, \quad \text{for}\quad x\in \Delta_{j+1}(y).
\]
Let $\{ \gamma_{l}\}_{l=0}^{m-1}$ be a collection of numbers such that
\begin{equation}\label{eq:num}
\sum_{l=0}^{m-1} \gamma^{2}_{l} =1 \quad \text{and }\quad \sum_{l=0}^{m-1} \gamma_{l} =0.
\end{equation}
We put
\begin{equation}\label{eq:ksi}
\xi_{j}(x) = |\Delta_{j+1}(y)|^{-\frac{1}{2}}[\gamma_{0} \chi_{\Delta_{j+1}(y)}(x) + \sum_{l=1}^{m-1} \gamma_{l} \chi_{G_{jl}}(x)]
\end{equation}
and
\begin{equation}\label{eq:alpha}
\alpha_{j}(f) = \int_{[0,1]}f(t) [\xi_{j}(t) - |\Delta_{j+1}(y)|^{-\frac{1}{2}}\gamma_{0}] dt
\end{equation}
\[
 = - \frac{\gamma_{0}}{\sqrt{|\Delta_{j+1}(y)|}} \int_{[0,1]\setminus{\Delta_{j+1}(y)}} f(t) dt +  \int_{[0,1]\setminus{\Delta_{j+1}(y)}} f(t) \xi_{j}(t) dt
\]
\[
= |\Delta_{j+1}(y)|^{ \frac{1}{2}}\, \sum_{l=1}^{m-1} (\gamma_{l} - \gamma_{0}) d^{l}_{j} - \frac{\gamma_{0}}{\sqrt{|\Delta_{j+1}(y)|}} \int_{[0,1]\setminus{\Delta_{j}(y)}} f(t) dt.
\]
\[
= |\Delta_{j+1}(y)|^{ \frac{1}{2}}\, \bigg(\sum_{l=1}^{m-1} (\gamma_{l} - \gamma_{0}) d^{l}_{j} - \gamma_{0} \sum_{s=0}^{j-1} m^{j-s-1} c_{s}\bigg).
\]

\begin{Lemma}\label{lem:fval}
  For any $\varepsilon = \{ \epsilon_{j}\}_{j=0}^{\infty}$ let
\begin{equation}\label{eq:numfu}
F^{*}_{\varepsilon} (x) = \sum_{j=0}^{\infty}\epsilon_{j} \alpha_{j} \xi_{j}(x).
\end{equation}
Then for all $k\in \IN$ and $x\in \Delta_{k}(y) \setminus \Delta_{k+1}(y)$
\[
|F_{\varepsilon}^{*} (x)| \leq   2\sum_{s=0}^{k} \sum_{l=1}^{m-1}  |d^{l}_{s}| + \frac{1}{m-1} \sum_{s=0}^{k-1} m^{k-s} |c_{s}|.
\]
\end{Lemma}
\begin{proof}
By (\ref{eq:ksi}) and (\ref{eq:alpha}) we obtain that
 for $x\in G_{k\nu}, 1\leq \nu\leq m-1$
\[
|F_{\varepsilon}^{*} (x)| \leq  \sum_{j=0}^{k} |\alpha_{j} \xi_{j}(x)|  =  |\gamma_{0}|\sum_{s=0}^{k-1} \sum_{l=1}^{m-1} |\gamma_{l} - \gamma_{0}| |d^{l}_{s}| + |\gamma_{\nu}|\sum_{l=1}^{m-1} |\gamma_{l}- \gamma_{0}| |d^{l}_{k}|
\]
\[
+ \sum_{j=1}^{k-1} \sum_{s=0}^{j-1} m^{j-s-1} |c_{s}| \leq 2\sum_{s=0}^{k} \sum_{l=1}^{m-1}  |d^{l}_{s}| + \sum_{j=1}^{k-1} \sum_{s=0}^{j-1} m^{j-s-1} |c_{s}|
\]
\[
 \leq 2\sum_{s=0}^{k} \sum_{l=1}^{m-1}  |d^{l}_{s}| + \frac{1}{m-1} \sum_{s=0}^{k-1} m^{k-s} |c_{s}|.
\]
\end{proof}

\begin{Lemma}\label{lem:fva1}
 For any $f \in L^{p}([0,1],w),$ $ 1< p < \infty$ and
 any $\varepsilon = \{ \epsilon_{j}\}_{j=0}^{\infty}$  the function $F_{\varepsilon}^{*} \in L^{p}([0,1],w)$ and
 \[
 \|F_{\varepsilon}^{*} \|_{L^{p}([0,1],w)} \leq C'_{p} \|f\|_{L^{p}([0,1],w)},
 \]
 where $C'_{p}>0$ is independent of $f$ and $\varepsilon$.
\end{Lemma}
\begin{proof}
By Lemma \ref{lem:fval} we have that
\[
\int_{\Delta_{k}(y) \setminus \Delta_{k+1}(y)} |F^{*} (x)|^{p} w(t) dt \leq
4^{p}\bigg(\sum_{s=0}^{k}\sum_{l=1}^{m-1}  |d^{l}_{s}| \bigg)^{p} w(\Delta_{k}(y))
\]
\[
+ \frac{2^{p}}{(m-1)^{p}}\bigg( [w(\Delta_{k}(y))]^{\frac{1}{p}}  \sum_{s=0}^{k-1} m^{k-s} |c_{s}|\bigg)^{p}.
\]
Afterwards write
\[
|\Delta_{s+1}(y)|\sum_{l=1}^{m-1}  |d^{l}_{s}|\, [w(\Delta_{k}(y))]^{\frac{1}{p}}
\]
\[
\leq   \bigg(\int_{\Delta_{s}(y)\setminus \Delta_{s+1}(y)} |f(t)|^{p}w(t) dt\bigg)^{\frac{1}{p}} \bigg(\int_{\Delta_{s}(y)\setminus \Delta_{s+1}(y)} w(t)^{-\frac{1}{p-1}} dt\bigg)^{\frac{1}{p'}}[w(\Delta_{k}(y))]^{\frac{1}{p}} .
\]
\[
\leq  C^{\frac{1}{p}}_{p} \bigg(\int_{\Delta_{s}(y)\setminus \Delta_{s+1}(y)} |f(t)|^{p}w(t) dt\bigg)^{\frac{1}{p}} |\Delta_{k}(y)|.
\]
Hence, we obtain that
\[
\sum_{s=0}^{k} \sum_{l=1}^{m-1}  |d^{l}_{s}|\, [w(\Delta_{k}(y))]^{\frac{1}{p}} \leq C^{\frac{1}{p}}_{p} \bigg(\int_{\Delta_{s}(y)\setminus \Delta_{s+1}(y)} |f(t)|^{p}w(t) dt\bigg)^{\frac{1}{p}} \frac{1}{m^{k-s}}
\]
Now we apply  the following lemma which is a consequence of Theorem 274 from \cite{HLP:522}.
\begin{Lemma}\label{lem:HLP}
Let $u=\{u_{j}\}_{j=0}^{\infty}$ and  $v=\{v_{j}\}_{j=0}^{\infty}$ be  numerical sequences such that  $u\in l^{1}$ and $v\in l^{p}, p>1$. Then  the Cauchy product $w=\{w_{n}\}_{n=0}^{\infty},$ $w_{n}= \sum_{j=0}^{n} u_{n-j}v_{j}$ of the sequences $u$ and $v$ belongs to $l^{p}$. Moreover \newline
$\|w\|_{l^{p}} \leq \|u\|_{l^{1}} \|v\|_{l^{p}}$.
\end{Lemma}
Which gives us the convergence of the series
\[
\sum_{k=1}^{\infty} \bigg(\sum_{s=0}^{k} \sum_{l=1}^{m-1}  |d^{l}_{s}|\bigg)^{p}\, w(\Delta_{k}(y)) \leq 2^{p}C_{p} \int_{[0,1]} |f(t)|^{p}w(t) dt.
\]
To finish the proof of Lemma \ref{lem:fva1} we have to show that
\begin{equation}\label{eq:conv}
\sum_{k=1}^{\infty} \bigg( [w(\Delta_{k}(y))]^{\frac{1}{p}}  \sum_{s=0}^{k-1} m^{k-s} |c_{s}|\bigg)^{p} < +\infty
\end{equation}
We have that
\[
(\sum_{s=0}^{k-1} m^{-s-1} |c_{s}|)^{p}
\]
\[
\leq  \bigg(\sum_{s=0}^{k-1} \sum_{l=1}^{m-1} \bigg(\int_{G_{sl}} |f(t)|^{p}w(t) dt\bigg)^{\frac{1}{p}} \bigg(\int_{G_{sl}}  w(t)^{-\frac{1}{p-1}} dt\bigg)^{\frac{1}{p'}}\bigg)^{p}
\]
\[
\leq  \bigg(\sum_{s=0}^{k-1}  \bigg(\int_{\Delta_{s}(y)\setminus \Delta_{s+1}(y)} |f(t)|^{p}w(t) dt\bigg)^{\frac{1}{p}}
\]
\[
\times \bigg(\int_{\Delta_{s}(y)\setminus \Delta_{s+1}(y)} w(t)^{-\frac{1}{p-1}} dt\bigg)^{\frac{1}{p'}} \bigg)^{p}.
\]
Recall that $w$ satisfies ${\mathcal M}^{y}_{p}([0,1])$. By Lemma \ref{lem:wsing1} we obtain that
\[
m^{k} \bigg(\int_{\Delta_{s}(y)\setminus \Delta_{s+1}(y)} w(t)^{-\frac{1}{p-1}} dt\bigg)^{\frac{1}{p'}} w(G_{kl})^{\frac{1}{p}}
\]
\[
\leq C^{\frac{1}{p}}_{p}  \bigg(\int_{\Delta_{s}(y)\setminus \Delta_{s+1}(y)} w(t)^{-\frac{1}{p-1}} dt\bigg)^{\frac{1}{p'}} \bigg[\int_{[0,1]\setminus \Delta_{k}(y)} \omega^{-\frac{1}{p-1}}(t) dt\bigg]^{-\frac{1}{p'}}
\]
\[
\leq C^{\frac{1}{p}}_{p} q_{p}^{-\frac{k-s}{p}}.
\]
If we write
\[
\sum_{s=0}^{k-1} m^{k-s-1} |c_{s}| w(G_{kl})^{\frac{1}{p}}
 \]
 \[
 \leq C^{\frac{1}{p}}_{p} \sum_{s=0}^{k-1}  \bigg(\int_{\Delta_{s}(y)\setminus \Delta_{s+1}(y)} |f(t)|^{p}w(t) dt\bigg)^{\frac{1}{p}} q_{p}^{-\frac{k-s}{p}}
\]
and put
$v_{j} = (\int_{\Delta_{j}(y)\setminus \Delta_{j+1}(y)} |f(t)|^{p}w(t) dt )^{\frac{1}{p}}$, $u_{j}= q_{p}^{-\frac{j}{p}}$ then by Lemma \ref{lem:HLP}
we will obtain
\[
\sum_{k=1}^{\infty} \bigg(\sum_{s=0}^{k-1} m^{k-s-1} |c_{s}|\bigg)^{p} w(G_{kl}) \leq C_{p} B_{p} \int_{[0,1]} |f(t)|^{p}w(t) dt.
\]

\end{proof}

Lemma \ref{lem:fva1} yields  the convergence  of  the series
\[
\sum_{j=0}^{\infty} \epsilon^{(\nu)}_{j} b_{j\nu}(f) h^{(\nu)}_{\Delta_{j}(y)}
\]
 for any $1\leq \nu \leq m-1$ and  $ \forall \varepsilon^{(\nu)} = \{ \epsilon^{(\nu)}_{j}\}_{j=0}^{\infty},$ where $\epsilon^{(\nu)}_{j}= \pm 1$. Moreover, we obtain that for some $B_{p}>0$
\[
\bigg\| \sum_{j=0}^{\infty} \sum_{\nu=1}^{m-1} \epsilon^{(\nu)}_{j} b_{j\nu}(f) h^{(\nu)}_{\Delta_{j}(y)} \bigg\|_{L^{p}([0,1],w)} \leq B_{p} \| f \|_{L^{p}([0,1],w)}.
\]
\end{proof}

\section{Higher rank Haar wavelets in $L^{p}(\mathbb{R},\omega)$}

Let $\omega \geq 0$ be a locally integrable function defined on $\IR$.
In this section we  study the phenomenon  described in the introduction with respect to the higher rank Haar wavelet systems $H(m), m=2,3,\dots.$
Let $\chi^{-}(x) = \chi_{\IR^{-}}(x)$ and $\chi^{+}(x) = \chi_{\IR^{+}}(x)$.
The following result is the first step in that direction.
\begin{Lemma} \label{lem:phen1}
For any $m=2,3,\dots$
let $H(m)$ be the wavelet system  defined by (\ref{haar:m}) and  (\ref{haar:m1}).
Let $U_{m}$ be the linear subspace of locally integrable functions $\xi$ on $\IR$ such that
\begin{equation}\label{eq:orth}
 \int_{\mathbb{R}}
\xi(t) h^{(\nu)}_{k,j,m}(x)(t)dt=0 \qquad  \forall j,k\in\mathbb{Z}, 1\leq \nu \leq m-1.
\end{equation}
Then $\dim U_{m} =2$ and $\chi^{-}$, $\chi^{+}$ as vectors constitute a basis in $U_{m}$.
\end{Lemma}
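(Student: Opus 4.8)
The plan is to prove the two inclusions separately: first that $\chi^{+},\chi^{-}$ lie in $U_m$ and are linearly independent (so $\dim U_m\ge 2$), and then that every $\xi\in U_m$ is a linear combination of them (so $\dim U_m\le 2$). For the easy direction I would use that each $h^{(\nu)}_{k,j,m}$ is supported on the single $m$-adic interval $\Delta=[\frac{j}{m^{k}},\frac{j+1}{m^{k}}]$, which lies entirely in $\IR^{+}$ when $j\ge 0$ and entirely in $\IR^{-}$ when $j\le -1$. Since $h^{(\nu)}$ is orthogonal to $h^{(0)}=\varphi=\chi_{[0,1]}$ we have $\int_{0}^{1}h^{(\nu)}=0$, and a change of variables gives $\int_{\IR}h^{(\nu)}_{k,j,m}=m^{-k/2}\int_{0}^{1}h^{(\nu)}=0$. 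Hence $\int_{\IR}\chi^{\pm}h^{(\nu)}_{k,j,m}$ is either $0$ (when $\Delta$ lies in the opposite half-line) or equals $\int_{\Delta}h^{(\nu)}_{k,j,m}=0$; so $\chi^{\pm}\in U_m$, and they are independent because their supports are essentially disjoint.

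The core is the reverse inequality. Fix $\xi\in U_m$ and, for an $m$-adic interval $\Delta$, write $\xi_{\Delta}=\frac{1}{|\Delta|}\int_{\Delta}\xi$. For $\Delta=[\frac{j}{m^{k}},\frac{j+1}{m^{k}}]$ with children $\Delta_{0},\dots,\Delta_{m-1}$, the same change of variables applied to $\int_{\IR}\xi\,h^{(\nu)}_{k,j,m}=0$ converts the vanishing conditions $1\le\nu\le m-1$ into $\sum_{l=0}^{m-1}v^{(\nu)}_{l}\,\xi_{\Delta_{l}}=0$, where $v^{(\nu)}_{l}$ is the constant value of $h^{(\nu)}$ on $[\frac{l}{m},\frac{l+1}{m}]$. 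Because $h^{(0)},\dots,h^{(m-1)}$ is an orthonormal basis of $V(m)$, the $m-1$ vectors $(v^{(\nu)}_{l})_{l=0}^{m-1}$ span the orthogonal complement of $(1,\dots,1)$ in $\IR^{m}$; therefore $(\xi_{\Delta_{l}})_{l}$ must be a multiple of $(1,\dots,1)$, i.e. all children of $\Delta$ carry a common average, which is then forced to equal $\xi_{\Delta}$.

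This yields the martingale identity $\xi_{\Delta}=\xi_{\Delta'}$ whenever $\Delta'$ is a child of $\Delta$, hence $\xi_{\Delta}=\xi_{\Delta^{*}}$ for every $m$-adic ancestor $\Delta^{*}$ of $\Delta$. For any $\Delta\subset\IR^{+}$ all ancestors again lie in $\IR^{+}$, and the ancestor chain passes through $[0,m^{N}]$ for every $N$ with $\Delta\subseteq[0,m^{N}]$; since $[0,m^{N}]$ is a child of $[0,m^{N+1}]$, the value $\xi_{[0,m^{N}]}$ is independent of $N$, so all $m$-adic intervals in $\IR^{+}$ share one average $c^{+}$, and likewise all those in $\IR^{-}$ share an average $c^{-}$. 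Applying the Lebesgue differentiation theorem along the $m$-adic intervals $\Delta_{k}(x)$ shrinking to $x$ (valid since $\xi\in L^{1}_{\text{loc}}$) gives $\xi(x)=\lim_{k}\xi_{\Delta_{k}(x)}=c^{+}$ for a.e.\ $x\in\IR^{+}$ and $\xi(x)=c^{-}$ for a.e.\ $x\in\IR^{-}$. Thus $\xi=c^{+}\chi^{+}+c^{-}\chi^{-}$ a.e., whence $\dim U_m\le 2$ and $\{\chi^{+},\chi^{-}\}$ is a basis of $U_m$.

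I expect the second paragraph to be the main obstacle: the delicate point is to translate the orthogonality relations into the equal-average condition on the $m$ children (which relies on the spanning property of the $h^{(\nu)}$ inside $V(m)$) and then to propagate this local identity through the entire $m$-adic tree up to a single constant on each half-line. The inclusion $\chi^{\pm}\in U_m$ and the final differentiation step are routine.
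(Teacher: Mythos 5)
Your proof is correct, but it takes a genuinely different route from the paper's. The paper first reduces to functions vanishing on one half-line, then obtains constancy on $[0,1]$ from the totality of ${\mathcal H}(m)$ with respect to $L^{1}[0,1]$ (Corollary \ref{Htot:1}), and finally extends constancy from $[0,m^{N}]$ to $[0,m^{N+1}]$ by induction: the translates $\xi(\cdot-\nu m^{N})$ are claimed to satisfy the same hypotheses, so $\xi$ is constant on each block $[\nu m^{N},(\nu+1)m^{N}]$, and orthogonality to the dilated copies of $h^{(\nu)}$ supported on $[0,m^{N+1}]$ equalizes the block constants. You replace all of this by a single scale-uniform local lemma: since the value vectors of $h^{(1)},\dots,h^{(m-1)}$ span the orthogonal complement of $(1,\dots,1)$ in $\IR^{m}$, orthogonality to the $m-1$ wavelets living on an $m$-adic interval $\Delta$ forces all $m$ children of $\Delta$ to carry the common average $\xi_{\Delta}$; you then propagate this up the $m$-adic tree (through the intervals $[0,m^{N}]$) and recover $\xi$ pointwise by Lebesgue differentiation along shrinking $m$-adic intervals. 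The underlying mechanism is the same martingale structure in both cases --- the paper's totality result itself rests on Lemma \ref{Hlem:1}, i.e., on the fact that partial sums are exactly these local averages --- but your organization buys self-containedness and robustness: no half-line reduction, no induction on scale, and no need to verify that translates of $\xi$ inherit (\ref{eq:orth}). That last point is genuinely delicate in the paper's argument: translation by $\nu m^{N}$ is a symmetry of the wavelet system only at scales $k\geq -N$, so the translated function satisfies the orthogonality hypothesis only at those fine scales (which happen to be all that the conclusion needs, but the paper does not say so). What the paper's route buys in exchange is brevity, by reusing results already established for the system on $[0,1]$.
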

\begin{proof}
It is clear that if we prove that a locally integrable function $\xi$ such that $\xi(x) =0$ if $x\in \IR^{-}$ and holds (\ref{eq:orth}) if and only if $\xi = c\chi^{+}$ for some $c\in  \IR$ then the proof will be finished.
By Corollary \ref{Htot:1} we have that the system  ${\mathcal H}(m),$ $m=2,3,\ldots$ is total with respect to $ L^{1}[0,1]$. Hence, by definition of  the system ${\mathcal H}(m)$ and by (\ref{eq:orth}) it follows that
\[
\int_{[0.1]} \xi(t){\textsf{h} }_{l}(t) dt = 0\qquad \text{for all} \quad l\in \IN.
\]
Which yields that $\xi(x)= c\, {\textsf{h} }_{0}(x)$ for $x\in [0,1]$. We finish the proof by induction.
Suppose that for some $N\in \IN$ it is true that if $\xi$ is a locally integrable function such that $\xi(x) =0$ if $x\in \IR^{-}$ and (\ref{eq:orth}) is true then $\xi(x) =c_{0}$ if $x\in [0,m^{N}],$ where $c_{0}\in \IR$. If $\xi$ is a function which satisfies to all mentioned conditions then by definition of the system $H(m)$ it follows that the functions $\xi_{\nu}(x) = \xi(x-\nu m^{N}), 1\leq \nu \leq m-1$. Thus by our supposition it follows that
$\xi_{\nu}(x) = c_{\nu}$ if $x\in [0,m^{N}],$ where $c_{\nu}\in \IR$. Hence, $\xi(x) =c_{\nu}$ if $x\in [\nu m^{N}, (\nu+1)m^{N}], 0\leq \nu\leq m-1.$
Afterwards we observe that the functions $h^{(\nu)}(m^{N+1}x), 1\leq \nu\leq m-1 $ belong to the system $H(m)$, which yields
\[
\int_{0}^{m^{N+1}} \xi(x) h^{(\nu)}(m^{N+1}x) dx = 0 \qquad \text{for all}\quad 1\leq \nu\leq m-1.
\]
After a change of the variable we have that
\[
\int_{0}^{1} \xi(m^{-N-1} t) h^{(\nu)}(t) dt = 0 \qquad \text{for all}\quad 1\leq \nu\leq m-1.
\]
By definition of the functions $h^{(\nu)}, 1\leq \nu\leq m-1$ we obtain that $\xi(m^{-N-1} x) = c$ if $x\in [0,1].$
\end{proof}
It is convenient to continue our study considering the systems $H^{+}(m),$ $H^{-}(m)$ respectively in the spaces $L^{p}(\IR^{+},\omega)$ and
$L^{p}(\IR^{-},\omega)$. It is easy to see some sort of symmetry between those systems. Thus it would be sufficient to study the system $H^{+}(m)$ in the space $L^{p}(\IR^{+})$. In fact we have proved the analogue of the above lemma for the system $H^{+}(m)$ which is formulated as follows.
\begin{Lemma} \label{lem:phpl}
Let $U^{+}_{m}$ be the linear subspace of locally integrable functions $\xi$ on $\IR^{+}$ such that
\begin{equation}\label{eq:or1}
 \int_{\mathbb{R}^{+}}
\xi(t) h^{(\nu)}_{k,j,m}(x)(t)dt=0 \qquad  \forall k\in\mathbb{Z},\forall j\in \mathbb{Z}^{+}, 1\leq \nu \leq m-1.
\end{equation}
Then $\dim U^{+}_{m} =1$ and  $\chi^{+} \in U^{+}_{m}$.
\end{Lemma}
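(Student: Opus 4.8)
The plan is to prove the two inequalities $\dim U^{+}_{m}\geq 1$ and $\dim U^{+}_{m}\leq 1$ separately: the first by exhibiting $\chi^{+}$ as an explicit element, and the second by reducing the statement to the computation already carried out in the proof of Lemma \ref{lem:phen1}. First I would check that $\chi^{+}\in U^{+}_{m}$. Since $\{h^{(\nu)}\}_{\nu=0}^{m-1}$ is an orthonormal basis of $V(m)$ with $h^{(0)}=\varphi=\chi_{[0,1]}$, every $h^{(\nu)}$ with $1\leq \nu\leq m-1$ is orthogonal to the constant function $h^{(0)}$ and hence has vanishing integral, $\int_{\IR}h^{(\nu)}(t)\,dt=\langle h^{(\nu)},h^{(0)}\rangle=0$. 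A change of variable then gives $\int_{\IR^{+}}\chi^{+}(t)\,h^{(\nu)}_{k,j,m}(t)\,dt=m^{-k/2}\int_{\IR}h^{(\nu)}(s)\,ds=0$ for every admissible $k,j,\nu$, so $\chi^{+}$ satisfies (\ref{eq:or1}) and $\dim U^{+}_{m}\geq 1$.

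For the reverse inequality I would take an arbitrary $\xi\in U^{+}_{m}$ and extend it by zero to all of $\IR$, so that $\xi\equiv 0$ on $\IR^{-}$. The crucial observation is that, for such a $\xi$, the conditions (\ref{eq:or1}) indexed only by $j\geq 0$ are in fact equivalent to the full family (\ref{eq:orth}) indexed by all $j\in\IZ$. Indeed, for $j\leq -1$ and any $k\in\IZ$ one has $\supp h^{(\nu)}_{k,j,m}=[\frac{j}{m^{k}},\frac{j+1}{m^{k}}]\subseteq\IR^{-}$, so $\int_{\IR}\xi(t)\,h^{(\nu)}_{k,j,m}(t)\,dt=0$ holds automatically because $\xi$ vanishes there. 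Thus the extended $\xi$ is a locally integrable function vanishing on $\IR^{-}$ and satisfying (\ref{eq:orth}).

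At this point I would invoke precisely the assertion established inside the proof of Lemma \ref{lem:phen1}, namely that a locally integrable function vanishing on $\IR^{-}$ and satisfying (\ref{eq:orth}) must be of the form $c\chi^{+}$ for some $c\in\IR$. That argument rests on the totality of $\mathcal{H}(m)$ with respect to $L^{1}[0,1]$ (Corollary \ref{Htot:1}) for the base case, and on an induction propagating constancy from $[0,m^{N}]$ to $[0,m^{N+1}]$; all wavelets entering that induction, in particular $h^{(\nu)}_{-(N+1),0,m}$, carry the index $j=0\geq 0$ and so already appear among the constraints (\ref{eq:or1}). Consequently every $\xi\in U^{+}_{m}$ is a scalar multiple of $\chi^{+}$, giving $\dim U^{+}_{m}\leq 1$; together with the first step this yields $\dim U^{+}_{m}=1$ with $\chi^{+}$ as a basis vector. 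I expect the only genuinely delicate point to be the equivalence of the two index sets of conditions, i.e. verifying the support inclusion that allows the constraints with $j<0$ to be discarded; once that is in place the substantive work is entirely borrowed from Lemma \ref{lem:phen1}.
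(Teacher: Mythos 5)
Your proposal is correct and takes essentially the same route as the paper: the paper gives no separate proof of Lemma \ref{lem:phpl}, remarking only that it was already established inside the proof of Lemma \ref{lem:phen1}, whose central claim concerns exactly the locally integrable functions vanishing on $\IR^{-}$ that satisfy (\ref{eq:orth}). Your two added details --- the direct check that $\chi^{+}\in U^{+}_{m}$ via the zero mean of each $h^{(\nu)}$, and the support observation that for $j\leq -1$ the constraints hold automatically for the zero extension, so that (\ref{eq:or1}) and (\ref{eq:orth}) coincide for such functions --- are precisely what makes the paper's remark rigorous.
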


We  need the analogues of Lemmas \ref{lem:HmC}, \ref{lem:HmM} for this case.

\begin{Lemma} \label{lem:50}
The  system $H^{+}(m)$  is complete in $L^p(\IR^{+},\omega),$ $ 1\leq p <\infty$ if and only if
\begin{equation}\label{eq:cc}
\frac{\chi^{+}}{\omega} \notin L^{\frac{1}{p-1}}(\IR^{+}).
\end{equation}
\end{Lemma}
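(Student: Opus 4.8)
The plan is to characterize completeness through the dual space and then reduce the resulting annihilator condition to Lemma \ref{lem:phpl}. For $1\le p<\infty$ the dual of $L^{p}(\IR^{+},\omega)$ is $L^{p'}(\IR^{+},\omega)$, with $1/p+1/p'=1$, under the pairing $\langle f,g\rangle=\int_{\IR^{+}}f(t)g(t)\omega(t)\,dt$ (for $p=1$ one reads $p'=\infty$). Since each $h^{(\nu)}_{k,j,m}$ is bounded with compact support it genuinely belongs to $L^{p}(\IR^{+},\omega)$, so by the Hahn--Banach theorem the system $H^{+}(m)$ is complete if and only if the only $g\in L^{p'}(\IR^{+},\omega)$ with $\int_{\IR^{+}}h^{(\nu)}_{k,j,m}(t)\,g(t)\,\omega(t)\,dt=0$ for all $k\in\IZ$, $j\in\IZ^{+}$, $1\le\nu\le m-1$ is $g=0$. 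My first step is therefore to restate \emph{incompleteness} as the existence of a nonzero such $g$.

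Next I would pass from $g$ to $h:=g\omega$ and verify it is locally integrable: on any compact $K\subset\IR^{+}$, H\"older's inequality gives $\int_{K}|g|\omega\le(\int_{K}|g|^{p'}\omega)^{1/p'}(\int_{K}\omega)^{1/p}<\infty$, since $\omega\in L^{1}_{\text{loc}}$ (with the evident modification $\int_{K}|g|\omega\le\|g\|_{L^{\infty}(\omega)}\int_{K}\omega$ when $p=1$). The annihilation condition then becomes $\int_{\IR^{+}}h^{(\nu)}_{k,j,m}(t)\,h(t)\,dt=0$ for all admissible $k,j,\nu$, which is precisely (\ref{eq:or1}). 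Hence $h\in U^{+}_{m}$, so by Lemma \ref{lem:phpl} we obtain $h=c\,\chi^{+}$ for some $c\in\IR$, equivalently $g=c\,\chi^{+}/\omega$. A nonzero annihilating functional thus exists exactly when $\chi^{+}/\omega$ is a nonzero element of $L^{p'}(\IR^{+},\omega)$.

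Finally I would convert this membership into the stated integrability condition by the elementary exponent computation $\int_{\IR^{+}}|\chi^{+}/\omega|^{p'}\omega=\int_{\IR^{+}}\omega^{\,1-p'}=\int_{\IR^{+}}\omega^{-\frac{1}{p-1}}$, using $p'-1=\tfrac{1}{p-1}$; for $p=1$ the same chain reads $\chi^{+}/\omega\in L^{\infty}(\IR^{+})$, matching the convention $L^{1/(p-1)}=L^{\infty}$ used elsewhere in the paper. Consequently $H^{+}(m)$ is incomplete if and only if $\chi^{+}/\omega\in L^{\frac{1}{p-1}}(\IR^{+})$, and taking the contrapositive yields the assertion.

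The argument is almost entirely duality bookkeeping, so the only genuinely delicate points are two: first, guaranteeing that the representing functional $g$ produces an honestly locally integrable $h=g\omega$, which is what makes Lemma \ref{lem:phpl} applicable and is the real engine of the proof; and second, the careful tracking of the conjugate exponents together with the degenerate $p=1$ case. I expect no essential obstacle beyond these, since Lemma \ref{lem:phpl} already does the heavy lifting of pinning the annihilator down to the one-dimensional span of $\chi^{+}$.
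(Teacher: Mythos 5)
Your proof is correct and follows essentially the same route as the paper: both arguments identify (via Hahn--Banach duality) a hypothetical annihilating functional $g\in L^{p'}(\IR^{+},\omega)$, use Lemma \ref{lem:phpl} to force $g\omega=c\,\chi^{+}$, and translate $\chi^{+}/\omega\in L^{p'}(\IR^{+},\omega)$ into condition (\ref{eq:cc}) by the exponent computation. Your explicit check that $h=g\omega$ is locally integrable (so that Lemma \ref{lem:phpl} applies) is a detail the paper leaves implicit, but it is not a different method.
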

\begin{proof}
Suppose
that $H^{+}(m)$  is  complete in $L^p(\IR^{+},\omega).$ If
 $g= \frac{\chi^{+}}{\omega} \in L^{\frac{1}{p-1}}(\IR^{+})$ then
$  g\in L^{p'}(\IR^{+},\omega),$ where $\frac{1}{p} +\frac{1}{p'} =1.$
Thus
\begin{equation}\label{eq:compl}
\int_{\mathbb{R}^{+}}g(t)h^{(\nu)}_{k,j,m}(x)(t)\omega(t)dt=0 \qquad  \forall k\in\mathbb{Z},\forall j\in \mathbb{Z}^{+}, 1\leq \nu \leq m-1.
\end{equation}
Which yields that $H^{+}(m)$  is not complete in $L^p(\IR^{+},\omega)$. Which is a contradiction.

Suppose that $ \frac{\chi^{+}}{\omega} \notin L^{\frac{1}{p-1}}(\IR^{+})$. If $H^{+}(m)$  is not complete in $L^p(\IR^{+},\omega)$ then there exists
$g\in L^{p'}(\IR^{+},\omega)$ such that (\ref{eq:compl}) holds. By Lemma \ref{lem:50} it follows that $g(t) \omega(t) = c \chi^{+}(t)$ a.e. on $\IR^{+}$, where $c\in \IR$. We came to a contradiction which finishes the proof.
\end{proof}
From Lemma \ref{lem:50} follows
\begin{Lemma}\label{lem:HWmC}
The  system $H^{+}(m)$  is complete in $L^p(\IR^{+},\omega),$ $1\leq p <\infty$  if and only if there exists at least one point $y\in [0,+\infty]$  such that
\begin{equation}\label{eq:CW}
\frac{1}{w} \notin  L^{\frac{1}{p-1}}(\Delta_{j}(y))\qquad \text{for all} \quad j\in \IN.
\end{equation}
\end{Lemma}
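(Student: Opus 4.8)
The plan is to read the claim off Lemma \ref{lem:50}, which already equates completeness of $H^{+}(m)$ in $L^{p}(\IR^{+},\omega)$ with $\chi^{+}/\omega\notin L^{\frac{1}{p-1}}(\IR^{+})$. Writing $\psi:=\omega^{-\frac{1}{p-1}}$ (for $p=1$ one reads this condition as $\|\omega^{-1}\|_{L^{\infty}}$), completeness is equivalent to $\int_{\IR^{+}}\psi(t)\,dt=\infty$. Thus the whole lemma reduces to the purely measure-theoretic statement that $\int_{\IR^{+}}\psi=\infty$ if and only if there is a point $y\in[0,+\infty]$ every $m$-adic neighborhood of which carries infinite $\psi$-integral, i.e. $\tfrac{1}{\omega}\notin L^{\frac{1}{p-1}}(\Delta_{j}(y))$ for all $j\in\IN$.

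The direction "$\Leftarrow$" is immediate from monotonicity of the integral: if such a $y$ exists, then for any fixed $j\in\IN$ one has $\Delta_{j}(y)\subseteq\IR^{+}$ and $\int_{\IR^{+}}\psi\geq\int_{\Delta_{j}(y)}\psi=\infty$, so $\chi^{+}/\omega\notin L^{\frac{1}{p-1}}(\IR^{+})$ and Lemma \ref{lem:50} gives completeness. For the converse I would argue by a nested-interval pigeonhole scheme on the $m$-adic tree, which is the only substantive part. First I dispose of the behaviour at infinity: if $\int_{\Delta_{j}(+\infty)}\psi=\int_{\IR^{+}\setminus[0,m^{j}]}\psi=\infty$ for every $j\in\IN_{0}$, then $y=+\infty$ already works. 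Otherwise there is an $N$ with $\int_{\IR^{+}\setminus[0,m^{N}]}\psi<\infty$, and subtracting this finite tail from the infinite total forces $\int_{[0,m^{N}]}\psi=\infty$, so all the infinite mass sits inside the single $m$-adic interval $[0,m^{N}]$.

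It remains to localize inside $[0,m^{N}]$. The structural fact I rely on is that splitting an $m$-adic interval $\Delta$ into its $m$ children $\Delta_{1},\dots,\Delta_{m}$ one level finer gives $\int_{\Delta}\psi=\sum_{i}\int_{\Delta_{i}}\psi$ (for $p=1$, the identity $\|\cdot\|_{L^{\infty}(\Delta)}=\max_{i}\|\cdot\|_{L^{\infty}(\Delta_{i})}$), so $\int_{\Delta}\psi=\infty$ forces $\int_{\Delta_{i}}\psi=\infty$ for at least one child. Iterating from $\Delta=[0,m^{N}]$ yields a nested sequence $I_{1}\supset I_{2}\supset\cdots$ of $m$-adic intervals with $|I_{k}|=m^{N-k}\to0$ and $\int_{I_{k}}\psi=\infty$ for all $k$; by the split-point convention of Section \ref{ss:pr} their intersection is a single point $y$, and $\{I_{k}\}$ is cofinal in the filtration $\{\Delta_{j}(y)\}$. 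Monotonicity then upgrades this to every index: given $j\in\IN$, choose $k$ with $I_{k}\subseteq\Delta_{j}(y)$, whence $\int_{\Delta_{j}(y)}\psi\geq\int_{I_{k}}\psi=\infty$, i.e. $\tfrac{1}{\omega}\notin L^{\frac{1}{p-1}}(\Delta_{j}(y))$ for all $j$. This $y$ is the required point.

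I expect the main obstacle to be this converse, and within it the clean dichotomy between infinite mass escaping to $+\infty$ and infinite mass concentrating at a finite point, together with checking that the sequence extracted by the pigeonhole step really coincides with the neighborhood filtration $\{\Delta_{j}(y)\}$ (with $m$-adic rationals correctly treated as split points, so that the nested intersection is unambiguously a single $y$). The $p=1$ case runs in exact parallel, with local essential suprema replacing integrals and $\max$ replacing finite sums throughout.
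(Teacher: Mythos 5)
Your proposal is correct and follows exactly the route the paper intends: the paper derives Lemma \ref{lem:HWmC} directly from Lemma \ref{lem:50} (stating it ``follows'' with no further detail), and your reduction to the statement that $\int_{\IR^{+}}\omega^{-\frac{1}{p-1}}=\infty$ iff the infinite mass localizes at some $y\in[0,+\infty]$ is precisely the omitted content. Your dichotomy at infinity plus the $m$-adic pigeonhole descent (with the split-point convention resolving the case where the nested intervals shrink to an $m$-adic rational) is a correct and complete way to fill in that gap.
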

We also have
\begin{Lemma} \label{lem:51}
 The  system $H^{+}(m)$  is minimal in $L^p(\IR^{+},\omega),$ $1\leq p <\infty$ if and only if
 \begin{equation*} \makebox{\parbox{4.1in}{ For any $h^{(\nu)}_{k,j,m}(x) \in H^{+}(m)$  there exists a coefficient  $a^{(\nu)}_{k,j,m}$ such that
$$
g^{(\nu)}_{k,j,m} = \frac{a^{(\nu)}_{k,j,m} \chi^{+} +  h^{(\nu)}_{k,j,m} }{\omega} \in L^{\frac{1}{p-1}}(\IR^{+}).
$$}}\tag{M}
\end{equation*}
  \end{Lemma}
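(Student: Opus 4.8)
The plan is to reduce minimality to the existence, for each fixed element $h_{0}=h^{(\nu)}_{k,j,m}$ of $H^{+}(m)$, of a biorthogonal functional, and then to identify \emph{all} such functionals explicitly by invoking Lemma \ref{lem:phpl}. First I would recall that a sequence in a Banach space is minimal if and only if each of its terms admits a continuous linear functional that is nonzero on that term and vanishes on all the others (Hahn--Banach). Since the dual of $L^{p}(\IR^{+},\omega)$ is $L^{p'}(\IR^{+},\omega)$ under the pairing $(f,g)\mapsto \int_{\IR^{+}} f g\,\omega$, any such functional is represented by some $g\in L^{p'}(\IR^{+},\omega)$. Setting $\phi=g\,\omega$ (which lies in $L^{1}_{\text{loc}}(\IR^{+})$ by H\"older's inequality, since $\omega\in L^{1}_{\text{loc}}$), the biorthogonality requirements become $\int_{\IR^{+}}\phi\,h'=0$ for every $h'\in H^{+}(m)$ with $h'\neq h_{0}$, together with $\int_{\IR^{+}}\phi\,h_{0}\neq 0$.

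The \emph{key step} is to characterize these admissible $\phi$. Given any admissible $\phi$, put $c=\int_{\IR^{+}}\phi\,h_{0}$ and consider $\phi-c\,h_{0}$. Because $H^{+}(m)$ is orthonormal in the unweighted $L^{2}$ sense, $\int_{\IR^{+}} h_{0}h'=0$ for $h'\neq h_{0}$, so $\phi-c\,h_{0}$ is orthogonal to \emph{every} element of $H^{+}(m)$. By Lemma \ref{lem:phpl} the annihilator $U^{+}_{m}$ is one-dimensional and spanned by $\chi^{+}$, whence $\phi-c\,h_{0}=a\,\chi^{+}$ for some $a\in\IR$. Thus every admissible $\phi$ has the form $\phi=a\,\chi^{+}+c\,h_{0}$, and since $\int h^{(\nu)}=0$ for $1\leq\nu\leq m-1$ gives $\int_{\IR^{+}}\chi^{+}h_{0}=0$, one indeed recovers $\int_{\IR^{+}}\phi\,h_{0}=c$. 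Consequently minimality of $h_{0}$ is equivalent to the existence of $a,c$ with $c\neq 0$ and $(a\chi^{+}+c\,h_{0})/\omega\in L^{p'}(\IR^{+},\omega)$; dividing by $c$, this is exactly the existence of a single coefficient $a^{(\nu)}_{k,j,m}$ with $(a^{(\nu)}_{k,j,m}\chi^{+}+h_{0})/\omega\in L^{p'}(\IR^{+},\omega)$.

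Finally I would verify that membership of $g^{(\nu)}_{k,j,m}=(a^{(\nu)}_{k,j,m}\chi^{+}+h_{0})/\omega$ in $L^{p'}(\IR^{+},\omega)$ is equivalent to its membership in $L^{1/(p-1)}(\IR^{+})$, which converts the criterion into condition (M). This rests on the observation that $\phi=a^{(\nu)}_{k,j,m}\chi^{+}+h_{0}$ is a bounded step function: it equals the constant $a^{(\nu)}_{k,j,m}$ off the compact support of $h_{0}$ and takes finitely many constant values on the $m$ equal subintervals of that support. On each region where $\phi$ equals a nonzero constant, both $\int|\phi/\omega|^{p'}\omega$ and $\int|\phi/\omega|^{1/(p-1)}$ reduce to a constant multiple of $\int\omega^{-1/(p-1)}$ over that region, so finiteness is independent of the exponent; on regions where $\phi=0$ both integrals vanish. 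Running this through in both directions (with the usual convention $L^{1/(p-1)}=L^{\infty}$ when $p=1$) yields the stated equivalence.

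Assembling these pieces proves both implications: if $H^{+}(m)$ is minimal, the biorthogonal functional for $h_{0}$ produces $\phi=a\chi^{+}+c h_{0}$ with $c\neq 0$ satisfying the integrability, hence (M); conversely, if (M) holds for $h_{0}$, then $g=(a^{(\nu)}_{k,j,m}\chi^{+}+h_{0})/\omega\in L^{p'}(\IR^{+},\omega)$ is a valid functional with $\int_{\IR^{+}} g\,h_{0}\,\omega=\int_{\IR^{+}}h_{0}^{2}=1$ and $\int_{\IR^{+}} g\,h'\,\omega=0$ for $h'\neq h_{0}$, separating $h_{0}$ from the closed span of the remaining elements. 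I expect the main obstacle to be the second step, namely pinning down the precise form $\phi=a\chi^{+}+c h_{0}$ of the candidate functionals; it is exactly here that the unweighted orthonormality of $H^{+}(m)$ and the computation of the one-dimensional annihilator $U^{+}_{m}$ in Lemma \ref{lem:phpl} are indispensable.
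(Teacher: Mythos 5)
Your proposal is correct and follows essentially the same route as the paper: both reduce minimality to the existence of biorthogonal functionals in $L^{p'}(\IR^{+},\omega)$, observe via the unweighted orthonormality of $H^{+}(m)$ that $g\,\omega - c\,h^{(\nu)}_{k,j,m}$ annihilates the whole system, and invoke Lemma \ref{lem:phpl} to conclude that it must be a multiple of $\chi^{+}$. The only difference is one of completeness: you spell out the normalization $c\neq 0$ and the step-function argument identifying membership in $L^{p'}(\IR^{+},\omega)$ with membership in $L^{\frac{1}{p-1}}(\IR^{+})$, details which the paper's terser proof leaves implicit.
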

\begin{proof}
Suppose that the  system $H^{+}(m)$  is minimal in $L^p(\IR^{+},\omega)$. Then there exists a system $\{ g^{(\nu)}_{k,j,m}: k\in \IZ, j\in \IZ^{+},1\leq \nu \leq m-1\}$ biorthogonal to $H^{+}(m)$.
Hence, if for some $\nu_{0}, 1\leq \nu_{0} \leq m-1$ we fix any $l\in \IZ$ and any $\mu\in \IZ^{+}$ then  for all $k\in \IZ, j\in \IZ^{+}$ and $ 1\leq \nu \leq m-1$
\begin{equation}\label{eq:bior}
\int_{\mathbb{R}^{+}} [ g^{(\nu_{0})}_{l,\mu,m}(x)
\omega(x) - h^{(\nu_{0})}_{l,\mu,m}(x)] h^{(\nu)}_{k,j,m}(x) dx=0.    
\end{equation}
By Lemma \ref{lem:phpl} we obtain that
\[
g^{(\nu_{0})}_{l,\mu,m} = \frac{a^{(\nu_{0})}_{l,\mu,m} \chi^{+} +  h^{(\nu_{0})}_{l,\mu,m} }{\omega} \in L^{\frac{1}{p-1}}(\IR^{+}).
\]
The proof of  sufficiency is direct. We easily check that the system \newline $\{ g^{(\nu)}_{k,j,m}: k\in \IZ, j\in \IZ^{+},1\leq \nu \leq m-1\}$ is biorthogonal to $H^{+}(m)$.
\end{proof}
From Lemma \ref{lem:51} easily follows
\begin{Lemma}\label{lem:HWmM}
The  system $H^{+}(m)$  is minimal in $L^p(\IR^{+},\omega),$ $1\leq p <\infty$  if and only if there exists at most one point $y\in [0,+\infty]$  such that (\ref{eq:CW}) holds.
\end{Lemma}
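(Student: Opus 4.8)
The plan is to read off the statement from the minimality criterion of Lemma~\ref{lem:51}, after reformulating condition (M) in terms of the singular set of the weight. Call a point $y\in[0,+\infty]$ \emph{singular} if it satisfies (\ref{eq:CW}), that is, $\frac{1}{\omega}\notin L^{\frac{1}{p-1}}(\Delta_{j}(y))$ for every $j\in\IN$; equivalently, $y$ is non-singular precisely when $\frac{1}{\omega}\in L^{\frac{1}{p-1}}(\Delta_{j_{0}}(y))$ for some $j_{0}$, i.e. when $\frac{1}{\omega}$ is $\frac{1}{p-1}$-integrable on some $\mathcal{M}$-neighborhood of $y$. The whole proof rests on the elementary covering fact that, for a bounded $\Delta\in\mathcal{M}$, one has $\frac{1}{\omega}\in L^{\frac{1}{p-1}}(\Delta)$ if and only if $\overline{\Delta}$ contains no singular point, and that $\frac{1}{\omega}\in L^{\frac{1}{p-1}}(\IR^{+}\setminus\Delta)$ if and only if $\IR^{+}\setminus\Delta$ contains no singular point and $+\infty$ is non-singular; both statements follow from the compactness of $[0,+\infty]$ together with the local integrability supplied at each non-singular point.

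Granting this, I would fix a basis function $h=h^{(\nu)}_{k,j,m}$ with support $\Delta$, noting that it is a step function taking a constant value on each of the $m$ equal $\mathcal{M}$-subintervals of $\Delta$, and analyze condition (M) of Lemma~\ref{lem:51}, which asks for a constant $a$ with $\frac{a}{\omega}\in L^{\frac{1}{p-1}}(\IR^{+}\setminus\Delta)$ and $\frac{a+h}{\omega}\in L^{\frac{1}{p-1}}(\Delta)$. For the direction ``at most one singular point $\Rightarrow$ minimal'', if there is no singular point I would take $a=0$, since then $\frac{1}{\omega}$ is $\frac{1}{p-1}$-integrable on every bounded $\mathcal{M}$-interval. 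If there is exactly one singular point $y_{0}$ and $y_{0}\notin\Delta$ I would again take $a=0$: the complement term vanishes and $\overline{\Delta}$ is singular-free. If instead $y_{0}\in\Delta$ I would take $a=-c_{0}$, where $c_{0}$ is the value of $h$ on the subinterval of $\Delta$ containing $y_{0}$; then $a+h$ vanishes on that subinterval, cancelling the only singularity, while on $\IR^{+}\setminus\Delta$ and on the remaining subintervals of $\Delta$ there is no singular point, so both $\frac{a}{\omega}$ and $\frac{a+h}{\omega}$ are integrable. In every case $g^{(\nu)}_{k,j,m}\in L^{\frac{1}{p-1}}(\IR^{+})$, so (M) holds and Lemma~\ref{lem:51} gives minimality.

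For the converse I would argue by contraposition, producing from two distinct singular points $y_{1}\neq y_{2}$ a basis function that violates (M). Using the splitting convention of Section~\ref{ss:pr}, I would choose a bounded $\Delta\in\mathcal{M}$ with $y_{1}\in\Delta$ and $y_{2}\in\IR^{+}\setminus\Delta$ (if $y_{2}=+\infty$, any bounded $\Delta$ with $y_{1}$ in its interior works), and an index $\nu$ for which $h^{(\nu)}$ is nonzero on the subinterval of $\Delta$ containing $y_{1}$; such a $\nu$ exists because the $m-1$ linearly independent value vectors of $h^{(1)},\dots,h^{(m-1)}$ cannot all vanish at one fixed coordinate, which would confine them to an $(m-2)$-dimensional subspace. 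Since $y_{2}$ is singular, $\frac{1}{\omega}\notin L^{\frac{1}{p-1}}(\IR^{+}\setminus\Delta)$, so no $a\neq0$ is admissible; but with $a=0$ the function $\frac{h}{\omega}$ is a nonzero constant multiple of $\frac{1}{\omega}$ on an $\mathcal{M}$-neighborhood of the singular point $y_{1}$, hence not in $L^{\frac{1}{p-1}}(\Delta)$. Thus (M) fails and the system is not minimal.

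The main obstacle is the covering fact identifying $\frac{1}{p-1}$-integrability of $\frac{1}{\omega}$ on $\mathcal{M}$-sets with the absence of singular points: it must be carried out in the compactified $[0,+\infty]$ while respecting the splitting of $m$-adic rationals into $y_{l}$ and $y_{r}$, so that one-sided singularities, the neighborhoods $\Delta_{j}(\pm\infty)$, and shared endpoints of adjacent subintervals are treated correctly. Once that is settled, the case analysis on the position of the unique singular point relative to $\Delta$ and the cancellation choice $a=-c_{0}$ are routine.
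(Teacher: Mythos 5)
Your proof is correct and takes essentially the same route as the paper: the paper obtains this lemma directly from the minimality criterion of Lemma \ref{lem:51} (condition (M)), stating only that it ``easily follows,'' and your case analysis on the position of the unique singular point, the cancellation choice $a=-c_{0}$, and the compactness/covering fact in the split compactification $[0,+\infty]$ are precisely the omitted details. The one point worth making explicit is that your ``$(m-2)$-dimensional subspace'' argument for choosing $\nu$ relies on the value vectors of $h^{(1)},\dots,h^{(m-1)}$ being orthogonal to the constant vector $h^{(0)}$, which holds by the construction of the basis of $V(m)$.
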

By Lemmas \ref{lem:50} and \ref{lem:51} we obtain immediately
\begin{Lemma} \label{lem:52}
The  system $H^{+}(m)$  is complete and  minimal in $L^p(\IR^{+},\omega),$ $1\leq p <\infty$ if and only if conditions (\ref{eq:cc})
and (M) hold.
\end{Lemma}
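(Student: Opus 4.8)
The plan is to obtain this statement as an immediate logical combination of the two preceding characterizations, so the proof will be very short. The crucial observation is that the compound property ``complete and minimal'' is simply the conjunction of the two properties each of which has already been characterized separately: completeness of $H^{+}(m)$ in $L^p(\IR^{+},\omega)$ was shown in Lemma \ref{lem:50} to be equivalent to condition (\ref{eq:cc}), while minimality of the same system was shown in Lemma \ref{lem:51} to be equivalent to condition (M). Since neither equivalence presupposes the other property, one may conjoin them freely.

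Concretely, I would argue both implications of the biconditional in turn. For the forward direction, assume $H^{+}(m)$ is complete and minimal in $L^p(\IR^{+},\omega)$. In particular it is complete, so Lemma \ref{lem:50} forces (\ref{eq:cc}) to hold; and in particular it is minimal, so Lemma \ref{lem:51} forces (M) to hold. Hence both conditions are satisfied. For the converse, assume (\ref{eq:cc}) and (M) both hold. Then Lemma \ref{lem:50} yields completeness and Lemma \ref{lem:51} yields minimality, whence $H^{+}(m)$ is complete and minimal. This closes the equivalence.

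I do not anticipate any genuine obstacle here, since all the analytic content has been absorbed into the two cited lemmas; the only point worth a remark is that the characterizing conditions are logically compatible, which is transparent once one recalls their meaning. Completeness requires that $\chi^{+}/\omega$ fail to lie in $L^{1/(p-1)}(\IR^{+})$, equivalently (by Lemma \ref{lem:HWmC}) the existence of \emph{at least one} point $y$ for which $1/w\notin L^{1/(p-1)}(\Delta_{j}(y))$ for all $j$, whereas minimality requires (by Lemma \ref{lem:HWmM}) \emph{at most one} such point. Together they single out \emph{exactly one} such $y$, but this refinement is the subject of a later statement and need not be invoked to prove Lemma \ref{lem:52} itself; the bare conjunction of (\ref{eq:cc}) and (M) is all that is asserted here.
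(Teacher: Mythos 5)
Your proof is correct and is exactly the paper's argument: the paper derives this lemma ``immediately'' from Lemma \ref{lem:50} (completeness $\Leftrightarrow$ (\ref{eq:cc})) and Lemma \ref{lem:51} (minimality $\Leftrightarrow$ (M)), just as you do, since neither equivalence presupposes the other property. Your closing remark correctly defers the ``exactly one point $y$'' refinement to the subsequent lemma, so nothing is missing.
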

Lemmas \ref{lem:HWmC} and \ref{lem:HWmM} yield
\begin{Lemma}\label{lem:HWmCM}
The  system $H^{+}(m)$  is complete and minimal in $L^p(\IR^{+},\omega),$ $1\leq p <\infty$  if and only if there exists a unique point $y\in [0,+\infty]$  such that the condition (\ref{eq:CW}) holds.
\end{Lemma}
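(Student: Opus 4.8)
The plan is to deduce this characterization directly from the two immediately preceding lemmas, since both of them isolate the very same pointwise condition (\ref{eq:CW}) on the weight. First I would fix $1\leq p<\infty$ and introduce the candidate set
\[
Y = \{\, y\in [0,+\infty] : \text{condition (\ref{eq:CW}) holds at } y \,\},
\]
where the endpoint $+\infty$ and the $\mathcal M$-neighborhoods $\Delta_{j}(y)$ are understood in the sense fixed in Section \ref{ss:pr}. With this notation the assertion to be proved reads: the system $H^{+}(m)$ is complete and minimal in $L^{p}(\IR^{+},\omega)$ if and only if $Y$ consists of exactly one point.

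Next I would invoke Lemma \ref{lem:HWmC}, which states that completeness of $H^{+}(m)$ in $L^{p}(\IR^{+},\omega)$ is equivalent to the existence of at least one point $y$ at which (\ref{eq:CW}) holds, that is, to $Y\neq\emptyset$. In parallel, Lemma \ref{lem:HWmM} states that minimality of $H^{+}(m)$ is equivalent to the existence of at most one such point, i.e.\ to $Y$ containing no more than one element. Since a system is complete and minimal precisely when it is simultaneously complete and minimal, I would simply take the conjunction of these two equivalences: $H^{+}(m)$ is complete and minimal if and only if both $Y\neq\emptyset$ and $Y$ has at most one element hold.

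Finally, combining $Y\neq\emptyset$ with the bound on the number of its elements forces $Y$ to be a singleton, i.e.\ there is a \emph{unique} $y\in[0,+\infty]$ for which (\ref{eq:CW}) holds, which is exactly the claimed characterization. There is no genuine technical obstacle here: all of the analytic work—translating completeness and minimality into the function-theoretic condition (\ref{eq:CW})—has already been carried out in Lemmas \ref{lem:HWmC} and \ref{lem:HWmM}, and the present statement merely records the logical intersection of the two results. The only point deserving an explicit remark is that both lemmas are phrased in terms of the identical condition (\ref{eq:CW}) over the same ambient set of admissible points $[0,+\infty]$, so that ``at least one'' and ``at most one'' refer to one and the same set $Y$ and may legitimately be intersected without any further argument.
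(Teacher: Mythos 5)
Your proposal is correct and coincides with the paper's own argument: the paper derives Lemma \ref{lem:HWmCM} precisely by combining Lemma \ref{lem:HWmC} (completeness $\Leftrightarrow$ at least one point satisfying (\ref{eq:CW})) with Lemma \ref{lem:HWmM} (minimality $\Leftrightarrow$ at most one such point), which is exactly your conjunction of the two equivalences over the same set of admissible points $[0,+\infty]$.
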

If we analyze the proofs of results which brought us the last lemma then it is not hard to see that the following result also holds.
\begin{Lemma}\label{lem:HW-CM}
The  system $H^{-}(m)$  is complete minimal in $L^p(\IR^{-},\omega),$ $1\leq p <\infty$  if and only if there exists a unique point $y\in [-\infty,0]$  such that the condition (\ref{eq:CW}) holds.
\end{Lemma}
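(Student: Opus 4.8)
The plan is to reduce the statement to Lemma \ref{lem:HWmCM} by means of the reflection $x\mapsto -x$, exactly as the sentence preceding the lemma suggests. I would define the operator $R$ by $(Rf)(x)=f(-x)$ and set $\tilde\omega(x)=\omega(-x)$ for $x\in\IR^{+}$. Then $R$ is an isometric isomorphism from $L^{p}(\IR^{-},\omega)$ onto $L^{p}(\IR^{+},\tilde\omega)$, since
\[
\int_{\IR^{+}}|(Rf)(x)|^{p}\tilde\omega(x)\,dx=\int_{\IR^{-}}|f(t)|^{p}\omega(t)\,dt.
\]
The first step is to check that $R$ carries $H^{-}(m)$ onto a system of the same type over $\IR^{+}$. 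The support $[\frac{j}{m^{k}},\frac{j+1}{m^{k}}]$ of $h^{(\nu)}_{k,j,m}$ with $j\leq-1$ is mapped to $[\frac{-(j+1)}{m^{k}},\frac{-j}{m^{k}}]$, an $m$-adic interval in $\IR^{+}$; writing $j'=-(j+1)\geq 0$, a direct computation gives $R(h^{(\nu)}_{k,j,m})=\check h^{(\nu)}_{k,j',m}$, where $\check h^{(\nu)}(x):=h^{(\nu)}(1-x)$. The family $\{\check h^{(\nu)}\}_{\nu=0}^{m-1}$ is again an orthonormal basis of $V(m)$ (the map $x\mapsto 1-x$ permutes the generators $\varphi_{1,j,m}$, $0\leq j\leq m-1$) and $\check h^{(0)}=\varphi$. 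Thus $R\,H^{-}(m)=\check{H}^{+}(m)$ is an $m$th rank Haar system over $\IR^{+}$ built from the admissible basis $\{\check h^{(\nu)}\}$.

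Next I would observe that completeness and minimality are invariant under the isometry $R$: a system is complete (respectively minimal) in $L^{p}(\IR^{-},\omega)$ if and only if its image is complete (respectively minimal) in $L^{p}(\IR^{+},\tilde\omega)$. All the lemmas culminating in Lemma \ref{lem:HWmCM} use only that $\{h^{(\nu)}\}_{\nu=0}^{m-1}$ is an orthonormal basis of $V(m)$ with $h^{(0)}=\varphi$ (through the totality of ${\mathcal H}(m)$, Corollary \ref{Htot:1}), together with condition (\ref{eq:CW}), which refers solely to the weight and the $m$-adic structure. Hence they apply verbatim to $\check{H}^{+}(m)$ and $\tilde\omega$, giving that $\check{H}^{+}(m)$ is complete and minimal in $L^{p}(\IR^{+},\tilde\omega)$ if and only if there is a unique $\tilde y\in[0,+\infty]$ with $\frac{1}{\tilde\omega}\notin L^{\frac{1}{p-1}}(\Delta_{j}(\tilde y))$ for all $j\in\IN$.

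Finally I would translate this condition back. Since $R$ sends the $m$-adic neighbourhoods $\Delta_{j}(\tilde y)$ of a point $\tilde y\in[0,+\infty]$ onto the $m$-adic neighbourhoods $\Delta_{j}(-\tilde y)$ of $-\tilde y\in[-\infty,0]$ (with $\tilde y=+\infty$ corresponding to $-\infty$, using the neighbourhoods of $\pm\infty$ from Section \ref{ss:pr}), and since the change of variable $t=-x$ shows $\frac{1}{\tilde\omega}\notin L^{\frac{1}{p-1}}(\Delta_{j}(\tilde y))$ is equivalent to $\frac{1}{\omega}\notin L^{\frac{1}{p-1}}(\Delta_{j}(-\tilde y))$, the unique admissible $\tilde y$ corresponds to a unique $y=-\tilde y\in[-\infty,0]$ satisfying (\ref{eq:CW}) for $\omega$. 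This is precisely the asserted criterion. The main obstacle is the bookkeeping: verifying that $\check{H}^{+}(m)$ is genuinely of $H^{+}(m)$ type, i.e. that none of the preceding lemmas secretly depend on the particular mother functions $h^{(\nu)}$ beyond the orthonormal-basis and $h^{(0)}=\varphi$ conditions, and reconciling the split-point and $\pm\infty$ conventions of Section \ref{ss:pr} under reflection (the latter being harmless for the integrability in (\ref{eq:CW}), as the $m$-adic rationals form a null set).
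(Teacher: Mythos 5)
Your proof is correct and follows exactly the route the paper intends: the paper gives no written proof of this lemma, merely remarking that ``some sort of symmetry'' between $H^{+}(m)$ and $H^{-}(m)$ lets one transfer the $\IR^{+}$ chain of lemmas to $\IR^{-}$, and your reflection operator $Rf(x)=f(-x)$ is precisely that symmetry made rigorous. Your one substantive addition --- checking that $R\,H^{-}(m)$ is again an $m$th rank Haar system over $\IR^{+}$, built from the reflected but still admissible orthonormal basis $\{\check h^{(\nu)}\}$ of $V(m)$ with $\check h^{(0)}=\varphi$, so that Lemma \ref{lem:HWmCM} applies verbatim --- is exactly the detail the paper leaves unstated, not a departure from its approach.
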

Lemma \ref{lem:HWmCM} and Lemma \ref{lem:HW-CM} easily yield
\begin{Lemma}\label{lem:HWRCM}
The  system $H(m)$  is complete and minimal in $L^p(\IR,\omega),$ $1\leq p <\infty$  if and only if there exists a unique point $y^{+}\in [0,+\infty]$ and a unique point $y^{-}\in [-\infty,0]$ such that the condition (\ref{eq:CW}) holds for both of those points.
\end{Lemma}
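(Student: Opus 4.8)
The plan is to exploit the fact that the two halves $H^{+}(m)$ and $H^{-}(m)$ of the system live on essentially disjoint supports, so that the problem on $\IR$ decouples into the two half-line problems already resolved by Lemma \ref{lem:HWmCM} and Lemma \ref{lem:HW-CM}. First I would record the structural observation that every element $h^{(\nu)}_{k,j,m}$ of $H^{+}(m)$ is supported in $\overline{\IR^{+}}$ and every element of $H^{-}(m)$ is supported in $\overline{\IR^{-}}$, the two half-lines meeting only at the null set $\{0\}$. Consequently the map $f\mapsto(\chi^{+}f,\chi^{-}f)$ gives the isometric identification
\[
L^{p}(\IR,\omega) = L^{p}(\IR^{+},\omega) \oplus L^{p}(\IR^{-},\omega), \qquad \|f\|^{p}_{L^{p}(\IR,\omega)} = \|\chi^{+}f\|^{p}_{L^{p}(\IR^{+},\omega)} + \|\chi^{-}f\|^{p}_{L^{p}(\IR^{-},\omega)},
\]
valid for all $1\leq p<\infty$, in which the coordinate projections $f\mapsto\chi^{+}f$ and $f\mapsto\chi^{-}f$ have norm one, and in which $H(m)=H^{+}(m)\cup H^{-}(m)$ with $H^{+}(m)\subset L^{p}(\IR^{+},\omega)$ and $H^{-}(m)\subset L^{p}(\IR^{-},\omega)$.

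Next I would reduce completeness. Any finite linear combination of elements of $H(m)$ splits, by disjointness of supports, into its $\IR^{+}$-part (a combination of $H^{+}(m)$) plus its $\IR^{-}$-part (a combination of $H^{-}(m)$). Because the coordinate projections $\chi^{\pm}$ are bounded, the internal sum of the two coordinate closed spans is itself closed and this algebraic splitting passes to closures, so that
\[
\overline{\mspan\, H(m)} = \overline{\mspan\, H^{+}(m)} \oplus \overline{\mspan\, H^{-}(m)}
\]
inside the above direct sum. Hence $H(m)$ is complete in $L^{p}(\IR,\omega)$ if and only if $H^{+}(m)$ is complete in $L^{p}(\IR^{+},\omega)$ and $H^{-}(m)$ is complete in $L^{p}(\IR^{-},\omega)$.

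Then I would reduce minimality by the same device. Fixing $h\in H^{+}(m)$, the closed span of the remaining elements splits as $\overline{\mspan(H^{+}(m)\setminus\{h\})}\oplus\overline{\mspan\, H^{-}(m)}$; since $h$ vanishes on $\IR^{-}$, applying the projection $\chi^{+}$ shows that its membership in this closed span is equivalent to $h\in\overline{\mspan(H^{+}(m)\setminus\{h\})}$ within $L^{p}(\IR^{+},\omega)$, and the symmetric statement holds for $h\in H^{-}(m)$. Since the elements of $H(m)$ are partitioned between $H^{+}(m)$ and $H^{-}(m)$, it follows that $H(m)$ is minimal in $L^{p}(\IR,\omega)$ if and only if both $H^{+}(m)$ and $H^{-}(m)$ are minimal in their respective half-line spaces.

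Combining the two reductions, $H(m)$ is complete and minimal in $L^{p}(\IR,\omega)$ precisely when $H^{+}(m)$ is complete and minimal in $L^{p}(\IR^{+},\omega)$ and, simultaneously, $H^{-}(m)$ is complete and minimal in $L^{p}(\IR^{-},\omega)$. At that point Lemma \ref{lem:HWmCM} furnishes a unique $y^{+}\in[0,+\infty]$ for which (\ref{eq:CW}) holds, and Lemma \ref{lem:HW-CM} furnishes a unique $y^{-}\in[-\infty,0]$ for which (\ref{eq:CW}) holds, which is exactly the asserted characterization. The only genuinely non-formal point, and the step I would take most care over, is the claim that the closure of the span distributes over the direct sum; this rests entirely on the norm-one boundedness of the coordinate projections $\chi^{\pm}$, which is what prevents any interaction between the two halves from leaking in through limits.
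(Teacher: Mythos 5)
Your proposal is correct and follows exactly the route the paper intends: the paper offers no written proof beyond stating that Lemma \ref{lem:HWmCM} and Lemma \ref{lem:HW-CM} ``easily yield'' the result, and your direct-sum decomposition $L^{p}(\IR,\omega)=L^{p}(\IR^{+},\omega)\oplus L^{p}(\IR^{-},\omega)$ via the norm-one projections $f\mapsto\chi^{\pm}f$, decoupling completeness and minimality into the two half-line statements, is precisely the implicit gluing argument. Nothing is missing.
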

The following lemma will be used in the proof of the main result of the present section.
\begin{Lemma}\label{lem:ker}
Let $\{\phi_{j}\}_{j=1}^{\mu}, \{\psi_{j}\}_{j=1}^{\mu}$ be some measurable functions defined on a measurable set $E$ and let
\[
K(x,t) = \sum_{j=1}^{\mu} \phi_{j}(x) \psi_{j}(t) \qquad (x,t)\in E\times E.
\]
Furthermore, for a given real valued orthogonal matrix
\[
A = \bigg( a_{ij} \bigg)_{\substack{1\leq i\leq \mu \\1\leq j\leq \mu}}
\]
let $f_{k}(x) = \sum_{i=1}^{\mu} a_{ik} \phi_{i}(x)$, $g_{k}(x) = \sum_{i=1}^{\mu} a_{ik} \psi_{i}(t).$ If we consider a new kernel
 $\Phi(x,t) = \sum_{k=1}^{\mu} f_{k}(x) g_{k}(t)$ then
\[
K(x,t) = \Phi(x,t) \qquad \text{for}\quad (x,t)\in E\times E.
\]
\end{Lemma}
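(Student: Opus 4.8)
The plan is to substitute the definitions of $f_k$ and $g_k$ directly into $\Phi(x,t)$ and reduce the whole identity to the orthogonality relation for the matrix $A$; no analysis is involved, only a finite interchange of summation. First I would expand, reading the second family of functions as $g_k(t) = \sum_{i=1}^{\mu} a_{ik} \psi_i(t)$ and renaming its summation index to $j$ so that the two inner sums are independent:
\[
\Phi(x,t) = \sum_{k=1}^{\mu} f_k(x) g_k(t) = \sum_{k=1}^{\mu} \left( \sum_{i=1}^{\mu} a_{ik} \phi_i(x) \right)\left( \sum_{j=1}^{\mu} a_{jk} \psi_j(t) \right).
\]

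Next I would multiply out the two inner sums and interchange the order of summation, which is legitimate because all sums are finite. Collecting the coefficient of $\phi_i(x)\psi_j(t)$ gives
\[
\Phi(x,t) = \sum_{i=1}^{\mu} \sum_{j=1}^{\mu} \phi_i(x) \psi_j(t) \left( \sum_{k=1}^{\mu} a_{ik} a_{jk} \right).
\]
The only structural fact needed is that $A$ is real and orthogonal, so that $A A^{T} = I$; equivalently, the rows of $A$ are orthonormal, which means precisely $\sum_{k=1}^{\mu} a_{ik} a_{jk} = \delta_{ij}$. Substituting this relation collapses the double sum to its diagonal, so that
\[
\Phi(x,t) = \sum_{i=1}^{\mu} \phi_i(x) \psi_i(t) = K(x,t)
\]
for all $(x,t) \in E \times E$, which is the asserted identity.

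There is no genuine obstacle here: since every sum ranges over the finite index set $1 \le k \le \mu$, no convergence or measurability question arises and the equality holds pointwise on $E \times E$. The one point deserving a moment's attention is the bookkeeping of the orthogonality relation, namely that one must invoke the orthonormality of the \emph{rows} of $A$, i.e. $\sum_{k} a_{ik} a_{jk} = \delta_{ij}$ (the condition $A A^{T} = I$), rather than that of the columns. For an orthogonal matrix both hold simultaneously, so this causes no difficulty, and the lemma follows at once.
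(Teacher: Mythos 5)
Your proof is correct and follows essentially the same route as the paper's: expand $\Phi(x,t)$ using the definitions of $f_k$ and $g_k$, interchange the finite sums, and invoke the row orthonormality $\sum_{k} a_{ik} a_{jk} = \delta_{ij}$ (i.e. $AA^{T}=I$) to collapse the double sum to the diagonal. Your remark distinguishing row from column orthonormality is a nice clarification of a point the paper leaves implicit, but otherwise the two arguments are identical.
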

\begin{proof}
We have
\begin{align*}
\Phi(x,t) &= \sum_{k=1}^{\mu} \sum_{i=1}^{\mu} a_{ik} \phi_{i}(x) \sum_{\nu=1}^{\mu} a_{\nu k} \psi_{\nu}(t)\\
 &=  \sum_{i=1}^{\mu} \sum_{\nu=1}^{\mu} \phi_{i}(x) \psi_{\nu}(t)  \sum_{k=1}^{\mu} a_{\nu k} a_{ik} \\
 &=   \sum_{\nu=1}^{\mu} \phi_{\nu}(x) \psi_{\nu}(t) = K(x,t).
\end{align*}
\end{proof}
We are going to apply Lemma \ref{lem:ker} in the proof of the next theorem. As $\{\phi_{j}\}_{j=1}^{\mu}$ and $\{f_{k}\}_{k=1}^{\mu}$ we will take two orthonormal bases in $V(m)$ considered in Section \ref{sec:HW}. Concretely we will consider the following orthonormal bases of $V(m):$ $\{h^{(\nu)}(x): 0\leq \nu \leq m-1\}$ and $\{\varphi_{1,j,m}(x): 0\leq j\leq m-1\}$.
\begin{Theorem}\label{WHUB:1pl}
For any $m=2,3,\ldots$ the system $H^{+}(m)$ is an unconditional basis in the weighted norm space $L^{p}(\IR^{+},\omega),$ $ 1< p < \infty$ if and only if there exists a point $y\in [0,+\infty]$ such that: \newline
If $y\neq +\infty$ then $\omega$ satisfies the condition ${\mathcal M}_{p}(\IR^{+}\setminus \{y\})$ and the condition ${\mathcal M}^{y}_{p}(\IR^{+})$;  \newline
If $y=+\infty$   then   $\omega$ satisfies the condition ${\mathcal M}_{p}(\IR^{+}).$
\end{Theorem}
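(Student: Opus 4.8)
The plan is to transfer everything to the unit interval by the dilation $D_{N}(\phi)(x)=m^{N/2}\phi(m^{N}x)$, exactly as in the proof of Theorem \ref{W1UnB:1}. The crucial observation is that $D_{N}$ carries the collection of those $h^{(\nu)}_{k,j,m}\in H^{+}(m)$ whose support lies in $[0,m^{N}]$ onto precisely the system ${\mathcal H}_{0}(m)=\{{\textsf h}_{l}\}_{l=1}^{\infty}$ on $[0,1]$: since $D_{N}(h^{(\nu)}_{k,j,m})=h^{(\nu)}_{k+N,j,m}$, the coarsest such function $h^{(\nu)}_{-N,0,m}$ is sent to the base wavelet $h^{(\nu)}$ and the others to genuine Haar functions of non-negative scale. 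Thus a finite sign-twisted partial sum $\sum \epsilon^{(\nu)}_{kj}c^{(\nu)}_{kj}(f)h^{(\nu)}_{k,j,m}$ of $H^{+}(m)$, after applying $D_{N}$ for $N$ large enough that all supports lie in $[0,m^{N}]$, becomes a sign-twisted partial sum of the expansion of $D_{N}f$, taken with respect to the dilated weight $\omega_{N}(x)=\omega(m^{N}x)$. The whole argument then rests on producing an unconditional estimate on $[0,1]$ whose constant does not depend on $N$, and on letting $N\to\infty$ (every element of $H^{+}(m)$ has compact support, hence lies in some such window).

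For sufficiency I split according to the position of $y$. If $y\in[0,+\infty)$, then restricting $\mathcal{M}_{p}(\IR^{+}\setminus\{y\})$ to $[0,m^{N}]$ and invoking Lemma \ref{lem:transf2} gives $\mathcal{M}_{p}([0,1]\setminus\{y_{N}\})$ for $\omega_{N}$, with $y_{N}=m^{-N}y$; since $\int_{\IR^{+}\setminus\Delta_{j}(y)}\omega^{-1/(p-1)}\geq\int_{[0,m^{N}]\setminus\Delta_{j}(y)}\omega^{-1/(p-1)}$, the condition $\mathcal{M}^{y}_{p}(\IR^{+})$ yields $\mathcal{M}^{y}_{p}([0,m^{N}])$, and Lemma \ref{lem:transf3} then gives $\mathcal{M}^{y_{N}}_{p}([0,1])$ — all with the same constant $C_{p}$, uniformly in $N$. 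By Theorems \ref{WHB0:1} and \ref{UNB0:1}, ${\mathcal H}_{0}(m)$ is an unconditional basis in $L^{p}([0,1],\omega_{N})$ with an unconditional constant depending only on $p$, $m$, $C_{p}$; one checks that the functionals ${\textsf h}^{*}_{l}=({\textsf h}_{l}-{\textsf h}_{l}(y_{N}))/\omega_{N}$ of (\ref{eq:dauls}) correspond under $D_{N}$ to the functionals $g^{(\nu)}_{k,j,m}=(a^{(\nu)}_{k,j,m}\chi^{+}+h^{(\nu)}_{k,j,m})/\omega$ supplied by Lemma \ref{lem:51}, so the coefficients match. If instead $y=+\infty$, then by Lemma \ref{lem:transf1} the weight $\omega_{N}$ satisfies $\mathcal{M}_{p}([0,1])$, so by Theorem \ref{WHUB:1} the \emph{full} system ${\mathcal H}(m)$ is an unconditional basis in $L^{p}([0,1],\omega_{N})$ with a constant independent of $N$; here the conjugate functions are simply $h^{(\nu)}_{k,j,m}/\omega$ (the correction vanishes because $h^{(\nu)}_{k,j,m}(+\infty)=0$), so the dilated sum is a sign-twisted sub-sum of the ${\mathcal H}(m)$-expansion of $D_{N}f$ not involving ${\textsf h}_{0}$, which is therefore controlled. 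Completeness and minimality in both cases come from Lemma \ref{lem:HWmCM}: for finite $y$ the geometric growth of the complement integrals (Lemma \ref{lem:wsing1}) forces $\int_{\Delta_{j}(y)}\omega^{-1/(p-1)}=\infty$ so that (\ref{eq:CW}) holds at $y$ and, by integrability away from $y$, at no other point; when $y=+\infty$ the same role is played by Lemmas \ref{ap:mpprima} and \ref{lem:fff}.

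For necessity, an unconditional basis is in particular a Schauder basis, so by Lemma \ref{lem:HWmCM} there is a unique $y\in[0,+\infty]$ and the partial sum operators of the natural scale-ordering are uniformly bounded on $L^{p}(\IR^{+},\omega)$. The $\IR^{+}$-analogue of Lemma \ref{Hylem:1}, proved by the same kernel computation (Lemma \ref{lem:ker}), identifies these partial sums as ordinary $m$-adic averages on every interval $G_{i}$ disjoint from the distinguished interval containing $y$, and as the negated average over the complement on that distinguished interval. Testing the uniform bound on functions supported in a single $G_{i}$ reproduces $\mathcal{M}_{p}(\IR^{+}\setminus\{y\})$ (or $\mathcal{M}_{p}(\IR^{+})$ when $y=+\infty$), while testing on $\Delta_{j}(y)$ reproduces $\mathcal{M}^{y}_{p}(\IR^{+})$, exactly as in the necessity half of Theorem \ref{WHB0:1}.

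The main obstacle I expect is keeping all constants uniform in $N$ and routing each branch to the correct model on $[0,1]$: the finite-$y$ branch must go through the incomplete system ${\mathcal H}_{0}(m)$, whose singular subtraction at $y_{N}$ is precisely what makes it a genuine basis for $\omega_{N}$, whereas the $y=+\infty$ branch must go through the full ${\mathcal H}(m)$ — for a weight in $\mathcal{M}_{p}([0,1])$ the system ${\mathcal H}_{0}(m)$ is not even complete, and completeness on $\IR^{+}$ is instead supplied by the coarse scales $k\to-\infty$, i.e. by the passage $N\to\infty$. Verifying that the conjugate functionals transform correctly under $D_{N}$ in each branch, and that the seemingly conflicting requirements $\mathcal{M}_{p}(\IR^{+}\setminus\{y\})$ and $\mathcal{M}^{y}_{p}(\IR^{+})$ are in fact compatible — the former excludes exactly the large intervals $[0,m^{N}]$ through which Lemma \ref{lem:fff} would otherwise force $\omega^{-1/(p-1)}$ to be non-integrable at infinity, while the latter forces its integrability there — is the delicate bookkeeping at the heart of the proof.
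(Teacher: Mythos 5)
Your route is the paper's own: necessity through completeness--minimality (Lemma \ref{lem:HWmCM}), the kernel identity of Lemma \ref{lem:ker}, and testing on functions supported in single intervals; sufficiency through the dilation $D_{N}$, the transfer Lemmas \ref{lem:transf1}--\ref{lem:transf3}, and the unit-interval results (Theorem \ref{WHUB:1} for $y=+\infty$, Theorems \ref{WHB0:1} and \ref{UNB0:1} for finite $y$). The necessity sketch and the $y=+\infty$ branch of the sufficiency are sound; in that branch the coefficients really do transfer, because the conjugate functionals $h^{(\nu)}_{k,j,m}/\omega$ of (\ref{eq:dual1}) have compact support.

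However, in the finite-$y$ branch of the sufficiency your key assertion --- that after dilation the sign-twisted sum of $H^{+}(m)$ ``becomes a sign-twisted partial sum of the expansion of $D_{N}f$'' and that ``the coefficients match'' --- is false for general $f\in L^{p}(\IR^{+},\omega)$, and this is a genuine gap. By Lemma \ref{lem:51} the conjugate functionals are $g^{(\nu)}_{k,j,m}=(h^{(\nu)}_{k,j,m}-h^{(\nu)}_{k,j,m}(y)\chi^{+})/\omega$, whose support is all of $\IR^{+}$ because of the correction term. Hence each coefficient
\[
c^{(\nu)}_{kj}(f)=\int_{\IR^{+}} f(t)\bigl[h^{(\nu)}_{k,j,m}(t)-h^{(\nu)}_{k,j,m}(y)\bigr]\,dt
\]
contains the contribution $-\,h^{(\nu)}_{k,j,m}(y)\int_{[m^{N},+\infty)}f(t)\,dt$ coming from the part of $f$ outside the window $[0,m^{N}]$, which the expansion of the restriction of $D_{N}f$ to $[0,1]$ in ${\mathcal H}_{0}(m)$ with respect to $\omega_{N}$ cannot see; the identification of coefficients is valid only for $f_{1}=f\chi_{[0,m^{N}]}$. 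This is exactly where the paper does additional work: it splits $f=f_{1}+f_{2}$ with $f_{2}=f\chi_{[m^{N},+\infty)}$, observes that $S^{(\nu)}_{\Omega}(f_{2},\cdot)$ equals the fixed function $\sum_{(k,j)\in \Omega}h^{(\nu)}_{k,j,m}(y)h^{(\nu)}_{k,j,m}$ multiplied by $-\int_{[m^{N},+\infty)}f\,dt$, rewrites that function (up to sign) as $S^{(\nu)}_{\Omega}(f^{*}_{N},\cdot)$ with $f^{*}_{N}=m^{-N}\chi_{[0,m^{N}]}$, and then bounds everything by H\"older's inequality together with
\[
m^{-N}\Bigl[\int_{[m^{N},+\infty)}\omega(t)^{-\frac{1}{p-1}}dt\Bigr]^{\frac{1}{p'}}\Bigl[\omega([0,m^{N}])\Bigr]^{\frac{1}{p}}\leq C^{\frac{1}{p}}_{p},
\]
which is precisely the condition ${\mathcal M}^{y}_{p}(\IR^{+})$ applied to the interval $\Delta_{-N}(y)=[0,m^{N}]$. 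This step is not cosmetic: it is the only place in the sufficiency where the half-line condition ${\mathcal M}^{y}_{p}(\IR^{+})$ itself (rather than its dilated trace ${\mathcal M}^{y_{N}}_{p}([0,1])$) is used, and it is also what guarantees that $\int_{[m^{N},+\infty)}|f|\,dt$ is even finite. Your proposal never controls this tail term, so the uniform unconditional bound you claim does not follow from the unit-interval theorems alone.
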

\begin{proof}
Suppose that $H^{+}(m)$ is an unconditional basis in the weighted norm space $L^{p}(\IR^{+},\omega),$ $ 1< p < \infty$. Then $H^{+}(m)$  is complete minimal in $L^p(\IR^{+},\omega)$  and  there exists a unique point $y\in [0,+\infty]$  such that the condition (\ref{eq:CW}) holds. First consider the case $y=+\infty.$
In this case the uniqueness of the point $y$ means that for any  $h^{(\nu)}_{k,j,m}(x) \in H^{+}(m)$
\begin{equation}\label{eq:dual1}
g^{(\nu)}_{k,j,m} = \frac{ h^{(\nu)}_{k,j,m} }{\omega} \in L^{\frac{1}{p-1}}(\IR^{+}).
\end{equation}
  The proof of the necessity  can be easily completed following the scheme  of the proof of Theorem \ref{WHB:1}.\newline
If $y\in [0,+\infty)$ then by Lemma \ref{lem:51} the system biorthogonal to $H^{+}(m)$ is defined by the following equations:
\begin{equation}\label{eq:dual2}
g^{(\nu)}_{k,j,m}(t) = \frac{ h^{(\nu)}_{k,j,m}(t) -  h^{(\nu)}_{k,j,m}(y) \chi^{+}(t) }{\omega(t)}
\end{equation}
for all $k\in \IZ, j\in \IZ^{+}$ and $ 1\leq \nu \leq m-1.$ \newline
Let $\Delta = \Delta_{l+1}(y)$ and let $\Delta_{l}(y) = \Delta_{l}(0)+ j_{y}m^{-l},$  where $j_{y}\in \IN_{0}$. For $f\in L^{p}(\IR^{+},w),$ consider the sum
\begin{align*}
&\sum_{\nu=1}^{m-1} c^{(\nu)}_{l,j_{y},m}(f)\, h^{(\nu)}_{l,j_{y},m}(x)\\ &= \sum_{\nu=0}^{m-1}\int_{\IR^{+}} f(t) g^{(\nu)}_{k,j_{y},m}(t) \omega(t) dt\, h^{(\nu)}_{l,j_{y},m}(x) - c^{(0)}_{l,j_{y},m}(f)\, h^{(0)}_{l,j_{y},m}(x)\\ & = \int_{\IR^{+}} f(t) \sum_{\nu=0}^{m-1}g^{(\nu)}_{k,j_{y},m}(t) h^{(\nu)}_{l,j_{y},m}(x) \omega(t) dt - c^{(0)}_{l,j_{y},m}(f)\, h^{(0)}_{l,j_{y},m}(x),
\end{align*}
where $h^{(0)}_{l,j_{y},m}(x) = \chi_{\Delta_{l}(y)}(x)$ and
\[
g^{(0)}_{k,j,m}(t) = \frac{ h^{(0)}_{k,j,m}(t) -  h^{(0)}_{k,j,m}(y) \chi^{+}(t) }{\omega(t)}.
\]
By Lemma \ref{lem:ker} follows that
\begin{align}\label{eq:y1}
&\int_{\IR^{+}} f(t) \sum_{\nu=0}^{m-1}g^{(\nu)}_{k,j_{y},m}(t) h^{(\nu)}_{l,j_{y},m}(x) \omega(t) dt \\ &= \int_{\IR^{+}} f(t) \sum_{j=0}^{m-1}\varphi_{l+1,j_{y}+j,m}(x) \psi_{l,j,m}(t)\omega(t) dt,
\end{align}
where
\[
\psi_{l,j,m}(t) = \frac{\varphi_{l+1,j_{y}+j,m}(t) -  \varphi_{l+1,j_{y}+j,m}(y) \chi^{+}(t) }{\omega(t)}.
\]
Thus we obtain that if $x\in \Delta_{l+1}(y)$ then
\[
\sum_{\nu=1}^{m-1} c^{(\nu)}_{l,j_{y},m}(f)\, h^{(\nu)}_{l,j_{y},m}(x) = - m^{l+1} \int_{\IR^{+}\setminus \Delta_{l+1}(y)}  f(t)dt - m^{l} \int_{\IR^{+}\setminus \Delta_{l}(y)}  f(t) dt.
\]
If $f(t) \geq 0$ for $ t\in \IR^{+}$ then it follows that
for $x\in \Delta_{l+1}(y)$
\[
|\sum_{\nu=1}^{m-1} c^{(\nu)}_{l,j_{y},m}(f)\, h^{(\nu)}_{l,j_{y},m}(x)| \geq  |\Delta_{l+1}(y)|^{-1} |\int_{\IR^{+}\setminus \Delta_{l+1}(y)}  f(t)dt|.
\]
Afterwards in the same way as in the proof of Theorem \ref{WHB0:1} we obtain that for some $B_{p}>0$ and for all $l\in \IZ$
\[
|\Delta_{l}(y)|^{-p} \omega(\Delta_{l}(y)) \bigg(\int_{\IR^{+}\setminus \Delta_{l}(y)} \omega^{-\frac{1}{p-1}} dt\bigg)^{p-1} \leq B_{p}^{p}.
 \]
Let $E_{lj} \in {\mathcal M}, 1\leq j \leq m-1$ be mutually disjoint intervals such that $|E_{lj}| = m^{-1}|\Delta_{l}(y)|,  1\leq l \leq m-1$ and
  $\Delta_{l}(y) = \Delta_{l+1}(y)\bigcup \bigcup_{j=1}^{m-1} E_{lj}$. By (\ref{eq:y1}) we obtain that if $f(t) = 0$ when $x\in \IR^{+}\setminus \Delta_{l}(y)$ then
\[
\sum_{\nu=1}^{m-1} c^{(\nu)}_{l,j_{y},m}(f)\, h^{(\nu)}_{l,j_{y},m}(x) = |E_{lj}|^{-1} \int_{E_{lj}}  f(t)dt \qquad \text{if}\quad x\in E_{lj} (1\leq j\leq m-1).
\]
Which yields
\[
|E_{lj}|^{-p} \omega(E_{lj}) \bigg(\int_{E_{lj}} \omega^{-\frac{1}{p-1}} dt\bigg)^{p-1} \leq B_{p}^{p} \quad \text{for any}\, l\in \IZ\quad \text{and}\, 1\leq j\leq m-1.
 \]
Let $\Delta \in {\mathcal M}, |\Delta| = m^{-l-1}$ be such that $\Delta \bigcap \Delta_{l}(y)= \emptyset.$ We consider the interval $\Delta^{*} \in {\mathcal M}, |\Delta^{*}| = m^{-l}$ be such that $\Delta \subset \Delta^{*}.$ Let  $ = \Delta_{l}(0)+ k^{*}m^{-l},$  where $k^{*}\in \IN_{0}$.

For $f\in L^{p}(\IR^{+},w)$ consider the sum
\[
\sum_{\nu=1}^{m-1} c^{(\nu)}_{l,k^{*},m}(f)\, h^{(\nu)}_{l,k^{*},m}(x).
\]
Using the same idea as above we show that if $x\in \Delta$ then
\[
\sum_{\nu=1}^{m-1} c^{(\nu)}_{l,k^{*},m}(f)\, h^{(\nu)}_{l,k^{*},m}(x) = |\Delta|^{-1} \int_{ \Delta}  f(t)dt - |\Delta^{*}|^{-1} \int_{ \Delta^{*}}  f(t)dt.
\]
Thus if $f(t) =0$ for $t\in \IR^{+}\setminus \Delta$ it follows that
\[
\sum_{\nu=1}^{m-1} c^{(\nu)}_{l,k^{*},m}(f)\, h^{(\nu)}_{l,k^{*},m}(x) = \frac{m-1}{m}|\Delta|^{-1} \int_{ \Delta}  f(t)dt
\]
and the proof of the necessity is completed easily.

The proof of the sufficiency will be given following the same idea as in proof of Theorem \ref{WUnB:1}. By Lemma \ref{lem:HWmCM} we have that the system $H^{+}(m)$ is complete and minimal in $ L^{p}(\IR^{+},\omega)$. Let $G^{+}(m) = \{g^{(\nu)}_{k,j,m}: k\in \IZ, j\in \IZ^{+}, 1\leq \nu \leq m-1\}$ be the conjugate system of the basis $H^{+}(m)$. Suppose that  $\omega$ satisfies the condition ${\mathcal M}_{p}(\IR^{+}).$ Then the system $G^{+}(m)$ is defined by the equations (\ref{eq:dual1}).
 Let $f\in L^{p}(\IR^{+},\omega)$ and
let $\Omega \subset \IZ\times \IZ^{+}$ be a finite set. Moreover, let $N\in \IN$ be such that $h^{(\nu)}_{k,j,m}(x) = 0$ if $x\in [m^{N}, +\infty)$ for all $(k,j)\in \Omega$.
For any $1\leq \nu \leq m-1$ consider the sum
\begin{equation}\label{eq:sum}
S^{(\nu)}_{\Omega}(f,x) = \sum_{(k,j)\in \Omega} c^{(\nu)}_{kj}(f) h^{(\nu)}_{k,j,m}(x),
\end{equation}
where
\[
c^{(\nu)}_{kj}(f) = \int_{\IR^{+}} f(t) h^{(\nu)}_{k,j,m}(t) dt.
\]
Applying Lemma \ref{lem:transf1}  as in the proof of Theorem \ref{WUnB:1} we obtain that for some
\[
\| S^{(\nu)}_{\Omega}(f,\cdot)\|_{L^{p}(\IR^{+},\omega)} \leq B_{p} \| f\|_{L^{p}(\IR^{+},\omega)},
\]
where $B_{p}>0$ is independent of $f$ and $\Omega$.\newline
If $y\in [0,+\infty)$ then we take $N\in \IN$ so that $h^{(\nu)}_{k,j,m}(x) = 0$ if $x\in [m^{N}, +\infty)$ for all $(k,j)\in \Omega$ and $y\in [0,m^{N}]$.
By Lemma \ref{lem:51} the system $G^{+}(m)$ is defined by the equations (\ref{eq:dual2}).
In this case the coefficients of the sum (\ref{eq:sum}) are defined as follows:
\[
c^{(\nu)}_{kj}(f) = \int_{\IR^{+}} f(t)[ h^{(\nu)}_{k,j,m}(t) - h^{(\nu)}_{k,j,m}(y) ]dt.
\]
We write $f(t) = f_{1}(t) + f_{2}(t)$, where $f_{1}(t) = f(t) \chi_{[0,m^{N}]}(t)$.
By Lemma \ref{lem:transf2} and Lemma \ref{lem:transf3} in the same way as above we obtain that
\[
\| S^{(\nu)}_{\Omega}(f_{1},\cdot)\|_{L^{p}(\IR^{+},\omega)} \leq B_{p} \| f_{1}\|_{L^{p}(\IR^{+},\omega)},
\]
where $B_{p}>0$ is independent of $f_{1}$ and $\Omega$. The proof will be complete if we show that
\[
\| S^{(\nu)}_{\Omega}(f_{2},\cdot)\|_{L^{p}(\IR^{+},\omega)} \leq B^{*}_{p} \| f_{2}\|_{L^{p}(\IR^{+},\omega)},
\]
where $B^{*}_{p}>0$ is independent of $f_{2}$ and $\Omega$. We have that
\[
S^{(\nu)}_{\Omega}(f_{2},x) = \sum_{(k,j)\in \Omega} c^{(\nu)}_{kj}(f_{2}) h^{(\nu)}_{k,j,m}(x)
\]
\[
= -\int_{[m^{N},+\infty)} f(t) dt \sum_{(k,j)\in \Omega}h^{(\nu)}_{k,j,m}(y) h^{(\nu)}_{k,j,m}(x).
\]
Let $f^{*}_{N}(t) = m^{-N} \chi_{[0,m^{N}]}(t)$ then
\begin{align*}
\sum_{(k,j)\in \Omega}h^{(\nu)}_{k,j,m}(y) h^{(\nu)}_{k,j,m}(x) &= \sum_{(k,j)\in \Omega} \int_{\IR^{+}} f^{*}_{N}(t)[ h^{(\nu)}_{k,j,m}(t) - h^{(\nu)}_{k,j,m}(y)]dt \, h^{(\nu)}_{k,j,m}(x) \\
 &= S^{(\nu)}_{\Omega}(f^{*}_{N},x).
\end{align*}
Thus we have that
\begin{align*}
&\| S^{(\nu)}_{\Omega}(f_{2},\cdot)\|_{L^{p}(\IR^{+},\omega)} = \bigg|\int_{[m^{N},+\infty)} f(t) dt\bigg| \| S^{(\nu)}_{\Omega}(f^{*}_{N},\cdot)\|_{L^{p}(\IR^{+},\omega)}\\
& \leq \| f_{2} \|_{L^{p}(\IR^{+},\omega)} \bigg[\int_{[m^{N},+\infty)} \omega(t)^{-\frac{1}{p-1}} dt\bigg]^{\frac{1}{p'}}
  B_{p}\| f^{*}_{N} \|_{L^{p}(\IR^{+},\omega)}\\
&= B_{p} m^{-N} \bigg[\int_{[m^{N},+\infty)} \omega(t)^{-\frac{1}{p-1}} dt\bigg]^{\frac{1}{p'}}
 \bigg[\int_{[0,m^{N}]} \omega(t) dt\bigg]^{\frac{1}{p}} \| f_{2} \|_{L^{p}(\IR^{+},\omega)}.
\end{align*}
Using that $\omega$ satisfies the condition ${\mathcal M}^{y}_{p}(\IR^{+})$ we complete the proof.
\end{proof}
It is easy to check that any function $\omega_{r}(x) = x^{r}$ if $r>p-1$ satisfies the condition ${\mathcal M}^{0}_{p}(\IR^{+})$ and the condition
${\mathcal M}_{p}((0,+\infty))$.
\begin{Corollary}
Let $ 1< p < \infty$ and let $\omega_{r}(x) = |x|^{r}$ if $r>p-1$.  Then the system $H^{+}(m)$ is an unconditional basis in the weighted norm space $L^{p}(\IR^{+},\omega_{r})$.
\end{Corollary}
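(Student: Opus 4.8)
The plan is to apply Theorem \ref{WHUB:1pl} with the distinguished point $y=0$. On $\IR^{+}$ we have $\omega_{r}(x)=|x|^{r}=x^{r}$, and since $0\neq+\infty$, it suffices to verify the two hypotheses of that theorem, namely that $\omega_{r}$ satisfies ${\mathcal M}_{p}(\IR^{+}\setminus\{0\})={\mathcal M}_{p}((0,+\infty))$ and ${\mathcal M}^{0}_{p}(\IR^{+})$; the conclusion is then immediate. Both verifications are elementary power-integral estimates, and the conceptual point of taking $y=0$ is that ${\mathcal M}_{p}(\IR^{+})$ itself necessarily fails here --- this is exactly the ``strong zero at the origin'' phenomenon stressed in the paper, which is why $0$ must be singled out as the exceptional point.

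First I would check ${\mathcal M}_{p}((0,+\infty))$. Any $m$-adic interval $\Delta=[a,a+|\Delta|]\subset(0,+\infty)$ has left endpoint $a\geq|\Delta|$, so $a\leq x\leq 2a$ for $x\in\Delta$; hence $x^{r}\asymp a^{r}$ and $x^{-r/(p-1)}\asymp a^{-r/(p-1)}$ on $\Delta$, with constants depending only on $r$. Writing $s=r/(p-1)$ (so $s>1$ because $r>p-1$), this gives $\omega_{r}(\Delta)\asymp a^{r}|\Delta|$ and $\int_{\Delta}\omega_{r}^{-1/(p-1)}\asymp a^{-s}|\Delta|$, whence
\[
\omega_{r}(\Delta)\Big[\int_{\Delta}\omega_{r}^{-1/(p-1)}\Big]^{p-1}\asymp a^{r}\,a^{-s(p-1)}\,|\Delta|^{p}=|\Delta|^{p},
\]
since $s(p-1)=r$. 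This is (\ref{M:p}) with a constant independent of $\Delta$ (indeed the upper bound alone, which is all that ${\mathcal M}_{p}$ demands, gives $C_{p}=2^{r}$).

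Next I would check ${\mathcal M}^{0}_{p}(\IR^{+})$. Here $\Delta_{j}(0)=[0,m^{-j}]$ and $\IR^{+}\setminus\Delta_{j}(0)=[m^{-j},+\infty)$, so the quantity in (\ref{MyR:p}) is
\[
\int_{0}^{m^{-j}}x^{r}\,dx\,\Big[\int_{m^{-j}}^{+\infty}x^{-s}\,dx\Big]^{p-1}=\frac{m^{-j(r+1)}}{r+1}\Big(\frac{m^{j(s-1)}}{s-1}\Big)^{p-1}.
\]
Using $(s-1)(p-1)=r-(p-1)$, the powers of $m$ combine to $m^{-j(r+1)+j(r-p+1)}=m^{-jp}=|\Delta_{j}(0)|^{p}$, so the left side equals $(r+1)^{-1}(s-1)^{-(p-1)}|\Delta_{j}(0)|^{p}$, verifying (\ref{MyR:p}) uniformly in $j\in\IZ$.

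The only genuinely delicate point --- and the reason $r>p-1$ is precisely the right hypothesis --- is the convergence at $+\infty$ of $\int_{m^{-j}}^{+\infty}x^{-s}\,dx$ in the second step: this integral is finite exactly when $s=r/(p-1)>1$, i.e.\ $r>p-1$. The very same inequality forces $\int_{0}^{\delta}x^{-s}\,dx=+\infty$, which is why the intervals $[0,m^{-j}]$ cannot satisfy (\ref{M:p}) and $y=0$ must be taken as the exceptional point rather than attempting ${\mathcal M}_{p}(\IR^{+})$ outright. Everything else is routine bookkeeping with the doubling estimate $a\leq x\leq 2a$ on $m$-adic intervals away from the origin.
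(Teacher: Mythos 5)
Your proof is correct and follows exactly the paper's route: the paper also deduces the corollary from Theorem \ref{WHUB:1pl} with the exceptional point $y=0$, merely asserting that ``it is easy to check'' that $\omega_{r}$ satisfies ${\mathcal M}^{0}_{p}(\IR^{+})$ and ${\mathcal M}_{p}((0,+\infty))$. Your power-integral computations (using $a\leq x\leq 2a$ on $m$-adic intervals away from $0$, and the exact cancellation $(s-1)(p-1)=r-(p-1)$ for the intervals $[0,m^{-j}]$) simply supply the details the paper omits, and your observation that ${\mathcal M}_{p}(\IR^{+})$ itself fails, forcing $y=0$ to be the singular point, is precisely the phenomenon the paper emphasizes.
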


\begin{Theorem}\label{WHUB:2pl}
For any $m=2,3,\ldots$ the system $H(m)$ is an unconditional basis in the weighted norm space $L^{p}(\IR,\omega),$ $ 1< p < \infty$ if and only if there exist  two points  $y_{1}\in [0,+\infty], y_{2}\in [-\infty,0]$ such that: \newline
If $y_{1}\neq +\infty$ then $\omega$ satisfies the condition ${\mathcal M}_{p}(\IR^{+}\setminus \{y_{1}\})$ and the condition ${\mathcal M}^{y_{1}}_{p}(\IR^{+})$;  \newline
If $y_{1}=+\infty$   then   $\omega$ satisfies the condition ${\mathcal M}_{p}(\IR^{+});$\newline
If $y_{2}\neq +\infty$ then $\omega$ satisfies the condition ${\mathcal M}_{p}(\IR^{-}\setminus \{y_{2}\})$ and the condition ${\mathcal M}^{y_{2}}_{p}(\IR^{-})$;  \newline
If $y_{2}=-\infty$   then   $\omega$ satisfies the condition ${\mathcal M}_{p}(\IR^{-});$
\end{Theorem}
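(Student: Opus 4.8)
The plan is to reduce Theorem~\ref{WHUB:2pl} to the half-line result Theorem~\ref{WHUB:1pl} by exploiting the splitting of $H(m)$ into the families $H^{+}(m)$ and $H^{-}(m)$ introduced in~(\ref{haar1:m}) and~(\ref{haar2:m}). The first observation is that this splitting respects the supports: if $\Delta = [j/m^{k},(j+1)/m^{k}]$ is the support of $h^{(\nu)}_{k,j,m}$, then $j\geq 0$ forces $\Delta \subset \IR^{+}$ while $j\leq -1$ forces $\Delta \subset \IR^{-}$. Hence every element of $H^{+}(m)$ vanishes on $\IR^{-}$ and every element of $H^{-}(m)$ vanishes on $\IR^{+}$, so $H(m)=H^{+}(m)\cup H^{-}(m)$ is a disjoint union of two families living on complementary half-lines. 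Since $\omega(\{0\})=0$, we have the isometric decoupling
\[
\|f\|^{p}_{L^{p}(\IR,\omega)} = \|f\chi^{+}\|^{p}_{L^{p}(\IR^{+},\omega)} + \|f\chi^{-}\|^{p}_{L^{p}(\IR^{-},\omega)},
\]
so that $f\mapsto (f\chi^{+},f\chi^{-})$ identifies $L^{p}(\IR,\omega)$ with the $\ell^{p}$-direct sum of $L^{p}(\IR^{+},\omega)$ and $L^{p}(\IR^{-},\omega)$.

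Next I would prove that $H(m)$ is an unconditional basis of $L^{p}(\IR,\omega)$ if and only if $H^{+}(m)$ and $H^{-}(m)$ are unconditional bases of $L^{p}(\IR^{+},\omega)$ and $L^{p}(\IR^{-},\omega)$ respectively. For the necessity I would note that the conjugate functionals attached to $H^{+}(m)$ are supported on $\IR^{+}$ and those attached to $H^{-}(m)$ on $\IR^{-}$ (this is transparent from the form of the biorthogonal system in Lemma~\ref{lem:51}, since the correcting term involves only $\chi^{+}$, respectively $\chi^{-}$); consequently, for $f$ supported on $\IR^{+}$ all coefficients against $H^{-}(m)$ vanish, the full expansion reduces to an $H^{+}(m)$-expansion, and the unconditional estimate for $H(m)$ restricts verbatim to $H^{+}(m)$, and symmetrically for $H^{-}(m)$. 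For the sufficiency, an arbitrary $f$ splits as $f=f\chi^{+}+f\chi^{-}$, its $H(m)$-expansion is the concatenation of the two half-line expansions, and because the two sign-modified partial sums live on disjoint half-lines the $\ell^{p}$-additivity of the norm gives $\|\sum \epsilon_{i}a_{i}e_{i}\|^{p}_{L^{p}(\IR,\omega)}\leq \max(C^{+},C^{-})^{p}\|f\|^{p}_{L^{p}(\IR,\omega)}$, where $C^{\pm}$ are the unconditional constants of $H^{\pm}(m)$. Completeness of $H(m)$ follows likewise from completeness of $H^{+}(m)$ and $H^{-}(m)$.

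Finally I would apply Theorem~\ref{WHUB:1pl} to $H^{+}(m)$, which furnishes the point $y_{1}\in[0,+\infty]$ together with the conditions ${\mathcal M}_{p}(\IR^{+}\setminus\{y_{1}\})$ and ${\mathcal M}^{y_{1}}_{p}(\IR^{+})$ when $y_{1}\neq+\infty$, or ${\mathcal M}_{p}(\IR^{+})$ when $y_{1}=+\infty$; the mirror-image version of Theorem~\ref{WHUB:1pl} for $H^{-}(m)$, obtained through the reflection $x\mapsto -x$, furnishes the point $y_{2}\in[-\infty,0]$ and the corresponding conditions on $\IR^{-}$. Combining these two characterizations through the equivalence of the previous paragraph yields precisely the stated criterion; note that the split-point convention of Section~\ref{ss:pr} automatically keeps the two candidate points at the origin distinct, since $H^{+}(m)$ only sees $0_{r}$ and $H^{-}(m)$ only sees $0_{l}$. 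Virtually all the analytic difficulty is already absorbed into Theorem~\ref{WHUB:1pl}; the only genuinely new point requiring care is the clean support separation of $H^{+}(m)$ and $H^{-}(m)$ and the resulting $\ell^{p}$-decoupling of both the norms and the biorthogonal functionals, which is where I expect the (mild) technical attention to be needed rather than any deep obstacle.
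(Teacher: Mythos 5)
Your proposal is correct and follows exactly the route the paper intends: the paper states Theorem~\ref{WHUB:2pl} without an explicit proof, immediately after Theorem~\ref{WHUB:1pl}, precisely because it is meant to follow by splitting $H(m)$ into $H^{+}(m)$ and $H^{-}(m)$ and invoking the half-line result on $\IR^{+}$ together with its reflection on $\IR^{-}$ (the same reduction the paper uses for Theorem~\ref{WUnB:1} in the unweighted case). Your filling-in of the $\ell^{p}$-decoupling of norms and of the biorthogonal functionals (the vanishing of the $\chi^{-}$, resp.\ $\chi^{+}$, correction term being forced by completeness) is the right way to make that implicit argument rigorous.
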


\begin{Corollary}
Let $ 1< p < \infty$ and let $\omega_{r}(x) = |x|^{r}$ if $r>p-1$.  Then the system $H(m)$ is an unconditional basis in the weighted norm space $L^{p}(\IR,\omega_{r})$.
\end{Corollary}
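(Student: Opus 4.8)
The plan is to derive the corollary directly from Theorem \ref{WHUB:2pl} by placing both singular points at the origin, i.e.\ taking $y_1 = y_2 = 0$, and checking that the even weight $\omega_r(x) = |x|^r$ fulfils the hypotheses listed there. Since $y_1 = 0 \neq +\infty$, on the positive half-line I must verify the two conditions ${\mathcal M}_p(\IR^+ \setminus \{0\})$ and ${\mathcal M}^0_p(\IR^+)$, the former being the same as ${\mathcal M}_p((0,+\infty))$. Because $\omega_r(-x) = \omega_r(x)$ and the reflection $x \mapsto -x$ carries $m$-adic intervals to $m$-adic intervals and $\Delta_j(0)\subset\IR^+$ to $\Delta_j(0)\subset\IR^-$, the two conditions on $\IR^-$ needed for $y_2 = 0$ follow at once from the positive-side ones. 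Thus it suffices to treat $\IR^+$.

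For ${\mathcal M}_p((0,+\infty))$ I would use that every $m$-adic interval $\Delta = [jm^{-k}, (j+1)m^{-k}]$ with $j \geq 1$ is separated from the origin, so that $x \asymp jm^{-k}$ on $\Delta$ with absolute comparability constants. Hence $\omega_r(\Delta) \asymp (jm^{-k})^r |\Delta|$ and $\int_\Delta \omega_r^{-1/(p-1)} \asymp (jm^{-k})^{-r/(p-1)} |\Delta|$; in the product forming the left-hand side of (\ref{M:p}) the powers of $jm^{-k}$ cancel, leaving a bounded multiple of $|\Delta|^p$. This holds for every real $r$, so no restriction is needed in this step.

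The decisive verification is ${\mathcal M}^0_p(\IR^+)$, and this is exactly where the hypothesis $r > p-1$ enters. With $y = 0$ one has $\Delta_j(0) = [0, m^{-j}]$, so the relevant integral over $\IR^+ \setminus \Delta_j(0)$ runs from $m^{-j}$ to $+\infty$. A direct computation gives $\omega_r(\Delta_j(0)) = (r+1)^{-1} m^{-j(r+1)}$, finite since $r > -1$. The tail integral $\int_{m^{-j}}^{\infty} x^{-r/(p-1)}\,dx$ converges at infinity precisely when $r/(p-1) > 1$, that is when $r > p-1$, and then equals $(r/(p-1)-1)^{-1} m^{-j(1 - r/(p-1))}$. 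Substituting into the left-hand side of (\ref{MyR:p}),
\[
\omega_r(\Delta_j(0)) \bigg[\int_{m^{-j}}^{\infty} x^{-\frac{r}{p-1}}\,dx\bigg]^{p-1} = \frac{(r/(p-1)-1)^{-(p-1)}}{r+1}\, m^{-j(r+1)}\, m^{-j(p-1-r)} = C_p\, m^{-jp},
\]
and since $|\Delta_j(0)|^p = m^{-jp}$ this is precisely (\ref{MyR:p}) with a constant $C_p$ depending only on $p$ and $r$.

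With all four conditions verified, Theorem \ref{WHUB:2pl} applied with $y_1 = y_2 = 0$ yields that $H(m)$ is an unconditional basis in $L^p(\IR, \omega_r)$. The only place the hypothesis $r > p-1$ is used is the convergence of the tail integral in ${\mathcal M}^0_p(\IR^+)$; for $r \leq p-1$ that integral diverges and the singular-point condition fails, so $r>p-1$ is exactly the threshold at which the result holds. I expect this convergence check to be the main (indeed the only non-routine) point, the remaining computations being the scale-invariant cancellation of powers characteristic of homogeneous weights.
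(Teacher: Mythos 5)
Your argument is correct and coincides with the paper's intended proof: the paper simply remarks that $\omega_{r}$ satisfies ${\mathcal M}^{0}_{p}(\IR^{+})$ and ${\mathcal M}_{p}((0,+\infty))$ and then invokes Theorem \ref{WHUB:2pl} with $y_{1}=y_{2}=0$ (evenness handling $\IR^{-}$), and your homogeneous computation of $\omega_r(\Delta_j(0))$ and of the tail integral $\int_{m^{-j}}^{\infty}x^{-r/(p-1)}\,dx$ supplies exactly the verification the paper leaves as ``easy to check''. One correction to your closing remark: $r>p-1$ is the threshold only for the singular-point route through $y=0$, not for the result itself --- for $-1<r<p-1$ the same scale-invariant computation shows $\omega_{r}$ satisfies the full conditions ${\mathcal M}_{p}(\IR^{+})$ and ${\mathcal M}_{p}(\IR^{-})$ (the Muckenhoupt-type case), so Theorem \ref{WHUB:2pl} applies with $y_{1}=+\infty$, $y_{2}=-\infty$ and $H(m)$ is again an unconditional basis; among $r>-1$ only $r=p-1$ genuinely fails.
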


\end{document}